\tikzset{anchorbase/.style={baseline={([yshift=-0.5ex]current bounding box.center)}}}
\tikzstyle directed=[postaction={decorate,decoration={markings,
    mark=at position #1 with {\arrow{>}}}}]
\tikzstyle rdirected=[postaction={decorate,decoration={markings,
    mark=at position #1 with {\arrow{<}}}}]
\numberwithin{equation}{section}
\newtheorem{theorem}[subsubsection]{Theorem}
\newtheorem{lemma}[theorem]{Lemma}
\newtheorem{prop}[theorem]{Proposition}
\newtheorem{conjecture}[theorem]{Conjecture}
\theoremstyle{definition}
\newtheorem{definition}[subsubsection]{Definition}
\newtheorem{remark}[theorem]{Remark}
\newtheorem{problem}[theorem]{Problem}
\newtheorem{example}[subsubsection]{Example}
\newtheorem{question}[theorem]{Question}
\newcommand{\nilo}{\nil\hspace{-0.3mm}/G}
\newcommand{\bo}{\mathbf{0}}
\renewcommand{\H}{\mathcal{H}}
\newcommand{\nil}{\mathscr{N}}
\newcommand{\bI}{\mathbf{I}}
\newcommand{\bJ}{\mathbf{J}}
\newcommand{\bN}{\mathbf{N}}
\newcommand{\bP}{\mathbf{P}}
\newcommand{\bQ}{\mathbf{Q}}
\newcommand{\cD}{\mathcal{D}}
\newcommand{\co}{\mathrm{co}}
\newcommand{\ev}{\mathrm{ev}}
\newcommand{\bu}{\mathbf{u}}
\newcommand{\out}{\mathrm{out}}
\newcommand{\on}{\mathrm{on}}
\newcommand{\reg}{\mathrm{reg}}
\newcommand{\subreg}{\mathrm{subreg}}
\newcommand{\supmin}{{\mathrm{supmin}}}
\newcommand{\Rep}{\mathsf{Rep}}
\newcommand{\Tilt}{\mathsf{Tilt}}
\renewcommand{\a}{\mathsf{a}}
\newcommand{\sD}{\mathscr{D}}
\newcommand{\mW}{\mathbb{W}}
\newcommand{\cA}{\mathsf{A}}
\newcommand{\cR}{\mathcal{R}}
\newcommand{\cB}{\mathsf{B}}
\newcommand{\bk}{\mathbf{k}}
\newcommand{\tto}{\twoheadrightarrow}
\newcommand{\cO}{\mathscr{O}}
\newcommand{\cJ}{\mathcal{J}}
\newcommand{\mN}{\mathbb{N}}
\newcommand{\mG}{\mathbb{G}}
\newcommand{\mZ}{\mathbb{Z}}
\newcommand{\mC}{\mathbb{C}}
\newcommand{\mR}{\mathbb{R}}
\newcommand{\mP}{\mathbb{P}}
\newcommand{\mF}{\mathbb{F}}
\newcommand{\fg}{\mathfrak{g}}
\newcommand{\fl}{\mathfrak{l}}
\newcommand{\fh}{\mathfrak{h}}
\newcommand{\fn}{\mathfrak{n}}
\newcommand{\End}{\mathrm{End}}
\newcommand{\Ob}{\mathrm{Ob}}
\newcommand{\Ext}{\mathrm{Ext}}
\newcommand{\Hom}{\mathrm{Hom}}
\newcommand{\Sym}{\mathrm{Sym}}
\newcommand{\Rad}{\mathrm{Rad}}
\newcommand{\rad}{\mathrm{rad}}
\newcommand{\op}{\mathrm{op}}
\newcommand{\Ind}{\mathrm{Ind}}
\newcommand{\Res}{\mathrm{Res}}
\newcommand{\Id}{\mathrm{Id}}
\newcommand{\Vecc}{\mathsf{Vec}}
\newcommand{\Ver}{\mathsf{Ver}}
\newcommand{\zero}{{\mathrm{zero}}}
\renewcommand{\Vec}{\mathsf{Vec}}
\newcommand{\unit}{{\mathbf{1}}}
\newcommand{\cC}{\mathsf{C}}
\newcommand{\cQ}{\mathcal{Q}}
\newcommand{\cI}{\mathcal{I}}
\newcommand{\supp}{\mathrm{supp}}
\newcommand{\cS}{\mathsf{S}}
\newcommand{\incl}{\mathrm{incl}}
\newcommand{\mX}{\mathbb{X}}
\newcommand{\cN}{\mathcal{N}}
\newcommand{\FPdim}{\mathrm{FPdim}}
\newcommand{\id}{\mathrm{id}}
\newcommand{\Tr}{\operatorname{Tr}}
\newcommand{\tA}{\mathsf{A}}
\newcommand{\tB}{\mathsf{B}}
\newcommand{\tG}{\mathsf{G}}
\newcommand{\asph}{\mathrm{asph}}
\newcommand{\OB}{\mathcal{O}\mathcal{B}}
\begin{document}
\title[Tensor ideals of abelian type]{Tensor ideals of abelian type and quantum groups}
\author{Kevin Coulembier}
\address{K.C.: School of Mathematics and Statistics, University of Sydney, NSW 2006, Australia}
\email{kevin.coulembier@sydney.edu.au}

\author{Pavel Etingof}

\address{P.E.: Department of Mathematics, MIT, Cambridge, MA USA 02139}
\email{etingof@math.mit.edu}

\author{Victor Ostrik}
\address{V.O.: Department of Mathematics,
University of Oregon,
Eugene, OR USA 97403}
\email{vostrik@uoregon.edu}


\keywords{tensor ideals, tensor categories, tilting modules for quantum groups and reductive groups, Duflo involutions in rigid monoidal categories, abelian envelopes, boundedness of antispherical Kazhdan-Lusztig polynomials}
\subjclass[2020]{}


\dedicatory{Dedicated to the memory of J\'er\'emie Guilhot}

\begin{abstract}
We initiate a study of tensor ideals in linear rigid monoidal categories that are kernels of linear monoidal functors to abelian monoidal categories. We develop general methods and apply them to the category of tilting modules over quantum groups as well as to some representation categories of finite groups. In an appendix on Duflo involutions in monoidal categories, we make a connection between Duflo involutions in the affine Weyl group and tensor ideals for quantum groups, and prove some of Lusztig's conjectures for arbitrary Coxeter groups, at equal parameters, without invoking the boundedness hypothesis.
\end{abstract}

\maketitle

\section*{Introduction}
This paper is motivated by recent developments in the theory of symmetric tensor categories, meaning $\bk$-linear abelian symmetric rigid monoidal categories over a field $\bk$ with some finiteness assumptions. If $\bk$ has characteristic 0, then classical work by Deligne shows that all such categories that are of moderate growth are representation categories of supergroups, that is affine group schemes in the symmetric tensor category of supervector spaces. Over a field $\bk$ of characteristic $p>0$ it was conjectured in \cite{BEO} that similarly all symmetric tensor categories of moderate growth are representation categories over a symmetric tensor category $\Ver_{p^\infty}=\cup_n\Ver_{p^n}$. The case `$n=1$' of this conjecture was proved in \cite{CEO}. 

The categories $\Ver_{p^n}$, for $n$ a positive integer, were constructed in \cite{BEO, AbEnv} as `abelian envelopes' of quotients of the symmetric monoidal category $\Tilt SL_2$ by its non-zero tensor ideals (which are labelled by positive integers). A natural first test of the conjecture in \cite{BEO} would be constructing abelian envelopes (if they exist) of quotients of $\Tilt G$ for other reductive groups $G$, and subsequently verifying whether indeed they fibre over $\Ver_{p^n}$ for some $n$. The first major obstacle is that a classification of the tensor ideals in $\Tilt G$ seems intractable at present, for $G$ not of rank 1. On the other hand, as we will demonstrate in the current paper, most tensor ideals in $\Tilt G$ will lead to quotients that do not admit faithful functors to tensor categories, and are thus irrelevant for the above goals. 

We therefore define a tensor ideal in a $\bk$-linear rigid monoidal category over a field $\bk$ to be {\em of abelian type} if it is the kernel of a $\bk$-linear monoidal functor to a tensor category over~$\bk$. We develop methods to classify tensor ideals of abelian type. We will pay specific attention to the category of tilting modules $\Tilt U_\zeta(\fg)$ of the quantum group at a root $\zeta$ of $1$ for $\fg$ a reductive Lie algebra over the complex numbers, because it often behaves as a `first order approximation' to $\Tilt G$. 

An obvious but important observation is that tensor ideals of abelian type must be `prime', see \ref{defideals}. The latter, being an intrinsic property of a tensor ideal, is often easier to handle. Several of our results are thus aimed at demonstrating that certain prime tensor ideals must be of abelian type, see for instance Theorem~\ref{ThmAbEnv}, and that the majority of tensor ideals are not prime, see for instance Theorem~\ref{Thm:FinR}.

A coarser notion of tensor ideal to classify is that of a `thick tensor ideal', which is a collection of objects that form an ideal under tensor product, and are closed under taking direct sums and summands. Indeed, while the classification of {\em thick} tensor ideals in $\Tilt U_\zeta(\fg)$ has been understood for nearly 30 years, see \cite{Os}, the full classification of all tensor ideals might well be intractable for $\fg$ of rank 3 or higher, as is corroborated from the complexity in Proposition~\ref{prop:B2}. Interestingly, while the number of tensor ideals compared to thick tensor ideals tends to be infinite to one, the assignment from prime tensor ideals to prime thick tensor ideals tends to be much closer to a bijection.

Our main strategy for understanding tensor ideals of abelian type is thus classifying prime tensor ideals based on a classification of prime thick tensor ideals and subsequently determining which prime tensor ideals are of abelian type. We demonstrate this approach with classifications of tensor ideals of abelian type in the representation categories of cyclic groups of order a power of the characteristic of the field, as well as in $\Tilt U_\zeta(\fg)$. Since the former case behaves as a simplified version of the latter, here we focus on quantum groups.

Assuming the naturality conjecture for the support of tilting modules, see \S\ref{sec:nat}, which is valid in type A, this leads to a classification of the tensor ideals of abelian type.
 
 \medskip
 
 {\bf Main Theorem:} {\em Under the above hypotheses and the conditions in \ref{sec:condl}, there is a canonical bijection between the tensor ideals of abelian type in $\Tilt U_\zeta(\fg)$ and the nilpotent orbits in $\fg$. Every prime tensor ideal in $\Tilt U_\zeta(\fg)$ is of abelian type.}
 
 \medskip
 
The tensor categories that we construct in order to establish that the prime tensor ideals are of abelian type, are the abelian envelopes of quotients of $\Tilt U_\zeta(\fl)$, corresponding to distinguished nilpotent orbits, for Levi subalgebras $\fl\subset\fg$.  For details, we refer to Theorems~\ref{ThmClassPrime}, \ref{thm:distab} and~\ref{ThmAb}.

 The paper is organised as follows. In Section~\ref{sec:prel} we recall some background. In Section~\ref{sec:abtype} we develop general theory of tensor ideals of abelian type and prime tensor ideals. In Section~\ref{sec:fingrp} we apply our techniques to the representation categories of some finite groups. In Section~\ref{sec:quantum1} we make some preparations for applying our methods to $\Tilt U_\zeta(\fg)$. We mainly recall some standard background, but also establish the apparently little-known fact that antispherical Kazhdan-Lusztig polynomials for an affine Weyl group are bounded in degree by the length of the longest element in the finite Weyl group. In Section~\ref{sec:quantum2}, we classify prime tensor ideals in $\Tilt U_\zeta(\fg)$ unconditionally, and classify the tensor ideals of abelian type assuming the naturality conjecture. In Section~\ref{sec:quantum3} we illustrate this classification by discussing the classification of {\em all} tensor ideals for quantum groups of rank 2. In Section~\ref{sec:further} we briefly discuss the problem of classification of tensor ideals in the oriented Brauer category in positive characteristic and in the category of tilting modules for reductive groups. Finally, in Appendix~\ref{sec:Duflo-Mon}, we establish a theory of Duflo involutions in specific rigid monoidal categories. Applying the results to the category of Soergel bimodules recovers the usual Duflo involutions, and more generally the resulting objects are always Duflo objects in the sense of \cite{MM, MM2}. In this way we manage to prove some of the basic facts on Duflo involutions (including their existence) for arbitrary Coxeter groups, without using Lusztig's boundedness hypothesis from \cite{Lubook, bounded}, but relying on the categorification results from \cite{EW14, So}. In relation to the main aims of the paper, we use these results to illuminate, in Section~\ref{sec:quantum2}, the precise connection between Duflo involutions and the classification of thick tensor ideals in $\Tilt U_\zeta(\fg)$.

\section{Preliminaries}\label{sec:prel}

Let $\bk$ be an arbitrary field, unless further specified, and write $\mN=\mZ_{\ge 0}$.

\subsection{Pseudo-tensor categories} We refer to \cite{EGNO} for details on monoidal categories.

\subsubsection{}\label{def:pseudo}
An essentially small $\bk$-linear monoidal category $(\cA,\otimes,\unit)$ is a {\bf pseudo-tensor category over $\bk$} if
\begin{enumerate}
\item $\cA$ is pseudo-abelian (additive and idempotent closed) and has finite dimensional morphism spaces;
\item $\bk\to\End(\unit)$ is an isomorphism;
\item $(\cA,\otimes,\unit)$ is rigid, meaning every object $X\in\cA$ has a left and right dual.
\end{enumerate}

Recall that a {\bf left dual} of $X\in\cA$ is a triple $(X^\ast,\ev_X,\co_X)$ of an object $X^\ast$ and morphisms $\ev_X:X^\ast\otimes X\to\unit$ and $\co_X: \unit\to X\otimes X^\ast$ satisfying $\id_X=(X\otimes \ev_X)\circ (\co_X\otimes X)$ and $\id_{X^\ast}=(\ev_X\otimes X^\ast)\circ (X^\ast\otimes \co_X)$. A right dual ${}^\ast X$ of $X$ is similarly defined, so that $X$ corresponds to a left dual of ${}^\ast X$. For a morphism $f:X\to Y$, the (co)evaluations define a morphism $f^\ast:Y^\ast\to X^\ast$. We use notation like $X^{(n)}$ and $f^{(n)}$, with $n\in\mZ$, for iterated duals, for instance $X^{(-2)}={}^{\ast\ast}X$.

A $\bk$-linear monoidal functor between pseudo-tensor categories will be called a {\bf tensor functor}. 
A bi-natural isomorphism $X\otimes Y\xrightarrow{\sim} Y\otimes X$ in a pseudo-tensor category will be called a {\bf commutor}. Commutors can satisfy additional properties that make the monoidal categories braided (e.g. symmetric) or coboundary, see \cite{HK}. We will often just require the existence of some commutor, and then speak of pseudo-tensor categories `with commutor'.

\subsubsection{} For a morphism $a:X\to X^{\ast\ast}$ in a pseudo-tensor category, following \cite[\S 4.7]{EGNO}, its left quantum trace is
$$\Tr^L(a)\;=\; \ev_{X^\ast}\circ(a\otimes  X^\ast)\circ \co_X\;\in\;\bk=\End(\unit).$$ Given a natural isomorphism $\psi:\Id\Rightarrow \Id^{\ast\ast}$ (not necessarily monoidal), we define the trace of an endomorphism $f\in \End(X)$ as
$$\Tr(f)\;=\;\Tr_{\psi}(f)\;:=\;\Tr^L(\psi_X\circ f).$$
The categorical dimension $\dim(X)=\dim_{\psi}(X)\in \bk$ of $X\in\cA$ is then $\Tr(\id_X)$.

\subsubsection{}\label{def:tensorcat}A pseudo-tensor category is a {\bf tensor category} if condition (1) is replaced with the condition that $\cA$ be abelian with all objects of finite length (which implies finite dimensionality of morphisms spaces). Moreover, in a tensor category $\unit$ is simple, see e.g. \cite[Theorem~4.3.8]{EGNO}. The tensor category of finite dimensional vector spaces is denoted by $\Vecc$.

A $\bk$-linear monoidal category $(\cA,\otimes,\unit)$ that satisfies (2) and (3) can be formally completed into its {\bf pseudo-abelian envelope}, which is a pseudo-tensor category, see \cite[\S 1.2]{AK}.

There is no uniform procedure to produce an abelian envelope of a pseudo-tensor category. However, the general theory of \cite{HomAb} establishes that for every pseudo-tensor category $\cA$, there exists a family $\{\cA\to\cC_\alpha\mid\alpha\in S\}$ of faithful tensor functors into tensor categories (the local abelian envelopes of $\cA$) such that for any faithful tensor functor $\cA\to\cC$ to a tensor category $\cC$ there are unique $\alpha\in S$ and exact tensor functor $\cC_\alpha\to\cC$ so that $\cA\to\cC$ factors as $\cA\to\cC_\alpha\to\cC$. If $S$, which can be empty, finite or infinite, is a singleton then the unique $\cA\to\cC_\alpha=:\cC$ is called the {\bf abelian envelope} of $\cA$. Sometimes the term abelian envelope is only applied if the functor $\cA\to\cC$ is also full.

The following is \cite[Corollary~4.4.4]{pretop}.
\begin{lemma}\label{LemGF}
Let $\Phi:\cA\to\cC$ be a faithful tensor functor from a pseudo-tensor category $\cA$ to a tensor category $\cC$. If
\begin{enumerate}
\item[(G)] every object in $\cC$ is a quotient of one in the essential image of $\Phi$, and
\item[(F)] for every morphism $a: \Phi(X)\to \Phi(Y)$ in $\cC$, there exists a morphism $q\in \cA(X',X)$ for which $\Phi(q)$ is an epimorphism and $a\circ \Phi(q)$ is in the image of $\Phi$,
\end{enumerate}
then $\Phi$ is an abelian envelope.
\end{lemma}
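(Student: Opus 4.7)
The plan is to verify the universal property of the abelian envelope directly: I show that for any faithful tensor functor $\Psi:\cA\to\cD$ to a tensor category $\cD$, there exists an exact tensor functor $F:\cC\to\cD$, unique up to monoidal natural isomorphism, with $F\circ\Phi\simeq\Psi$. Combined with the existence of local abelian envelopes recalled before the lemma, this forces $\cC$ to be \emph{the} abelian envelope: applying the universal property to each $\cC_\alpha$ in both directions yields mutually inverse exact tensor functors between $\cC$ and $\cC_\alpha$, so $|S|=1$ and the canonical factorisation is an equivalence.

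On objects, condition (G) applied twice produces, for each $Z\in\cC$, a presentation $\Phi(Y)\to\Phi(X)\twoheadrightarrow Z$ in which the first arrow is the composite of an epimorphism $\Phi(Y)\twoheadrightarrow\ker(\Phi(X)\twoheadrightarrow Z)$ with the kernel inclusion. By (F), after precomposition with a suitable $\Phi(q)$ that is epic, this morphism takes the form $\Phi(f)$ for some $f:Y'\to X$ in $\cA$, so $Z\cong\coker\Phi(f)$. Declare $F(Z):=\coker_{\cD}\Psi(f)$. Two such presentations are dominated by a common refinement obtained by forming pullbacks in $\cC$ and re-applying (G), (F); the residual ambiguities are controlled because two morphisms between $\Phi$-images that agree in $\cC$ are, after one further precomposition by a $\Phi$-epic morphism, represented by $\Phi$-images of equal $\cA$-morphisms, so faithfulness of $\Phi$ identifies the underlying lifts and then $\Psi$ makes the corresponding cokernels canonically isomorphic (here faithfulness of $\Psi$ is implicitly used to prevent collapses).

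For a morphism $a:Z_1\to Z_2$ between objects with presentations $\Phi(Y_i)\to\Phi(X_i)\twoheadrightarrow Z_i$, I form in $\cC$ the pullback of $\Phi(X_2)\twoheadrightarrow Z_2$ along the composite $\Phi(X_1)\twoheadrightarrow Z_1\xrightarrow{a}Z_2$, cover it by some $\Phi(X_1')$ using (G), and apply (F) twice (with a common refinement) to express the resulting morphisms $\Phi(X_1')\to\Phi(X_i)$ as $\Phi(h_i)$ for morphisms $h_i$ in $\cA$, with $\Phi(h_1)$ being epic onto (a representative of) the cover of $Z_1$. Applying $\Psi$ and passing to cokernels yields $F(a):F(Z_1)\to F(Z_2)$, and functoriality, $\bk$-linearity, and independence of all choices all reduce to the same pullback-and-refine machinery used in the object case. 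Right exactness of $F$ is automatic from the cokernel construction, left exactness follows from right exactness together with the fact that $F$ is monoidal into a rigid category and so preserves duals (expressing kernels as dualised cokernels), and the monoidal structure on $F$ is obtained from biexactness of $\otimes$ in $\cC$ and $\cD$: a tensor product of two presentations is a presentation, and the monoidality of $\Psi$ then supplies the canonical isomorphism $F(Z_1\otimes Z_2)\simeq F(Z_1)\otimes F(Z_2)$. Uniqueness of $F$ is immediate since any exact tensor functor extending $\Psi$ is determined on the essential image of $\Phi$, which generates $\cC$ under cokernels by (G) and (F).

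The principal obstacle is that (F) only furnishes a lift to $\cA$ after precomposition with a $\Phi$-epic morphism, so every construction in $\cC$ that involves a morphism not a priori in the image of $\Phi$ must be replaced by a construction on a refined object. Making these refinements coherent enough to yield a genuine functor, rather than only an object-level assignment up to non-canonical isomorphism, is where the bulk of the work sits; it rests on iterating (G) and (F) systematically and on the faithfulness of $\Phi$ (and of $\Psi$) to pin down lifts uniquely once pulled back far enough into $\cA$.
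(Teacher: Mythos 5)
The paper does not prove this lemma itself; it imports it verbatim from \cite[Corollary~4.4.4]{pretop}, where it arises as a corollary of a machinery built on additive Grothendieck pretopologies. Your proposal therefore takes a genuinely different route: you try to verify the universal property of the abelian envelope directly, by hand-building the extension $F:\cC\to\cD$ of an arbitrary faithful tensor functor $\Psi:\cA\to\cD$ and then invoking uniqueness in the local-abelian-envelope theory. That strategy is sound in outline, and the closing argument (that existence of such an $F$ for every $\Psi$ pins $\cC$ down as the unique local abelian envelope, forcing $|S|=1$) is correct.

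The gap is in the construction of $F$, which is the entire content of the statement and is only sketched. Two places are genuinely underproven. First, well-definedness and functoriality: since (F) only produces lifts to $\cA$ after precomposition with a $\Phi$-epic morphism, the assignment $Z\mapsto\coker_{\cD}\Psi(f)$ depends on a chain of choices (presentation, cover, lift), and you need to produce a coherent system of comparison isomorphisms, not just ``controlled ambiguities.'' Your pullback-and-refine outline does not address how to compare two refinements that are not themselves comparable by a single further refinement, nor how to see that the induced isomorphisms on cokernels compose correctly; this is a nontrivial zigzag argument, and it is precisely where faithfulness of $\Psi$ must enter in a concrete way rather than ``implicitly.'' Second, the assertion that ``right exactness of $F$ is automatic from the cokernel construction'' is not correct as stated: defining $F(Z)$ by choosing a presentation does not make $F$ preserve cokernels. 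One must show that an exact sequence $Z_1\to Z_2\to Z_3\to 0$ in $\cC$ can be presented by a levelwise $\Phi$-image diagram whose rows stay exact after applying $\Psi$, and this requires a separate application of (G) and (F) together with a diagram chase, not merely the definition. Once right exactness and the monoidal structure on $F$ are actually established, your duality argument for left exactness ($\ker(f)\simeq\coker(f^{\ast})^{\ast}$, and monoidal functors preserve duals) is correct, but as written it is presented before the monoidal structure is in place, which reads circularly. In short, the high-level plan would work, but the proposal as written stops exactly where the technical work begins.
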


\subsection{Tensor ideals}
Let $\cA$ be a pseudo-tensor category.

\subsubsection{}\label{defideals} A {\bf thick tensor ideal $\bI$} in $\cA$ is a collection of objects (closed under isomorphism) in $\cA$ such that $X\in\bI$ implies that $Y\otimes X\in \bI\ni X\otimes Y$, and furthermore $X_1\oplus X_2\in\bI$ if and only if $X_1$ and $X_2$ are both in $\bI$. A thick tensor ideal $\bI$ is {\bf prime} if it does not contain $\unit$ and if $X\otimes Y\in\bI$ implies that at least one of $X,Y$ is in $\bI$. 

A {\bf tensor ideal} $\cI$ in $\cA$ is an ideal in the additive sense (a system of subspaces $\cI(X,Y)\subset\Hom(X,Y)$ for all $X,Y\in \cA$ that is closed under composition on either side with arbitrary morphisms) such that $f\in \cI$ implies that $Z\otimes f\in\cI\ni f\otimes Z$ for all $Z\in\cA$. A tensor ideal $\cI$ is {\bf prime}\footnote{In the case of pseudo-tensor categories without commutor, the term `completely prime' would be more appropriate, and a straightforward adaptation of our definition should be called prime. Especially if one drops the requirement $\End(\unit)=\bk$, the current definition would be problematic, since the zero ideal in a general multi-tensor category would not be prime.} if it is not the identity ideal and if $f\otimes g\in\cI$ implies that at least one of $f,g$ is in $\cI$.  A tensor ideal $\cI$ is {\bf faithfully prime} if the preimage of $\cI(X\otimes U,Y\otimes V)$ under
$$\Hom(X,Y)\otimes_{\bk}\Hom(U,V)\;\xrightarrow{\otimes}\;\Hom(X\otimes U,Y\otimes V)$$
is given by 
$$\cI(X,Y)\otimes_{\bk}\Hom(U,V)+\Hom(X,Y)\otimes_{\bk}\cI(U,V).$$
This is the same as requiring that the tensor product on the quotient category is faithful as a $\bk$-linear functor $\cA/\cI\,\otimes_{\bk}\,\cA/\cI\to \cA/\cI.$ Clearly faithfully prime ideals are prime.

Examples of prime tensor ideals that are not faithfully prime are given in Section~\ref{Klein}. Up to formulation, the following lemma (with proof) is contained in \cite[Lemmas~3.1 and~3.2]{OS}:

\begin{lemma} Assume that $\cA$ has a commutor.
  The following conditions are equivalent on a tensor ideal $\cI$ in $\cA$:
  \begin{enumerate}
  \item $\cI$ is faithfully prime;
  \item $\cI$ is prime, and for all morphisms $f,g:X\rightrightarrows Y$ in $\cA$, the property $f\otimes g-g\otimes f\in \cI$ implies that a non-trivial linear combination of $f$ and $g$ is in $\cI$.
  \end{enumerate}
 \end{lemma}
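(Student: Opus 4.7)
For the forward direction (1) $\Rightarrow$ (2), everything is essentially formal. Writing $V_{A,B}:=\Hom(A,B)/\cI(A,B)$, faithful primeness is exactly the injectivity of the natural map $V_{A,B}\otimes_\bk V_{C,D}\to V_{A\otimes C,B\otimes D}$. From this, the relation $f\otimes g\in\cI$ reads as $\bar f\otimes\bar g=0$ in $V_{X,Y}\otimes V_{U,V}$, forcing $\bar f=0$ or $\bar g=0$, and so $\cI$ is prime. Similarly $f\otimes g-g\otimes f\in\cI$ translates into $\bar f\otimes\bar g=\bar g\otimes\bar f$ in $V_{X,Y}\otimes V_{X,Y}$, and since an equality of transposed simple tensors in the tensor square of a vector space forces linear dependence, we obtain the required linear combination of $f,g$ in $\cI$.

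The content lies in (2) $\Rightarrow$ (1). A preliminary direct-sum reduction removes the distinction between the two pairs of objects: setting $X':=X\oplus U$ and $Y':=Y\oplus V$, the $(X,Y,U,V)$-version of the faithfulness map embeds as a direct summand of the $(X',Y',X',Y')$-version, via the canonical decompositions of $\Hom(X',Y')$ and $\Hom(X'^{\otimes 2},Y'^{\otimes 2})$, which $\cI$ respects because the summand projections and inclusions are split. It therefore suffices to prove, for every object pair $(X,Y)$, injectivity of
$$
\alpha\colon V_{X,Y}\otimes_\bk V_{X,Y}\longrightarrow V_{X^{\otimes 2},Y^{\otimes 2}}.
$$

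My plan for this is a minimal-counterexample argument. Suppose $\ker\alpha\neq 0$ and pick $\xi=\sum_{i=1}^n\bar f_i\otimes\bar g_i\in\ker\alpha$ of minimal length $n$. Gaussian elimination modulo $\cI$ on either factor forces both $\{\bar f_i\}_i$ and $\{\bar g_i\}_i$ to be linearly independent in $V_{X,Y}$; primality excludes $n=1$. Using the commutor identity $\sigma_{Y,Y}\circ(f\otimes g)=(g\otimes f)\circ\sigma_{X,X}$ together with closure of $\cI$ under composition yields $\sum g_i\otimes f_i\in\cI$, so that $\sum(f_i\otimes g_i-g_i\otimes f_i)\in\cI$.

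The main obstacle, and the technical heart of the argument, is to convert this single relation into an effective application of condition (2), which a priori only constrains a single antisymmetric pair. My plan is parametric: form $f_\lambda=\sum_i\lambda_if_i$ and $g_\mu=\sum_i\mu_ig_i$ for $\lambda,\mu\in\bk^n$ and expand $f_\lambda\otimes g_\mu-g_\mu\otimes f_\lambda$ modulo $\cI$ using the already-derived diagonal relation together with the primality-based fact that each $f_i\otimes g_j\notin\cI$. For suitably chosen $(\lambda,\mu)$ this combination lies in $\cI$, and condition (2) then produces a non-trivial linear dependence between $\bar f_\lambda$ and $\bar g_\mu$ in $V_{X,Y}$. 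By varying $(\lambda,\mu)$ and using the minimality of $n$ in an essential way, one extracts enough such dependences to contradict the linear independence of $\{\bar f_i\}$ or $\{\bar g_i\}$, forcing $n=0$ and completing the proof.
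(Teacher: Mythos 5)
Your $(1)\Rightarrow(2)$ argument is correct and essentially the same as the paper's (which simply notes it is clear). The preliminary direct-sum reduction to the diagonal case $X'=X$, $Y'=Y$ is also valid. However, the heart of $(2)\Rightarrow(1)$ has a genuine gap, and you have effectively flagged it yourself by calling it ``the main obstacle'' without resolving it.

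Concretely: from $\xi=\sum_{i=1}^n f_i\otimes g_i\in\cI$ you correctly derive the single diagonal relation $\sum_i(f_i\otimes g_i-g_i\otimes f_i)\in\cI$. But when you expand
$f_\lambda\otimes g_\mu-g_\mu\otimes f_\lambda=\sum_{i,j}\lambda_i\mu_j(f_i\otimes g_j-g_j\otimes f_i)$,
the off-diagonal terms $f_i\otimes g_j-g_j\otimes f_i$ with $i\neq j$ are not controlled by the diagonal relation, and the ``primality-based fact'' that $f_i\otimes g_j\notin\cI$ gives you no information about these antisymmetric differences. So there is no reason any nonzero $(\lambda,\mu)$ makes the displayed expression land in $\cI$, and the claimed extraction of ``enough dependences'' is unsupported. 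The parametric strategy as stated does not reach a contradiction.

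The missing idea is the paper's reduction step: tensor the \emph{entire} relation $\sum_i f'_i\otimes f_i=0$ (mod $\cI$) on the right by a fixed $f_n$, apply the commutor to the last two factors, and \emph{subtract}. The $n$th term cancels and you obtain $\sum_{i<n} f'_i\otimes(f_i\otimes f_n-f_n\otimes f_i)\in\cI$, a relation with one fewer term. Linear independence mod $\cI$ of the differences $f_i\otimes f_n-f_n\otimes f_i$ for $i<n$ is exactly what condition (2) gives (together with independence of the $f_i$), and then induction on $n$ closes the argument, the base case being primeness. This single ``tensor by one term and subtract'' trick is what your proposal lacks; once you have it, the minimal-counterexample framing becomes unnecessary (though it would also work).
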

 \begin{proof}
 That (1) implies (2) is clear. To show that (2) implies (1), without loss of generality we can focus on the case $\cI=0$.  We then need to show that if
 $\sum_{i=1}^n f'_i\otimes f_i=0$
 for morphisms $f'_i:X'\to Y'$ and linearly independent morphisms $f_i:X\to Y$, then $f'_i=0$ for all $i$ (and the analogous claim for $f_i'$ linearly independent). We do this by induction on $n$, where the case $n=1$ is true by assumption ($0$ is prime).  The assumption $\sum_{i=1}^n f'_i\otimes f_i=0$ implies
 $$\sum_{i=1}^n f'_i\otimes f_i\otimes f_n=0=\sum_{i=1}^n f'_i\otimes f_n\otimes f_i,$$
 using the commutor.
 Subtracting both equalities shows
$$ \sum_{i=1}^{n-1} f'_i\otimes (f_i\otimes f_n-f_n\otimes f_i)\;=\;0.$$ By our assumptions, the morphisms $f_i\otimes f_n-f_n\otimes f_i$ are linearly independent, for $1\le i<n$. The induction hypothesis thus implies that $f_i'=0$ for $i<n$, and that $f'_n=0$ then follows similarly, or from the prime condition.
 \end{proof}

\subsubsection{}For a tensor ideal $\cI$, we let $\Ob(\cI)$ denote the thick tensor ideal of objects $X$ with $\id_X\in\cI$. These are the objects that become isomorphic to zero in the pseudo-tensor category $\cA/\cI$. We thus obtain a map
\begin{equation}\label{ObAll}\{\mbox{tensor ideals}\}\;\;\xrightarrow{\hspace{4mm}\Ob\hspace{4mm}}\;\;\{\mbox{thick tensor ideals}\}.\end{equation}
In \cite{Selecta}, several examples where \eqref{ObAll}
is a bijection were described. Despite this collection of natural examples, this map will rarely be a bijection in general, see for instance Lemma~\ref{lem:cyclic} and Section~\ref{sec:quantum3} below. However, the map is always surjective, and each fibre has a unique maximal and minimal element, see \S\ref{sec-fibre}. 

There are obvious left and right versions of tensor ideals, which we will always let refer to the tensor product alone (not composition), and of thick tensor ideals. For pseudo-tensor categories with commutor, left and right tensor ideals are simply tensor ideals. Indeed, if $\{\sigma_{A,B}:A\otimes B\xrightarrow{\sim}B\otimes A\}$ is a commutor for $\cA$, then for any $f:X\to Y$ and $Z$ in $\cA$, we have
$$Z\otimes f\;=\; \sigma_{Y,Z}^{-1}\circ (f\otimes Z)\circ \sigma_{X,Z}.$$
The following is \cite[Theorem~3.1.1]{Selecta} and goes back to \cite{AK}.

\begin{theorem}\label{ThmRigBij}
The assignment $\cJ\mapsto \cJ(\unit,-)$ yields an isomorphism between the poset of left (resp. right) tensor ideals in $\cA$ and the poset of subfunctors of $\cA(\unit,-):\cA\to\Vec$. The inverse map sends a subfunctor $J\subset\cA(\unit,-)$ to the left (resp. right) tensor ideal containing all morphisms $f:X\to Y$ for which the corresponding $\unit\to {}^\ast X\otimes Y$ (resp. $\unit\to Y\otimes X^\ast$) is in~$J({}^\ast X\otimes Y)$ (resp. $J(Y\otimes X^\ast)$).
\end{theorem}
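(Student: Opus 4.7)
The plan is to extract from rigidity a natural adjunction bijection that converts arbitrary morphisms into morphisms out of $\unit$, and then to observe that this bijection translates the closure properties of a left tensor ideal (two-sided composition and left tensoring) into the single property of being a subfunctor (closure under postcomposition) of $\cA(\unit,-)$.

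Concretely, for each pair $X,Y\in\cA$ I fix the adjunction bijection
\[
\Phi_{X,Y}:\Hom(X,Y)\;\xrightarrow{\;\sim\;}\;\Hom(\unit,\, {}^\ast X\otimes Y),\qquad f\;\mapsto\;(\id_{{}^\ast X}\otimes f)\circ \co_{{}^\ast X},
\]
with inverse built from $\ev_{{}^\ast X}$ via the snake identities. The first standard computation is the naturality identity
\[
\Phi_{X',Y'}(h\circ f\circ g)\;=\;({}^\ast g\otimes h)\circ \Phi_{X,Y}(f)
\]
for any $g:X'\to X$ and $h:Y\to Y'$; this follows from the characterisation of ${}^\ast g$ by the zig-zag relation $({}^\ast g\otimes \id_X)\circ\co_{{}^\ast X}=(\id_{{}^\ast X'}\otimes g)\circ\co_{{}^\ast X'}$ and from the functoriality of $\otimes$. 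The second computation, using the canonical right dual ${}^\ast(Z\otimes X)={}^\ast X\otimes {}^\ast Z$ with $\co_{{}^\ast(Z\otimes X)}=(\id_{{}^\ast X}\otimes \co_{{}^\ast Z}\otimes \id_X)\circ \co_{{}^\ast X}$, yields the analogous identity for left tensoring:
\[
\Phi_{Z\otimes X,\,Z\otimes Y}(Z\otimes f)\;=\;(\id_{{}^\ast X}\otimes \co_{{}^\ast Z}\otimes \id_Y)\circ \Phi_{X,Y}(f).
\]

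With these two identities in hand, given a left tensor ideal $\cJ$ the restriction $\cJ(\unit,-)\subset\cA(\unit,-)$ is automatically a subfunctor, since $\cJ$ is closed under postcomposition. Conversely, for a subfunctor $J\subset \cA(\unit,-)$ I would define $\cJ(X,Y):=\Phi_{X,Y}^{-1}\bigl(J({}^\ast X\otimes Y)\bigr)$; the first identity above yields closure under two-sided composition and the second yields closure under left tensoring by arbitrary $Z$, both because $J$ is closed under postcomposition. That the two constructions are mutually inverse follows directly from the bijectivity of $\Phi$, applied in particular at $(\unit, X)$, where $\Phi_{\unit,X}$ is identified with the canonical isomorphism $\Hom(\unit,X)\cong \Hom(\unit, {}^\ast\unit\otimes X)$. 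Order-preservation in both directions is then tautological, since the correspondence is given by $\Phi$-image and $\Phi$-preimage of inclusions of sets.

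The main technical obstacle is the second naturality identity, which is the only place where the canonical right dual of a tensor product enters; it has to be verified from the rigidity axioms alone, without invoking a braiding or commutor. The statement for right tensor ideals is obtained by the parallel argument using the bijection $\Hom(X,Y)\cong\Hom(\unit,Y\otimes X^\ast)$ built from the left dual in place of $\Phi$.
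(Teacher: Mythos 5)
The paper does not prove this statement; it simply cites it as \cite[Theorem~3.1.1]{Selecta} and notes it goes back to \cite{AK}. Your argument is the standard rigidity/adjunction proof that underlies those references, and it is correct in structure and in its two key naturality identities.

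One small imprecision: the claim that the two constructions are mutually inverse ``follows directly from the bijectivity of $\Phi$'' only covers one direction. The passage $J\mapsto\cJ_J\mapsto\cJ_J(\unit,-)=J$ does indeed reduce to identifying $\Phi_{\unit,-}$ with the unitor isomorphism. But the passage $\cJ\mapsto\cJ(\unit,-)\mapsto\Phi^{-1}\bigl(\cJ(\unit,{}^\ast X\otimes Y)\bigr)$ returns $\cJ(X,Y)$ only because, for a left tensor ideal $\cJ$, one has $f\in\cJ(X,Y)$ if and only if $\Phi(f)\in\cJ(\unit,{}^\ast X\otimes Y)$; the forward implication uses closure under left tensoring (apply ${}^\ast X\otimes-$) plus precomposition with $\co_{{}^\ast X}$, and the reverse uses closure under left tensoring (apply $X\otimes-$) plus postcomposition with $\ev_{{}^\ast X}\otimes\id_Y$. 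These are exactly the closure properties you verified earlier, so nothing is missing in substance, but this direction is not a consequence of bijectivity alone and should be spelled out.
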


\subsection{Negligible morphisms}

The results in this section go back to \cite{AK, BW}. 

\subsubsection{}\label{def:Jac} For a $\bk$-linear category $\cB$, its (Jacobson) radical $\cR=\cR(\cB)$ is a two-sided ideal, with
$$\cR(X,Y)\;=\;\{f\in \cB(X,Y)\mid \id_X-g\circ f\;\;\mbox{ invertible for all }\;\;g\in \cB(Y,X) \}.$$

\begin{prop}
Let $\cA$ be a pseudo-tensor category. Then $\cA$ has a unique maximal left tensor ideal $\cN_L$, and a unique maximal right tensor ideal $\cN_R$. Concretely:
$$\cN_R(X,Y)\;=\;\{f\in \cA(X,Y)\mid  \Tr^L(g\circ f)=0\mbox{ for all }g\in \cA(Y,X^{\ast\ast})\}.$$
\end{prop}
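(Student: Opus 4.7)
The plan is to deduce the proposition cleanly from Theorem~\ref{ThmRigBij}, which identifies the poset of right tensor ideals in $\cA$ with the poset of subfunctors of $\cA(\unit,-):\cA\to\Vecc$. Since $\End(\unit)=\bk$ is one-dimensional, any subfunctor $J\subset\cA(\unit,-)$ either contains $\id_\unit$ --- in which case $J$ is the full functor by closure of subfunctors under post-composition --- or satisfies $J(\unit)=0$. The sum of any collection of subfunctors with $J(\unit)=0$ still has that property, so there is a unique maximal proper subfunctor $J_R^{\max}$, giving the unique maximal proper right tensor ideal $\cN_R$. The argument for $\cN_L$ is symmetric, using the variant of Theorem~\ref{ThmRigBij} for left tensor ideals.

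To match $\cN_R$ with the trace formula, I would first describe $J_R^{\max}$ intrinsically: the subfunctor generated by a single $p:\unit\to W$ is $\cA(W,-)\circ p$, so $p$ lies in $J_R^{\max}(W)$ iff $q\circ p=0$ for every $q:W\to\unit$. Next I would apply the inverse map of Theorem~\ref{ThmRigBij}: a morphism $f:X\to Y$ lies in $\cN_R$ iff its adjoint $\tilde f:=(f\otimes X^\ast)\circ\co_X:\unit\to Y\otimes X^\ast$ lies in $J_R^{\max}(Y\otimes X^\ast)$, i.e.\ iff $q\circ \tilde f=0$ for every $q:Y\otimes X^\ast\to\unit$. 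Using the standard duality bijection $\cA(Y\otimes X^\ast,\unit)\cong\cA(Y,X^{\ast\ast})$ given by $q = \ev_{X^\ast}\circ(g\otimes X^\ast)$, a short rigid-structure calculation gives $q\circ\tilde f=\ev_{X^\ast}\circ((g\circ f)\otimes X^\ast)\circ\co_X=\Tr^L(g\circ f)$, which yields the asserted formula.

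The only genuinely delicate step is this final identification of $q\circ\tilde f$ with $\Tr^L(g\circ f)$, but it is a routine diagram chase using naturality of $\ev$ and $\co$. Alternatively, one could verify the proposition directly by showing the trace-vanishing condition defines a proper right tensor ideal containing every other proper right tensor ideal; this avoids Theorem~\ref{ThmRigBij} but requires separately checking linearity, closure under composition on either side (using cyclicity $\Tr^L(g\circ f\circ h)=\Tr^L(h^{\ast\ast}\circ g\circ f)$), and closure under right tensor product (using a partial-trace identity). Routing through Theorem~\ref{ThmRigBij} sidesteps these verifications.
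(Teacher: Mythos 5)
Your proof is correct and follows essentially the same route as the paper's: both reduce to Theorem~\ref{ThmRigBij}, identify the unique maximal proper subfunctor of $\cA(\unit,-)$ (the paper calls it $\cR(\unit,-)$; your ``$q\circ p=0$ for all $q:W\to\unit$'' description is the same thing spelled out), and then unwind the inverse of the bijection to arrive at the trace formula, a computation the paper leaves as ``can be verified directly.'' The alternative you mention at the end is also the paper's stated alternative.
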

\begin{proof}
By Theorem~\ref{ThmRigBij}, it suffices to observe that $\cA(\unit,-)$ has a unique maximal proper subfunctor $\cR(\unit,-)$. Concretely, $\cR(\unit, X)=\cA(\unit, X)$ for all indecomposable $X\not=\unit$ and $\cR(\unit,\unit)=0$.
The displayed equation for $\cN_R$ can be verified directly. Alternatively, we can observe directly that $\cN_R$ is a right tensor ideal, after which it suffices to observe that $\cN_R(\unit,X)=\cR(\unit,X)$.
\end{proof}

Hence, if $\cA$ has a commutor, there is a unique maximal tensor ideal $\cN=\cN_L=\cN_R$, the ideal of {\bf negligible morphisms}. Additionally, for a natural isomorphism $\psi:\Id\Rightarrow \Id^{\ast\ast}$, which always exists if $\cA$ is braided, see \cite[8.10.6]{EGNO}, we can express $\cN$ in terms of $\Tr_{\psi}$. Also for a pivotal structure (without braiding requirement) there is a similar conclusion, see \cite[Theorem~2.6]{EO}:

\begin{theorem}\label{ThmN}
Let $\cA$ be a pseudo-tensor category with a pivotal structure $a$, see \cite[\S 4.7]{EGNO}. Then $\cN=\cN_R=\cN_L$ is the two-sided tensor ideal
$$\cN(X,Y)\;=\;\{f\in \cA(X,Y)\mid  \Tr_{a}(g\circ f)=0\mbox{ for all }g\in \cA(Y,X)\}.$$
If furthermore, for any nilpotent endomorphism $f$ in $\cA$ we have $\Tr_a(f)=0$, and for any indecomposable $X\in\cA$ we have $\dim_a(X)=0$ if and only if $\dim_a(X^\ast)=0$, then $\overline{\cA}:=\cA/\cN$ is a semisimple tensor category.
\end{theorem}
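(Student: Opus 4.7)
The plan is to reduce the first claim to the preceding proposition by converting $\Tr_a$ into $\Tr^L$ via the pivotal structure, and to handle semisimplicity through a Krull--Schmidt analysis combined with a Schur-type argument in the quotient. For the identity $\cN=\cN_R=\cN_L$, I would identify $\cN(X,Y)$ as stated with $\cN_R(X,Y)$: the pivotal isomorphism $a_X:X\xrightarrow{\sim}X^{\ast\ast}$ induces a $\bk$-linear bijection $\cA(Y,X)\xrightarrow{\sim}\cA(Y,X^{\ast\ast})$, $g\mapsto a_X\circ g$, and under this bijection the condition $\Tr_a(g\circ f)=\Tr^L(a_X\circ g\circ f)=0$ for all $g\in\cA(Y,X)$ becomes the characterizing condition of $\cN_R(X,Y)$. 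The match $\cN_R=\cN_L$ uses that a pivotal structure is \emph{monoidal} by definition, so $a_{X\otimes Y}=a_X\otimes a_Y$; this identifies left and right quantum traces (\cite[\S 4.7]{EGNO}) and hence the left-ideal description symmetric to the preceding proposition cuts out the same ideal.

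For semisimplicity, write $\overline{\cA}:=\cA/\cN$. Since $\cA$ is pseudo-abelian with finite-dimensional Hom spaces, it is Krull--Schmidt: every indecomposable has a local Artinian endomorphism algebra in which the Jacobson radical consists of nilpotents. If $\dim_a(X)=0$, then every $g\in\End(X)$ is a sum of a semisimple part of trace a multiple of $\dim_a(X)=0$ and a nilpotent of trace zero by the first extra hypothesis; so $\Tr_a(g)=0$ for every $g$, i.e.\ $\id_X\in\cN$, and $X\cong 0$ in $\overline{\cA}$. Conversely, if $\dim_a(X)\neq 0$, the same decomposition gives $\cN(X,X)=\rad\End(X)$, so $\End_{\overline{\cA}}(X)$ is a field. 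The second hypothesis $\dim_a(X)=0\Leftrightarrow\dim_a(X^\ast)=0$ guarantees that dualization descends to $\overline{\cA}$ without sending nonzero objects to zero, so the triangle identities survive the quotient and $\overline{\cA}$ remains rigid.

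Semisimplicity then follows from a Schur-type argument. For nonzero indecomposables $X,Y\in\overline{\cA}$ and a nonzero $f:X\to Y$, the definition of $\cN$ yields some $g\in\cA(Y,X)$ with $\Tr_a(g\circ f)\neq 0$; hence $g\circ f\notin\cN(X,X)=\rad\End_\cA(X)$, so it is a unit, and $f$ is a split monomorphism in the pseudo-abelian category $\overline{\cA}$. Indecomposability of $Y$ then forces $f$ to be an isomorphism, so $\Hom_{\overline{\cA}}(X,Y)=0$ whenever $X\not\cong Y$, giving $\overline{\cA}$ the structure of a semisimple abelian tensor category.

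The main obstacle is controlling the rigid structure after passing to the quotient: without the second hypothesis, a nonzero indecomposable could have a vanishing dual in $\overline{\cA}$, which would collapse the triangle identities and prevent $\overline{\cA}$ from being a tensor category at all. The monoidality of the pivotal structure---already essential in part~1 to align left and right traces---is indispensable for making the trace-based arguments compatible with the tensor product, and it is this interplay of pivotality with rigidity, rather than the trace computations themselves, that is the delicate point.
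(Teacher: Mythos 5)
The paper itself cites this result from \cite[Theorem~2.6]{EO} rather than proving it, so I assess your argument on its own. Your conjugation by $a_X$, matching the $\Tr_a$-formula with the $\cN_R$-formula of the preceding proposition, is correct. But the step to $\cN_R=\cN_L$ has a genuine gap: you assert that monoidality of the pivotal structure ``identifies left and right quantum traces.'' That identification is exactly the \emph{spherical} condition, which is strictly stronger than pivotality and is not part of the hypotheses. What $a_{X\otimes Z}=a_X\otimes a_Z$ actually buys you is the \emph{partial trace} identity: for $f\in\cN(X,Y)$, $Z\in\cA$ and any $h\colon Y\otimes Z\to X\otimes Z$, one has $\Tr_a\bigl(h\circ(f\otimes Z)\bigr)=\Tr_a\bigl(\mathrm{ptr}_Z(h)\circ f\bigr)=0$ because $\mathrm{ptr}_Z(h)\in\cA(Y,X)$, and symmetrically on the left. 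This directly shows that the set cut out by the $\Tr_a$-formula is a two-sided tensor ideal; together with the equality $\cN=\cN_R$ and maximality of $\cN_L$ one gets $\cN_R\subset\cN_L$, and the mirror argument gives the reverse inclusion. As written, your proof would only apply to spherical categories.

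On semisimplicity, two remarks. First, decomposing an arbitrary $g\in\End(X)$ as a scalar multiple of $\id_X$ plus a nilpotent presupposes $\End(X)/\rad\End(X)\cong\bk$, which fails when $\bk$ is not algebraically closed. A cleaner route is that the nilpotence hypothesis forces $\cN\supset\cR$ directly: for non-isomorphic indecomposables $X,Y$ and arbitrary $f\colon X\to Y$, $g\colon Y\to X$, the composite $g\circ f$ is a non-unit in the local finite-dimensional algebra $\End(X)$, hence nilpotent, hence of trace zero; thus $\cN(X,Y)=\cA(X,Y)$ for $X\not\cong Y$ and $\cN(X,X)\supset\rad\End(X)$, from which semisimplicity of $\overline\cA$ follows. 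Second, your use of the hypothesis $\dim_a(X)=0\Leftrightarrow\dim_a(X^\ast)=0$ to ensure that ``dualization descends'' is not where that hypothesis does its work: the quotient of a rigid monoidal category by \emph{any} tensor ideal is automatically rigid, since the triangle identities are equalities of morphisms and pass to quotients, and the triangle identity already forces $X\cong 0$ whenever $X^\ast\cong 0$ in $\overline\cA$. That hypothesis does subtler work in \cite{EO}, controlling the interaction between negligibility and the radical in the genuinely pivotal, non-spherical, case.
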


We will only apply Theorem~\ref{ThmN} in case $\cA$ is both braided and pivotal (equivalently $\cA$ is braided with a twist, e.g. a ribbon monoidal category, see \cite[\S 8.10]{EGNO}). Note that the nilpotence condition in Theorem~\ref{ThmN} is satisfied if $\cA$ admits any tensor functor to a tensor category, see Proposition~\ref{propnotab2} below. Besides the notation $\overline{\cA}$ for this {\bf semisimplification}, we denote the corresponding quotient functor by
$$\cA\;\to\;\overline{\cA}=\cA/\cN,\quad X\mapsto \overline{X}.$$

\begin{remark}\label{RemNegSingle}
Let $\cA$ be a pivotal pseudo-tensor category as in Theorem~\ref{ThmN}. 
The thick ideal of negligible objects is $\bN:=\Ob(\cN)$, and an indecomposable $X\in\cA$ is in $\bN$ if and only if $\dim(X)=0$, see \cite[Theorem~2.6]{EO}.
If it is furthermore the case that for all indecomposable $X,Y\not\in \bN$ in $\cA$, we have
$$\cA(X,Y)=\begin{cases}\bk,&\mbox{if $X\simeq Y$}\\
0,&\mbox{otherwise},\end{cases}$$
 then $\Ob^{-1}(\bN)=\{\cN\}$. This is for instance the case for $\cA$ the category of tilting modules of quantum groups, see Section~\ref{sec:quantum1}.
\end{remark}

\begin{remark}\label{rem:dic}
One can view the theory of tensor ideals as a categorification of the theory of ideals in rings. In this analogy, multi-pseudo-tensor categories (dropping condition (2) in \ref{def:pseudo}) correspond to rings, and Theorem~\ref{ThmRigBij} shows that pseudo-tensor categories with commutor correspond to commutative local rings. Prime tensor ideals correspond to (completely) prime ideals in rings. Depending on the point of view, the analogue of fields would be semisimple tensor categories with commutor, or more generally pseudo-tensor categories with commutor with only the zero ideal as proper tensor ideal. The role of the Jacobson radical should not be played by $\cR$ from \ref{def:Jac}, as it does not refer to the tensor product. In the philosophy of the current paper, the role of the Jacobson radical will be played by the unique maximal tensor ideal contained in the radical $\cR$, which exists by \ref{minmax}. Other choices also make sense, for instance, one could also declare the analogue of the Jacobson radical in a pseudo-tensor category with commutor to be the unique maximal ideal.
\end{remark}

\section{Tensor ideals of abelian type}\label{sec:abtype}

Unless further specified, $\cA$ is an arbitrary pseudo-tensor category over $\bk$. 

\subsection{Definition and recognition}

\begin{definition}\label{DefAbT}
A tensor ideal in a pseudo-tensor category $\cA$ over $\bk$ is of {\bf abelian type} if it is the kernel of a tensor functor to a tensor category over $\bk$.
\end{definition}

\begin{example}\label{preimN}
Let $\cA$ be a pseudo-tensor category, with a tensor functor to a pivotal pseudo-tensor category $\cB$ satisfying the assumptions in \ref{ThmN}. Then the pre-image in $\cA$ of $\cN(\cB)$ is a tensor ideal of abelian type. 
\end{example}

We have the following two general principles at our disposal to demonstrate that a given tensor ideal is {\bf not} of abelian type.

\begin{prop}\label{propnotab1}
If a tensor ideal $I$ in $\cA$ is of abelian type, then it is faithfully prime.
\end{prop}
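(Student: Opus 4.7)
The plan is to reformulate faithful primeness of $\cI$ as the injectivity of the canonical tensor product map on morphism spaces in the quotient $\cA/\cI$, transfer the problem along the induced faithful functor into $\cC$, and then invoke the standard fact that the tensor product in a tensor category is bilinearly faithful on morphisms.

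Since $\cI = \ker F$ for some tensor functor $F\colon \cA \to \cC$ into a tensor category $\cC$, the universal property of the quotient gives a factorisation $F = \bar F \circ \pi$, where $\pi \colon \cA \to \cA/\cI$ is the canonical projection and $\bar F \colon \cA/\cI \to \cC$ is a tensor functor whose kernel is the zero ideal; equivalently, $\bar F$ is faithful on every morphism space. Unwinding the definition in \ref{defideals}, $\cI$ is faithfully prime precisely when, for all objects $X, Y, U, V$ of $\cA$, the $\bk$-linear map
\[
\mu_{\cA/\cI} \colon \Hom_{\cA/\cI}(X, Y) \otimes_\bk \Hom_{\cA/\cI}(U, V) \to \Hom_{\cA/\cI}(X \otimes U, Y \otimes V)
\]
induced by $\otimes$ is injective. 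Since tensor products over the field $\bk$ preserve injections, both $\bar F \otimes \bar F$ on the source of $\mu_{\cA/\cI}$ and $\bar F$ on the target are injective, and the tensor structure on $\bar F$ yields the commutation $\bar F \circ \mu_{\cA/\cI} = \mu_\cC \circ (\bar F \otimes \bar F)$, where $\mu_\cC$ denotes the analogous map for $\cC$. Hence the injectivity of $\mu_{\cA/\cI}$ will follow from that of $\mu_\cC$ in $\cC$.

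The proof therefore reduces to the standard property that in any tensor category $\cC$ over $\bk$, the tensor product map $\mu_\cC$ is injective on Hom-spaces, and this foundational lemma is the main substantive step. It is provable using rigidity, simplicity of $\unit$, and biexactness of $\otimes$ by a subobject/induction argument: a non-zero morphism $\unit \to A$ is a monomorphism because $\unit$ is simple, and tensoring with any object preserves monomorphisms by biexactness, which allows one to isolate and kill each summand in a putative non-trivial dependence $\sum f_i \otimes g_i = 0$. Once this foundational fact about tensor categories is granted, the proposition follows formally from the two-step reduction above.
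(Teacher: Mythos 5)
Your approach is essentially the same as the paper's: reduce the claim to the injectivity of the tensor product map $\cA(X,Y)\otimes_\bk\cA(U,V)\to\cA(X\otimes U,Y\otimes V)$ in a tensor category $\cA$, and prove that via simplicity of $\unit$, rigidity, and biexactness. Where you gesture at an inductive ``isolate and kill'' step, the paper gives a slightly tidier finish for that key lemma: after using adjunction to reduce to $Y=\unit=U$, it replaces $X$ by its quotient $\unit^x$ with $x=\dim\cA(X,\unit)$ and $V$ by its subobject $\unit^v$ with $v=\dim\cA(\unit,V)$, reducing the whole question to the trivial case $\cA=\Vecc$.
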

\begin{proof}
It suffices to observe the (well-known) property that if $\cA$ is a tensor category, then
$$\cA(X,Y)\otimes_{\bk}\cA(U,V)\;\xrightarrow{\otimes}\;\cA(X\otimes U, Y\otimes V)$$
is injective.
By adjunction, we can assume that $Y=\unit=U$. With $x,v$ the dimensions of $\cA(X,\unit)$ and $\cA(\unit, V)$, we can then reduce to the case $\cA=\Vecc$, by replacing $X$ with its quotient $\unit^x$ and $V$ with its subobject $\unit^v$.
\end{proof}


\begin{prop}\label{propnotab2}
If $\cA$ has any tensor ideal of abelian type, then for any morphism of the form $f:X\to X^{\ast\ast}$ satisfying $f^{(2n)}\circ \cdots\circ f^{\ast\ast}\circ f=0$, for some $n\in \mN$, it follows that $\Tr^L(f)=0$.
\end{prop}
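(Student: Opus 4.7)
The plan is to reduce to the tensor category receiving the abelian‑type ideal, and then, inside that tensor category, to argue by induction on $n$ using two auxiliary identities: a cyclic trace rotation and a descent of the nilpotency under passage to the image.

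Let $\Phi\colon\cA\to\cC$ be a $\bk$-linear tensor functor to a tensor category $\cC$ whose kernel is the given tensor ideal of abelian type. Since $\Phi$ preserves duals, evaluations, coevaluations, and identifies $\End(\unit_{\cA})=\bk=\End(\unit_{\cC})$, one has $\Tr^L(f)=\Tr^L(\Phi(f))\in\bk$, while $\Phi(f)\colon\Phi(X)\to\Phi(X)^{\ast\ast}$ still satisfies the nilpotency hypothesis. Hence it suffices to prove the following statement inside any tensor category $\cC$: for $g\colon Y\to Y^{\ast\ast}$ with $g^{(2n)}\circ\cdots\circ g^{\ast\ast}\circ g=0$, one has $\Tr^L(g)=0$.

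I will induct on $n$, the case $n=0$ being immediate since the hypothesis then reads $g=0$. For the inductive step, I exploit abelian‑ness of $\cC$ to factor $g=\iota\circ\pi$ with $\pi\colon Y\twoheadrightarrow I$ epic, $\iota\colon I\hookrightarrow Y^{\ast\ast}$ monic, and $I:=\im(g)$, and I set $\tilde g:=\pi^{\ast\ast}\circ\iota\colon I\to I^{\ast\ast}$. The first identity I need is the \emph{trace rotation} $\Tr^L(h'\circ h)=\Tr^L(h^{\ast\ast}\circ h')$, valid in any rigid monoidal category for $h\colon X\to Y$ and $h'\colon Y\to X^{\ast\ast}$; specialised to $(h,h')=(\pi,\iota)$ it yields $\Tr^L(g)=\Tr^L(\tilde g)$. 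It is obtained by rewriting $\Tr^L(h'\circ h)=\ev_{X^\ast}\circ(h'\otimes h^\ast)\circ\co_Y$ using the zig--zag relation $(h\otimes X^\ast)\circ\co_X=(Y\otimes h^\ast)\circ\co_Y$, and then applying the characterising property $\ev_{Y^\ast}\circ(h^{\ast\ast}\otimes Y^\ast)=\ev_{X^\ast}\circ(X^{\ast\ast}\otimes h^\ast)$ of $h^{\ast\ast}$ to recognise the result as $\Tr^L(h^{\ast\ast}\circ h')$.

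The second identity, \emph{image descent}, is
\[
g^{(2k)}\circ\cdots\circ g^{\ast\ast}\circ g\;=\;\iota^{(2k)}\circ\bigl(\tilde g^{(2k-2)}\circ\cdots\circ\tilde g^{\ast\ast}\circ\tilde g\bigr)\circ\pi\qquad(k\ge 1),
\]
proved by induction on $k$ using only bifunctoriality and the covariance of $(-)^{\ast\ast}$, via the collapse $\pi^{(2k)}\circ\iota^{(2k-2)}=(\pi^{\ast\ast}\circ\iota)^{(2k-2)}=\tilde g^{(2k-2)}$. Because $(-)^{\ast\ast}$ is an exact autoequivalence of the abelian category $\cC$, $\iota^{(2k)}$ remains monic and $\pi$ remains epic, so vanishing of the left‑hand side at $k=n$ forces $\tilde g^{(2n-2)}\circ\cdots\circ\tilde g^{\ast\ast}\circ\tilde g=0$; in other words, $\tilde g$ satisfies the nilpotency hypothesis with $n$ replaced by $n-1$. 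The inductive hypothesis applied to $\tilde g$ gives $\Tr^L(\tilde g)=0$, and the trace rotation then gives $\Tr^L(g)=0$, closing the induction. The main obstacle I anticipate is the clean derivation of the trace rotation in the absence of a pivotal structure; everything else reduces to routine abelian‑categorical manipulation.
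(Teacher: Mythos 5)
Your argument is correct, and it is essentially the same route the paper takes: reduce to a tensor category via the tensor functor realising the abelian‑type ideal, pass from $f$ to the induced map on the image (equivalently, on $X/\ker f$ as in the paper—these are the same up to canonical isomorphism), observe that the trace is unchanged and the nilpotency degree drops by one, and induct on $n$ with base case $n=0$ trivial. The only real point of departure is the lemma used to justify $\Tr^L(g)=\Tr^L(\tilde g)$: you derive the purely monoidal trace‑rotation identity $\Tr^L(h'\circ h)=\Tr^L(h^{\ast\ast}\circ h')$ from the zig--zag and naturality axioms (so this step needs only rigidity, not the abelian structure), whereas the paper invokes additivity of $\Tr^L$ over short exact sequences from \cite[Proposition~4.7.5]{EGNO} applied to $0\to W\to X\to X/W\to 0$ with the restriction of $f$ to $W=\ker f$ vanishing. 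Both yield the same conclusion; your version is slightly more self‑contained and isolates the monoidal content, while the paper's is shorter by citation. Your remark that the trace rotation might be problematic without a pivotal structure is an unfounded worry—the identity you write is valid in any rigid monoidal category—and the derivation you sketch is the standard one, so there is no gap.
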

\begin{proof}
This result is well-known in the symmetric setting, see~\cite{De}. In general, it suffices to prove that the condition is satisfied if $\cA$ is a tensor category. Consider then the kernel $W\subset X$ of $f$. Since $X$ has the same length as $X^{\ast\ast}$, it follows that $W\not=0$. By \cite[Proposition~4.7.5]{EGNO}, it follows that 
$\Tr^L(f)=\Tr^L(g)$
where $g$ is the morphism $X/W\to X^{\ast\ast}/W^{\ast\ast}$ induced from $f$. Since $g^{(2n-2)}\circ \cdots\circ g^{\ast\ast}\circ g=0$, we can prove the claim by induction on $n$, with the base case $n=0$ being trivial.
\end{proof}

We will say that a pseudo-tensor category $\cA$ is {\bf trace-abelian} if $\Tr^L(f)=0$ for every $f:X\to X^{\ast\ast}$ in $\cA$ with $f^{(2n)}\circ \cdots\circ f^{\ast\ast}\circ f=0$ for some $n\in \mN$. If there is a natural isomorphism $\psi:\Id\Rightarrow\Id^{\ast\ast}$, the category $\cA$ is trace-abelian if and only if $\Tr_{\psi}(f)=0$ for all nilpotent $f$.

\begin{remark}\label{RemNecCond}
\begin{enumerate}
\item Another necessary condition on a tensor ideal $\cI$ to be of abelian type is that $(\cA/\cI)(\unit,L)$ is at most one-dimensional when $L$ is invertible.
\item We know no examples of faithfully prime tensor ideals in trace-abelian pseudo-tensor categories $\cA$ that are not of abelian type. However, there are prime tensor ideals in trace-abelian pseudo-tensor categories $\cA$ that are not of abelian type, see \S\ref{Klein}.
\end{enumerate}

\end{remark}

For the applications to symmetric tensor categories mentioned in the introduction,
the following observation is important. The same conclusion holds in the braided setting.
\begin{lemma}\label{LemSym}
Let $\cA$ be a pseudo-tensor category over $\bk$ with commutor. A tensor ideal in $\cA$ is of abelian type if and only if it is the kernel of a commutor-compatible tensor functor to a tensor category over $\bk$ with commutor. The statement remains valid if we replace `commutor' with `commutor with properties', such as a (symmetric) braiding.
\end{lemma}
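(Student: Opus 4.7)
The ``if'' direction is immediate, so the substance is the converse. My plan is, given $\cI = \ker(F)$ for a tensor functor $F\colon \cA\to\cC$, to replace $\cC$ by a tensor category $\cC'$ with a commutor, in such a way that the induced functor $\cA\to\cC'$ is commutor-compatible and still has kernel $\cI$.

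First I would factor $F$ as $\cA\twoheadrightarrow\cA/\cI\xrightarrow{\bar F}\cC$, observing that $\cA/\cI$ is a pseudo-tensor category inheriting a commutor from $\cA$ (a bi-natural isomorphism survives the linear quotient) and that $\bar F$ is faithful by construction. Next, I would invoke the local abelian envelope theory from \cite{HomAb} recalled above Lemma~\ref{LemGF} to factor $\bar F$ further as $\cA/\cI\xrightarrow{G_\alpha}\cC_\alpha\to\cC$, with $G_\alpha$ faithful into a local abelian envelope $\cC_\alpha$ and the second arrow an exact tensor functor. Since $G_\alpha$ is faithful, the composition $\cA\to\cC_\alpha$ still has kernel exactly $\cI$.

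The crux is then to equip $\cC_\alpha$ with a commutor that makes $G_\alpha$ commutor-compatible. By Lemma~\ref{LemGF}, every object of $\cC_\alpha$ is a quotient of one in the essential image of $G_\alpha$, and every morphism is, up to pre-composition with an epimorphism, in the image of $G_\alpha$. On image objects the commutor is forced to be $G_\alpha(\sigma^{\cA/\cI}_{A,B})$ modulo the tensor structure isomorphisms; I would extend it by descent through these presentations, deriving well-definedness and bi-naturality from the bi-naturality of $\sigma^{\cA/\cI}$ together with the faithfulness of $G_\alpha$. For the final sentence of the lemma (commutors with additional properties such as a braiding or a symmetric structure), the defining axioms are morphism identities that hold on image objects and hence propagate to all of $\cC_\alpha$ by the same density principle, using that a morphism identity in $\cC_\alpha$ can be tested after pulling back along the suitable epimorphisms provided by Lemma~\ref{LemGF}.

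The main obstacle I anticipate is precisely this well-definedness and bi-naturality of the extended commutor on $\cC_\alpha$: one must ensure that two different presentations of the same object or morphism as a quotient of something in the image of $G_\alpha$ yield the same value for the extended commutor. An alternative route sidestepping \cite{HomAb} would be to take for $\cC'$ the smallest abelian tensor subcategory of $\cC$ containing the essential image of $\bar F$ and to run the same extension argument there; the density/descent step would remain the key point.
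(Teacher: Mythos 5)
Your overall plan — reduce to the zero ideal, pass to a local abelian envelope $\cC_\alpha$, and then transfer the commutor from $\cA/\cI$ to $\cC_\alpha$ — matches the paper's. But the step you flag as the ``crux'' is where your argument actually has a gap, in two ways.

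First, your citation of Lemma~\ref{LemGF} is backwards. That lemma states that if a faithful tensor functor satisfies (G) and (F) then it is an abelian envelope; it does not assert that an abelian envelope (let alone an arbitrary local abelian envelope $\cC_\alpha$) satisfies (G) and (F). So you cannot invoke Lemma~\ref{LemGF} to conclude that every object of $\cC_\alpha$ is a quotient of something in the image of $G_\alpha$, or the corresponding density statement on morphisms. If you want those properties, they need to be established independently for the $\cC_\alpha$ produced by \cite{HomAb}. Second, even assuming (G) and (F), the proposed extension-by-descent of the commutor is not routine: you would need to show that the value assigned to $\sigma^{\cC_\alpha}_{X,Y}$ is independent of the choice of presentations $G_\alpha(A)\twoheadrightarrow X$, $G_\alpha(B)\twoheadrightarrow Y$, that it is an isomorphism, that it is natural in both variables, and — for the ``commutor with properties'' clause — that the hexagon/symmetry identities persist. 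Faithfulness of $G_\alpha$ forces uniqueness of such an extension \emph{if it exists}, but does not by itself produce it; comparing two presentations requires a common refinement, which (G) and (F) do not immediately provide. You correctly identify this as the obstacle, but you leave it as an obstacle rather than resolving it.

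The paper sidesteps both problems by appealing to the explicit construction of $\cC_\alpha$ in \cite[\S 5.2]{HomAb}: $\Ind\cC_\alpha$ is obtained from $\cA$ by a sequence of functorial operations (kernel category, monoidal presheaf category via Day convolution, localisation at a Grothendieck topology), each of which manifestly carries a commutor (and any extra structure such as a braiding or symmetry) forward from the previous stage. This eliminates the density/descent step entirely. If you want to make your route work, you would need to (i) prove that the local abelian envelopes from \cite{HomAb} satisfy the density conditions you use, and (ii) carry out the descent verification for bi-naturality and the axioms of the commutor, which is essentially as much work as just following the construction through its stages.
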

\begin{proof}
For ease of notation, and without loss of generality, we focus on the zero ideal. If~$\cA$ admits a faithful tensor functor to a tensor category $\cC$, we can replace the tensor functor with the corresponding local abelian envelope $\cA\to\cC_\alpha$, see \ref{def:tensorcat}. By construction of $\cC_\alpha$ in \cite[\S 5.2]{HomAb}, the monoidal category $\Ind\cC_\alpha$ is a localisation with respect to a Grothendieck topology of the monoidal presheaf category (via Day convolution) of the kernel category (see \cite[Definition~3.1.2]{HomAb}) of~$\cA$. If $\cA$ has a commutor, then so has its kernel category, the presheaf category and its localisation, with inheritance of properties.
\end{proof}


\begin{remark}We comment on some variations of Definition~\ref{DefAbT}.
\begin{enumerate}
\item
One could weaken the definition of tensor ideals of abelian type to include kernels of tensor functors to tensor categories over field extensions $K:\bk$. Even when $\bk=\mC$ this is a genuinely more general notion, as follows for instance from combining \cite[Theorem~10.10]{OS} and Remark~\ref{rem:super}. Such ideals are still prime, but no longer always faithfully prime. For our purposes it seems preferable to work with Definition~\ref{DefAbT}.
\item Every tensor ideal is `of triangulated type', since we can embed $\cA$ faithfully into the tensor-triangulated homotopy category $K^b(\cA)$.
\item We can say that a tensor ideal is `of semisimple type' if it is the kernel of a tensor functor to a semisimple tensor category. This notion does not satisfy the analogue of Lemma~\ref{LemSym}. Indeed, the symmetric tensor category $\Ver_4^+$, see e.g. \cite{BEO}, admits an exact tensor functor to $\Vecc$, but no exact symmetric tensor functor to a symmetric semisimple tensor category.
\item One can weaken Definition~\ref{DefAbT} to include kernels of tensor functors to multi-tensor categories, see \cite[\S 4.1]{EGNO}. This gives a more general notion, since the intersection of two tensor ideals of abelian type is always the kernel of a tensor functor to a multi-tensor category, but not prime (so not of abelian type) unless one ideal is included in the other. A more subtle question is whether every {\em prime} ideal that is the kernel of a tensor functor to a multi-tensor category is of abelian type. If we restrict to symmetric functors and categories, the answer is easily seen to be affirmative, since symmetric multi-tensor categories are simply products of symmetric tensor categories, as follows from \cite[\S 4.3]{EGNO}. Using arguments as in Lemma~\ref{LemSym}, we can even extend this to pseudo-tensor categories with commutor in relation to arbitrary multi-tensor categories.
\end{enumerate}
\end{remark}

\subsection{A method for classification}\label{Sec:Method}
In order to relate tensor ideals of abelian type with classes of tensor ideals that are more intrinsically characterised, and the coarser notion of thick tensor ideals, we suggest to exploit the following diagram, as a refinement of \eqref{ObAll}:
\begin{equation}
\label{themap}\xymatrix{
\{\mbox{tensor ideals of abelian type}\}\ar[rrr]^-{\Upsilon=\Ob}\ar@{^{(}->}[d]^{\incl_1}&&&\{\mbox{prime thick tensor ideals}\}\\
\{\mbox{faithfully prime tensor ideals}\}\ar[urrr]^{\Ob}\ar@{^{(}->}[rrr]^{\incl_2}&&& \{\mbox{prime tensor ideals}\}\ar@{->}[u]_{\Ob}
}\end{equation}
The injections are inclusions of subsets, see Proposition~\ref{propnotab1}. Under fairly broad assumptions, the two upwards arrows are surjective, see Proposition~\ref{PropImax} below.

As we will see, none of the five maps above needs to be a bijection in general. However, in many cases they seem to be close enough to bijections that they are useful in classification efforts. It seems to be a principle that the maps in \eqref{themap} are bijections when sufficient `finiteness' conditions are satisfied, see for example Theorem~\ref{LemStr}(2) and Theorem~\ref{Thm:FinR} below.
Up to formulation, $\incl_1$ is shown to be a bijection in a large class of examples by O'Sullivan in \cite{OS}:
\begin{prop}\label{PropOS}
If $\bk$ has characteristic 0 and $\cA$ is symmetric and such that for every $X\in \cA$, there is $n\in\mN$ for which $\bk S_n\to \End(X^{\otimes n})$ is not injective, then every faithfully prime tensor ideal in $\cA$ is of abelian type.
\end{prop}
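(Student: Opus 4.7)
The plan is to reduce to the case of the zero ideal, reinterpret the hypothesis via Schur functors, and then invoke O'Sullivan's construction of an abelian envelope for symmetric pseudo-tensor categories of moderate growth.

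\textbf{Reduction.} Given a faithfully prime tensor ideal $\cI\subset\cA$, pass to the quotient pseudo-tensor category $\cB:=\cA/\cI$. By definition of faithfully prime, the tensor product on $\cB$ is faithful as a $\bk$-linear functor $\cB\otimes_\bk\cB\to\cB$, so the zero ideal in $\cB$ is faithfully prime. The $S_n$-hypothesis is inherited by $\cB$: for $X\in\cA$, the kernel of $\bk S_n\to\End_\cA(X^{\otimes n})$ is contained in the kernel of $\bk S_n\to\End_\cB(\Phi(X)^{\otimes n})$, where $\Phi:\cA\to\cB$ is the quotient, so the latter kernel is still nonzero. Hence it suffices to construct a faithful symmetric tensor functor from any such $\cB$ to a tensor category; by Lemma~\ref{LemSym} this gives the desired abelian type.

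\textbf{Reinterpreting the hypothesis.} Using the Wedderburn decomposition $\bk S_n=\bigoplus_{\lambda\vdash n}\End_\bk(V^\lambda)$, a nontrivial kernel element of $\bk S_n\to\End(X^{\otimes n})$ forces some nonzero central idempotent $e_\lambda$ to act as zero on $X^{\otimes n}$. Since $e_\lambda$ cuts out the isotypic component corresponding to the Schur functor $\bS^\lambda$, this means $\bS^\lambda(X)=0$. The hypothesis is therefore equivalent to Deligne's moderate growth condition from \cite{De}: every object of $\cB$ is annihilated by some Schur functor.

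\textbf{Building the abelian envelope.} I would then apply O'Sullivan's construction \cite{OS}: form the symmetric monoidal ind-completion of $\cB$ under Day convolution, equip the kernel category of $\cB$ (cf.~\cite{HomAb}) with a suitable Grothendieck topology, and sheafify to obtain a symmetric monoidal abelian category $\widetilde{\cB}$ together with a faithful symmetric tensor embedding $\cB\hookrightarrow\widetilde{\cB}$. The moderate growth condition is precisely what guarantees that objects of $\cB$ have finite length in $\widetilde{\cB}$, that internal Hom's exist and preserve finite length, and that $\unit$ remains simple. The full subcategory $\cC\subset\widetilde{\cB}$ of finite-length objects is then a symmetric tensor category in the sense of~\ref{def:tensorcat}, and the composite $\cB\to\cC$ provides the desired faithful symmetric tensor functor.

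\textbf{Main obstacle.} The technical heart is the third step: producing $\widetilde{\cB}$ and checking that $\cC$ is genuinely a tensor category rather than merely a pseudo-abelian symmetric monoidal category. Both the faithfulness of the tensor product on $\cB$ (to guarantee a faithful embedding) and the Schur vanishing (to guarantee finite length and simplicity of $\unit$) are jointly indispensable here, and this is the main content of O'Sullivan's paper.
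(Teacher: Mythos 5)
Your reduction to the quotient $\cB = \cA/\cI$ is correct and matches the paper's first move: in O'Sullivan's terminology, a quotient by a prime ideal is ``integral,'' and the $S_n$-condition passes to the quotient as you explain. Your observation that the $S_n$-hypothesis is equivalent to Schur functor vanishing (Deligne's super-Tannakian condition) is also correct, though the paper cites O'Sullivan directly without unpacking this.

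The third step, however, contains a genuine gap and a misattribution. What you describe — ind-completion under Day convolution, a Grothendieck topology on the kernel category, sheafification — is Coulembier's construction from \cite{HomAb}, not O'Sullivan's approach in \cite{OS}; you have conflated the two. More substantively, you never introduce the one construction that is actually central to the paper's proof and to the use of \emph{faithful} primeness: O'Sullivan's \emph{fractional closure}. The paper's argument is that (a) $\cA/\cI$ is integral because $\cI$ is prime; (b) because $\cI$ is \emph{faithfully} prime, the fractional closure of $\cA/\cI$ — the categorical analogue of passing from an integral domain to its fraction field — is again a symmetric pseudo-tensor category over $\bk$ into which $\cA/\cI$ embeds faithfully (this is \cite[Lemma~3.3]{OS}); and (c) \cite[Corollary~10.4]{OS} then produces the super-Tannakian hull of that fractionally closed category. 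Faithful primeness is precisely the hypothesis ensuring the fractional closure does not enlarge $\End(\unit)$ beyond $\bk$ or otherwise fail to be a pseudo-tensor category over $\bk$; your parenthetical remark that faithfulness ``guarantees a faithful embedding'' gestures in the right direction but does not identify this mechanism, and the sheafification machinery you invoke in its place does not obviously deliver it. As written, the proof does not close the argument.
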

\begin{proof}
If a tensor ideal $\cI$ in $\cA$ is prime, then (by definition) $\cA/\cI$ is integral in the sense of \cite[\S 3]{OS}. Moreover, if $\cI$ is faithfully prime then the `fractional closure' of $\cA/\cI$ (into which $\cA/\cI$ faithfully embeds) is a symmetric pseudo-tensor category again over $\bk$, see \cite[Lemma~3.3]{OS}. The conclusion thus follows from \cite[Corollary~10.4]{OS}.
\end{proof}

We conclude this subsection by pointing out that in general the map $\Upsilon$ in \eqref{themap} need not be injective or surjective, even when working over $\bk=\mC$ and focusing on symmetric pseudo-tensor categories.

\begin{example}\label{ExNotInj}
If a symmetric tensor category has no objects of categorical dimension zero, it has no non-zero proper thick tensor ideals, but it can have several tensor ideals of abelian type. The typical example is the category of finite dimensional rational representations $\Rep_{\mC}\mG_a$, for $\mG_a$ the additive group scheme over $\mC$. Then $0$ and $\cN$ are two tensor ideals of abelian type that are both sent to $\bo$,
 showing that $\Upsilon$ is {\bf not injective} for $\cA=\Rep_{\mC}\mG_a$. In this case the non-zero tensor ideals form one countable chain by Theorem~\ref{ThmRigBij} and the fact a $\mG_a$-representation is equivalent to a vector space with a nilpotent endomorphism, so that indecomposable modules are classified by their dimension. Moreover, $\incl_1$ and $\incl_2$ are bijections (but the maps labelled $\Ob$ are only surjective).
\end{example}

Another natural example of the failure of $\Upsilon$ to be injective is given in Subsection~\ref{Klein}. That will also lead to an example where $\incl_2$ is not a bijection.

\begin{example}\label{MEG}
In \cite[\S 5.8]{De}, Deligne constructed an example of a symmetric pseudo-tensor with an endomorphism of trace 1 that squares to zero. It follows that it does not have any tensor ideals of abelian type, by Proposition~\ref{propnotab2}. In fact, the only proper tensor ideal is the zero ideal, as follows from Theorem~\ref{ThmRigBij} and the observation that composition
$$\Hom(\unit, X^{\otimes 2n})\otimes\Hom(X^{\otimes 2n},\unit)\to\End(\unit)=\bk$$ 
is a perfect pairing for the generating object $X$. In particular, it then follows that $\bo$ is prime (if $U\otimes V=0$, then the collection of objects $W$ with $W\otimes V=0$ forms a non-zero proper thick tensor ideal). 
Hence $\Upsilon$ is {\bf not surjective} for the example in \cite[\S 5.8]{De}. However, $\incl_2$ and the other occurrences of $\Ob$ in \eqref{themap} are all bijections.
\end{example}
Another example of the behaviour in Example~\ref{MEG} is given by $(\Rep GL)_\delta$ in characteristic $p>0$ with $\delta\not\in\mF_p$, see \S\ref{sec:RepGLt}. 

%

\subsection{Semiprimitive tensor ideals}\label{sec-fibre}

In this section we develop a sufficient condition, in Theorem~\ref{Thm:FinR}, under which the right triangle in \eqref{themap} consists of bijections. The intermediate results are applicable in much greater generality.

\subsubsection{}\label{minmax} Let $\cI,\cJ$ be tensor ideals in $\cA$ and $\bI$ a thick tensor ideal.
We have 
$$\Ob(\cI\cap\cJ)\;=\;\Ob(\cI)\cap\Ob(\cJ),$$
so it follows that there is a unique minimal element in $\Ob^{-1}(\bI)$, that we denote by $\bI^{\min}$. Explicitly, this is the tensor ideal of all morphisms that factor through an object in $\bI$.

It follows from locality of endomorphism algebras of indecomposable objects that
\begin{equation}\label{eq:sumideals}\Ob(\cI+\cJ)\;=\;\Ob(\cI)\vee \Ob(\cJ),\end{equation}
where the set of indecomposable objects in the join $\vee$ of two thick tensor ideals is the union of the sets of indecomposable objects in both thick tensor ideals.
Hence there is always a maximal element in $\Ob^{-1}(\bI)$ that we denote by $\bI^{\max}$. We will call the tensor ideals of the form $\bI^{\max}$ the {\bf semiprimitive} tensor ideals. This is motivated by the fact a tensor ideal $\cI$ is semiprimitive if and only if the `monoidal Jacobson radical', see Remark~\ref{rem:dic}, of $\cA/\cI$ is zero.

For an ideal $\cI$ in a $\bk$-linear category $\cB$, we denote by $\cI^n$, for $n\in\mN$, the corresponding power of the ideal, consisting of morphisms $\sum_i f_{i}^{(1)}\circ\cdots \circ f_{i}^{(n)}$ for $f_{i}^{(j)}\in\cI$. 

Recall the (non-monoidal) radical $\cR=\cR(\cA)$ from \ref{def:Jac}.

\begin{theorem}\label{Thm:FinR}
 Let $\cA$ be a pseudo-tensor category with commutor and assume $\cR(\cA)^M=0$ for some $M\in\mN$. Then $\Ob$ yields a bijection between prime tensor ideals and prime thick tensor ideals, with inverse $\bI\mapsto \bI^{\max}$. Hence a tensor ideal $\cI$ is prime if and only if it is semiprimitive and $\Ob(\cI)$ is prime. If $\bk$ is algebraically closed, every prime tensor ideal is faithfully prime.
\end{theorem}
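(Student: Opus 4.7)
The proof splits into three coupled parts. \textbf{First}, I would show that $\Ob$ sends prime tensor ideals to prime thick tensor ideals (immediate from $\id_{X \otimes Y} = \id_X \otimes \id_Y$ and $\id_\unit \notin \cI$) and that, conversely, $\bI^{\max}$ is prime whenever $\bI$ is: if $f \otimes g \in \bI^{\max}$ with $f, g \notin \bI^{\max}$, maximality produces objects $X_f, X_g \notin \bI$ with $\id_{X_f} \in \langle f \rangle + \bI^{\max}$ and $\id_{X_g} \in \langle g \rangle + \bI^{\max}$. Expanding $\id_{X_f} \otimes \id_{X_g}$ and using the commutor to bring any $f$ and $g$ factors into adjacent position, every cross term falls into $\langle f \otimes g \rangle + \bI^{\max}$, placing $X_f \otimes X_g$ in $\bI$ and contradicting primality of $\bI$. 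The commutor hypothesis is essential to this rearrangement.

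\textbf{Second}, the pivotal step is to show $\cI = \Ob(\cI)^{\max}$ for $\cI$ prime. This reduces, via the quotient $\cA/\cI$, to the following general lemma: in a pseudo-tensor category $\cB$ with $0$ prime and $\cR(\cB)^M = 0$, any tensor ideal $\cJ$ with $\Ob(\cJ) = \bo$ vanishes. To prove it, I would first observe $\cJ \subseteq \cR$: for $f \in \cJ(X, Y)$ and any $g : Y \to X$, the composition $gf \in \cJ \cap \End(X)$ cannot be an isomorphism on any indecomposable summand of $X$ (else that summand would lie in $\Ob(\cJ) = \bo$), so by locality and Krull--Schmidt $gf \in \cR(X, X)$, forcing $f \in \cR(X, Y)$. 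Next, one writes
\[
f^{\otimes M} = (\id^{\otimes (M-1)} \otimes f) \circ (\id^{\otimes (M-2)} \otimes f \otimes \id) \circ \cdots \circ (f \otimes \id^{\otimes (M-1)})
\]
as a composition of $M$ morphisms each living in $\cJ \subseteq \cR$, so $f^{\otimes M} = 0$ by hypothesis; iterated application of the primality of $0$, using $f^{\otimes k} = f \otimes f^{\otimes (k-1)}$, then drives $f$ to $0$. Combined with the first part, this yields the bijection in (1) with the stated inverse and the semiprimitive characterisation (2).

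\textbf{Third}, for the algebraically closed case I would invoke the equivalent formulation given by the lemma following \ref{defideals}: it suffices to show that in the integral pseudo-tensor category $\cA/\cI$, any pair $f, g : X \to Y$ with $f \otimes g = g \otimes f$ is linearly dependent. Over algebraically closed $\bk$, Krull--Schmidt together with the fact that endomorphism rings of indecomposables have residue field $\bk$ lets one choose $\lambda \in \bk$ so that $h := g - \lambda f$ falls into $\cR$; substituting yields $f \otimes h = h \otimes f$ with $h \in \cR$, and iterating this dichotomy against the filtration $\cR \supseteq \cR^2 \supseteq \cdots \supseteq \cR^M = 0$ should drive $h$ to zero. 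The main obstacle is controlling how the commutor interacts with successive powers of $\cR$, so that the commutation relation modulo $\cR^k$ can be promoted modulo $\cR^{k+1}$; this delicate bookkeeping is the heart of the argument, whereas the earlier steps become formal once the observation ``$\Ob(\cJ) = \bo \Rightarrow \cJ \subseteq \cR$'' and the tensor-into-composition expansion of $f^{\otimes M}$ are in hand.
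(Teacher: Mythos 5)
Your first two parts are essentially sound and follow the spirit of the paper's own argument, with a couple of small points to flag, but your third part (faithful primality over algebraically closed fields) is a sketch with an acknowledged and genuine gap, whereas the paper uses a different and complete argument.

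\textbf{Parts 1 and 2.} Your argument that $\bI^{\max}$ is prime when $\bI$ is matches the first paragraph of the paper's Proposition~\ref{PropImax}; the paper phrases it slightly more economically via the thick ideals $\Ob(\langle f\rangle)$ and $\Ob(\langle g\rangle)$, but the mechanism is the same. For the inverse direction ($\cI$ prime $\Rightarrow \cI = \Ob(\cI)^{\max}$), your route — pass to $\cA/\cI$, show any tensor ideal $\cJ$ with $\Ob(\cJ)=\bo$ satisfies $\cJ\subset\cR$, write $f^{\otimes M}$ as a composite of $M$ radical morphisms, and invoke primality of $\bo$ — is a clean compression of what the paper does with the intermediate machinery of $\rad$, $\Rad$ (Lemma~\ref{Lem:rad}, Proposition~\ref{prop:minmax}) and the auxiliary fact $\cJ^M=\cJ^\infty$. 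The composite-decomposition trick for $f^{\otimes M}$ is exactly Lemma~\ref{Lem:rad}'s device. The one thing you leave implicit is that $\cR(\cA)^M=0$ descends to $\cR(\cA/\cI)^M=0$; this is true (between indecomposables not in $\Ob(\cI)$, the radical of the quotient is the image of the radical of $\cA$, because any morphism in $\cA$ that becomes an isomorphism in $\cA/\cI$ between surviving indecomposables was already an isomorphism), but it deserves a sentence since it is the mechanism that makes the reduction to $\cA/\cI$ legitimate. The paper's more abstract auxiliary result $\cJ^M\subset\cJ^{M+1}$ sidesteps the need for this transfer by working in $\cA$ itself.

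\textbf{Part 3.} Here you concede that the ``delicate bookkeeping'' of how the commutor interacts with the powers of $\cR$ is not carried out, and in fact your reduction step is also incomplete: choosing $\lambda$ with $g-\lambda f\in\cR$ requires $f,g$ to be endomorphisms of an indecomposable object, and the lemma after \ref{defideals} asks for the linear-dependence property for arbitrary pairs $f,g:X\rightrightarrows Y$. There is a nontrivial reduction you have not supplied, and then the inductive step along the radical filtration is missing. The paper's own argument in Proposition~\ref{PropImax} takes a quite different path: reduce (by adjunction and linearity) to $X=X'$, $Y=Y'$ indecomposable, decompose $\End(X)\otimes_\bk\End(Y)$ as $\bk\,(\id_X\otimes\id_Y)\oplus\bigl(\cR(X,X)\otimes\End(Y)+\End(X)\otimes\cR(Y,Y)\bigr)$, note that the second summand is a nilpotent two-sided ideal, and deduce that any element with nonzero coefficient on $\id_X\otimes\id_Y$ is a unit in $\End(X)\otimes_\bk\End(Y)$, hence maps to an isomorphism of $X\otimes Y$ (which is nonzero by primality of $\bo$), contradicting membership in $\cJ\subset\cR$. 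This nilpotent-ideal argument is what replaces the ``delicate bookkeeping'' you were hoping to do. You should either supply the missing inductive argument in your own framework, or switch to this ideal-theoretic observation; as written, the faithfully prime claim is not proved.
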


In the rest of the subsection we prove Theorem~\ref{Thm:FinR}. In \cite[Proposition~2.1.7]{Sinf} the following proposition (in a different formulation) was proved for the oriented Brauer category. We adapt the ideas to generalise the result:

\begin{prop}\label{PropImax}
\label{LemImax}Assume that $\cA$ has a commutor.
For a prime thick tensor ideal $\bI$ in $\cA$, the ideal $\bI^{\max}$ is prime, and faithfully prime if $\bk$ is algebraically closed.
\end{prop}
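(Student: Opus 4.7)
The plan is to exploit the defining maximality of $\bI^{\max}$: as the largest tensor ideal $\cI$ with $\Ob(\cI)=\bI$, a morphism $f$ fails to lie in $\bI^{\max}$ if and only if the tensor ideal $\bI^{\max}+\langle f\rangle$ obtained by adjoining $f$ strictly enlarges $\Ob$ past $\bI$, equivalently contains $\id_Z$ for some indecomposable $Z\notin\bI$. I shall call this the ``extraction principle''. Combined with the commutor on $\cA$ and primality of $\bI$, it is the main tool.

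\emph{Primality.} Assume $f\otimes g\in\bI^{\max}$ but $f,g\notin\bI^{\max}$. Picking indecomposables $Z,W\notin\bI$ together with expressions
\[
\id_Z\;=\;h+\sum_{i}a_{i}(A_{i}\otimes f\otimes B_{i})b_{i},\qquad \id_W\;=\;h'+\sum_{j}a'_{j}(A'_{j}\otimes g\otimes B'_{j})b'_{j},
\]
where $h\in\bI^{\max}(Z,Z)$ and $h'\in\bI^{\max}(W,W)$, I would take the tensor product $\id_{Z\otimes W}=\id_Z\otimes\id_W$ and expand. The three mixed terms containing $h$ or $h'$ lie in $\bI^{\max}$ directly, and each ``cross'' term of the form $(a_{i}\otimes a'_{j})\bigl((A_{i}\otimes f\otimes B_{i})\otimes(A'_{j}\otimes g\otimes B'_{j})\bigr)(b_{i}\otimes b'_{j})$, after using the commutor to bring $f$ and $g$ into adjacent tensor positions, lies in $\langle f\otimes g\rangle\subseteq\bI^{\max}$. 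Thus $\id_{Z\otimes W}\in\bI^{\max}$, i.e.\ $Z\otimes W\in\bI$, contradicting primality of $\bI$ and $Z,W\notin\bI$.

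\emph{Faithful primality over algebraically closed $\bk$.} By the lemma stated after Definition~\ref{defideals}, given primality it is enough to prove: if $f,g:X\rightrightarrows Y$ satisfy $f\otimes g-g\otimes f\in\bI^{\max}$, then some non-zero linear combination $\lambda f+\mu g$ is already in $\bI^{\max}$. Dualising via Theorem~\ref{ThmRigBij} to $\tilde f,\tilde g\in\cA(\unit,Z)$ with $Z={}^{\ast}X\otimes Y$, the hypothesis rewrites as $(\id-\sigma_{Z,Z})(\tilde f\otimes\tilde g)\in J(Z\otimes Z)$ for the subfunctor $J=\bI^{\max}(\unit,-)$. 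Decomposing $Z$ into indecomposables and projecting onto the unit-summands $\iota_i\otimes\iota_j$ of $Z\otimes Z$ coming from the unit-isotypic part of $Z$ yields the ``rank-one'' scalar identities $\lambda_i\mu_j=\lambda_j\mu_i$, where $(\lambda_i),(\mu_i)$ are the scalar components of $\tilde f,\tilde g$ and algebraic closedness identifies each residue with $\bk$. Proportionality of $(\lambda_i)$ and $(\mu_i)$ then produces a linear combination $c\tilde f-\tilde g$ whose unit-components all vanish. The main obstacle is to show that vanishing of unit-components already suffices for membership in $J(Z)$: this amounts to controlling $J$ on each indecomposable summand $S\not\cong\unit$ of $Z$ outside $\bI$, which I would establish by the extraction principle applied to each such $S$, arguing by maximality that $\bI^{\max}$ absorbs all of $\cA(\unit,S)$ since otherwise adjoining some morphism $\unit\to S$ would create an $\id_{Z'}$ with $Z'\notin\bI$ — a step whose verification is the technical heart of the proof and where algebraic closedness is genuinely used.
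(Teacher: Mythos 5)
Your primality argument is correct and is essentially the paper's: the paper picks $X\notin\bI$, $Y\notin\bI$ with $\id_X\in\langle f\rangle$, $\id_Y\in\langle g\rangle$ directly (this is justified by \eqref{eq:sumideals}), whereas you carry along correction terms $h,h'\in\bI^{\max}$; either way, expanding $\id_{Z\otimes W}$ and using the commutor to shuffle $f$ next to $g$ produces the same contradiction with primeness of $\bI$.

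Your faithful primality argument takes a genuinely different route and contains a fatal gap. The step you flag as ``the technical heart'' --- that $\bI^{\max}$ absorbs all of $\cA(\unit,S)$ for every indecomposable $S\not\cong\unit$ with $S\notin\bI$ --- is simply false except when $\bI=\bN$. Your justification (``otherwise adjoining $\alpha:\unit\to S$ would create an $\id_{Z'}$ with $Z'\notin\bI$'') does not yield a contradiction: the appearance of such a $Z'$ is precisely what the definition of $\bI^{\max}$ predicts whenever $\alpha\notin\bI^{\max}$, and is perfectly consistent. Moreover, if the claim were true, then by Theorem~\ref{ThmRigBij} the subfunctor $\bI^{\max}(\unit,-)$ would contain $\cR(\unit,-)$, i.e. $\bI^{\max}=\cN$, forcing $\bI=\Ob(\cN)$; so it can only hold for the unique maximal prime thick tensor ideal. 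A concrete counterexample lives in $\cA=\Rep_\bk C_{p^n}$ with $n\ge 2$ (Section~4.2): take $\bI=\bI_{n-1}$ (prime, with sole indecomposable $J_{p^n}$), so that $\bI^{\max}=\cI_{p^{n-1}}$, the ideal generated by $\unit\hookrightarrow J_{p^{n-1}+1}$. Then $S=J_{p^{n-1}}$ satisfies $S\not\cong\unit$ and $S\notin\bI$, yet the nonzero morphism $\unit\hookrightarrow J_{p^{n-1}}$ is \emph{not} in $\cI_{p^{n-1}}$: every $h\in\Hom(J_{p^{n-1}+1},J_{p^{n-1}})$ kills the socle of $J_{p^{n-1}+1}$, so $\cI_{p^{n-1}}(\unit,J_{p^{n-1}})=0$ while $\cA(\unit,J_{p^{n-1}})\ne 0$.

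The paper avoids this pitfall entirely by not analysing $J$ on individual non-unit summands. It first reduces to $\bI=\bo$ by passing to $\cA/\bI^{\min}$, so that $\bI^{\max}$ becomes the maximal tensor ideal inside the radical $\cR$. Faithful primeness is then checked directly: after reducing to $x\in\End(X)\otimes_\bk\End(Y)$ with $X,Y$ indecomposable, one expands $x$ in bases consisting of the identity plus nilpotent endomorphisms (here algebraic closedness enters, to ensure $\End(X)/\cR(X)=\bk$); the membership of the image of $x$ in $\cJ\subset\cR(XY,XY)$, combined with $\id_{XY}\ne 0$ (primeness of $\bo$), forces the coefficient of $\id_X\otimes\id_Y$ to vanish. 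You may want to redo the faithfully-prime half along these lines.
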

\begin{proof}
If $f,g\not\in\bI^{\max}$, then the tensor ideals generated respectively by $f$ and $g$ contain respective objects $X$ and $Y$ not in $\bI$. If $f\otimes g\in\bI^{\max}$, it follows that $X\otimes Y$ must be in $\bI$, contradicting primeness of $\bI$. Hence $\bI^{\max}$ is prime.

Now assume that $\bk$ is algebraically closed. We will write the tensor product $X\otimes Y$ in $\cA$ simply as $XY$, for brevity and to avoid confusion with the tensor product over $\bk$. 

By replacing $\cA$ with $\cA/\bI^{\min}$, we can reduce to the case $\bI=\bo$. For each $X,Y\in\cA$ we define a set of morphisms $\cJ(X,Y)$ by the following exact sequence
$$0\to\cJ(X,Y)\to \cA(X,Y)\to\prod_{Z}\cA(X Z,Y Z)/\cR(X Z,Y Z),$$
where $Z$ runs over all objects in $\cA$ and the right arrow represents the tensor product $-Z$ followed by projection onto the quotient space. Since $\cR$ is an ideal, it follows that so is $\cJ$, and by construction it is a right (and thus two-sided, using the commutor) tensor ideal. As such it is the maximal tensor ideal contained in~$\cR$ and thus $\cJ=\bo^{\max}$. We thus need to prove that $\cJ$ is faithfully prime.

Take therefore arbitrary $X,X',Y,Y'\in \cA$ and consider the exact sequence
$$0\to\cJ(X,X')\otimes_{\bk}\cA(Y,Y')+\cA(X,X')\otimes_{\bk}\cJ(Y,Y')\to \cA(X,X')\otimes_{\bk}\cA(Y,Y')$$
$$\to\prod_{U,V}\left(\cA(X U,X' U)/\cR(X U,X'U)\right)\otimes_{\bk}\left(\cA(YV,Y'V)/\cR(YV,Y'V)\right).$$
For $x\in \cA(X,X')\otimes_{\bk}\cA(Y,Y')$ such that its image in $\cA(X Y,X' Y')$ is in $\cJ$, we need to show that it is sent to zero under the last arrow in the previous sequence for every pair $U,V$. However, since $\cJ$ is a tensor ideal, it is actually sufficient to deal with the case $U=V=\unit$.

The statement about $x$ needed to be proved by the last paragraph can, by linearity, be reduced to case where $X,X',Y,Y'$ are indecomposable, and thus without loss of generality $X'=X$ and $Y'=Y$. We have thus arrived special case of the following claim, for finite dimensional associative $\bk$-algebras $A,B,C$ (all non-zero, where $C=\cA(XY,XY)$ is non-zero by assumed primeness of $\bo$) and an algebra morphism
\[A\otimes_{\bk}B\to C,\]
if $x\in A\otimes B$ is sent to an element of the Jacobson radical of $C$, it must be in $J\otimes B+A\otimes J$, where we denote by $J$ the Jacobson radical of both $A$ and $B$. This is indeed true since the Jacobson radical (for finite dimensional algebras) is nilpotent and, since $\bk$ is algebraically closed, the quotient by it is $\bk$. In particular $A\otimes B$ is local with nilpotent maximal ideal $J\otimes B+A\otimes J$. Hence, if $x$ is not in 
$J\otimes B+A\otimes J$, it is an isomorphism in $A\otimes B$ and not sent to a nilpotent.
\end{proof}



\subsubsection{} 

For an ideal $\cI$ in a $\bk$-linear category $\cB$, we write $\cI^\infty=\cap_n \cI^n$. In a pseudo-tensor category $\cA$, for any $X,Y\in\cA$, we have $\cI^\infty(X,Y)=\cI^n(X,Y)$ for some $n$, by finite-dimensionality.

For a tensor ideal $\cI$ in $\cA$, let $\Rad(\cI)$ be
 the maximal ideal $\cJ$ for which $\cJ^\infty\subset\cI$. We explain why this is well-defined. Firstly, there exists some maximal ideal $\cJ$ for which $\cJ^\infty\subset\cI$ by Zorn's lemma. Indeed, for a chain $\cJ_i\subset \cJ_{i+1}$ of such ideals, we can use finite-dimensionality of morphism spaces to conclude that for any given $X,Y\in\cA$, we have $(\cup_i \cJ_i)^\infty(X,Y)=(\cJ_m)^\infty(X,Y)$ for high enough $m$, showing that $(\cup_i\cJ_i)^\infty\subset\cI$. Furthermore,  $(\cJ_1+\cJ_2)^\infty=\cJ_1^\infty+\cJ_2^\infty$, by the previous paragraph, showing uniqueness of the maximal $\cJ$ for which $\cJ^\infty\subset\cI$. Finally, we observe that $\Rad(\cI)$ is again a tensor ideal. Similarly, let $\rad(\cI)$ be
 the maximal ideal $\cJ$ for which $\cJ^n\subset\cI$ for some $n\in\mN$. Note that we have
 $$\rad(\cI)\;\subset\;\Rad(\cI)\;\subset\; \left(\Ob(\cI)\right)^{\max}.$$
In Example~\ref{ExNotInj}, we have
 $0=\rad(0)$ and $\Rad(0)=\cN.$
%
 
 \begin{prop}\label{prop:minmax}
 Let $\cA$ be a pseudo-tensor category and assume that $\cR(\cA)^\infty=0$. Consider a tensor ideal $\cI$ and set $\bI=\Ob(\cI)$.
 Then $\bI^{\max}=\Rad(\cI)$ and $\bI^{\min}=\cI^\infty$.
 \end{prop}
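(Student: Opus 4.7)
The plan is to derive both equalities simultaneously from the single structural containment $\bI^{\max}\subset\cR(\cA)+\bI^{\min}$, interpreted as a containment of sets of morphisms. I would first dispatch the two easy inclusions. For $\bI^{\min}\subset\cI^\infty$: any morphism $f=h\circ g$ factoring through some $Z\in\bI$ has $\id_Z\in\cI$, and the factorisation $f=(h\circ\id_Z)\circ\id_Z^{n-2}\circ(\id_Z\circ g)$ exhibits $f$ as a composite of $n$ elements of $\cI$ for every $n\geq 2$. For $\Rad(\cI)\subset\bI^{\max}$: if $\cJ$ is any tensor ideal with $\cJ^\infty\subset\cI$ and $\id_Z\in\cJ$, then $\id_Z=\id_Z^n\in\cJ^n$ for all $n$, so $\id_Z\in\cJ^\infty\subset\cI$, giving $Z\in\bI$; hence $\Ob(\cJ)\subset\bI$ and $\cJ\subset\bI^{\max}$.

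Next I would prove the structural containment on indecomposable $X,Y$ and extend by additivity. If $X\in\bI$ or $Y\in\bI$, every morphism $X\to Y$ factors through $X$ or $Y$ and thus lies in $\bI^{\min}$. If $X,Y\notin\bI$ then $\id_X,\id_Y\notin\bI^{\max}$ (since $\Ob(\bI^{\max})=\bI$), so when $X=Y$ the subspace $\bI^{\max}(X,X)$ is a proper ideal of the local ring $\End_\cA(X)$ and hence contained in its unique maximal ideal $\cR(X,X)$. When $X\not\cong Y$ are distinct indecomposables, a short locality argument shows every morphism $X\to Y$ lies in $\cR(X,Y)$: for any $g\colon Y\to X$, if $g\circ f$ were a unit in $\End_\cA(X)$ then $f$ would be split monic, forcing $X\cong Y$.

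The remainder is a telescoping computation. Since $\cR$ is a $\bk$-linear ideal and $\bI^{\min}$ a tensor ideal, any composition of $n$ elements of $\cR+\bI^{\min}$ in which at least one factor is taken from $\bI^{\min}$ lies in $\bI^{\min}$; hence $(\cR+\bI^{\min})^n\subset\cR^n+\bI^{\min}$, and therefore $(\bI^{\max})^n\subset\cR^n+\bI^{\min}$. On any fixed hom-space, $\cR(X,Y)^n$ forms a decreasing chain of subspaces of the finite-dimensional $\cA(X,Y)$, so the hypothesis $\cR(\cA)^\infty=0$ forces $\cR(X,Y)^n=0$ for $n$ large, whence $(\bI^{\max})^\infty\subset\bI^{\min}$. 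Combined with $\cI\subset\bI^{\max}$ (from maximality) this gives $\cI^\infty\subset\bI^{\min}$, completing $\bI^{\min}=\cI^\infty$. Moreover $\bI^{\min}\subset\cI$ (a morphism $h\circ\id_Z\circ g$ with $Z\in\bI$ lies in $\cI$), so $(\bI^{\max})^\infty\subset\cI$, yielding $\bI^{\max}\subset\Rad(\cI)$ and completing $\bI^{\max}=\Rad(\cI)$.

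The main obstacle I expect is the structural containment $\bI^{\max}\subset\cR+\bI^{\min}$: this is where the abstract maximality of $\bI^{\max}$ must be converted, via locality of endomorphism rings of indecomposables, into a concrete upper bound. Once this inequality is available, the finiteness hypothesis $\cR(\cA)^\infty=0$ does the rest automatically through the telescoping above, and both equalities drop out in parallel.
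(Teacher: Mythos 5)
Your proof is correct, and it takes a genuinely different route from the paper's. The paper's proof of $\bI^{\min}=\cI^\infty$ passes to the monoidal quotient $\cA/\bI^{\min}$ (noting that $\cR^\infty=0$ is inherited), where the thick ideal becomes zero and one observes that an ideal containing no identity of an indecomposable object lies in the radical, whence $\cI^\infty=0$ in the quotient. It then deduces $\bI^{\max}=\Rad(\cI)$ by applying this equality to $\cI'=\bI^{\max}$ for one inclusion and using $\Ob(\cJ^\infty)=\Ob(\cJ)$ for the other. You instead stay in $\cA$ throughout and lift the key structural fact to the explicit containment $\bI^{\max}\subset\cR(\cA)+\bI^{\min}$, verified hom-space by hom-space on indecomposables by splitting into the three cases (object in $\bI$, endomorphisms of $X\notin\bI$, distinct non-isomorphic $X,Y\notin\bI$). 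The telescoping $(\cR+\bI^{\min})^n\subset\cR^n+\bI^{\min}$ then yields $(\bI^{\max})^\infty\subset\bI^{\min}$, from which both equalities drop out at once after the two easy inclusions. The two arguments rest on the same underlying facts (local endomorphism rings of indecomposables, $\cR^\infty=0$, and the principle \eqref{eq:sumideals} justifying that $\Ob(\cJ)\subset\bI$ forces $\cJ\subset\bI^{\max}$), but your version avoids the quotient-category reduction and makes the containment $\bI^{\max}\subset\cR+\bI^{\min}$ explicit, which is a useful structural statement in its own right; the paper's version is shorter but delegates the key step to the behavior of ideals under quotients.
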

\begin{proof}
First we prove the equality $\bI^{\min}=\cI^\infty$. Since the condition $\cR(\cA)^\infty=0$ is inherited by quotient categories of $\cA$, we can replace $\cA$ with a monoidal quotient category to reduce to the case $\bI=0$. Now we have $\cI\subset\cR(\cA)$ and thus indeed $\cI^\infty=0$.

Now, we can apply the equality $\bI^{\min}=\cI^\infty$ to the case where $\cI$ is $\bI^{\max}$, which shows $(\bI^{\max})^\infty=\bI^{\min}$, so in particular $(\bI^{\max})^\infty\subset\cI$, or in other words $\bI^{\max}\subset \Rad(\cI)$. That this inclusion is also an equality follows from $\Ob(\cJ^\infty)=\Ob(\cJ)$ for any tensor ideal.
\end{proof}

\begin{lemma}\label{Lem:rad}
If $\cI$ is a prime tensor ideal in $\cA$, then $\Ob(\cI)$ is prime and $\rad(\cI)=\cI$.
\end{lemma}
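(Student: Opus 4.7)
The lemma splits into two independent assertions; I would prove them in order, as they use the same primeness hypothesis but essentially nothing else.

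For the first assertion, that $\bI := \Ob(\cI)$ is a prime thick tensor ideal, the key observation is the identity $\id_{X\otimes Y} = \id_X \otimes \id_Y$. If $X\otimes Y \in \bI$ then $\id_X \otimes \id_Y \in \cI$, and primeness of $\cI$ forces $\id_X \in \cI$ or $\id_Y \in \cI$, i.e.\ $X \in \bI$ or $Y \in \bI$. To verify that $\unit \notin \bI$, I would note that under the unit isomorphism $X \cong X \otimes \unit$ the identity $\id_X$ corresponds to $\id_X \otimes \id_\unit$; hence if $\id_\unit \in \cI$ then $\id_X \in \cI$ for every $X$, contradicting properness of $\cI$ (which is implicit in the definition of prime, since otherwise every tensor ideal would be trivially prime).

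For the second assertion, $\cI \subset \rad(\cI)$ is immediate since $\cI^1 \subset \cI$. For the reverse inclusion, by maximality of $\rad(\cI)$ it suffices to show that whenever $\cJ$ is a tensor ideal with $\cJ^n \subset \cI$ for some $n$, already $\cJ \subset \cI$. So fix such a $\cJ$ and $f \in \cJ(X,Y)$. The crucial step is to write
\[
f^{\otimes n} \;=\; (f \otimes \id^{\otimes(n-1)}) \circ (\id \otimes f \otimes \id^{\otimes(n-2)}) \circ \cdots \circ (\id^{\otimes(n-1)} \otimes f),
\]
where each factor lies in $\cJ$ (since $\cJ$ is a tensor ideal and $f \in \cJ$). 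Hence $f^{\otimes n} \in \cJ^n \subset \cI$. Primeness of $\cI$ applied to $f^{\otimes n} = f \otimes f^{\otimes(n-1)}$, together with an induction on $n$, then yields $f \in \cI$.

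There is no real obstacle here; the argument is definitional. The only subtlety worth flagging is the convention that a prime tensor ideal is proper, which is needed in the first part; and the fact that in the second part, the relevant decomposition of $f^{\otimes n}$ as a composition of $n$ morphisms from $\cJ$ does not require a commutor, so the lemma holds in the stated generality.
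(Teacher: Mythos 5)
Your proof is correct and uses the same key decomposition of $f^{\otimes n}$ into a composition of $n$ morphisms, each of which lies in $\cJ$ (resp.\ $\rad(\cI)$), as the paper's proof. The paper in fact only writes out this second assertion (leaving the primeness of $\Ob(\cI)$ implicit as a one-liner from $\id_{X\otimes Y}=\id_X\otimes\id_Y$), so your fuller treatment of the first assertion and the concluding primeness induction is a reasonable expansion of the same argument, and you are right that no commutor is needed.
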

\begin{proof}
Since $\rad(\cI)^n\subset\cI$ for some $n\in\mZ_{\ge 1}$, for any $f:X\to Y$ in $ \rad(\cI)$ we have $f^{\otimes n}\in \cI$, by writing $f^{\otimes n}$ as a composition of $n$ morphisms $X^{\otimes i}\otimes f\otimes Y^{\otimes n-i-1}\in \rad(\cI)$. Hence $f\in \cI$.
\end{proof}

\begin{proof}[Proof of Theorem~\ref{Thm:FinR}]
We start by proving an auxiliary result: for any ideal $\cJ$ in $\cA$, we have $\cJ^M=\cJ^\infty$, or equivalently, $\cJ^M\subset\cJ^{M+1}$. Indeed, an arbitrary element of $\cJ^{M}$ can be written as
$$\sum_i f_{i,1}\circ f_{i,2}\circ\,\cdots\, \circ f_{i,M}$$
with each $f_{i,j}$ a morphism in $\cJ$ with indecomposable source and target. Ignoring zero terms, it thus follows that for each $i$ we must have an $f:=f_{i,j}$ that is an isomorphism, so that we can replace $f$ with $f(f^{-1}f)\in \cJ^2$ to show that each non-zero term is thus actually in $\cJ^{M+1}$.

 By Proposition~\ref{LemImax}, $\bI^{\max}$ is (faithfully) prime for $\bI$ prime. On the other hand, if $\cI$ is a prime tensor ideal in $\cA$, then by Lemma~\ref{Lem:rad} the thick tensor ideal $\bI:=\Ob(\cI)$ is prime and $\rad(\cI)=\cI$. Let us show that $\cI=\bI^{\max}$. By Proposition~\ref{prop:minmax}, it suffices to prove that $\Rad(\cJ)=\rad(\cJ)$ for ideals $\cJ$ in $\cA$. But that follows immediately from the previous paragraph.
\end{proof}

\subsection{Existence of abelian envelopes}
In this section we develop a result to deal with the surjectivity question of $\incl_1$ in \eqref{themap}. The theorem is actually a reformulation of a result of Stroi\'nski, see \cite[Proposition~4.26]{Stroinski}, as part of a more general theory. For the reader's convenience we give a direct proof of the precise result we need, similar in philosophy to Stroi\'nski's approach in \cite[\S 4]{Stroinski}. We say that an element $x$ in a partially ordered set $(\Lambda,\le)$ has a `unique cover' $y$ if $x<y$ and all $z\in\Lambda$ with $x< z$ satisfy $y\le z$.

\begin{theorem}\label{ThmAbEnv}\label{LemStr} Let $\cA$ be a pseudo-tensor category over $\bk$ with commutor.

\begin{enumerate}
\item Assume that $\cA$ has a unique minimal non-zero tensor ideal $\cQ$ and that $\Ob(\cQ)$ is non-zero, but only has finitely many indecomposable objects. Then $\cA$ has an abelian envelope.
\item Let $\bI$ be a thick tensor ideal in $\cA$ and assume that there is a unique cover $\bI<\bJ$ in the inclusion order on thick tensor ideals in $\cA$.
If the number of indecomposable objects in $\bJ\,\backslash\, \bI$ is finite, then $\bI^{\max}$ is of abelian type and $\cA/\bI^{\max}$ admits an abelian envelope.
\end{enumerate}
\end{theorem}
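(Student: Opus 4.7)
My plan is to apply part~(1) to the quotient $\cA':=\cA/\bI^{\max}$, which inherits the structure of a pseudo-tensor category with commutor (rigidity and the unit property are inherited, and the commutor descends). By construction $\cA'$ is semiprimitive, so using~\eqref{eq:sumideals} every non-zero tensor ideal $\cI$ in $\cA'$ has $\Ob(\cI)\neq 0$. Since thick tensor ideals of $\cA'$ correspond to thick tensor ideals of $\cA$ containing $\bI$, the unique-cover hypothesis forces $\Ob(\cI)=\bJ/\bI$, and hence $\cI\supseteq (\bJ/\bI)^{\min}=:\cQ$. Thus $\cQ$ is the unique minimal non-zero tensor ideal in $\cA'$, with $\Ob(\cQ)=\bJ/\bI$ consisting of finitely many indecomposables. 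Granting~(1), the resulting abelian envelope $\cA'\to\cC$ is faithful, so the composite $\cA\to\cA'\to\cC$ has kernel exactly $\bI^{\max}$, simultaneously establishing that $\bI^{\max}$ is of abelian type and exhibiting $\cC$ as an abelian envelope of $\cA/\bI^{\max}$.

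\textbf{For part~(1), the plan is to apply the recognition criterion of Lemma~\ref{LemGF}.} Let $X_1,\dots,X_n$ be representatives of the indecomposable objects of $\Ob(\cQ)$. I would construct a candidate tensor category $\cC$ as a Gabriel quotient $\Ind\cA/\cS$ of the ind-completion by a suitable tensor Serre subcategory $\cS$. The subcategory $\cS$ should be defined to be ``generic with respect to the finite test family $\{X_i\}$'': concretely, the maximal tensor Serre subcategory of $\Ind\cA$ disjoint from all nonzero images of morphisms in $\cA$. The uniqueness of $\cQ$ as minimal non-zero tensor ideal is essential here, ensuring that such a maximal $\cS$ exists and behaves well: any other tensor Serre subcategory not meeting $\cA$ nontrivially would correspond, via Theorem~\ref{ThmRigBij} applied inside $\cA$, to a tensor ideal strictly smaller than $\cQ$, which cannot exist. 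With this choice, the induced functor $\Phi:\cA\to\cC$ is faithful by construction.

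\textbf{Verification of (G), (F) and the tensor-category conditions.} Condition~(G) should follow from the ind-construction: every object of $\cC$ is represented in $\Ind\cA$ by a filtered colimit of objects in $\cA$, and the finiteness of $\{X_i\}$ lets us truncate this colimit to a single quotient. For~(F), given $a:\Phi(X)\to\Phi(Y)$, the description of morphisms in a Gabriel quotient as colimits over subobjects with quotient in $\cS$, combined with the finite generation of $\cS$ arising from the finiteness of $\Ob(\cQ)$, should produce $q\in\cA(X',X)$ with $\Phi(q)$ epi such that $a\circ\Phi(q)$ lifts to a morphism of $\cA$. Separately one must check that $\cC$ is a genuine tensor category: rigidity descends from $\cA$ provided $\cS$ is closed under duals, the unit $\unit$ remains simple provided $\cS$ contains no proper nonzero subobject of $\unit$, and objects have finite length by the controlled size of $\Ob(\cQ)$.

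\textbf{Main obstacle.} The hardest step will be constructing $\cS$ precisely and showing that $\Ind\cA/\cS$ is a tensor category rather than merely an abelian monoidal one. Simplicity of the unit and rigidity both hinge on delicate primality and duality properties of $\cS$, which in turn rely on the combination of finiteness of $\Ob(\cQ)$ and uniqueness of $\cQ$. This is essentially the content of Stroi\'nski's \cite[Proposition~4.26]{Stroinski}, which the present statement reformulates, so the concrete construction of $\cS$ and the verification of its properties can be imported from that reference; the main new content here is the reduction in the first paragraph and the passage from the existence of an abelian envelope in~(1) to the assertion that $\bI^{\max}$ is of abelian type.
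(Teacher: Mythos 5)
Your reduction of part~(2) to part~(1) is essentially the same as the paper's argument and is correct: $\cA/\bI^{\max}$ inherits the pseudo-tensor structure with commutor, the unique-cover hypothesis descends to the statement that $\cQ := (\bJ/\bI)^{\min}$ is the unique minimal non-zero tensor ideal in $\cA/\bI^{\max}$ with $\Ob(\cQ) = \bJ/\bI$ finite, and once an abelian envelope of $\cA/\bI^{\max}$ is produced, faithfulness of the composite $\cA \to \cA/\bI^{\max} \to \cC$ immediately gives $\ker = \bI^{\max}$, so $\bI^{\max}$ is of abelian type.

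For part~(1) there is a genuine gap. The paper, despite the historical remark crediting Stroi\'nski, deliberately provides a self-contained proof, and its key technical step is entirely absent from your proposal. The paper shows that $\bQ := \Ob(\cQ)$ is a \emph{splitting ideal} in the sense of \cite[Definition~2.2]{BEO}: assigning Frobenius--Perron dimensions $q_i = \FPdim Q_i$ to the finitely many indecomposables $Q_1,\dots,Q_n$ of $\bQ$, tracking how tensoring $Q_j \otimes -$ interacts with split and non-split morphisms, and exploiting the Perron eigenvector $(d_i)$ of the multiplication matrix to force a dimension inequality into an equality. That equality, together with minimality of $\cQ$, yields that tensoring any morphism of $\bQ$ with any $Q_i$ becomes split. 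Once $\bQ$ is known to be a splitting ideal, the tensor category $\cC(\cA,\bQ)$ of \cite[\S 2.9--2.10]{BEO} exists and receives a tensor functor $F : \cA \to \cC(\cA,\bQ)$ whose kernel is shown to vanish again using minimality of $\cQ$; then \cite[Theorem~2.42]{BEO} or Lemma~\ref{LemGF} gives the abelian envelope. Your proposal replaces this with a sketch of a Gabriel quotient $\Ind\cA/\cS$, but the definition of $\cS$ (``maximal tensor Serre subcategory disjoint from nonzero images of $\cA$-morphisms'') is not shown to exist or to be duality-closed, the claim that this recovers rigidity and simplicity of the unit is asserted rather than argued, and you ultimately acknowledge that the construction must be imported wholesale from Stroi\'nski. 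That may be a valid alternate route, but as written it is not a proof: the content — how finiteness of $\Ob(\cQ)$ and uniqueness of $\cQ$ combine to produce a genuine tensor category rather than merely an abelian monoidal one — is exactly what is deferred, and it is precisely the step the paper's Frobenius--Perron argument handles.
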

\begin{proof}
Starting with (1), write $\bQ=\Ob(\cQ)$, and observe that $\bQ$ is the unique minimal non-zero thick tensor ideal in $\cA$, since $\Ob$ is surjective. We also use the symbol $\bQ$ for the full subcategory of $\cA$ on the corresponding objects. This is thus a semigroup subcategory (see \cite[Remark~2.2.9]{EGNO}) and we label its indecomposable objects as $Q_i$ for $1\le i\le n$.  We have multiplication rules
\begin{equation}\label{QtQ}
Q_j\otimes Q_i\;\simeq\; \bigoplus_{l} Q_l^{c^l_{ji}},\qquad c_{ji}^l\in\mN,
\end{equation}
from which we can assign Frobenius-Perron dimensions $q_i:=\FPdim Q_i\in\mR_{>0}$, extending by linearity to a ring homomorphism $K_0^{\oplus}(\bQ)\to \mR,$ from the split Grothendieck ring,
see \cite[\S 3.3]{EGNO} (note that $K_0^{\oplus}(\bQ)$ is transitive by rigidity).
We will prove that $\bQ$ is a splitting ideal in $\cA$, in the sense of \cite[Definition~2.2]{BEO}.

Consider an arbitrary morphism $f:Q\to Q'$ in $\bQ$ and set $q =\FPdim (Q)$. For each $i$, we consider the morphism
$$Q_i\otimes f:\; Q_i\otimes Q\to Q_i\otimes Q'.$$
We let $u_i$ denote the minimal Frobenius-Perron dimension of a direct summand of $Q_i\otimes Q$ for which the corresponding idempotent $e$ is such that $(Q_i\otimes f)\circ (1-e)$ is split. This exists, as there are only finitely many values of Frobenius-Perron dimensions of direct summands for any given object. For example, $u_i=0$ if and only if $Q_i\otimes f$ is split, and $u_i=q_iq$ implies that $Q_i\otimes f$ is in $\cR(\bQ)$  (although the converse is not true, for example if $f=0$ then $u_i=0$).

We can similarly define a minimal dimension $u_{ij}$ based on the morphism $Q_j\otimes Q_i\otimes f$. Firstly by using that applying $Q_j\otimes-$ to a split morphism remains split and secondly by applying \eqref{QtQ}, we find 
\begin{equation}
\label{ineq}
q_ju_i\;\ge\;u_{ij}\;=\;\sum_l c_{ji}^l u_l.
\end{equation}
Also, by \cite[3.3.6(2)]{EGNO} there exist $d_i>0$ (which a posteriori will be the Frobenius-Perron dimension of the simple top of $Q_i$ in the abelian envelope) for which
$$\sum_i d_ic_{ji}^l\;=\; q_jd_l.$$
Combining this with \eqref{ineq} shows
$$\sum_i d_i q_j u_i\ge \sum_id_i u_{ij}\;=\;\sum_{i,l}d_i c_{ji}^l u_l\;=\;\sum_ld_lq_j u_l,$$
hence the inequality is an equality which, by \eqref{ineq} and $d_i>0$, can only hold if $u_{ij}=q_ju_i$ for every $i,j$. 

Now if $u_i=0$ for all $i$ then $Q_i\otimes f$ is split. Conversely, assume $u_i>0$, so that we can define a non-zero morphism $g$ by restricting $Q_i\otimes f$ to a (non-zero) minimal summand of $Q_i\otimes Q$ (with dimension $u_i$) as defined in the previous paragraph. Since \eqref{ineq} is actually an equality, the discussion at the end of the second paragraph of the proof shows that $Q_j\otimes g$ is in the radical $\cR(\bQ)$, for each $j$.
We claim that then actually $X\otimes g$ is in the radical of $\bQ$ for all $X\in \cA$. Indeed, we know that $P\otimes P^\ast\otimes X\otimes g$ is in $\cR(\bQ)$ for all $P\in \bQ$, so that $\id_P$ cannot be in the (non-tensor) ideal generated by $X\otimes g$. Hence the tensor ideal in $\cA$ generated by $g$ contains none of the $Q_i$, contradicting the minimality of~$\bQ$. We thus conclude that in fact $u_i=0$ for all $i$ (implying that $\bQ$ is a splitting ideal).

Hence, following \cite[\S 2.9 and \S2.10]{BEO}, we have a tensor category $\cC(\cA,\bQ)$ with tensor functor $F:\cA\to \cC(\cA,\bQ)$ such that the kernel consists of those morphisms $h$ for which $P\otimes h=0$ in $\cA$ for all $P\in\bQ$. Let $\cJ$ denote this ideal. By minimality of $\cQ$, we must have $\cQ\subset\cJ$ or $\cJ=0$. The former is not possible, since $P\otimes P^\ast\not=0$ for non-zero $P\in\bQ$. Thus $\cJ=0$, meaning that $F$ is faithful and it follows from \cite[Theorem~2.42]{BEO} or Lemma~\ref{LemGF} that $\cC(\cA,\bQ)$ is the abelian envelope.

For part (2), we can observe that $\bI^{\max}$ has a unique cover in the inclusion order on tensor ideals in $\cA$. Indeed, for every strict inclusion $\bI^{\max}\subset\cJ$ of tensor ideals, we have $\bJ\subset\Ob(\cJ)$, so the proper tensor ideal given by the intersection of all such $\cJ$ does not equal $\bI^{\max}$.
It then suffices to apply part (1) to $\cA\,/\,\bI^{\max}$.
\end{proof}

\begin{remark}
Theorem~\ref{ThmAbEnv}(1) extends, with same proof, to categories without commutor, but then the two-sided tensor ideal $\cQ$ needs to be minimal as a left tensor ideal.
\end{remark}

Theorem~\ref{LemStr}(2) shows in particular that $\bI$ is prime. We show that this is true even without the finiteness assumption, and provide a partial converse implication.
\begin{lemma}\label{LemPrimeCover}
Assume that $\cA$ has a commutor and let $\bI$ be a thick tensor ideal in $\cA$.
\begin{enumerate}
\item If there is a unique cover $\bI<\bJ$ in the inclusion order, then $\bI$ is prime.
\item If
$\bI$ is prime and there are only finitely many thick tensor ideals in $\cA$, then there is a unique cover $\bI<\bJ$ in the inclusion order.
\end{enumerate} 
\end{lemma}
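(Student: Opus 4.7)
The plan for part (1) is a proof by contradiction exploiting a pair of annihilator-type thick tensor ideals. Assume $\bI$ has unique cover $\bJ$. Since $\bI \subsetneq \bJ$, in particular $\bI \neq \cA$, so $\unit \notin \bI$. Suppose for contradiction that $X \otimes Y \in \bI$ with $X, Y \notin \bI$. The first step is to check that
\[
\mathbf{K} \;:=\; \{Z \in \cA \mid Z \otimes Y \in \bI\}
\]
is a thick tensor ideal; closure under right tensoring by $A$ uses the commutor to replace $Z \otimes A \otimes Y$ with $Z \otimes Y \otimes A$, which lies in $\bI$. Because $X \in \mathbf{K}$ and $X \notin \bI$, we have $\bI \subsetneq \mathbf{K}$, and the unique-cover hypothesis forces $\bJ \subseteq \mathbf{K}$.

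Next, I introduce $\mathbf{L} := \{W \in \cA \mid Z \otimes W \in \bI \text{ for all } Z \in \bJ\}$, which is an intersection of thick tensor ideals of the same shape as $\mathbf{K}$, hence itself a thick tensor ideal. It contains $Y$, so by the same argument $\bJ \subseteq \mathbf{L}$; together these inclusions yield $Z_1 \otimes Z_2 \in \bI$ for all $Z_1, Z_2 \in \bJ$. To extract a contradiction, I select an indecomposable $Z \in \bJ \setminus \bI$. Since $Z^\ast$ is a retract of $Z^\ast \otimes Z \otimes Z^\ast \in \bJ$ via the rigidity identity $(\ev_Z \otimes Z^\ast) \circ (Z^\ast \otimes \co_Z) = \id_{Z^\ast}$ together with idempotent closure, the object $Z^\ast$ itself lies in $\bJ$. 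Therefore $Z \otimes Z^\ast \in \bI$, hence $Z \otimes Z^\ast \otimes Z \in \bI$, and $Z$ is a retract of $Z \otimes Z^\ast \otimes Z$ by rigidity, giving $Z \in \bI$, a contradiction.

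For part (2), the finiteness hypothesis enters only to guarantee existence of a cover: since $\bI$ is prime it is proper, so the nonempty finite set of thick tensor ideals strictly containing $\bI$ has a minimal element, which is a cover. Uniqueness follows from primeness alone. Indeed, if $\bJ_1$ and $\bJ_2$ were two distinct covers, then $\bJ_1 \cap \bJ_2$ is a thick tensor ideal with $\bI \subseteq \bJ_1 \cap \bJ_2 \subsetneq \bJ_1$ (because $\bJ_1 \not\subseteq \bJ_2$), so $\bJ_1 \cap \bJ_2 = \bI$ by the covering property. Choosing indecomposable $X \in \bJ_1 \setminus \bI$ and $Y \in \bJ_2 \setminus \bI$, one finds $X \otimes Y \in \bJ_1 \cap \bJ_2 = \bI$, contradicting primeness of $\bI$.

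The main obstacle is the final retract step in part (1), which needs the auxiliary observation that thick tensor ideals in a rigid, idempotent-closed category are automatically closed under duals: a point not built into the definition but deducible from the rigidity axioms. A secondary subtlety is the systematic reliance on the commutor to ensure that the annihilator-type sets $\mathbf{K}$ and $\mathbf{L}$ are genuinely two-sided thick tensor ideals, which is where the hypothesis that $\cA$ has a commutor is essential.
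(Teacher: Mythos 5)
Your proof is correct. Part~(2) is essentially the same argument as the paper's (intersection of two distinct covers equals $\bI$, contradicting primeness via indecomposable objects from each cover), though you usefully spell out the existence of a minimal cover, which the paper leaves implicit.

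For part~(1) your route is genuinely different. The paper argues directly: taking arbitrary $Q_1, Q_2 \notin \bI$, the thick tensor ideal generated by $\bI$ and $Q_i$ contains $\bJ$, so any indecomposable $X \in \bJ \setminus \bI$ is (by Krull--Schmidt) a summand of $Q_i \otimes A_i$ for each $i$; since $X$ is a retract of $X \otimes X^\ast \otimes X$, it is a summand of $Q_1 \otimes Q_2 \otimes C$, forcing $Q_1 \otimes Q_2 \notin \bI$. You instead argue by contradiction via annihilator ideals: $\mathbf{K}$ and $\mathbf{L}$ are proper thick tensor ideals strictly containing $\bI$, the unique-cover hypothesis forces $\bJ \subseteq \mathbf{K}$ and $\bJ \subseteq \mathbf{L}$, yielding the clean intermediate statement $\bJ \otimes \bJ \subseteq \bI$, and then you derive the contradiction by taking an indecomposable $Z \in \bJ \setminus \bI$, observing $Z^\ast \in \bJ$ (closure of thick tensor ideals under duals, via the zigzag identities and idempotent completeness), so that $Z \otimes Z^\ast \in \bI$ and hence $Z \in \bI$. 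Both proofs hinge on the same ingredients (the commutor, rigidity, Krull--Schmidt); yours buys the structural observation $\bJ \otimes \bJ \subseteq \bI$ and makes the use of dual-closure explicit, while the paper's is slightly more economical in that it proves primeness directly rather than by contradiction and needs only one auxiliary construction per $Q_i$ rather than the two-stage $\mathbf{K}, \mathbf{L}$ passage.
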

\begin{proof}
For part (1), let $Q_1,Q_2$ be two objects not in $\bI$. Then $\bJ$ is included in the thick tensor ideal generated by $\bI$ and $Q_1$. From this we can conclude that any indecomposable $X\in\bJ\backslash \bI$ is a direct summand of $Q_1\otimes A$ for some $A\in\cA$. Similarly $X$ is a summand of $Q_2\otimes B$. Hence $X$, as a direct summand of $X\otimes X^\ast\otimes X$, is a direct summand of $Q_1\otimes Q_2\otimes C$ (for $C=A\otimes B\otimes X^\ast$). This implies that $Q_1\otimes Q_2$ is not in $\bI$.

For part (2), if there would be two different coverings $\bJ_1$ and $\bJ_2$ of $\bI$, then $\bI=\bJ_1\cap \bJ_2$, which contradicts primeness of $\bI$.
\end{proof}


\section{Finite groups}\label{sec:fingrp}
Let $\bk$ be a field of characteristic $p>0$.

\subsection{Abelian envelopes}

In this section we fix a finite group $G$, with normal subgroup $N$.

\subsubsection{}\label{subsecT}
We define the tensor ideal $\cI_N$ in $\Rep_{\bk}G$ as the preimage under the restriction functor of the maximal ideal $\cN$ in $\Rep_{\bk}N$. By construction $\cI_N$ is of abelian type, see Example~\ref{preimN}. 

There is a canonical action of $G$ on $\Rep_{\bk}N$, so that the action of $N$ is isomorphic to the trivial action, producing an action of $Q$. To write it explicitly, we choose coset representatives $\{g_i\}$ for $Q :=G/N$ so that we can define an action of $Q $ on $\Rep_{\bk} N$. This action sends $g_i N$ to the tensor auto-equivalence $T_{g_i}$ of $\Rep_{\bk }N$ that sends a representation $V$ to the same vector space, but with new action of $n\in N$ given by the previous action of $g_i^{-1}ng_i$. Moreover, for coset representatives $g_i,g_j,g_l$ such that $g_iNg_jN=g_lN$, we have $n_0\in N$ with $g_ig_j=g_ln_0$ and then
the isomorphism $T_{g_i}\circ T_{g_j}\Rightarrow T_{g_l}$ is, when evaluated on a representation $V$, given by the morphism of vector spaces $v\mapsto n_0 v$.
This leads to the equivalence of tensor categories on the top row of the diagram
$$\xymatrix{\Rep_{\bk}G\ar[rr]^-{\sim}\ar[d]&& (\Rep_{\bk}N)^Q\ar[d]\\
 (\Rep_{\bk }G)/\cI_N\ar[rr]^-\Phi&&\left(\overline{\Rep_{\bk}N}\right)^Q}$$
see \cite[4.5.13]{EGNO}. Since $\overline{\Rep_{\bk}N}$ is just the quotient of $\Rep_{\bk}N$ by $\cN$, it is equipped with the same action of $Q $ and we obtain the above commutative diagram of symmetric tensor functors, where the horizontal arrows are faithful.

\begin{theorem}\label{PropQN} 
The symmetric tensor functor
$$\Phi:(\Rep_{\bk }G)/\cI_N\;\to\;\left(\overline{\Rep_{\bk}N}\right)^Q$$
is an abelian envelope. 
\end{theorem}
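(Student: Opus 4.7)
The plan is to apply the recognition criterion of Lemma~\ref{LemGF}. The functor $\Phi$ is faithful by construction, as $\cI_N$ is its kernel, and the codomain $(\overline{\Rep_{\bk}N})^Q$ is a tensor category: the equivariantization of the semisimple symmetric tensor category $\overline{\Rep_{\bk}N}$ by the finite group $Q$ acting through symmetric tensor autoequivalences is abelian with all objects of finite length, and $\End(\unit)=\bk^Q=\bk$. It remains to verify conditions (G) and (F).

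For (G), the key input is the Mackey-type decomposition for the normal subgroup $N\subset G$: under the equivalence $\Rep_{\bk}G\simeq (\Rep_{\bk}N)^Q$, we have $\Res_N\,\Ind_N^G V\cong \bigoplus_{q\in Q} T_q V$ with its canonical permutation $Q$-equivariance, for any $V\in\Rep_{\bk}N$. Consequently, $V\mapsto \Phi(\Ind_N^G V)$ identifies with the equivariantization induction functor $\overline{\Rep_{\bk}N}\to (\overline{\Rep_{\bk}N})^Q$ applied to $\overline{V}$, which is left adjoint to the forgetful functor. Given any $(V_0,\{\phi_q\})\in (\overline{\Rep_{\bk}N})^Q$, choose a lift of $V_0$ to some $V\in \Rep_{\bk}N$; the counit $\sum_q \phi_q:\bigoplus_q \overline{T_q V}\to V_0$ of this adjunction is a $Q$-equivariant epimorphism from $\Phi(\Ind_N^G V)$ onto $(V_0,\{\phi_q\})$, which gives (G).

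For (F), let $a:\Phi(X)\to\Phi(Y)$ with $X,Y\in \Rep_{\bk}G$, and pick a lift $\tilde{a}:X\to Y$ in $\Rep_{\bk}N$ of the underlying morphism $\bar{a}\in \overline{\Rep_{\bk}N}(\overline{X},\overline{Y})$. Then $Q$-equivariance of $a$ means precisely that the commutator $x\mapsto g_q \tilde{a}(x)-\tilde{a}(g_q x)$ is a negligible morphism $T_q X\to Y$ for every $q\in Q$. By Frobenius reciprocity, define the $G$-module map $f:\Ind_N^G X\to Y$ by $g\otimes x\mapsto g\tilde{a}(x)$, and let $\epsilon:\Ind_N^G X\to X$ be the counit $g\otimes x\mapsto gx$; since $\epsilon$ restricts to a split $N$-surjection, $\Phi(\epsilon)$ is an epimorphism in $(\overline{\Rep_{\bk}N})^Q$ (the forgetful functor is faithful and hence reflects epimorphisms). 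A direct computation on the $q$-th summand of the Mackey decomposition yields $\Phi(f):x\mapsto g_q\tilde{a}(x)$ and $a\circ \Phi(\epsilon):x\mapsto \tilde{a}(g_q x)$; these agree modulo negligibles, so $\Phi(f)=a\circ \Phi(\epsilon)$ in $(\overline{\Rep_{\bk}N})^Q$, verifying (F). The central subtlety is precisely this dual role of equivariance: $Q$-equivariance of $a$ holds only modulo negligibles, while $\epsilon$ and $f$ must be strict $G$-module maps, and one needs the obstruction to $G$-equivariance of the lift $\tilde{a}$ to be exactly the negligible correction demanded by $Q$-equivariance of $a$, so the two obstructions cancel upon passing through the Mackey decomposition back to $\overline{\Rep_{\bk}N}$.
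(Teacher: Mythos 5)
Your proof is correct and follows essentially the same route as the paper: both apply Lemma~\ref{LemGF}, obtain (G) from the (Mackey/biadjunction) identification of $\Phi(\Ind^G_N V)$ with equivariantization-induction, and obtain (F) from Frobenius reciprocity combined with the fullness of the semisimplification on morphism spaces. The paper packages the (F) step more briefly via the chain of adjunction isomorphisms and the surjection $\Hom_N(V,\Res^G_N W)\twoheadrightarrow\Hom(\overline V,\overline{\Res^G_N W})$, and cites \cite[Lemma~4.6]{DGNO} where you unwind the Mackey decomposition by hand, but the two verifications amount to the same computation.
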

\begin{proof}
We denote the functor forgetting the $Q $-action on $(\overline{\Rep_{\bk}N})^Q$ by $\Res$ and the composite in the square before the theorem by $\widetilde{\Phi}$. We obtain a commutative square
$$\xymatrix{
\Rep_{\bk}G\ar@/^/[rr]^{\Res^G_N}\ar[d]_-{\widetilde{\Phi}}&&\Rep_{\bk}N\ar[d]^-{V\mapsto \overline{V}}\ar@/^/[ll]^{\Ind^G_N}\\
\left(\overline{\Rep_{\bk}N}\right)^Q\ar@/^/[rr]^{\Res}&& \overline{\Rep_{\bk}N}.\ar@/^/[ll]^{\Ind}
}$$
It is a classical fact, see \cite[Lemma~4.6]{DGNO}, that $\Res$ is bi-adjoint to a functor $\Ind$ which also leads to a commutative square with $\Ind^G_N$.

We thus have isomorphisms
\begin{equation}\label{2iso}
\Hom(\widetilde{\Phi}(\Ind^G_N(V)), R)\simeq\Hom(\Ind(\overline{V}),R )\simeq \Hom(\overline{V}, \Res R),
\end{equation} 
natural in $V\in \Rep_{\bk}N$ and $R\in \left(\overline{\Rep_{\bk}N}\right)^Q$.
It follows from semisimplicity of  $\overline{\Rep_{\bk}N}$ that the objects $\widetilde{\Phi}(\Ind^G_N(V))$ with $V\in \Rep_{\bk}N$ are precisely the projective objects in $\left(\overline{\Rep_{\bk}N}\right)^Q$ and that every object in $\left(\overline{\Rep_{\bk}N}\right)^Q$ is a quotient of such a projective object.

Next we claim that, for $V\in \Rep_{\bk}N$ and $W\in \Rep_{\bk}G$, the morphism
\begin{equation}
\label{eqsurj}\Hom_G(\Ind^G_NV,W)\;\xrightarrow{\widetilde{\Phi}}\; \Hom(\widetilde\Phi(\Ind^G_N(V)),\widetilde\Phi(W))
\end{equation}
is surjective. Indeed, via adjunction and \eqref{2iso}, we can rewrite this morphism as the quotient
$$\Hom_N(V,\Res^G_NW)\;\tto\; \Hom(\overline{V}, \overline{\Res^G_NW}).$$

We can now apply Lemma~\ref{LemGF}. We have verified already condition (G) and condition (F) follows quickly from the surjectivity of \eqref{eqsurj}.
\end{proof}

\begin{remark}
In general the functor $\Phi$ is not full, see Example~\ref{RemEnvC} below.
\end{remark}

\begin{example}
If $N$ contains a Sylow $p$-subgroup, then $\cI_{\cN}=\cN$ and $(\Rep_{\bk}G)/\cN$ is simply $(\overline{\Rep_{\bk }N})^Q$ by \cite[Corollary~3.12]{EO}.
\end{example}

\subsection{Cyclic groups}

In this section, we consider the tensor category $\Rep_{\bk} C_{p^n}$ for the cyclic group $C_{p^n}$ with $n\in\mZ_{>0}$. We will classify all tensor ideals, thick tensor ideals and their basic properties, leading to the following main result:

\begin{theorem}\label{ThmCpn}
For $\Rep_{\bk} C_{p^n}$, all maps in \eqref{themap} are bijections. Moreover, every quotient with respect to a tensor ideal of abelian type admits an abelian envelope.
\end{theorem}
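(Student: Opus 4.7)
The plan is to classify all thick tensor ideals, prime tensor ideals, and tensor ideals of abelian type in $\Rep_\bk C_{p^n}$ explicitly, and show they form a chain of length $n+1$. Recall the indecomposable $\bk C_{p^n}$-modules are $M_j\cong\bk[x]/(x^j)$ for $1\le j\le p^n$, where $\bk C_{p^n}\cong\bk[x]/(x^{p^n})$ with $x=g-1$. Since there are only finitely many isomorphism classes of indecomposables and each has local Artinian endomorphism ring, the Jacobson radical $\cR$ of $\Rep_\bk C_{p^n}$ is nilpotent, so Theorem~\ref{Thm:FinR} applies: prime tensor ideals correspond bijectively, via $\Ob$, to prime thick tensor ideals, with inverse $\bI\mapsto\bI^{\max}$.

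For each $0\le k\le n$, applying Theorem~\ref{PropQN} to the (normal, as $C_{p^n}$ is abelian) subgroup $C_{p^k}\subseteq C_{p^n}$ yields a tensor ideal $\cI_k:=\cI_{C_{p^k}}$ of abelian type with abelian envelope $(\overline{\Rep_\bk C_{p^k}})^{C_{p^{n-k}}}$. Under the inclusion $C_{p^k}\hookrightarrow C_{p^n}$ the generator of $C_{p^k}$ becomes $g^{p^{n-k}}$, so its augmentation acts on $M_j$ as $x^{p^{n-k}}$. A direct Jordan-block count on $\bk[x]/(x^j)$ then gives
\begin{equation*}
\Ob(\cI_k)\;=\;\{M_j\,:\,p^{n-k+1}\mid j\},
\end{equation*}
producing a strict chain $0=\Ob(\cI_0)\subsetneq\Ob(\cI_1)\subsetneq\cdots\subsetneq\Ob(\cI_n)=\bN$ of $n+1$ thick tensor ideals, and each $\cI_k$ is faithfully prime by Proposition~\ref{propnotab1}.

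The heart of the argument is to show these are the only proper prime thick tensor ideals. For an indecomposable $M_j$ with $j=p^am$ and $\gcd(p,m)=1$, I intend to compute $\langle M_j\rangle$. When $a=0$, the dimension $m$ is invertible in $\bk$, so $\unit$ is a summand of $M_j\otimes M_j^\ast$ and $\langle M_j\rangle=\Rep_\bk C_{p^n}$. When $a\ge 1$, one first verifies $\Ind^{C_{p^n}}_{C_{p^{n-a}}}M_\ell\cong M_{p^a\ell}$ for $1\le \ell\le p^{n-a}$ (the induced module has dimension $p^a\ell$ and local endomorphism ring, realised as a crossed product of $\bk[C_{p^a}]$ with $\End_{C_{p^{n-a}}}(M_\ell)=\bk[z]/z^\ell$). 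The projection formula $M_{p^am}\otimes V\cong\Ind(M_m\otimes\Res V)$ confines $\langle M_{p^am}\rangle$ inside $\Ob(\cI_{n-a+1})$. Conversely, taking $V=M_{mp^a}$ gives $\Res V\cong M_m^{\oplus p^a}$, and since $\gcd(p,m)=1$ the unit $\unit_{C_{p^{n-a}}}$ is a summand of $M_m\otimes M_m$; hence $M_{p^a}=\Ind\unit_{C_{p^{n-a}}}$ is a summand of $M_j\otimes V$. Then every $M_{p^a\ell}\in\Ob(\cI_{n-a+1})$ appears as a summand of $M_{p^a}\otimes M_{p^a\ell}\cong\Ind\Res M_{p^a\ell}\cong M_{p^a\ell}^{\oplus p^a}$, so $\Ob(\cI_{n-a+1})\subseteq\langle M_j\rangle$.

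Combining everything: the proper prime thick tensor ideals are exactly the $\Ob(\cI_k)$ for $0\le k\le n$, so Theorem~\ref{Thm:FinR} identifies the prime tensor ideals as the $\Ob(\cI_k)^{\max}$. Since each $\cI_k$ is itself prime with matching image under $\Ob$, uniqueness in that theorem forces $\cI_k=\Ob(\cI_k)^{\max}$, and every prime tensor ideal is of abelian type. Thus all three subsets appearing in \eqref{themap} coincide and all maps there are bijections, while the abelian envelope of each quotient $\Rep_\bk C_{p^n}/\cI_k$ is the one furnished by Theorem~\ref{PropQN}. The main technical obstacle is the reverse containment $\langle M_{p^am}\rangle\supseteq\Ob(\cI_{n-a+1})$, but it reduces cleanly to the observation that $\unit_{C_{p^{n-a}}}$ appears as a summand in $M_m\otimes M_m$ precisely because $\dim M_m\in\bk^\times$.
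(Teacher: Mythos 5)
Your proof is correct, but it takes the alternative route that the paper explicitly declines to pursue: right before the proof of Theorem~\ref{ThmCpn} the authors remark ``We could also apply Theorem~\ref{Thm:FinR}, but we give a direct proof here.'' You carry out precisely that alternative. The paper instead classifies \emph{all} tensor ideals first --- using Theorem~\ref{ThmRigBij} to identify them with the chain $0=\cI_{p^n}\subset\cdots\subset\cI_1$, where $\cI_j$ is generated by $\unit\hookrightarrow J_{j+1}$ --- and then rules out primeness of $\cI_j$ for $j$ not a power of $p$ by a binomial-coefficient computation with Lucas' theorem applied to the image of $(\unit\hookrightarrow J_{a+1})\otimes(\unit\hookrightarrow J_{b+1})$. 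You avoid that arithmetic argument entirely: by invoking Theorem~\ref{Thm:FinR} (the Harada--Sai-type nilpotence of $\cR$ is indeed available here) you reduce to classifying prime thick tensor ideals, and your computation of $\langle M_{p^am}\rangle$ via the projection formula $M_{p^am}\otimes V\cong\Ind(M_m\otimes\Res V)$ for $\Ind=\Ind^{C_{p^n}}_{C_{p^{n-a}}}$, together with the observation that $\unit$ is a summand of $M_m\otimes M_m$ when $p\nmid m$, is a clean substitute for the paper's unnamed lemma on thick ideals (which proceeds by looking at the restriction functor $\cC_n\to\cC_{n-m-1}$ and the summand $J_{ap^{m+1}}$ of $J_{p^{m+1}}\otimes J_{ap^{m+1}}$). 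Both approaches rely on Theorem~\ref{PropQN} for the abelian-type ideals and envelopes. The trade-off: the paper's direct route additionally yields the full lattice of tensor ideals (all $p^n$ of them) and explicitly identifies which are not prime, information your argument does not produce but which the theorem does not require; your route is more structural and would transfer more readily to the generalisation in \ref{formal}, where the paper notes the same arguments work.
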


The proof will occupy the rest of this subsection. We could also apply Theorem~\ref{Thm:FinR}, but we give a direct proof here.

\subsubsection{} Set $\cC_n:=\Rep_{\bk} C_{p^n}$.  If we pick a generator $\sigma\in C_{p^n}$, then $\bk C_{p^n}\simeq \bk [x]/x^{p^n}$ with $x:=\sigma-1$. The indecomposable modules in $\cC_n$ are labelled as $J_i\simeq\;\bk[x]/x^i$, for $1\le i\le p^n$. 

By Theorem~\ref{ThmRigBij}, the proper tensor ideals in $\cC_n$ are given by
$$0=\cI_{p^n}\subset\cI_{p^n-1}\subset \cdots\subset\cI_2\subset\cI_1\subset\cC_n,$$
where the ideal $\cI_j$, for $j<p^n$, is generated by a monomorphism
$\unit=J_1\hookrightarrow J_{j+1}.$

\subsubsection{} We have unique subgroups $C_{p^l}<C_{p^n}$, for all $l\le n$, yielding short exact sequences
$$1\to C_{p^l}\to C_{p^n}\to C_{p^{n-l}}\to 1.$$
In particular, for $0\le m\le n$, we have tensor subcategories $\cC_m\subset\cC_n$, which satisfy $J_i\mapsto J_i$ for $i\le p^m$, and restriction functors $\cC_n\to \cC_m$. It follows that the intersection of $\cC_m\subset \cC_n$ with the ideal $\cI_i$ in $\cC_n$ is $\cI_i$ for $i\le p^m$ and $0$ for $i\ge p^m$.

\begin{lemma}\label{LemVic}
The ideals $\cI_{p^m}$, for $0\le m\le n$ are of abelian type. More concretely, $\cI_{p^m}$ is the kernel of the composite
$$\cC_n\to\cC_{n-m}\to\overline{\cC_{n-m}}.$$
\end{lemma}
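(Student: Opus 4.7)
The plan is to realise $\cI_{p^m}$ as the ideal $\cJ_m:=\ker(\cC_n\xrightarrow{\Res}\cC_{n-m}\to\overline{\cC_{n-m}})$, which is automatically of abelian type since $\overline{\cC_{n-m}}$ is a (semisimple) tensor category. Since every tensor ideal of $\cC_n$ lies in the total chain $0=\cI_{p^n}\subsetneq\cI_{p^n-1}\subsetneq\cdots\subsetneq\cI_1$, identifying $\cJ_m$ reduces to testing, for each socle inclusion $\alpha_k:J_1\hookrightarrow J_{k+1}$ (the generator of $\cI_k$ for $k<p^n$), whether $\alpha_k\in\cJ_m$. I first dispose of the boundary cases: for $m=n$ the composite factors through the faithful forgetful $\cC_n\to\Vec$, giving $\cJ_n=0=\cI_{p^n}$; for $m=0$ the composite is the semisimplification of $\cC_n$, and $\cN=\cI_1$ holds because both are characterised as the unique maximal proper tensor ideal in $\cC_n$.

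For $0<m<n$, the only substantive calculation is to restrict $J_i$ along $C_{p^{n-m}}=\langle\sigma^{p^m}\rangle$. Since $\sigma=1+x$ and $(1+x)^{p^m}=1+x^{p^m}$ in characteristic $p$, the generator of $C_{p^{n-m}}$ acts on $J_i=\bk[x]/x^i$ by multiplication by $y:=x^{p^m}$. A clean dichotomy emerges: if $i\le p^m$ then $y$ acts as zero, so $J_i|_{C_{p^{n-m}}}\cong i\cdot J_1$; if $i=p^m+1$ then $y$ has rank one with $y^2=0$, giving $J_{p^m+1}|_{C_{p^{n-m}}}\cong J_2\oplus(p^m-1)J_1$ with the $J_2$-summand spanned by $\{1,x^{p^m}\}$.

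With these restrictions in hand, both inclusions follow. For $k<p^m$ the morphism $\alpha_k$ restricts to a split mono $J_1\hookrightarrow(k+1)J_1$ into trivial summands, which remains nonzero in $\overline{\cC_{n-m}}$ since $\unit=J_1$ is not negligible; hence $\alpha_k\notin\cJ_m$, so $\cJ_m\not\supseteq\cI_k$, and the chain structure forces $\cJ_m\subseteq\cI_{p^m}$. For $k=p^m$, the generator $\alpha_{p^m}:1\mapsto x^{p^m}$ restricts into the $J_2$-summand as the socle inclusion $J_1\hookrightarrow J_2$ of $\cC_{n-m}$; I claim this is negligible: any morphism $J_2\to J_1$ factors through $J_2\twoheadrightarrow J_2/\soc(J_2)\cong J_1$, so its composition with the socle inclusion vanishes, making all relevant traces zero. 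This yields $\alpha_{p^m}\in\cJ_m$, hence $\cI_{p^m}\subseteq\cJ_m$ and the desired equality.

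The principal obstacle—minor but essential—is precisely this negligibility of the socle inclusion $J_1\hookrightarrow J_2$, which holds despite $J_2$ itself being non-negligible (for $p>2$); everything else is bookkeeping using the chain structure and the direct restriction computation.
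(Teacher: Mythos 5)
Your proof is correct and follows the same route as the paper's: both identify the kernel of $\cC_n\to\cC_{n-m}\to\overline{\cC_{n-m}}$ by computing the restriction of $J_{p^m}$ and $J_{p^m+1}$ along $C_{p^{n-m}}=\langle\sigma^{p^m}\rangle$ (using $\sigma^{p^m}=1+x^{p^m}$) and observing that the generator $\unit\hookrightarrow J_{p^m}$ survives while $\unit\hookrightarrow J_{p^m+1}$ lands in the negligible socle inclusion $\unit\hookrightarrow J_2$. You simply spell out a few points the paper leaves implicit, namely the boundary cases $m=0,n$, the check of all $\alpha_k$ with $k<p^m$ rather than only $k=p^m-1$, and the explicit trace computation showing $\unit\hookrightarrow J_2$ is negligible.
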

\begin{proof}
The functor $\cC_n\to\cC_{n-m}$ sends $J_{p^m}$ to $\unit^{p^m}$ and $J_{p^m+1}$ to $J_2\oplus\unit^{p^m-1}$ and moreover, the inclusion $\unit\hookrightarrow J_{p^m+1} $ to the sum of $\unit\hookrightarrow J_2$ and the zero morphism $\unit\to\unit^{p^m-1}$. Hence $\unit\hookrightarrow J_{p^m}$ is not sent to zero by the composite functor, but $\unit\hookrightarrow J_{p^m+1}$ is.
\end{proof}

\begin{lemma}\label{lem:cyclic}
The proper thick tensor ideals in $\Rep_{\bk} C_{p^n}$ form a chain
$$\bo=\bI_n\;\subset \bI_{n-1}\;\subset\;\cdots\;\subset\; \bI_1\;\subset\; \bI_0.$$
The indecomposable objects in $\bI_m$ are given by
$\{J_{a p^{m+1}} \mid 1\le a\le p^{n-m-1}\}.$
The map $\Ob$ in \eqref{ObAll} is given by
$$\cI_i\;\mapsto \;\bI_{\lfloor\log_p(i)\rfloor}\;=\; \bI_{m},\quad\mbox{with } \; p^m\le i<p^{m+1}.$$
In particular, $\bI_m^{\max}=\cI_{p^m}$.
\end{lemma}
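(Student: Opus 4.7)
My plan has three parts: identify $\bI_m$ with $\Ob(\cI_{p^m})$ using Lemma~\ref{LemVic}, classify all proper thick tensor ideals via a Frobenius-reciprocity argument, and then compute $\Ob(\cI_i)$ along the chain of tensor ideals to deduce $\bI_m^{\max}=\cI_{p^m}$. For the identification, I would compute the restriction to $C_{p^{n-m}}=\langle\sigma^{p^m}\rangle$ explicitly. Using $\bk C_{p^n}\simeq\bk[x]/x^{p^n}$ with $y:=x^{p^m}$ generating $\bk C_{p^{n-m}}$, and decomposing $\bk[x]/x^i$ as a $\bk[y]$-module along residues of exponents modulo $p^m$, one writes $i=qp^m+r$ with $0\le r<p^m$ and finds $\Res^{C_{p^n}}_{C_{p^{n-m}}}J_i\simeq J_{q+1}^{r}\oplus J_q^{p^m-r}$ in $\cC_{n-m}$. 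The negligible indecomposables in $\cC_{n-m}$ are exactly the $J_s$ with $p\mid s$, so the composite $\cC_n\to\cC_{n-m}\to\overline{\cC_{n-m}}$ of Lemma~\ref{LemVic} kills $J_i$ precisely when $r=0$ and $p\mid q$, i.e.\ $p^{m+1}\mid i$. This identifies $\bI_m$ with $\Ob(\cI_{p^m})$; the chain $\bI_n\subset\cdots\subset\bI_0$ is then immediate.

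For the classification, let $\bI$ be a nonzero proper thick tensor ideal. Every indecomposable $J_i\in\bI$ must satisfy $p\mid i$: otherwise $i\ne 0$ in $\bk$ and the composite $\unit\to J_i\otimes J_i\to\unit$ is an isomorphism, splitting $\unit$ off $J_i\otimes J_i$ and forcing $\unit\in\bI$. Setting $m+1:=\min\{v_p(i):J_i\in\bI\}\ge 1$ gives $\bI\subseteq\bI_m$, and the reverse inclusion is the crux. Here I would exploit Frobenius reciprocity: a direct calculation shows $\Ind^{C_{p^n}}_{C_{p^{n-m-1}}}J_a\simeq J_{ap^{m+1}}$ for $a\le p^{n-m-1}$ (the induced module is cyclic of dimension $ap^{m+1}$), so $J_{p^{m+1}}=\Ind(\unit)$ and $J_{i_0}:=J_{a_0p^{m+1}}=\Ind(J_{a_0})$ whenever $\gcd(a_0,p)=1$. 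Combined with the projection formula $\Ind(V)\otimes W\simeq\Ind(V\otimes\Res W)$ and the restriction formula, this yields the central identity
\[
J_{p^{m+1}}\otimes J_a \;\simeq\; J_{(q+1)p^{m+1}}^{r}\oplus J_{qp^{m+1}}^{p^{m+1}-r},\qquad a=qp^{m+1}+r,\ 0\le r<p^{m+1}.
\]
Applying this with $a=(c-1)p^{m+1}+1$ realises each $J_{cp^{m+1}}$ as a summand of $J_{p^{m+1}}\otimes J_a$, so $J_{p^{m+1}}$ generates $\bI_m$. Conversely $J_{i_0}^{\otimes 2}\simeq\Ind(J_{a_0}\otimes\Res J_{i_0})\simeq\Ind(J_{a_0}^{\otimes 2})^{p^{m+1}}$, and the same dimension argument splits $\unit$ off $J_{a_0}^{\otimes 2}$ in $\cC_{n-m-1}$, putting $J_{p^{m+1}}=\Ind(\unit)$ as a summand of $J_{i_0}^{\otimes 2}\in\bI$. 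Hence $\bI_m\subseteq\bI$ and $\bI=\bI_m$. I expect this Frobenius-reciprocity bootstrap, simultaneously generating $\bI_m$ from $J_{p^{m+1}}$ and recovering $J_{p^{m+1}}$ from an arbitrary $J_{i_0}\in\bI$, to be the main technical step.

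For $\Ob(\cI_i)$ and $\bI_m^{\max}$, I would use via Theorem~\ref{ThmRigBij} that the tensor ideals of $\cC_n$ form the displayed chain, as $\cC_n(\unit,J_k)$ is one-dimensional for each indecomposable and any $J_{j+1}\to J_k$ factors through the socle. Monotonicity of $\Ob$ along the chain and the equality $\Ob(\cI_{p^m})=\bI_m$ reduce the remaining computation to showing $J_{p^{m+1}}\in\Ob(\cI_{p^{m+1}-1})$, equivalently that the coevaluation $\unit\to J_{p^{m+1}}\otimes J_{p^{m+1}}$ factors through the socle inclusion $\iota:\unit\hookrightarrow J_{p^{m+1}}$ that generates $\cI_{p^{m+1}-1}$. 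Since the central identity gives $J_{p^{m+1}}\otimes J_{p^{m+1}}\simeq J_{p^{m+1}}^{p^{m+1}}$, and precomposition with $\iota$ surjects $\End(J_{p^{m+1}})$ onto $\Hom(\unit,J_{p^{m+1}})$, every morphism $\unit\to J_{p^{m+1}}^{p^{m+1}}$ factors through some summand, giving the desired factorisation. The identity $\bI_m^{\max}=\cI_{p^m}$ is then immediate.
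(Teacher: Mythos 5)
Your proof is correct and rests on the same underlying machinery as the paper's — restriction and induction along the chain of $p$-power cyclic subgroups, together with Lemma~\ref{LemVic} — but the organisation differs in two respects. First, you give an independent classification of all proper thick tensor ideals (showing every $J_i\in\bI$ has $p\mid i$ via the trace/dimension argument, and then bootstrapping from the minimal $v_p$ using Frobenius reciprocity to get $\bI=\bI_m$), whereas the paper bypasses this step: it already knows the tensor ideals in $\cC_n$ form a chain and that $\Ob$ is surjective in general, so it only has to evaluate $\Ob$ along that chain. Second, to see that $J_{p^{m+1}}\in\Ob(\cI_i)$ for $p^m\le i<p^{m+1}$, the paper intersects $\cI_i$ with the tensor subcategory $\cC_{m+1}$, uses the earlier observation that this intersection is the nonzero ideal $\cI_i$ inside $\cC_{m+1}$, and then notes that any nonzero tensor ideal contains $\id_P$ for $P$ projective; you instead show directly that the coevaluation of $J_{p^{m+1}}$ factors through $\iota:\unit\hookrightarrow J_{p^{m+1}}$ using the fusion rule $J_{p^{m+1}}^{\otimes2}\simeq J_{p^{m+1}}^{\oplus p^{m+1}}$, which is a more explicit rendering of the same fact. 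Your ``central identity'' comes from the projection formula $\mathrm{Ind}(V)\otimes W\simeq\mathrm{Ind}(V\otimes\mathrm{Res}\,W)$, while the paper obtains the fusion rule $J_{p^{m+1}}\otimes J_{ap^{m+1}}\simeq J_{ap^{m+1}}^{\oplus p^{m+1}}$ by identifying the summands after restricting to $\cC_{n-m-1}$; these are two faces of the same calculation. Both routes are sound, with yours being somewhat longer but more self-contained.
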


\begin{proof}
First we observe that the modules $J_{a p^{m+1}}$ are precisely the indecomposable objects in $\Ob(\cI_{p^m})$ that become negligible after restriction $\cC_n\to\cC_{n-m}$.
Indeed, this condition means that each summand over $\bk[x^{p^m}]/(x^{p^n})$ needs to be of dimension divisible by $p$.

 It remains to verify that, for $m<n$ and $p^m\le i<p^{m+1}$, in the chain
$$\Ob(\cI_{p^{m+1}})\;\subset\; \Ob(\cI_i)\;\subset\;\Ob( \cI_{p^{m}})$$
the right inclusion is an equality. 
For this, we can observe that $J_{p^{m+1}}\in \Ob(\cI_i)$, since $\cI_i$ intersects $\cC_{m+1}\subset\cC_n$ non-trivially, and must thus contain the ideal of projective objects in $\cC_{m+1}$. We can then observe that the thick tensor ideal generated by $J_{p^{m+1}}$ contains all the modules $J_{ap^{m+1}}$. Indeed, the tensor product
$J_{p^{m+1}}\otimes J_{ap^{m+1}}$
contains $J_{ap^{m+1}}$ as a direct summand, and this follows from observing that the tensor product must be in $\bI_m$, and that under restriction $\cC_n\to\cC_{n-m-1}$, the module  $J_{ap^{m+1}}$ is sent to $(J_a)^{p^{m+1}}$.
\end{proof}

\begin{prop}\label{PropEnvCpn}
The pseudo-tensor category $\cC_n/\cI_{p^m}$ for $0\le m\le n$, admits the abelian envelope
$$\cC_m\boxtimes \overline{\cC_{n-m}}\;=\;\Rep C_{p^m}\boxtimes\overline{\Rep C_{p^{n-m}}}.$$
\end{prop}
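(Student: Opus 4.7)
The plan is to combine Theorem~\ref{PropQN} with an identification of the equivariantization on its right-hand side as a Deligne tensor product. Applying Theorem~\ref{PropQN} with $G=C_{p^n}$ and normal subgroup $N=C_{p^{n-m}}$, so that $Q:=G/N\simeq C_{p^m}$, Lemma~\ref{LemVic} identifies $\cI_{p^m}$ with the preimage under the restriction functor $\cC_n\to\Rep_{\bk}N$ of $\cN$, i.e., with the ideal $\cI_N$ appearing in Theorem~\ref{PropQN}. That theorem therefore directly exhibits
$$\cC_n/\cI_{p^m}\;\hookrightarrow\;(\overline{\cC_{n-m}})^{C_{p^m}}$$
as an abelian envelope.

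The remaining task is to identify the right-hand side with $\Rep C_{p^m}\boxtimes\overline{\cC_{n-m}}$, and the plan is to do this by showing that the categorical $Q$-action on $\overline{\cC_{n-m}}$ is strictly trivial. Since $G$ is abelian, the conjugation $n\mapsto g_i^{-1}ng_i$ is the identity, so each twisting auto-equivalence $T_{g_i}$ of $\cC_{n-m}$ is literally the identity functor, and descends to the identity on $\overline{\cC_{n-m}}$. The coherence datum $T_{g_i}\circ T_{g_j}\Rightarrow T_{g_l}$, which for $g_ig_j=g_ln_0$ has component on $V$ given by the action of $n_0\in N$, will accordingly reduce to the identity provided every $n_0\in N$ acts as the identity in $\overline{\cC_{n-m}}$.

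The main obstacle is this last verification. To address it, write $\sigma$ for a generator of $G$ and $x=\sigma-1\in\bk N$; the augmentation ideal of $\bk N$ is the principal ideal $(x)$, so $n_0-1=xf$ for some $f\in\bk N$, and it suffices to show that $x$ acts as a negligible endomorphism of every $V\in\cC_{n-m}$. Reducing to indecomposables $V=J_i$, this is a short trace computation: with $x$ acting on $J_i=\bk[x]/x^i$ as the shift operator in the standard basis, one has $\Tr(x^l)=0$ for every $l\geq 1$, so $\Tr(gx)=0$ for every $g\in\End(J_i)=\bk[x]/x^i$. Once this is in place, the $Q$-action on $\overline{\cC_{n-m}}$ is strictly trivial, so a $Q$-equivariant object is just an object of $\overline{\cC_{n-m}}$ together with a compatible $Q$-representation structure, yielding $(\overline{\cC_{n-m}})^{C_{p^m}}\simeq\Rep C_{p^m}\boxtimes\overline{\cC_{n-m}}$ and completing the argument.
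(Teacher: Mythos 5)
Your proof is correct and follows essentially the same route as the paper: apply Theorem~\ref{PropQN} with $N=C_{p^{n-m}}$ (using Lemma~\ref{LemVic} to identify $\cI_{p^m}$ with $\cI_N$), and then show the induced $C_{p^m}$-action on $\overline{\cC_{n-m}}$ is strictly trivial. The only discrepancy is in the final verification, and it is cosmetic rather than substantive. The paper observes that the coherence-datum endomorphism $n_0-1$ is nilpotent on each indecomposable and therefore dies in the semisimplification; you instead factor $n_0-1=xf$ through the augmentation ideal of $\bk N$ and check $\Tr(g x)=0$ directly, which is the same negligibility computation phrased via traces rather than via nilpotence plus locality of $\End(J_i)$. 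One small slip: you write ``let $\sigma$ be a generator of $G$ and $x=\sigma-1\in\bk N$,'' but $\sigma\notin N$, so $\sigma-1\notin\bk N$. What you actually use (and what makes the rest of the paragraph, including the identification $J_i=\bk[x]/x^i$ as $\bk N$-modules, correct) is $x=\tau-1$ for $\tau$ a generator of $N=C_{p^{n-m}}$, e.g. $\tau=\sigma^{p^m}$; with that replacement the argument goes through.
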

\begin{proof} By Lemma~\ref{LemVic}, the ideal $\cI_{p^m}$ is the preimage of the ideal of neglibile morphisms in $\Rep C_{p^{n-m}}$.
By an application of Theorem~\ref{PropQN}, we find that the abelian envelope is 
$\overline{\Rep C_{p^{n-m}}}^{C_p^m}$. We only need to prove that the action of $C_{p^m}$ becomes trivial on $\overline{\Rep C_{p^{n-m}}}$. If $\sigma$ is the generator of $C_{p^n}$, we choose as coset representatives of $C_{p^n}/C_{p^{n-m}}$ the set 
$$\{g_i:=\sigma^i\mid 0\le i <p^m\}$$

Since $C_{p^n}$ is abelian, every functor $T_{g_i}$ from \ref{subsecT} is the identity functor. Also the natural transformations from \ref{subsecT} are the identity, except when $i+j\ge p^{m}$, in which case the natural transformation (of the identity functor) is given by the action of $\sigma^{p^{m}}$. Now, the difference of the identity transformation and the above transformation is nilpotent on each indecomposable object $\Rep C_{p^{n-m}}$ and hence vanishes in the semisimplification.
 \end{proof}

\begin{remark}\label{RemEnvC}
\begin{enumerate}
\item The abelian envelope is never full for $0<m<n$. Indeed, one can observe that $\unit\to J_{p^{m}+1}$ is sent to zero, while $J_{p^{m}+1}$ is sent to an object $X$ that has $J_{p^{m}-1}\boxtimes \unit$ as a direct summand, so that the space of morphisms $\unit\to X$ in the abelian envelope is not zero.
\item The category $\overline{\Rep C_{p^{n-m}}}$ has been studied in \cite[\S 5]{CEO-as}.
\end{enumerate}

\end{remark}

\begin{proof}[Proof of Theorem~\ref{ThmCpn}]
The only thing that remains to be proved is that $\cI_j$ is not prime when $j$ is not a power of $p$.
Consider $0\le a,b< p^n$ with
$$\binom{a+b}{a}\;\not\equiv\;0\mod p.$$
Then the submodule generated by $1\otimes 1$ in $J_{a+1}\otimes J_{b+1}$ has socle spanned by $x^{a}\otimes x^{b}$. Hence the image of 
$$(\unit\hookrightarrow J_{a+1})\otimes (\unit\hookrightarrow J_{b+1})$$
in $J_{a+1}\otimes J_{b+1}$ is contained inside a summand $J_{i+1}$ with $i\ge a+b$ (this is actually an equality). In conclusion, for such $a,b$ we observed that the tensor product of two morphisms that are not in $\cI_{a+b}$ ends up in $\cI_{a+b}$, so that $\cI_{a+b}$ is not prime.

Now if $1\le j\le  p^n$ is not a power of $p$, then, by Lucas' Theorem, there exists $0<a< j$ with
$\binom{j}{a}$ not divisible by $p$,
so that $\cI_j$ is not prime.
\end{proof}

\subsubsection{Generalisation}\label{formal}
Let $F(u,v)\in \bk[[u,v]]$ be a one-dimensional formal group law over $\bk$. For every $n\in \mZ_{>0}$, there is a cocommutative Hopf algebra structure $H_n$ on $\bk[x]/(x^{p^n})$, with
$$\Delta(x)\;=\; F(x\otimes 1,1\otimes x).$$
For $m<n$, we have Hopf algebra inclusions $H_m\subset H_n$, $x\mapsto x^{p^{n-m}}$ and morphisms $H_n\to H_m$, $x\mapsto x$. For example, taking the multiplicative formal group law $u+v+uv$ recovers the example $H_n\simeq \bk C_{p^n}$ from this section. The additive group law gives $H_n=\bk[\alpha_{p^n}]$, the coordinate algebra of the $n$-th Frobenius kernel of the additive group $\mG_a$.

One can verify that all observations in the current section, with the exception of Proposition~\ref{PropEnvCpn}, remain valid with identical proof for the tensor categories $\cC_n^F:=H_n$-${\sf Mod}$. In particular all maps in \eqref{themap} are bijections.

\begin{problem}
Show that Proposition~\ref{PropEnvCpn} remains valid in the generality of \ref{formal}.
\end{problem}
The complication is that we cannot use Theorem~\ref{PropQN}, although the abelian envelope is still guaranteed to exist by Theorem~\ref{ThmAbEnv}. To identify the envelope one needs for instance to understand the endomorphism algebra of the functor $\cC^F_n\to \overline{\cC^F_{n-m}}$. Note that the semisimplification of $\cC_n^F$ is known to be independent of $F$, as follows from independence of the Green ring of $\cC_n^F$ on $F$ and \cite[\S 5]{CEO-as}.

\subsection{The Klein 4-group}\label{Klein}We let $\bk$ be an algebraically closed field of characteristic 2.
The tensor ideals of abelian type in $\Rep_{\bk} C_2^2$ have been classified in \cite[Theorem~6.1.2]{CF}. In the notation of \cite[Lemma~6.3.6]{CF}, they are indexed by 
$$\left(\mN\sqcup \{\infty\}\right)\times \mP^1(\bk)\;\sqcup\; \{\infty\}$$
as $\cJ_i^\lambda$ (with $i\in\mN\cup\{\infty\}$ and $\lambda\in\mP^1(\bk)$) and $\cJ_\infty=\cN$, except that $\cJ_1^\lambda$ only exists if $\lambda\in\mP^1(\mF_2)$. We actually slightly adapted notation for convenience below, as $\cJ_0^\lambda$ for us is denoted by $\cJ_1^\lambda$ in \cite{CF}, for $\lambda\not\in\mP(\mF_2)$. Moreover, by \cite[Theorem~6.1.2]{CF} they all admit abelian envelopes, which are
$\Rep C_2^2$, $\Rep C_2$, $\Rep\alpha_2 $, $\Ver_4$, $\Vecc$ or $\Vecc_{\mZ}$.

It turns out that $\Upsilon$ in \eqref{themap} is `almost' a bijection.
\begin{prop}
For $\Rep C_2^2$, the map $\Upsilon$ in \eqref{themap} is a surjection
such that all fibres are singletons, except that one fibre has the cardinality of $\bk$.
\end{prop}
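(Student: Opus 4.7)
The plan is to match the classification of abelian-type ideals from \cite[Lemma~6.3.6]{CF} against the classification of prime thick tensor ideals in $\Rep_\bk C_2^2$, and then count the fibres of $\Ob$.

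First I would enumerate the prime thick tensor ideals. The zero thick ideal $\bo$ is prime because the vector-space dimension is a multiplicative positive-integer-valued invariant on nonzero objects; similarly $\bN$ is prime using the categorical dimension (which on $\Rep G$ equals the vector-space dimension reduced mod $\charr\bk$), combined with the fact that a tensor product of indecomposables of nonzero categorical dimension admits an indecomposable summand of nonzero categorical dimension. The remaining prime thick tensor ideals come from the tt-geometric classification for the stable category of $C_2^2$ in characteristic $2$: Balmer's spectrum is $\Proj H^*(C_2^2,\bk)\cong \mP^1_\bk$, so there is one further prime thick ideal $\bI_\lambda$ per closed point $\lambda\in\mP^1(\bk)$, namely the modules whose rank variety avoids $\lambda$. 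The poset of prime thick tensor ideals is thus $\bo\subsetneq\bI_\lambda\subsetneq\bN$ with the $\bI_\lambda$ pairwise incomparable, of total cardinality $|\bk|+2$.

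Next I would evaluate $\Ob$ on each abelian-type ideal. Every abelian-type ideal is prime by Proposition~\ref{propnotab1}, and $\Ob$ carries primes to primes (since $\id_{X\otimes Y}=\id_X\otimes\id_Y$), so $\Ob(\cJ)\in\{\bo,\bI_\mu,\bN\}$. The zero ideal gives $\bo$ and is its unique preimage, because $\Ob(\cI)=\bo$ forces $\cI$ to be the kernel of a faithful tensor functor, hence zero. For $\cJ_0^\lambda$, whose abelian envelope is $\Rep C_2$ if $\lambda\in\mP^1(\mF_2)$ and $\Rep\alpha_2$ otherwise, the defining functor factors through restriction to the order-two subgroup scheme of the Hopf algebra $\bk C_2^2$ in direction $\lambda$; Dade's lemma (and its shifted-subgroup analogue for the $\alpha_2$-directions) identifies $\Ob(\cJ_0^\lambda)$ with the modules whose restriction along $\lambda$ is free, i.e.\ with $\bI_\lambda$. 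For $\cJ_i^\lambda$ with $i\geq 1$, the envelope is among $\{\Ver_4,\Vecc,\Vecc_\mZ\}$; unpacking the construction shows that objects outside $\bI_\lambda$ become negligible in the envelope, so $\Ob(\cJ_i^\lambda)\supsetneq\bI_\lambda$, and by the primeness dichotomy $\Ob(\cJ_i^\lambda)=\bN$. The same conclusion holds trivially for $\cJ_\infty=\cN$.

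The fibres of $\Ob$ are therefore $\Ob^{-1}(\bo)=\{0\}$ and $\Ob^{-1}(\bI_\lambda)=\{\cJ_0^\lambda\}$ for each $\lambda\in\mP^1(\bk)$, all singletons, whereas $\Ob^{-1}(\bN)=\{\cJ_\infty\}\cup\{\cJ_i^\lambda:i\geq 1\}$ has cardinality $\aleph_0\cdot|\mP^1(\bk)|=|\bk|$, using that $\bk$ is algebraically closed and hence infinite. The main technical point is showing $\Ob(\cJ_i^\lambda)\supsetneq\bI_\lambda$ for $i\geq 1$: one must go into the specific construction of these ideals in \cite{CF} to verify that the $\lambda$-parameter is genuinely lost at the level of objects once $i\geq 1$, so that these ideals all collapse to the single prime $\bN$. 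Everything else is a direct count.
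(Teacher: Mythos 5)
Your plan goes wrong at the point where you invoke Balmer's spectrum. Tensor-triangular geometry classifies prime \emph{thick tensor-triangulated} ideals in the stable module category, i.e.\ subcategories closed under tensoring, retracts \emph{and cones}. The thick tensor ideals in this paper, defined in \ref{defideals} for a pseudo-tensor category, are only required to be closed under direct summands, direct sums and tensoring --- not under extensions or cones. In $\Rep_\bk C_2^2$ these are genuinely different structures: the paper's proof cites \cite[Lemma~6.2.1]{CF} to show that proper thick tensor ideals are parametrised by \emph{all} functions $f:\mP^1(\bk)\to\mN\sqcup\{\infty\}$ (with a mild constraint at $f(\lambda)=1$), the ideal attached to $f$ containing exactly those $A_m(\lambda)$ with $m\le f(\lambda)$. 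Because one can truncate at any $m$ separately for each $\lambda$, the lattice of thick tensor ideals is vastly larger than the three-layer poset $\bo\subsetneq\bI_\lambda\subsetneq\bN$ that you extract from the Balmer spectrum.

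This propagates to the heart of your argument. The prime thick tensor ideals are precisely those corresponding to $f$ with $f(\lambda)=\infty$ for all but one $\lambda$; you get one prime for every pair $(\lambda_0,n)$ with $\lambda_0\in\mP^1(\bk)$ and $n\in\mN\sqcup\{\infty\}$ (subject to the $n=1$ constraint), and the prime with $f\equiv\infty$ is $\bN$. In particular there are infinitely many primes strictly between each $\bI_\lambda$ and $\bN$, so your ``primeness dichotomy'' step --- concluding $\Ob(\cJ_i^\lambda)=\bN$ from $\Ob(\cJ_i^\lambda)\supsetneq\bI_\lambda$ --- fails. In fact the paper's proof shows that $\Ob(\cJ_i^\lambda)$ for finite $i$ are pairwise \emph{distinct} prime thick tensor ideals (namely the one with $f(\lambda)=i$), and the only collisions are among $\{\cJ_\infty^\lambda:\lambda\in\mP^1(\bk)\}\cup\{\cJ_\infty\}$, all of which hit $\bN$. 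Your fibre over $\bN$, which sweeps in every $\cJ_i^\lambda$ with $i\ge1$, is therefore the wrong set, and your claimed singleton fibres $\Ob^{-1}(\bI_\lambda)=\{\cJ_0^\lambda\}$ wrongly discard the ideals $\cJ_1^\lambda,\cJ_2^\lambda,\ldots$, which go to distinct primes. That the headline cardinality count ($|\bk|$ for the exceptional fibre, all others singletons) matches the Proposition is an artefact of $\bk$ being infinite and does not validate the reasoning. To repair the argument you would need to replace the tt-geometric input with the CF analysis of thick tensor ideals in the additive (non-triangulated) sense, which is exactly what the paper does.
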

\begin{proof}
It follows for instance from \cite[Theorem~6.1.2]{CF} that only the ideals $\cJ_\infty^\lambda$ and $J_\infty$ map to the same thick tensor ideal. Next, we prove surjectivity.

It follows from the tensor product rules, see for instance \cite[Lemma~6.2.1]{CF}, that proper thick tensor ideals can be parametrised by all functions
$$f: \mP^1(\bk)\;\to\; \mN\sqcup\{\infty\},$$
where only elements of $\mP^1(\mF_2)$ are allowed to be sent to $1$. For such a function, the indecomposable objects in the corresponding thick tensor ideal are all $A_m(\lambda)$ for $m\le f(\lambda)$. For such an ideal to be prime, we require $f(\lambda)=\infty$ for all but one $\lambda$. The corresponding thick tensor ideal is then precisely $\Ob(\cJ^\lambda_{f(\lambda)})$.
\end{proof}

Recall that $\bN$ always denotes the unique maximal thick tensor ideal (of negligible objects) and $\bN^{\min}$ is the minimal tensor ideal with $\Ob(\bN^{\min})=\bN$.

\begin{prop}
The ideal $\bN^{\min}$ is prime but not faithfully prime, so not of abelian type.
\end{prop}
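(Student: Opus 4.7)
The plan is to handle the two claims independently; the consequence that $\bN^{\min}$ is not of abelian type then follows from Proposition~\ref{propnotab1}.

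For \emph{failure of faithful primality}, I will exhibit explicit witnesses to the characterisation in the lemma of \ref{defideals}. Take $X=\bk$, $Y=\Omega^{-1}\bk$ (the inverse Heller shift of the trivial module, realised as $\bk[x,y]/(x^2,y^2,xy)$ with basis $\bar 1,\bar x,\bar y$, where $\bar x,\bar y$ span the socle), and $f,g:\bk\to \Omega^{-1}\bk$ the two morphisms $1\mapsto \bar x$ and $1\mapsto \bar y$; these form a basis of $\Hom(\bk,\Omega^{-1}\bk)\cong H^1(C_2^2,\bk)$. A short calculation with the coproduct of the norm element $N=1+g_1+g_2+g_1g_2=xy\in \bk C_2^2$ gives $N\cdot(\bar 1\otimes \bar 1)=\bar x\otimes \bar y+\bar y\otimes \bar x$ in $\Omega^{-1}\bk\otimes \Omega^{-1}\bk$, which shows that $f\otimes g-g\otimes f$ (equal to $f\otimes g+g\otimes f$ in characteristic two) factors through the projective $\bk C_2^2$ via $\bk\hookrightarrow \bk C_2^2\to \Omega^{-1}\bk\otimes \Omega^{-1}\bk$, and hence lies in $\bN^{\min}$. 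I then need to verify that no non-trivial combination $af+bg$ lies in $\bN^{\min}$: factorisation through $\bk C_2^2$ is ruled out because $N=xy$ acts by zero on $\Omega^{-1}\bk$, while factorisation through any band module $M_m(\lambda)$ can be ruled out by passing to the stable module category and computing that the composition $\underline{\Hom}(\bk,M_m(\lambda))\otimes \underline{\Hom}(M_m(\lambda),\Omega^{-1}\bk)\to \underline{\Hom}(\bk,\Omega^{-1}\bk)$ vanishes (directly visible for $m=1$, since any $\psi:M_1(\lambda)\to \Omega^{-1}\bk$ is forced to satisfy $\psi(b)=0$ on the socle generator).

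For \emph{primality}, my plan combines the classification of \cite{CF} with the position of $\bN^{\min}$ as the minimum of the fibre $\Ob^{-1}(\bN)$. Each $\cJ^\lambda_\infty$ (for $\lambda\in\mP^1(\bk)$) lies in this fibre and is prime as an ideal of abelian type, so $\bN^{\min}\subseteq \bigcap_\lambda \cJ^\lambda_\infty$; I would first verify equality by unpacking the descriptions of the $\cJ^\lambda_\infty$ in \cite[Theorem 6.1.2]{CF} and matching them against the morphisms factoring through a direct sum of negligible indecomposables. Given this equality, $\phi\otimes \psi\in \bN^{\min}$ forces $\phi\otimes \psi\in \cJ^\lambda_\infty$ for every $\lambda$, and primeness of each $\cJ^\lambda_\infty$ gives $\phi\in \cJ^\lambda_\infty$ or $\psi\in \cJ^\lambda_\infty$ pointwise. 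A support-theoretic argument on $\mP^1(\bk)$, using that the set of $\lambda$ for which a given morphism lies in $\cJ^\lambda_\infty$ is controlled by the support varieties of the source and target indecomposables, then forces one of $\phi,\psi$ into every $\cJ^\lambda_\infty$ and hence into $\bN^{\min}$.

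The main obstacle is the primality step: verifying $\bN^{\min}=\bigcap_\lambda \cJ^\lambda_\infty$ and drawing the support-theoretic conclusion on $\mP^1(\bk)$. The failure of faithful primality is by contrast quite concrete, reducing to graded commutativity of cup product in $H^*(C_2^2,\bk)$ in characteristic two together with the explicit action of the norm element.
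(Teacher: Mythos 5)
Your argument that $\bN^{\min}$ fails to be faithfully prime is correct and is essentially an explicit version of the paper's. The paper simply counts dimensions in the quotient category $\cA=\Rep C_2^2/\bN^{\min}$: the tensor-product map $\Hom(\unit,\Omega^{-1})\otimes_{\bk}\Hom(\unit,\Omega^{-1})\to\Hom(\unit,\Omega^{-2})$ goes from a $4$-dimensional to a $3$-dimensional space, so it cannot be injective. Your computation with the norm element $N$ is an explicit witness to the same failure (and your claim $N(\bar 1\otimes\bar 1)=\bar x\otimes\bar y+\bar y\otimes\bar x$ checks out), but it requires you to rule out factorisations of $af+bg$ through every negligible indecomposable, which you only sketch for projectives and for band modules with $m=1$. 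The paper's dimension count sidesteps this entirely, so you should either complete the band-module step for all $m$ or adopt the dimension count.

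The primality argument, however, has a genuine gap. Your plan rests on two unverified steps. First, the identity $\bN^{\min}=\bigcap_{\lambda}\cJ^{\lambda}_{\infty}$ is asserted but not proved; it is not a formal consequence of $\bN^{\min}$ being minimal in the fibre $\Ob^{-1}(\bN)$, since minimality only gives the inclusion $\bN^{\min}\subseteq\bigcap_{\lambda}\cJ^{\lambda}_{\infty}$, and you would need a concrete argument that every morphism lying in all $\cJ^{\lambda}_{\infty}$ actually factors through a negligible object. Second, and more seriously, even granting that identity, an intersection of prime ideals need not be prime. For each $\lambda$ you get $f\in\cJ^{\lambda}_{\infty}$ or $g\in\cJ^{\lambda}_{\infty}$, but the choice may vary with $\lambda$; you need that one of the two sets $\{\lambda: f\in\cJ^{\lambda}_{\infty}\}$, $\{\lambda: g\in\cJ^{\lambda}_{\infty}\}$ is all of $\mP^1(\bk)$. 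Your proposed resolution, that these sets are ``controlled by the support varieties of the source and target indecomposables,'' does not obviously apply here: the indecomposables relevant to $\bN^{\min}$ are the images of the $\Omega^{j}$, which all have full support $\mP^1(\bk)$, so support of source and target cannot distinguish different $\lambda$. You would need a genuinely finer invariant attached to the morphisms themselves, and it is unclear what that should be.

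The paper's primality argument is much more elementary and avoids the classification of \cite{CF} entirely. It reduces (by adjunction) to showing that for $m,n>0$ and nonzero $f:\unit\to\Omega^{-m}$, $g:\Omega^{n}\to\unit$, the composite $f\circ g$ does not factor through a negligible object. Using that $\Omega^{m}\otimes\Omega^{n}\cong\Omega^{m+n}$ up to projective summands, the only candidate negligible through which such a composite could factor is a projective; but any nonzero morphism $\Omega^{n}\to P\to\Omega^{-m}$ with $P$ projective has image of dimension at least two, whereas $f\circ g$ has image of dimension at most one. You would do well to replace your indirect primality argument with a direct structural argument of this kind.
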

\begin{proof}Set $\cA=\Rep C_2^2/\bN^{\min}$.
By the classification of modules recalled in \cite[\S 6.1]{CF}, the indecomposable objects in $\cA$ are given by the images of the representations $\Omega^j=\Omega^j\unit$, $j\in\mZ$, where $\Omega X$ stands for the kernel of the projective cover of $X$. In other words, $\Omega^{-n}$ for $n>0$ is $2n+1$-dimensional, with basis 
$$u_1,\ldots, u_n,v_0,\ldots,v_n$$
so that $(1+g_1)u_i=v_{i-1}$ and $(1+g_2)u_i=v_i$ for $g_1,g_2$ the generators of $C_2^2$ and $(1+g_j)v_i=0$ for $1\le j\le 2$ and $0\le i\le n$. The representation $\Omega^n$ is its dual. The tensor product in $\cA$ is given by $\Omega^n\otimes \Omega^m=\Omega^{n+m}$. Next we claim that 
$$\cA(\unit,\Omega^n)=\begin{cases}
0&\mbox{ if }\;n>0,\\
\Hom_{C_2^2}(\unit,\Omega^n)\simeq \bk^{1-n} &\mbox{ if }\;n\le 0.
\end{cases}$$
Indeed, for $n>0$, the socle of $\Omega^n\simeq (\Omega^{-n})^\ast$ is contained in the kernel of $v_0:(\Omega^{-n})^\ast\to\bk$, which is an even-dimensional indecomposable submodule of $\Omega^n$. In other words, every morphism $\unit\to \Omega^n$ factors through a negligible module. On the other hand, since every non-projective indecomposable negligible module $N$ in $\Rep C_2^2$ is a quotient of $\Omega^{-m}$, for some $m>0$, by a 1-dimensional submodule, it follows that the morphisms $N\to\Omega^n$, for $n<0$, take values in the socle, so that composites $\unit\to N\to \Omega^n$ are zero. The same conclusion holds if $N$ is projective.

That $\bN^{\min}$ is not faithfully prime now follows by the dimension count
$$\bk^4\simeq\Hom(\unit,\Omega^{-1})\otimes_{\bk}\Hom(\unit,\Omega^{-1})\;\xrightarrow{\otimes}\Hom(\unit,\Omega^{-2})\simeq\bk^3.$$

Finally, we need to prove that $\bN^{\min}$ is prime. It suffices (by adjunction) to prove that for $m,n>0$ and non-zero morphisms $f:\unit\to \Omega^{-m}$ and $g:\Omega^n\to \unit$, the composite $f\circ g$ does not factor through a negligible object. It follows using the same type of arguments as before (or using that in $\Rep C_2^2$, the product $\Omega^m\otimes\Omega^n$ equals $\Omega^{m+n}$ up to a projective summand), that the only way such a composite can factor through a negligible object is if it factors via a projective one. However, we claim that a non-zero morphism $\Omega^n\to P\to\Omega^{-m}$, with $P$ projective, has image of dimension at least two, which cannot be the case for $f\circ g$. To demonstrate the claim, we can observe that any morphism $P\to\Omega^{-m}$ either takes values in the socle (so that any composite $\Omega^n\to P\to\Omega^{-m}$ must be zero) or has three-dimensional image. The dual obervation for $\Omega^{n}\to P$ must thus also hold, which leads to the claim.
\end{proof}

\begin{remark}
 We can also see that $\bN^{\min}$ is not of abelian type by Remark~\ref{RemNecCond} and the observation that the morphism space $\unit\to\Omega^{-1}$ is two-dimensional in $\Rep C_2^2/\bN^{\min}$.

\end{remark}

\begin{remark}\label{rem:super}
Let $\bk$ be an algebraically closed field with $\mathrm{char}(\bk)\not=2$. Then we can define a symmetric tensor category $\cC$ as the representation category of the Lie superalgebra with underlying supervector space $\bk^{0|2}$, which is an adaptation of the tensor category in the current section to other characteristics. Using the exact same arguments as above, the ideal $\bN^{\min}$ is prime but not faithfully prime. Alternatively, we can consider the zero ideal in the category of vector bundles on $\mP^1$.
\end{remark}


\section{Quantum groups: preparation}\label{sec:quantum1}
In this section we prepare the background for classifying prime tensor ideals for quantum tilting modules. We recall some basic facts and connect some dots in the literature.

\subsection{Conventions}\label{SecConv}
For this section and the next two, we set $\bk=\mC$.

\subsubsection{}\label{sec:condl}
Let $\fg$ be a finite-dimensional semisimple Lie algebra of rank $r$ with fixed triangular decomposition
$\fn^-\oplus\fh\oplus\fn^+,$ and $\zeta \in \mC$ a primitive $\ell$-th root of unity, with $\ell$ an odd integer, larger than the Coxeter number of any factor of $\fg$, and not divisible by 3 if $\fg$ contains a factor~$\tG_2$.
We let $G$ be the reductive group of adjoint type corresponding to $\fg$, acting faithfully on~$\fg$. 
We denote by
$$\mX^+\;\subset\;\mX\;\subset\;\fh^\ast$$
the lattice $\mX$ of weights appearing in finite dimensional $\fg$-modules and the subset $\mX^+$ of their highest weights. We denote by $\Phi\subset\mX$ the set of roots of $\fg$. The positive roots $\Phi^+$ correspond to the roots in $\fn^+$ and $\rho\in\mX$ is the half sum of elements in $\Phi^+$.

\subsubsection{}
Let $U_\zeta (\fg)$ be Lusztig's quantum group (a Hopf algebra over $\mC$) with divided powers, with the `small quantum group' Hopf subalgebra $\bu_\zeta(\fg)$. 
By a $U_\zeta(\fg)$-module, we refer to a finite-dimensional type 1 module. These modules form a ribbon tensor category $\Rep U_\zeta(\fg)$. We refer to \cite[Appendix~H]{Jantzen} for more details.

For a Levi subalgebra $\fl\subset\fg$, we set $\fl':=[\fl,\fl]$. Then we have canonical inclusions $U_\zeta(\fl')\subset U_{\zeta}(\fg)$ and $\bu_\zeta(\fl')\subset\bu_\zeta(\fg)$ as Hopf algebras.

\subsubsection{} We denote by $W_f$ the Weyl group of $\fg$, and by $W=W_f\ltimes Q$, with $Q$ the $\mZ$-span of $\Phi$ in $\mX$, the corresponding affine Weyl group. We denote generators of $W_f$ corresponding to the simple roots in $\Phi$ by $s_1,\ldots, s_r$ and the affine generator by $s_0$. The affine Weyl group acts on $\mX$ where
$$s_0(\lambda)\;=\;\lambda+(\ell-\vartheta^\vee(\lambda))\,\vartheta,$$
for $\vartheta\in\Phi$ the highest short root.
We write $W^+\subset W$ for the subset of shortest coset representatives of $W_f\backslash W$. Then for the $\rho$-shifted action, $W^+$ embeds into $\mX^+$ by $w\mapsto w\cdot 0=w(\rho)-\rho.$
For example, $s_0\cdot0=(\ell+1-h)\vartheta$.

The alcoves of $\mR\otimes \mX$ are the connected components of the complement of the reflection hyperplanes of $W$ under the $\rho$-shifted action. We are mainly interested in the intersection with $\mX$ of the lower closures of the alcoves. These are the subsets of the form
$$\{\lambda\in\mX\mid (n_\alpha-1)\ell\,\le\, \alpha^\vee(\lambda+\rho) \,<\,n_\alpha \ell,\;\forall \alpha\in\Phi^+\},$$
for fixed $n_\alpha\in\mZ$.

The representation category $\Rep U_\zeta(\fg)$ decomposes into a direct sum of the Serre subcategories generated by those simple modules with highest weights in one single $\rho$-shifted $W$-orbit, see \cite[Corollary~8.2]{APW}. The principal block $\Rep_0U_\zeta(\fg)$ is this subcategory corresponding to the orbit of $0$.

\subsubsection{Tilting modules} 
We denote by $\Tilt U_\zeta (\fg)$ the (ribbon) pseudo-tensor subcategory of $\Rep U_\zeta(\fg)$ of tilting modules, see \cite{An}. Indecomposable tilting modules are labelled by their highest weight as $T(\lambda)$, $\lambda\in\mX^+$. The category $\Rep U_\zeta(\fg)$ has enough projective objects, and all projective objects are tilting modules and also injective, see \cite{APW}. Concretely, the indecomposable projectives are $T((\ell-1)\rho+\mu)$, $\mu\in\mX^+$ and the projective cover, and injective hull, of $\unit$ is $T((2\ell-2)\rho)$.

For a Levi subalgebra $\fl\subset\fg$, the restriction (braided)  tensor functor from $\Rep U_\zeta(\fg)$ to $\Rep U_\zeta(\fl)$ maps tilting modules to tilting modules, see \cite[Corollary~1.9(i)]{Ka} for the integral version from which both the quantum and the algebraic group case follow.

\subsection{Nilpotent orbits}
We refer to \cite{CM} for details on nilpotent orbits.

\subsubsection{}Denote the nilpotent cone of $\fg$ by $\nil=\nil(\fg)$. We consider the set $\nil\hspace{-0.3mm}/G$ of non-empty $G$-orbits $\cO$ in $\nil$ as a partially ordered set, where $\cO_1$ is smaller than $\cO_2$ if $\cO_1$ is included in  the (Zariski) closure $\overline{\cO_2}$.

\subsubsection{Lusztig - Xi bijection}\label{LXbij} There are bijections
$$\nilo\;\longleftrightarrow\;\{\mbox{two-sided cells in $W$}\}\;\longleftrightarrow\;\{\mbox{right cells in $W^+$}\},$$
where the first one is from \cite{Lu4} and the second is given by intersection with $W^+$, see \cite[Theorem~1.2]{LX}. 
We denote the overall map from left to right by $\cO\mapsto c(\cO)$. Furthermore, we denote by $C(\cO)$ the union of all lower closures of the alcoves that contain an element $x\cdot 0$ for $x\in c(\cO)\subset W^+$. In particular $\mX^+=\sqcup_{\cO}C(\cO)$.

An orbit $\cO$ is {\bf distinguished} if its intersection with every proper Levi subalgebra of $\fg$ is empty.
By \cite[Theorem~8.1]{Lu4}, an orbit $\cO$ is distinguished if and only if $c(\cO)$ is finite.

\subsection{Support theory}\label{SecSupp}

\subsubsection{}\label{sec:GK}
As proved in \cite{GK}, see also \cite{BNPP}, we have an isomorphism
$$\Ext^{\bullet}_{\bu_\zeta(\fg)}(\unit,\unit)=\Ext^{2\bullet}_{\bu_\zeta(\fg)}(\unit,\unit)\;\simeq\;\mC^\bullet[\nil]$$
of graded algebras (for $\nil$ viewed as a cone). As can be verified from naturality of the construction in \cite{GK}, the isomorphisms admit a commutative square with restriction
\begin{equation*}\label{commdiaGK}
\xymatrix{
\Ext^{2\bullet}_{\bu_\zeta(\fg)}(\unit,\unit)\ar[d]\ar[rr]^{\sim}&&\mC^\bullet[\nil]\ar[d]\\
\Ext^{2\bullet}_{\bu_\zeta(\fl')}(\unit,\unit)\ar[rr]^{\sim}&&\mC^\bullet[\nil(\fl')],}
\end{equation*}
where the right map is restriction along $\nil(\fl')=\nil\cap\fl\hookrightarrow \nil$. 

\subsubsection{}For any $\bu_{\zeta}(\fg)$-module $M$, the kernel of the graded algebra morphism
$$\Ext^{\bullet}_{\bu_\zeta(\fg)}(\unit,\unit)\;\to\; \Ext^{\bullet}_{\bu_\zeta(\fg)}(M,M)$$
defines a subcone of $\nil$, its support variety (cone)
$\supp (M)\subset \nil.$ 
For example
$$\supp(\unit)=\nil,\quad \supp(P)=0,\quad \supp(0)=\varnothing,$$
if $P$ is projective.
If $M$ is actually a $U_\zeta(\fg)$-module, then the support variety is $G$-stable (this follows from the observation $\Ext^\bullet_{U_\zeta(\fg)}(U(\fg),-)\simeq \Ext^\bullet_{\bu_\zeta(\fg)}(\mC,-)$ in \cite[\S 5.2]{GK} and Remark (iii) on p180 of \cite{GK}) and hence a (finite) union of nilpotent orbits.

It follows from the naturality in \ref{sec:GK}, see \cite[Lemma~3.4(ii)]{OsSupp}, that
\begin{equation}\label{eq:IncSuppLevi}\supp \Res M\subset\nil(\fl)\cap\supp M,\end{equation}
which is expected to be an equality.

\subsubsection{}For $U_\zeta(\fg)$-modules $M,N$ we have
\begin{equation}\label{supptensor}
\supp(M\otimes N)\;\subset\;\supp(M)\cap\supp(N),
\end{equation}
see \cite[Lemma~3.6]{OsSupp}, while equality is expected. We will show in Proposition~\ref{prop:thickprime}(4) that at least equality holds for tilting modules. By \cite[Lemma~3.4]{OsSupp}, we also know that the support of a direct sum is the union of the supports of the summands.

For any $\lambda\in\mX^+$, we have
\begin{equation}\label{supptilt}\supp(T(\lambda))\;=\;\overline{\cO}\quad\Leftrightarrow \quad \lambda\in C(\cO).\end{equation}
Indeed, in type $\tA$ this is proved in \cite[Theorem~6.8]{OsSupp}. In general this was proved in \cite[Corollary~3]{Be}, for the support based on the $\mC[\nil]$-module $\Ext^\bullet_{\bu_{\zeta}(\fg)}(\unit,T(\lambda))$, and subsequently for the support notion as used above in \cite[\S 7.2]{BKN} and \cite[\S 8.4]{AHR1}.

\subsection{Hecke algebra} \label{SecHecke}
\subsubsection{}\label{ordKL} We consider the Hecke algebra corresponding to the affine Weyl group. Let $\H$ be the $\mZ[v,v^{-1}]$-algebra with basis $H_x, x\in W$ and multiplication given by $H_xH_y=H_{xy}$ if $\ell(xy)=\ell(x)+\ell(y)$ and, for each simple reflection $s$
$$H_s^2\;=\; (v^{-1}-v)H_s+1.$$

There exists a unique involutive automorphism $\overline{\cdot}$ of the ring $\H$ with $\overline{v}=v^{-1}$ and $\overline{H_x}=(H_{x^{-1}})^{-1}$. The Kazhdan-Lusztig basis of $\H$ is given by $\underline{H}_x$, where, for any $x\in W$, the element $\underline{H}_x$ is the unique element with
$$\overline{\underline{H}_x}=\underline{H}_x\quad\mbox{ and }\quad\underline{H}_x\;\in\; H_x+\sum_{y<x}v\mZ[v]H_y$$
for $\le$ the Bruhat order on $W$. We write $\underline{H}_x=\sum_{y}h_{y,x}H_y$ with $h_{y,x}\in\mZ[v]$.

\subsubsection{Anti-spherical module} The anti-spherical module for $\H$ is defined as the right module
$$\mathcal{AS}\;=\;\mZ[v,v^{-1}]\otimes_{\H_f}\H,$$
where $\H_f$ is the Hecke algebra for $W_f$, canonically a subalgebra in $\H$, and ${H}_{s_i}$ acts on $\mZ[v,v^{-1}]$ as $-v$ for $i>0$.
Then $1\otimes \underline{H}_{x}=0$ for $x\not\in W^+$ and
$$\{\underline{N}_x\;:=\; 1\otimes \underline{H}_x\mid x\in W^+\},$$
is a basis of the abelian group $\mathcal{AS}$. We also write $N_x=1\otimes H_x$, for $x\in W^+$ and define $n_{y,x}\in\mZ[v]$ by $\underline{N}_x=\sum_y n_{y,x}N_y$. Hence $n_{y,x}=0$ unless $y\le x$ and $n_{y,x}\in v\mZ[v]$ if $y<x$.

The following result does not seem to be available in the literature, but all the necessary ingredients can be found in \cite{Soergel}.
\begin{theorem}\label{thm:bound}
With $w_0$ the longest element of $W_f$, for all $x,y\in W^+$, we have
$$\deg n_{y,x}\;\le\;\ell(w_0)=\dim\fn^+.$$

\end{theorem}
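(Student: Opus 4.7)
The plan is to establish the bound by interpreting $n_{y,x}(v)$ as the graded dimension of a morphism space in a graded categorification of $\mathcal{AS}$, where these morphism spaces are naturally finite-dimensional modules over the coinvariant algebra / flag-variety cohomology.

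First, I would invoke Soergel's categorification theorem (\cite{Soergel}) together with the diagrammatic reformulation of Elias--Williamson \cite{EW14}: the antispherical module $\mathcal{AS}$ is categorified by a graded $\mC$-linear category $\mathcal{A}^{\mathrm{asph}}$ (the antispherical quotient of the diagrammatic Hecke category, equivalent via Soergel's functor $\mathbb{V}$ to a graded lift of the category of tilting modules in the principal block of $\Rep U_\zeta(\fg)$). Under this categorification, indecomposable objects $\mathcal{B}_x$ for $x\in W^+$ satisfy $[\mathcal{B}_x]=\underline{N}_x$, and
\[
  n_{y,x}(v)\;=\;\sum_{i\ge 0}\dim_{\mC}\Hom^{i}_{\mathcal{A}^{\mathrm{asph}}}(\mathcal{B}_y,\mathcal{B}_x)\,v^{i}
\]
with an appropriate normalisation of the grading shift (chosen so that the generator sits in degree $0$).

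Next, I would observe that the graded endomorphism ring of the identity object $\mathcal{B}_e\in\mathcal{A}^{\mathrm{asph}}$ is the coinvariant algebra
\[
  C\;:=\;\mathrm{Sym}(\fh^{*})\big/\mathrm{Sym}(\fh^{*})^{W_f}_{+}\cdot\mathrm{Sym}(\fh^{*})\;\cong\;H^{\bullet}(G/B;\mC),
\]
which in the grading convention matched to the $v$-grading on $\H$ is concentrated in degrees $[0,\ell(w_0)]$, with top class given (via the Borel isomorphism) by the class of a point. The Hom space $\Hom^{\bullet}(\mathcal{B}_y,\mathcal{B}_x)$ carries a canonical graded $C$-module structure, obtained by pre- or post-composing with endomorphisms of $\mathcal{B}_e$ through the natural map $C\to\End^{\bullet}(\mathcal{B}_x)$ (coming from identifying morphisms from $\mathcal{B}_e$ in the antispherical setting). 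Because the generator of this module sits in degree $0$, every homogeneous element lies in degree $\le\ell(w_0)$, giving exactly $\deg n_{y,x}\le\ell(w_0)$.

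The conceptual content of the argument is thus a categorical explanation of the bound: the ``antispherical'' quotient imposes $W_f$-invariance, collapsing the infinite-dimensional polynomial algebra that governs generic morphism spaces in the Hecke category down to the finite-dimensional coinvariants $H^{\bullet}(G/B)$ of top degree $\ell(w_0)$. The principal step requiring care is reconciling the grading conventions of \cite{EW14,Soergel} with those of Section~\ref{SecHecke} so that $v^{\ell(w_0)}$ (and not $v^{2\ell(w_0)}$) emerges; this is essentially bookkeeping but must be carried out explicitly. As a sanity check, the bound is sharp: taking $y=e$ and $x$ the unique element with $x\cdot 0 = (\ell-h)\vartheta+\text{(suitable shift)}$ supporting the top cell, the leading coefficient of $n_{e,x}$ should be computable from the socle of a projective tilting module and realises the top degree.
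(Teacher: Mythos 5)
Your approach has a genuine gap, and the paper's proof is in any case entirely different (pure Hecke-algebra combinatorics, no categorification).

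The first problem is the claimed formula $n_{y,x}(v)=\sum_i\dim\Hom^i(\mathcal{B}_y,\mathcal{B}_x)\,v^i$: for \emph{two indecomposable} objects the graded Hom in the antispherical categorification satisfies the pairing formula $\sum_i\dim\Hom^i(\mathcal{B}_y,\mathcal{B}_x)v^i=\sum_z n_{z,x}(v)\,n_{z,y}(v)$ (this is exactly Proposition~\ref{Prop:GL}), not a single antispherical polynomial; you would need one of the two arguments to be a standard object to obtain $n_{y,x}$ alone. The second and more serious problem is the assertion that $\End^\bullet(\mathcal{B}_e)$ in $\mathcal{A}^{\mathrm{asph}}$ is the coinvariant algebra $C$: by the same pairing formula, $\sum_i\dim\End^i(\mathcal{B}_e)v^i=\sum_z n_{z,e}(v)^2=1$, i.e.\ $\End^\bullet(\mathcal{B}_e)=\bk$ concentrated in degree $0$ (consistent with $\End_{U_\zeta(\fg)}(T(0))=\bk$ under the Riche--Williamson equivalence). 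The coinvariant algebra is Soergel's Endomorphismensatz for the big projective, not for the unit of the antispherical quotient; in the antispherical quotient the augmentation ideal of $R$ acts by zero on $\mathcal{B}_e$. Finally, even if one produced a map $C\to\End^\bullet(\mathcal{B}_x)$, the claim that $\Hom^\bullet(\mathcal{B}_y,\mathcal{B}_x)$ is a cyclic $C$-module generated in degree $0$ is unsupported and would require a separate argument.

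The paper's proof is instead a self-duality argument inside the Hecke algebra, using \cite{Soergel}: Soergel's identity $m^{y,x}=v^{\ell(w_0)}\,\overline{n_{y,\hat x}}$ for the inverse spherical polynomials shows $\deg m^{y,x}\le\ell(w_0)$; one then writes $\underline{N}_x$ in a second self-dual basis $\widetilde{\underline{N}}_z$ built from the $\overline{m_{y,x}}$, and exploits the Laurent-degree symmetry forced by self-duality, together with a maximality argument, to transfer the bound from $m^{y,z}$ to $n_{y,x}$. Your intuition that ``the antispherical quotient imposes finiteness of $G/B$-type'' points in the right direction --- the $v^{\ell(w_0)}$ in Soergel's identity is exactly the top degree of $H^\bullet(G/B)$ --- but the mechanism behind the bound is a duality between the spherical and antispherical modules, not a $C$-module structure on antispherical Hom spaces.
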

\begin{proof}
Similarly to the anti-spherical module, there is the spherical $\H$-module, leading to a canonical basis $\underline{M}_x=\sum_{y}m_{y,x}M_y$, see \cite[\S 3]{Soergel}, for $m_{y,x}\in\mZ[v]$. As explained in the proof of \cite[Theorem~3.6]{Soergel}, these can be `inverted' via $m^{z,x}\in\mZ[v]$ (with $m^{z,x}=0$ unless $z\ge x$ and $m^{x,x}=1$) satisfying
$$\sum_z(-1)^{\ell(z)+\ell(x)}m^{z,x}m_{z,y}\;=\;\delta_{x,y}.$$
It then follows that also
$$\sum_x(-1)^{\ell(z)+\ell(x)}m^{z,x}m_{y,x}\;=\;\delta_{y,z}.$$

The main ingredient is \cite[Theorem~5.1]{Soergel}, which states that
$m^{y,x}= v^{\ell(w_0)} \overline{n_{y,\hat{x}}}$,
for $x\mapsto \hat{x}$ an operation on $W^+$. Though we will not use it, this already implies the statement in the theorem in most cases (for arbitrary $y$ and for $x$ in the image of $\,\hat{\cdot}\,$). More importantly for us, it shows that, for all $x,y$, the polynomials $m^{y,x}$ are bounded in degree by $\ell(w_0)$.

As shown in \cite[Theorem~3.5]{Soergel}, for the duality $\overline{\cdot}$ on $\mathcal{AS}$ (satisfying $\overline{mh}=\overline{m}\overline{h}$ for $m\in\mathcal{AS}$ and $h\in\H$), also the elements
$$\widetilde{\underline{N}}_x\;:=\; \sum_y (-1)^{\ell(x)+\ell(y)}\overline{m_{y,x}}N_y$$
are, like the $\underline{N}_x$, self-dual. Inverting the above formula shows that
$$\underline{N}_x\;=\;\sum_{y,z}n_{y,x}\overline{m^{y,z}}\,\widetilde{\underline{N}}_z.$$
By self-duality of $\underline{N}_x$ and $\widetilde{\underline{N}}_z$, the maximal positive degree of $v$ occurring with non-zero coefficient in the Laurent polynomial $\sum_{y}n_{y,x}\overline{m^{y,z}}\in\mZ[v,v^{-1}]$ is minus the minimal degree.

Now assume for a contradiction that $d:=\deg n_{z,x}>\ell(w_0)$ for some $x,z\in W^+$, and maximise this degree along $z$, while keeping $x$ fixed. Then the maximal (positive and thus also negative) degree in $\sum_{y}n_{y,x}\overline{m^{y,z}}$ is $d$, and hence there must be some $y\in W^+$ with $\deg m^{y,z}\ge d>\ell(w_0)$, a contradiction.
\end{proof}

\subsection{Graded lift}\label{sec:grli}

\subsubsection{}\label{desi} 

The principal block $\Tilt_0U_\zeta (\fg)$ has a graded lift. For our purposes this will be most conveniently interpreted as saying that every morphism space between $T_1,T_2\in\Tilt_0U_\zeta(\fg)$ has a canonical $\mZ$-grading
 $$\Hom(T_1,T_2)\;=\;\bigoplus_{i\in\mZ}\Hom(T_1,T_2)_i,$$ such that the composition of a morphism in degree $i$ and a morphism in degree $j$ produces a morphism in degree $i+j$.

To see this, we can use the Kazhdan-Lusztig equivalence, from \cite{KLq} in the simply-laced case and \cite[\S 8.4]{Lu-Mono} in the general case for large enough $\ell$, between $\Rep U_\zeta(\fg)$ and a negative level parabolic category $\mathcal{O}$ for the affine Kac-Moody algebra $\hat{\fg}$. By \cite{SVV} the latter has a graded lift. More concretely, finite truncations of $\mathcal{O}$ are module categories over finite dimensional algebras that admit a graded lift. The grading between truncations is compatible, so that it suffices to observe that tilting modules are gradable, see \cite[Proposition~2.7]{SVV} or \cite{MO}. The requirement that $\ell$ be large outside of simply-laced cases can then be dropped, since \cite[(1.1.2)]{ABG} shows that $\Tilt_0U_\zeta(\fg)$ does not depend on $\ell$. As a side note, the latter paper, in \cite[Corollary~9.2.4]{ABG}, actually also shows that there is a graded lift
$$\Psi:\;D^b(\mathsf{Coh}^{G\times\mG_m}(\tilde{\nil}))\;\to\; D^b(\Rep_0 U_\zeta(\fg))\supset \Tilt_0U_\zeta(\fg),$$
where $\tilde{\nil}\to\nil$ is the Springer resolution. 
For us, the main feature is compatibility with Kazhdan-Lusztig combinatorics:

\begin{prop}\label{Prop:GL}
For all $x,y\in W^+$, the grading from \ref{desi} satisfies
$$\sum_{i\in\mZ}v^i\dim \Hom(T(x\cdot0),T(y\cdot 0))_i\;=\;\sum_{z\in W^+}n_{z,x}(v)n_{z,y}(v)\;\in\mN[v].$$
\end{prop}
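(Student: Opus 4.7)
The plan is to derive the formula from graded Brauer--Humphreys-type reciprocity in the highest weight structure on the principal block, combined with the graded Kazhdan--Lusztig/Soergel character formula for tilting modules in the antispherical setting.

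First I would fix one of the graded lifts of $\Tilt_0 U_\zeta(\fg)$ indicated in \ref{desi}: either via the Kazhdan--Lusztig equivalence with negative level affine parabolic category $\mathcal{O}$ together with \cite{SVV}, or via the ABG equivalence $\Psi$ of \cite{ABG}. In either realisation $\Rep_0 U_\zeta(\fg)$ becomes a graded highest weight category with graded standard and costandard objects $\Delta(x\cdot 0), \nabla(x\cdot 0)$ indexed by $x\in W^+$, and each indecomposable tilting module $T(x\cdot 0)$ admits graded $\Delta$- and $\nabla$-filtrations, with well-defined graded multiplicities $(T(x\cdot 0):\Delta(z\cdot 0))_v,\, (T(x\cdot 0):\nabla(z\cdot 0))_v \in \mN[v,v^{-1}]$.

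Next, using graded Ext-orthogonality of standards and costandards, namely $\dim \Hom(\Delta(x\cdot 0), \nabla(z\cdot 0))_i = \delta_{i,0}\delta_{x,z}$ and $\Ext^{>0}(\Delta(x\cdot 0), \nabla(z\cdot 0))=0$, together with the $\Delta$-filtration of $T(x\cdot 0)$ and the $\nabla$-filtration of $T(y\cdot 0)$, I would obtain the graded reciprocity
$$\sum_{i\in\mZ} v^i \dim \Hom(T(x\cdot 0), T(y\cdot 0))_i \;=\; \sum_{z\in W^+} (T(x\cdot 0): \Delta(z\cdot 0))_v\; (T(y\cdot 0):\nabla(z\cdot 0))_v.$$

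The final step is to identify these graded multiplicities with antispherical Kazhdan--Lusztig polynomials: $(T(x\cdot 0):\Delta(z\cdot 0))_v = n_{z,x}(v) = (T(x\cdot 0):\nabla(z\cdot 0))_v$. The first equality is the graded antispherical Kazhdan--Lusztig/Soergel character formula: under the chosen graded lift, the class of $T(x\cdot 0)$ in the graded Grothendieck group, identified with $\mathcal{AS}$, matches $\underline{N}_x = \sum_z n_{z,x}(v) N_z$. This is implicit in \cite{ABG} respectively \cite{SVV}. The second equality follows from the self-duality of indecomposable tilting modules together with the self-duality of $\underline{N}_x$ under $\overline{\cdot}\,$. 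The main obstacle, and where the argument requires care rather than new ideas, is bookkeeping: one must verify that the grading shift and normalisation conventions of the chosen graded lift, and of $\Delta$ and $\nabla$, match the conventions fixing $n_{z,x}$ in \ref{SecHecke}, so that no spurious shift or sign appears in the comparison with $\underline{N}_x = \sum_z n_{z,x}(v) N_z$.
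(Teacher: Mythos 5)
Your outline is the same conceptual picture the paper opens with — graded Brauer--Humphreys reciprocity on tiltings, followed by the graded tilting character formula $(T(x\cdot 0):\Delta(z\cdot 0))_v = n_{z,x}(v)$ — but where you defer the latter to ``implicit in \cite{ABG} respectively \cite{SVV},'' the paper deliberately reorganises the argument so as not to rely on a graded tilting character formula that the references do not state directly. Via the Kazhdan--Lusztig equivalence, the paper applies the graded Ringel duality of \cite[\S 2.6]{SVV} to identify $\Tilt_0$ in negative level with \emph{projectives} in positive level parabolic category $\mathcal{O}$, and then invokes \cite[Proposition~4.27, Corollary~4.34]{SVV}, which compute the graded dimensions of Hom-spaces between projectives directly in terms of parabolic Kazhdan--Lusztig polynomials; this yields the displayed formula with no reciprocity step at all. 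The ABG route in the paper similarly bypasses graded tilting filtrations by computing Hom-spaces geometrically via IC sheaves on $\tilde{\nil}$ (\cite{Be}) and matching with the antispherical polynomials via \cite{KT}. So your plan is correct in spirit, and graded reciprocity for a graded highest weight category is indeed standard, but the load-bearing step is precisely the one you flag as ``bookkeeping'': establishing the graded multiplicity formula $(T:\Delta)_v = n_{z,x}$. The paper's detour through Ringel duality to projectives is not cosmetic — it is how one lands on a statement that \cite{SVV} actually proves, rather than one that is only ``implicit.'' If you want to keep your reciprocity-based route, you would still need to run Ringel duality (or an equivalent argument) to extract the graded tilting multiplicities from what the references provide, at which point the two proofs converge.
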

\begin{proof}
This is the graded refinement of the usual formula
$$\dim\Hom(T(x\cdot 0),T(y\cdot 0))\;=\;\sum_z(T(x\cdot 0):\Delta(z\cdot 0))\,(T(y\cdot 0):\nabla(z\cdot 0))$$
where, by \cite[Conjecture~7.1]{Soergel} and \cite{SoKM}:
$$(T(x\cdot 0):\Delta(z\cdot 0))=(T(x\cdot 0):\nabla(z\cdot 0))=n_{z,x}(1).$$

To formulate a rigorous argument for the version via $\hat{\fg}$, we consider the corresponding statement in the negative level parabolic category $\mathcal{O}$ for $\hat{\fg}$. The category of tilting modules there is, by the Ringel duality from \cite{SoKM} which is compatible with the Koszul grading as explained in \cite[\S 2.6]{SVV}, equivalent to the category of projective modules in a positive level parabolic category $\mathcal{O}$ for~$\hat{\fg}$. That the grading of the morphism spaces between projectives is given by the relevant parabolic Kazhdan-Lusztig polynomials is the content of \cite[Proposition~4.27]{SVV} thanks to the algebra isomorphism in \cite[Corollary~4.34]{SVV}.
\end{proof}

\section{Quantum groups: Tensor ideals}\label{sec:quantum2}
We keep $\bk=\mC$ and work with the conventions and assumptions from the previous section.

\subsection{Thick tensor ideals}
Thick tensor ideals in $\Tilt U_\zeta (\fg)$ were classified in \cite{Os}, see also \cite[Theorem~8.2.1]{BKN}.
 We choose a labelling system based on support theory, see \S \ref{SecSupp}.

 \begin{prop}\label{prop:thickprime}
 \begin{enumerate}
 \item The assignment 
 $$I\;\mapsto\;\{T\in\Tilt U_\zeta(\fg)\mid \supp(T)\subset I\}$$
 is an isomorphism between the poset of order-ideals in $\nilo$ and the poset of thick tensor ideals in $\Tilt U_\zeta(\fg)$. Here $\supp(T)\subset I$ is an abbreviation for $\supp(T)\subset \cup_{\cO\in I}\cO$.
 \item The assignment that sends $\cO\in\nilo$ to the set of tilting modules $T$ with $\supp(T)=\overline{\cO}$ is a bijection between $\nilo$ and the set of cells in $\Tilt U_\zeta(\fg)$.
 \item The assignment
$$\cO\;\mapsto\;\bI_\cO:=\{T\in\Tilt U_\zeta(\fg)\mid \cO\not\subset \supp(T)\}$$
is an isomorphism between the poset $\nilo$ and the poset of prime thick tensor ideals in $\Tilt U_\zeta (\fg)$.
\item For $T,T'\in \Tilt U_\zeta(\fg)$, we have
$$\supp(T\otimes T')\;=\;\supp(T)\cap\supp(T').$$
 \end{enumerate}
 \end{prop}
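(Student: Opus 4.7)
The plan is to establish parts~(1), (2), (4), (3) in that order; the main difficulty is in~(4).

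For~(1), the input is Ostrik's classification of thick tensor ideals in $\Tilt U_\zeta(\fg)$ from~\cite{Os}, phrased there in terms of order ideals in the poset of two-sided cells of $W$. Via the Lusztig--Xi bijection~\ref{LXbij}, this poset identifies with $\nilo$. Under this identification, Ostrik's bijection sends an ideal $I$ to exactly the set of tilting modules with support in $\cup_{\cO\in I}\cO$, as follows from~\eqref{supptilt}: an indecomposable $T(\lambda)$ corresponds in Ostrik's picture to the label $\cO$ with $\lambda\in C(\cO)$, which by~\eqref{supptilt} is the closure of its support. That the proposed map $I\mapsto\{T:\supp T\subset I\}$ lands in thick tensor ideals follows from~\eqref{supptensor} together with the additivity of support over direct sums and summands \cite[Lemma~3.4]{OsSupp}. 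Part~(2) is then immediate from~(1) applied to principal ideals: two indecomposable tilting modules generate the same thick tensor ideal iff they lie in the same fibre of $\supp$, which by~\eqref{supptilt} means that their highest weights lie in a common $C(\cO)$.

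For~(4), the inclusion $\subset$ is~\eqref{supptensor}. For the reverse inclusion, decomposing $T$ and $T'$ into indecomposable summands reduces matters to showing: for any $\lambda\in C(\cO_1)$, $\mu\in C(\cO_2)$ and any orbit $\cO\leq\cO_1,\cO_2$, some indecomposable summand $T(\nu)$ of $T(\lambda)\otimes T(\mu)$ satisfies $\nu\in C(\cO')$ with $\cO'\geq\cO$. My preferred route is via the geometric/Koszul incarnation: under the equivalence $\Psi$ of~\cite{ABG}, tensor products of tilting modules in $\Tilt U_\zeta(\fg)$ correspond to convolution on $\tilde{\nil}$, and the support of a convolution is --- by the projection formula and the properness of the Grothendieck--Springer map --- the intersection of the supports of the factors; combined with~\eqref{supptilt} identifying the orbit closures as the relevant IC-supports, this delivers the reverse containment. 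A purely combinatorial route seems also viable, using positivity of structure constants for the antispherical Kazhdan--Lusztig basis (cf.\ Proposition~\ref{Prop:GL}) together with the fact that the Lusztig--Xi bijection~\ref{LXbij} is compatible with the cell filtration of Hecke-algebra multiplication.

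Part~(3) is then formal given~(1) and~(4). By~(1), a thick tensor ideal $\bI$ is determined by the downward-closed subset $I(\bI)\subset\nilo$. Using~(4) one checks that $\bI$ is prime precisely when the complement $J:=\nilo\setminus I(\bI)$ is a nonempty, upward-closed, downward-directed subset, equivalently an upward-closed subset with a unique minimum $\cO$; this forces $I(\bI)=\{\cO':\cO\not\leq\cO'\}$ and hence $\bI=\bI_\cO$. Conversely $\bI_\cO$ is prime by~(4). That the resulting bijection $\cO\mapsto\bI_\cO$ is a poset isomorphism follows from unpacking definitions. The main obstacle is~(4): the nontrivial containment of the tensor-product-support formula, for which I would rely on the Koszul/geometric translation.
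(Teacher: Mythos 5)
Your proposal inverts the logical order of the paper's argument in a way that leaves a real gap. The paper proves~(3) \emph{before}~(4): part~(3) follows from part~(1) combined with the formal Lemma~\ref{LemPrimeCover}(1), which says a thick tensor ideal with a unique cover in the inclusion order is automatically prime (the proof is a rigidity argument: any indecomposable $X\in\bJ\setminus\bI$ is a summand of $Q_1\otimes A$ and of $Q_2\otimes B$ for any $Q_1,Q_2\notin\bI$, hence a summand of $Q_1\otimes Q_2\otimes C$, so $Q_1\otimes Q_2\notin\bI$). Part~(4) is then a short corollary of~(3): if $\cO\subset\supp T\cap\supp T'$ but $\cO\not\subset\supp(T\otimes T')$, then $T\otimes T'\in\bI_\cO$ while $T,T'\notin\bI_\cO$, contradicting primeness of $\bI_\cO$.

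You instead attempt to prove~(4) first, and then~(3) as a consequence. Your reduction of~(3) to~(1)+(4) is fine (a downward-closed $I\subset\nilo$ yields a prime ideal iff the complement $J$ is nonempty, upward-closed, and downward-directed, which over a finite poset forces a unique minimum). The problem is your proof of~(4). The geometric route you describe does not work as stated: the ABG equivalence $\Psi$ relates $D^b(\mathrm{Coh}^{G\times\mG_m}(\tilde{\nil}))$ to the \emph{principal block} $D^b(\Rep_0 U_\zeta(\fg))$, and the tensor product of tilting modules is not an operation internal to the principal block, nor does it correspond to a naive convolution of coherent sheaves on $\tilde{\nil}$. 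Even setting this aside, a ``support of convolution equals intersection of supports'' principle for coherent convolution on a resolution of singularities is not a standard consequence of the projection formula and properness; the containment $\subset$ is formal, but the reverse inclusion is exactly the hard direction and you have not supplied a reason for it. Your alternative combinatorial route (positivity of Kazhdan--Lusztig structure constants plus compatibility of the Lusztig--Xi bijection with the cell filtration) is also only a sketch: positivity gives $\preceq$-compatibility, but you still need an argument that the ``new'' support is not strictly smaller than $\overline{\cO_1}\cap\overline{\cO_2}$, and that is precisely what is at stake. In short, the non-trivial containment in~(4) is the content of the proposition, and your plan relies on it being available from geometry or combinatorics without a proof. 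The paper's ``unique cover $\Rightarrow$ prime'' lemma is what makes~(4) cheap, and is the piece your argument is missing.
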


\begin{proof}
That the assignment in (1) takes values in thick tensor ideals follows from \eqref{supptensor}. That it is injective follows from \eqref{supptilt}. Also by \cite{OsSupp, Be},
 the thick tensor ideal generated by an indecomposable tilting module $T$ contains an indecomposable tilting module $T'$ if and only if $\supp(T')\subset\supp(T)$, see also \cite[Theorem~8.1.1(b)]{BKN}, so the assignment is surjective, and by construction inclusion preserving. Part (2) now follows immediately.

By construction, $\bI_{\cO}$ has a unique cover in the inclusion order on thick tensor ideals, namely the tensor ideal corresponding as in (1) to the ideal $I$ in $\nilo$ generated by $\cO$ and all orbits that are incomparable to $\cO$.
That $\bI_{\cO}$ is a prime thick tensor ideal thus follows from Lemma~\ref{LemPrimeCover}(1). That the assignment in (3) is injective follows from part (1). For an arbitrary thick tensor ideal $\bI$, let $\cO_1,\ldots,\cO_m$ be a complete list of nilpotent orbits that are not contained in the support of any $T\in \bI$. Then $\bI=\cap_{i}\bI_{\cO_i}$, by part (1).
 Since a prime tensor ideal cannot be the intersection of two tensor ideals (without being equal to one of them), the assignment in (3) is surjective too.
 
 For part (4), we already know the inclusion in one direction, see \eqref{supptensor}. The other inclusion can be argued as follows. If there were $T_1,T_2\in \Tilt U_\zeta(\fg)$ with $\supp(T)\supset\cO\subset \supp(T')$, but $\cO\not\subset\supp(T\otimes T')$, then we would have $T\otimes T'\in \bI_{\cO}$ while $T,T'\not\in\bI_{\cO}$, contradicting (3).
\end{proof}

\begin{remark}\label{Rem:PI}
\begin{enumerate}
\item As observed in the proof of Proposition~\ref{prop:thickprime}, arbitrary (proper) thick tensor ideals are intersections of the ideals $\bI_{\cO}$.
\item There is also a bijection between $\nilo$ and the set of non-zero thick tensor ideals in $\Tilt U_\zeta (\fg)$ generated by one indecomposable tilting module, given by 
$$\cO\;\mapsto\;\{T\in \Tilt U_\zeta(\fg)\mid \supp(T)\subset\overline{\cO}\}.$$
Even when the partial order on $\nilo$ is linear (and thus every proper thick tensor ideal is prime), this labelling convention is shifted from ours.
\end{enumerate}

\end{remark}

\subsection{Prime tensor ideals}

We abbreviate $\cI_{\cO}:=\bI^{\max}_{\cO}$. 
By Remark~\ref{RemNegSingle}, we know
$$\cN\;=\;\cI_{\cO_{\reg}}\;=\;\bI_{\cO_{\reg}}^{\max}\;=\;\bI_{\cO_{\reg}}^{\min}.$$

\begin{theorem}\label{ThmClassPrime}
The prime tensor ideals in $\Tilt U_\zeta (\fg)$ are precisely the ideals $\cI_{\cO}=\bI_{\cO}^{\max}$, with $\cO\in \nilo$.
\end{theorem}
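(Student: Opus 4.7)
The plan is to invoke Theorem~\ref{Thm:FinR}, which will reduce the classification to the one of prime thick tensor ideals already obtained in Proposition~\ref{prop:thickprime}(3). One inclusion is automatic: each $\bI_\cO$ is a prime thick tensor ideal by Proposition~\ref{prop:thickprime}(3), so Proposition~\ref{PropImax} immediately yields that $\cI_\cO = \bI_\cO^{\max}$ is a (faithfully) prime tensor ideal, since $\bk = \mC$ is algebraically closed. For the converse, it suffices to verify the hypothesis of Theorem~\ref{Thm:FinR} that the Jacobson radical $\cR$ of $\Tilt U_\zeta(\fg)$ is nilpotent: then the theorem gives $\Ob^{-1}(\bI_\cO) = \bI_\cO^{\max}$ on the poset of primes, forcing every prime tensor ideal to be of the form $\cI_\cO$.

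To bound $\cR$ on the principal block, I would use the graded lift of \ref{desi} together with Proposition~\ref{Prop:GL}, which identifies the Hilbert series of $\Hom(T(x\cdot 0),T(y \cdot 0))$ as $\sum_z n_{z,x}(v) n_{z,y}(v)$. By Theorem~\ref{thm:bound}, these polynomials lie in degrees at most $2\ell(w_0)$, and since $n_{z,x} \in \delta_{x,z} + v\mZ[v]$ (with $n_{z,x} = 0$ for $z \not\le x$), the constant term evaluates to $\sum_z n_{z,x}(0)n_{z,y}(0) = \delta_{x,y}$. Thus the grading on Hom spaces in $\Tilt_0 U_\zeta(\fg)$ is concentrated in non-negative degrees bounded by $2\ell(w_0)$, and its degree-zero part separates indecomposables; the radical therefore coincides with the positive-degree part of the Hom spaces, so that $\cR^{2\ell(w_0)+1} = 0$ on the principal block.

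To globalise the bound, I would use the block decomposition of $\Tilt U_\zeta(\fg)$, under which Hom spaces (and hence their compositions and $\cR$) decompose blockwise. Under our standing assumptions on $\ell$, the translation principle gives an equivalence between any regular block and $\Tilt_0 U_\zeta(\fg)$, while any singular block is obtained as a quotient of a regular one; in both cases the nilpotence bound on $\cR$ is inherited. Since $\Tilt U_\zeta(\fg)$ is braided and hence admits a commutor, Theorem~\ref{Thm:FinR} now applies: $\Ob$ is a bijection from prime tensor ideals to prime thick tensor ideals with inverse $\bI \mapsto \bI^{\max}$, and combining with Proposition~\ref{prop:thickprime}(3) we conclude that every prime tensor ideal equals $\cI_\cO$ for a unique $\cO \in \nilo$.

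The main obstacle is the global nilpotence of $\cR$. The bound within the principal block follows cleanly from Theorem~\ref{thm:bound}, but extending it to singular blocks relies on the translation principle for quantum groups at a root of unity. This is standard under our hypotheses on $\ell$, but is the technical heart of the proof; one must ensure that translation functors preserve tiltings and behave compatibly with Hom spaces so that the degree bound descends.
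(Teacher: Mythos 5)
Your overall strategy matches the paper's exactly: reduce via Theorem~\ref{Thm:FinR} and Proposition~\ref{prop:thickprime}(3) to showing nilpotence of $\cR(\Tilt U_\zeta(\fg))$, establish the bound on the principal block from the non-negativity and boundedness of the grading (Proposition~\ref{Prop:GL} and Theorem~\ref{thm:bound}), and then globalise over blocks. The one divergence is the globalisation step, where your argument is looser and goes in a less convenient direction. You propose that ``any singular block is obtained as a quotient of a regular one,'' and that the nilpotence bound on $\cR$ is inherited by the quotient. That statement needs care: the functor $\theta^{\on}$ (onto the wall) kills some indecomposable tilting objects, so its kernel ideal is not contained in $\cR(\Tilt_0 U_\zeta(\fg))$, and inheritance of a radical nilpotence bound along a full dense functor with such a kernel is not automatic --- it requires a Krull--Schmidt lifting argument (decompose a lift of a radical morphism into components, discard the components factoring through dying objects, and observe the remainder is radical). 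This can be made to work, but it is an extra lemma you have not supplied, and you in fact flag it as ``the technical heart.'' The paper (Lemma~\ref{Lem:N}) sidesteps this entirely by going in the opposite direction: it uses the translation functor $\theta^{\out}:\cB\to\Tilt_0 U_\zeta(\fg)$ out of the wall, proves that it is \emph{faithful} and \emph{sends radical into radical}, and then a composite of $M$ radical morphisms in $\cB$ is detected in $\Tilt_0$, where it vanishes. Faithfulness of $\theta^{\out}$ on the tilting level is cleaner to establish (via exactness and the density statement from \cite{APW}) than fullness of $\theta^{\on}$, and the transfer of the bound is immediate, so you would do well to re-orient the argument along these lines.
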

\begin{proof}
By Theorem~\ref{Thm:FinR} and Proposition~\ref{prop:thickprime}(3), the conclusion follows from Lemma~\ref{Lem:N} below.
\end{proof}

\begin{lemma}\label{Lem:N}
We have $\cR\left(\Tilt U_\zeta(\fg)\right)^M=0$ for 
$M=2\ell(w_0)+1=\dim\nil+1.$
\end{lemma}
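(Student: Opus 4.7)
The plan is to combine the graded lift of morphism spaces from Section~\ref{desi} with the bound on antispherical Kazhdan--Lusztig polynomials in Theorem~\ref{thm:bound}. Since $\cR$ respects the block decomposition of $\cA = \Tilt U_\zeta(\fg)$, I would work block by block, describing the argument for the principal block $\Tilt_0 U_\zeta(\fg)$. Other regular blocks reduce to the principal one via translation equivalences, and singular blocks admit analogous graded lifts and degree bounds (from singular/parabolic antispherical Kazhdan--Lusztig combinatorics), with the same uniform bound $\ell(w_0)$ on degrees.

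In the principal block, Proposition~\ref{Prop:GL} gives
$$\sum_i v^i \dim \Hom(T(x\cdot 0), T(y\cdot 0))_i \;=\; \sum_{z\in W^+} n_{z,x}(v)\, n_{z,y}(v),$$
and Theorem~\ref{thm:bound} bounds each $n_{z,x}$ by $\ell(w_0)$ in degree. Hence graded Hom spaces between indecomposable tilting modules are concentrated in $[0, 2\ell(w_0)]$. Since $n_{x,x}=1$ and $n_{z,x}\in v\mZ[v]$ for $z<x$, the constant term of the right-hand side equals $\delta_{x,y}$: the degree-zero part of $\End(T(x\cdot 0))$ is one-dimensional, spanned by $\id$, while morphism spaces between non-isomorphic indecomposable tilting modules live entirely in strictly positive degree.

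Because $T(x\cdot 0)$ is indecomposable, $\End(T(x\cdot 0))$ is local, and the previous paragraph identifies its maximal ideal with the strictly positive graded part (an element $c\cdot\id + f_+$ is invertible iff $c\ne 0$, since $f_+$ is nilpotent by the finite degree bound). Combined with the fact that morphisms between distinct indecomposables are always in $\cR(\cA)$, we conclude that every morphism in $\cR(\cA)$ between indecomposable tilting modules has graded degree at least $1$. Since composition adds degrees, a composition of $M$ morphisms in $\cR(\cA)$ between indecomposables has degree $\ge M$, which exceeds the support $[0, 2\ell(w_0)]$ as soon as $M > 2\ell(w_0) = \dim\nil$. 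Idempotent decomposition extends this to arbitrary morphisms, yielding $\cR(\cA)^{2\ell(w_0)+1} = 0$.

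The main expected obstacle is the uniform treatment across all blocks, especially singular ones: while the graded lift machinery in Section~\ref{desi} and the degree bound in Theorem~\ref{thm:bound} should both extend to this setting (either via the $\hat\fg$-parabolic $\mathcal{O}$ description or via coherent sheaves on the Springer resolution), a careful verification that the bound $\ell(w_0)$ persists in the parabolic/singular antispherical setting would be needed; this is where the bulk of the technical work lies.
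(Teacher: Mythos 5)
Your argument for the principal block matches the paper's exactly: use Proposition~\ref{Prop:GL} to identify $\cR(\Tilt_0U_\zeta(\fg))$ with the strictly positive graded part, bound the degrees by $2\ell(w_0)$ using Theorem~\ref{thm:bound}, and conclude that compositions of $M = 2\ell(w_0)+1$ radical morphisms vanish. That part is correct and complete.

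The genuine gap is precisely the part you flag as ``the main expected obstacle'': the treatment of blocks other than the principal one. You propose to extend the graded-lift and degree-bound machinery to singular blocks via parabolic/singular antispherical Kazhdan--Lusztig combinatorics, but you do not carry this out, and it would indeed be a nontrivial detour. The paper sidesteps this entirely with a much shorter reduction: for any block $\cB$ there is a translation functor $\theta^{\out}:\cB\to\Tilt_0U_\zeta(\fg)$ (translating out of the wall) with two-sided adjoint $\theta^{\on}$, and one checks that $\theta^{\out}$ is faithful and sends $\cR(\cB)$ into $\cR(\Tilt_0U_\zeta(\fg))$. Faithfulness follows from exactness of $\theta^{\out}$ together with the fact that every simple in $\cB$ lies in the essential image of $\theta^{\on}$; radical-preservation follows because $\theta^{\out}$ of a radical map between indecomposable tiltings is again a map between indecomposable tiltings, and if it were not in the radical it would be an isomorphism, forcing the original source and target to coincide and contradicting nilpotence. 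Once these are established, $\cR(\cB)^M = 0$ is immediate from the principal-block case. So the missing idea is: do not re-prove the singular graded theory, but rather transport the nilpotence bound along a faithful, radical-preserving translation functor into the principal block.
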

\begin{proof}
For the principal block $\Tilt_0U_\zeta(\fg)$, we can observe that the non-negativity of the grading implied by Proposition~\ref{Prop:GL} implies that $\cR(\Tilt_0U_\zeta(\fg))$ corresponds precisely to the morphism spaces of strictly positive degree. That the $M$-th power then vanishes then follows from the observation that the morphism spaces in degree $M$ and above are zero by Theorem~\ref{thm:bound}. For any other block $\cB$ in $\Tilt U_\zeta(\fg)$, we have a translation functor $\theta^{\out}:\cB\to \Tilt_0U_\zeta(\fg)$ with two-sided adjoint $\theta^{\on}$, see \cite[\S 8]{APW}. It suffices to observe that $\theta^{\out}$ is faithful and sends the radical into the radical. To prove these (known) statements, we consider the translation functors between the corresponding blocks of $\Rep U_\zeta(\fg)$. The first claim then follows for instance from exactness of $\theta^{\out}$ and the fact that every simple object in the block of $\cB$ is in the essential image of $\theta^{\on}$ by \cite[Theorem 8.3(ii)]{APW}. To prove the second claim, we consider a morphism in the radical $f:T_1\to T_2$ between indecomposable tilting modules in $\cB$. Then $\theta^{\out}(f):\theta^{\out}T_1\to\theta^{\out}T_2$ is again a morphism between indecomposable tilting modules, see \cite[Proposition~5.6]{An} for a special case and \cite[Proposition~5.2]{An3} in the more complicated modular setting. If $\theta^{\out}(f)$ is not in the radical then it is an isomorphism, which in particular implies that $T_1=T_2$, again by \cite[Proposition~5.6]{An}. However, we then get a contradiction with the fact that $f$, and so also $\theta^{\out}(f)$, is nilpotent.
\end{proof}

\subsection{The naturality conjecture}\label{sec:nat}

\begin{prop}\label{prop:naturality}
The following conditions are equivalent on a Levi subalgebra $\fl\subset\fg$:
\begin{enumerate}
\item For every orbit $\cO\subset\nil(\fl')=\nil(\fl)$, the preimage of $\bI_{\cO}$ under the restriction  
$$\Res:\;\Tilt U_\zeta (\mathfrak{g})\,\to\,\Tilt U_\zeta (\mathfrak{l'})$$ is given by $\bI_{G\cdot \cO}$.

\item For every $T\in \Tilt U_\zeta(\fg)$, we have
$\supp \Res T\;=\;\nil(\fl)\cap\supp T.$
\item For every $T\in \Tilt U_\zeta(\fg)$, we have
$\supp \Res T\;\supset\;\nil(\fl)\cap\supp T.$
\item For every $\cO\subset\nil(\fl)$, there is $T\in\Tilt U_\zeta (\fg)$ with $\supp T=\overline{G\cdot\cO}$ and $\cO\subset \supp\Res(T)$.
\end{enumerate}
\end{prop}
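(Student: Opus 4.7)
The plan is to prove the cycle of implications $(2)\Leftrightarrow(3)\Rightarrow(4)\Rightarrow(1)\Rightarrow(3)$, building everything on the classification of prime thick tensor ideals in Proposition~\ref{prop:thickprime}, the one-sided inclusion \eqref{eq:IncSuppLevi}, and the realisation of orbit closures as supports in \eqref{supptilt}.

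First, $(2)\Leftrightarrow(3)$ is immediate from \eqref{eq:IncSuppLevi}. For $(3)\Rightarrow(4)$, given an orbit $\cO\subset\nil(\fl)$, pick any $\lambda\in C(G\cdot\cO)$ and take $T=T(\lambda)$. By \eqref{supptilt} we have $\supp T=\overline{G\cdot\cO}$, and since $\cO$ lies in both $\nil(\fl)$ and $\overline{G\cdot\cO}$, assumption (3) gives $\cO\subset\supp\Res T$.

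The main step is $(4)\Rightarrow(1)$. Since $\Res:\Tilt U_\zeta(\fg)\to\Tilt U_\zeta(\fl')$ is a tensor functor sending $\unit$ to $\unit$, the preimage $\Res^{-1}(\bI_\cO)$ is a proper thick tensor ideal; primeness transfers because $\Res(X\otimes Y)=\Res X\otimes \Res Y$. By Proposition~\ref{prop:thickprime}(3), $\Res^{-1}(\bI_\cO)=\bI_{\cO'}$ for a unique $\cO'\in\nilo$. I will pin down $\cO'=G\cdot\cO$ by two complementary inclusions of ideals. For the first, if $T\in\bI_{G\cdot\cO}$, then $G\cdot\cO\not\subset\supp T$; since $\supp T$ is $G$-stable this forces $\cO\not\subset\supp T$, and then \eqref{eq:IncSuppLevi} yields $\cO\not\subset\supp\Res T$, so $T\in\bI_{\cO'}$. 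Reading the inclusion $\bI_{G\cdot\cO}\subset\bI_{\cO'}$ on an indecomposable tilting module with support exactly $\overline{\cO'}$, as in Proposition~\ref{prop:thickprime}(3), gives $G\cdot\cO\subset\overline{\cO'}$. For the reverse inclusion, hypothesis (4) furnishes a tilting $T$ with $\supp T=\overline{G\cdot\cO}$ and $\cO\subset\supp\Res T$, so $T\notin\Res^{-1}(\bI_\cO)=\bI_{\cO'}$, whence $\cO'\subset\supp T=\overline{G\cdot\cO}$. Since both orbits $G\cdot\cO$ and $\cO'$ are dense in a common closure, they coincide.

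Finally, $(1)\Rightarrow(3)$: for $\cO\subset\nil(\fl)\cap\supp T$, the $G$-stability of $\supp T$ gives $G\cdot\cO\subset\supp T$, so $T\notin\bI_{G\cdot\cO}$; by (1) then $T\notin\Res^{-1}(\bI_\cO)$, i.e.\ $\cO\subset\supp\Res T$. Since $\nil(\fl)$ is a union of such $\fl$-orbits, this proves (3). The only non-formal step is $(4)\Rightarrow(1)$, and the subtlety there is the careful use of $G$-stability of supports to compare the prime labels in $\nilo$ for $\fg$ versus $\fl$; everything else is a direct consequence of \eqref{eq:IncSuppLevi}, \eqref{supptilt} and Proposition~\ref{prop:thickprime}.
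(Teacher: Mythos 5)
Your proof is correct and follows essentially the same strategy as the paper's: the implications $(2)\Leftrightarrow(3)$ rest on \eqref{eq:IncSuppLevi}, and the classification from Proposition~\ref{prop:thickprime} together with \eqref{supptilt} drives the remaining equivalences. The one genuine variation is in $(4)\Rightarrow(1)$. The paper works with Proposition~\ref{prop:thickprime}(1): it notes $\Res^{-1}(\bI_\cO)\supset\bI_{G\cdot\cO}$ and argues by contradiction that a strictly larger thick tensor ideal would, via the lattice of ideals in $\nilo$, contain the orbit $G\cdot\cO$ and hence every tilting with support $\overline{G\cdot\cO}$, contradicting (4). You instead observe that the preimage of a prime thick tensor ideal under a tensor functor is again prime (a clean general fact), invoke Proposition~\ref{prop:thickprime}(3) to write $\Res^{-1}(\bI_\cO)=\bI_{\cO'}$, and then pin down $\cO'=G\cdot\cO$ by two order-theoretic inclusions. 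The two routes are of comparable length and use essentially the same ingredients; yours is perhaps slightly more self-contained since it avoids the implicit unpacking of the ideal lattice in $\nilo$. Your explicit verification of $(1)\Rightarrow(3)$ also fills in a step the paper only asserts (namely the equivalence of (1) and (2) with the condition $\cO\subset\supp\Res T\Leftrightarrow G\cdot\cO\subset\supp T$). No gaps.
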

\begin{proof}
That (2) implies (3) is trivial. That (3) implies (2) follows from \eqref{eq:IncSuppLevi}.

We can also observe that conditions (1) and (2) are both equivalent to the condition
$$\cO\subset\supp \Res T\quad\Leftrightarrow\quad G\cdot\cO\subset \supp T.$$

That (4) is equivalent to the other conditions follows from the classification of thick tensor ideals in Proposition~\ref{prop:thickprime}. Concretely, if (2) is satisfied, then any $T$ with $\supp T=\overline{G\cdot\cO}$ (which exists by the classification) satisfies $\cO\subset\supp\Res(T)$. On the other hand, assume that (4) is satisfied. The preimage of $\bI_{\cO}$ contains $\bI_{G\cdot \cO}$, again by \eqref{eq:IncSuppLevi}. If it were strictly bigger then it would need to contain all tilting modules with $\supp T=\overline{G\cdot \cO}$. However, by assumption (4) at least one such tilting module is not in the ideal, a contradiction. Hence (1) follows.
\end{proof}

\begin{theorem}
If $\fg$ is of type $\tA$, the conditions in Proposition~\ref{prop:naturality} are satisfied.
\end{theorem}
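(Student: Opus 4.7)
The plan is to verify condition~(4) of Proposition~\ref{prop:naturality}. Fix a Levi $\fl\subset\fg$ in type~A and an orbit $\cO\subset\nil(\fl)$. Applying Proposition~\ref{prop:thickprime}(2) to $\fl'$, I would first select an indecomposable tilting module $T_{\fl'}(\mu)$ in $\Tilt U_\zeta(\fl')$ with $\supp T_{\fl'}(\mu)=\overline{\cO}$, so that $\mu$ lies in the cell for $\fl'$ corresponding to $\cO$ under the Lusztig-Xi bijection. The task then reduces to exhibiting $\lambda\in\mX^+$ with $\lambda\in C(G\cdot\cO)$ such that $T_{\fl'}(\mu)$ appears as a direct summand of $\Res T(\lambda)$: the first condition yields $\supp T(\lambda)=\overline{G\cdot\cO}$ by \eqref{supptilt}, while the second gives $\cO\subset\supp T_{\fl'}(\mu)\subset\supp\Res T(\lambda)$, which is exactly condition~(4).

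To construct $\lambda$, I would exploit the type-A combinatorial description of the cells $C(\cO)$ available through \cite{OsSupp} (in terms of Jordan types and $\ell$-adic data of dominant weights), together with the canonical embedding of the analogue of $W^+$ for $\fl'$ into $W^+$ coming from the parabolic subgroup of $W$ associated to $\fl$. Concretely, one lifts $\mu$ to a dominant weight $\lambda$ for $\fg$ whose coordinates transverse to $\fl'$ are chosen so that the Jordan type output by the Lusztig-Xi bijection for $\fg$ equals that of $G\cdot\cO$. Compatibility of antispherical Kazhdan-Lusztig polynomials across this embedding, together with the graded lift provided by Proposition~\ref{Prop:GL}, then controls the character of $\Res T(\lambda)$ and pinpoints $T_{\fl'}(\mu)$ among its tilting summands.

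The main obstacle will be the passage from a character identity to a genuine direct-summand statement: a priori $T_{\fl'}(\mu)$ might appear only as a composition factor of $\Res T(\lambda)$, rather than as a direct summand. In type~A this gap can be bridged via Schur-Weyl duality — realising every tilting module as a summand of a mixed tensor power of the defining representation $V$ and tracing the restriction of $V^{\otimes n}$ to $\fl'$ through the explicit description of the centraliser algebras — which reduces the splitting question to a known statement about double cosets in parabolic Hecke algebras. A more conceptual alternative is to argue by induction on the semisimple rank of the quotient $\fg/\fl'$, reducing to the case of a minimal (type $\tA_1$) Levi removal, where the claim can be verified by a direct computation.
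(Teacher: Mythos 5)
Your overall strategy — verifying condition~(4) of Proposition~\ref{prop:naturality} by choosing a suitable $T(\lambda)$ whose restriction hits $\cO$ — is the right one, but you have missed a reduction that the paper uses to make the rest of the argument tractable, and without it your proof has a real gap that your proposed fixes do not bridge.

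The crucial step you skip is invoking transitivity of condition~(1) to reduce to the case where $\cO$ is \emph{distinguished} in $\fl'$: given an arbitrary $\fl\subset\fg$ and $\cO\subset\nil(\fl)$, one passes to a minimal Levi $\fl_1\subset\fl$ meeting $\cO$, so that $\cO\cap\fl_1$ is distinguished in $\fl_1'$, and transitivity of (1) for the chain $\fl_1\subset\fl\subset\fg$ reduces the whole problem to distinguished orbits. In type~$\tA$ the only distinguished orbit is the regular one, so one is left needing to find $T(\lambda)$ with $\supp T(\lambda)=\overline{\cO_\lambda}$ whose restriction to $\Tilt U_\zeta(\mathfrak{sl}_\lambda)$ is simply \emph{not negligible}. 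This is a much weaker demand than producing a prescribed indecomposable tilting $T_{\fl'}(\mu)$ as a direct summand: it suffices that $\unit$ appears as a direct summand, which is immediate if one takes $\kappa$ with $\langle\kappa,\alpha_i^\vee\rangle=0$ for all simple coroots $\alpha_i^\vee$ of $\fl'$ (one uses the simple tilting modules of the appropriate cell from~\cite[\S 6]{OsSupp}; then the weight-zero part of $\Res T(\kappa)$ splits off a copy of $\unit$ by a routine argument). None of your steps identify this reduction.

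Because you work with an arbitrary $\cO$, you are forced to aim at exhibiting the general indecomposable $T_{\fl'}(\mu)$ as a genuine direct summand of $\Res T(\lambda)$, and you correctly flag this as the obstruction. Your two candidate fixes do not resolve it. The Schur--Weyl route reduces the question to one about double cosets in parabolic Hecke algebras, but you do not explain why that known double-coset statement yields a splitting of $T_{\fl'}(\mu)$ off $\Res T(\lambda)$ (characters of tiltings determine the decomposition into indecomposables only after a nontrivial recursion, and the relevant cells are not supported on a single weight). The induction on semisimple rank of $\fg/\fl'$ is also incomplete as stated: peeling off one node at a time does not by itself reduce $\cO$ to the regular orbit of the smaller Levi, so you would still face the hard summand question at every step. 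The correct induction is precisely what the transitivity of condition~(1) supplies, and it is applied simultaneously with the choice of a minimal Levi meeting $\cO$ — that combination is the idea your proposal is missing.
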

\begin{proof}
We can reduce this to the case $\fg=\mathfrak{sl}_n$. Furthermore, by transitivity for (1), we can assume that $\cO$ is distinguished (and thus maximal).
Focusing on formulation (4), for $\lambda\vdash n$, we need to prove that there exists $T\in \Tilt U_\zeta (\fg)$ with $\supp(T)=\overline{\cO_\lambda}$ such that its restriction to the Levi subalgebra $\mathfrak{sl}_\lambda=\oplus_i\mathfrak{sl}_{\lambda_i}$,
$$\Tilt U_\zeta (\mathfrak{sl}_n)\;\to\; \Tilt U_\zeta (\mathfrak{sl}_\lambda)$$
is not negligible (the statement for conjugate Levi subalgebras then also follows). We can take the (simple) tilting module with desired support constructed in \cite[\S 6]{OsSupp}. Indeed, one can take $T(\kappa)$, for a weight $\kappa$ with $\langle\kappa,\alpha_i^\vee\rangle=0$ whenever $i$ is not equal to a partial sum $\sum_{a=1}^b\lambda_a$, so that by a standard argument its restriction to $U_{\zeta}(\mathfrak{sl}_{\lambda})$ has a summand $\unit$.
\end{proof}

Since Proposition~\ref{prop:naturality}(2) is even expected to be true for arbitrary modules, we conjecture:

\begin{conjecture}\label{conj:nat}
The conditions in Proposition~\ref{prop:naturality} are satisfied for an arbitrary $\fg$ (with assumptions as in \ref{sec:condl}), i.e. we have naturality of support of tilting modules with respect to Levi subalgebras.
\end{conjecture}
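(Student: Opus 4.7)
The plan is to verify condition (4) of Proposition~\ref{prop:naturality}: for each Levi $\fl \subset \fg$ and each nilpotent orbit $\cO \subset \nil(\fl)$, construct a tilting module $T$ for $U_\zeta(\fg)$ with $\supp T = \overline{G \cdot \cO}$ and $\cO \subset \supp \Res T$. A natural first reduction is to the case of distinguished orbits: if $\cO = \mathrm{Ind}_{\fl_0}^{\fl}\cO_0$ with $\cO_0 \subset \nil(\fl_0)$ distinguished, one hopes to lift a tilting witness of (4) for $(\fg,\fl_0)$ and $\cO_0$ to a tilting witness for $(\fg,\fl)$ and $\cO$, via compatibility of Lusztig-Spaltenstein induction with the Lusztig-Xi bijection of \ref{LXbij} and support-closure relations between $G\cdot\cO_0$ and $G\cdot\cO$. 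This reduction is not formally immediate, because $G\cdot\cO_0$ and $G\cdot\cO$ are in general different orbits, so it likely requires a separate step in which tilting modules realising the two supports are interpolated.

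For distinguished orbits, the type~$\tA$ argument succeeds because every distinguished orbit is the regular one in its Levi, so the target summand on restriction can be the trivial tilting $T_\fl(0)$, and one extends $0$ to a dominant weight $\kappa$ vanishing on simple coroots of $\fl$ while landing in $C(G \cdot \cO)$. In general the target summand must be some non-trivial $T_\fl(\mu)$ with $\mu \in C_\fl(\cO)$, of which the finite cell $c_\fl(\cO) \subset W^+_\fl$ of \ref{LXbij} provides finitely many candidates. The first attempt would be to extend a chosen $\mu$ to a dominant $\fg$-weight $\lambda \in C(G\cdot \cO)$ by prescribing pairings with simple coroots outside $\fl$, and then to prove that $T_\fl(\mu)$ occurs as a summand of $T(\lambda)|_{U_\zeta(\fl')}$. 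The combinatorial shadow of this statement is controlled by the antispherical modules for both $W$ and $W_\fl$ together with the graded multiplicity formula of Proposition~\ref{Prop:GL}, turning the summand condition into a non-vanishing statement for antispherical Kazhdan-Lusztig polynomials matched across $W_\fl \hookrightarrow W$.

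The main obstacle is to supply the geometric content of the induction step in a way compatible with the graded/categorical structure of $\Tilt U_\zeta(\fg)$. The most conceptual route is through the Arkhipov-Bezrukavnikov-Ginzburg equivalence $\Psi: D^b(\mathrm{Coh}^{G\times \mG_m}(\tilde\nil)) \xrightarrow{\sim} D^b(\Rep_0 U_\zeta(\fg))$ from \cite{ABG} and a parabolic analogue for $\fl$: one would translate restriction to $U_\zeta(\fl')$ into a suitable transverse pullback along $\tilde{\nil(\fl)} \hookrightarrow \tilde\nil$, verify that this pullback preserves the support of the exceptional perverse coherent sheaves labelling tilting modules (by Bezrukavnikov's dictionary between cells and indecomposable $G$-equivariant perverse coherent sheaves on $\tilde\nil$), and conclude by transversality of the restricted Springer resolution. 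Matching restriction on the quantum side with the geometric pullback on the coherent side, extending the argument from the principal block to arbitrary blocks via translation functors (as in Lemma~\ref{Lem:N}), and controlling summands rather than merely overall supports of complexes, is where the technical heart of the problem lies.
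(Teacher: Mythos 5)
This is a \emph{conjecture} in the paper, not a theorem: the paper establishes the statement only in type~$\tA$ (the theorem immediately preceding it) and leaves the general case open. So there is no ``paper's own proof'' to compare against, and your sketch should not be read as reconstructing one; at best you are proposing a research plan for an unresolved problem, which you partly acknowledge.

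Comparing your plan with what the paper itself suggests in the remark following the conjecture: the authors point out that naturality of support is already known for reductive groups (for $\ell$ a prime), so it would suffice to produce, in each cell, a quantum tilting module with the \emph{same character} as the corresponding modular tilting module, and then transport the modular naturality statement back to the quantum side. That is a rather different route from yours. You instead propose (i) a reduction to distinguished orbits via Lusztig–Spaltenstein induction, (ii) a combinatorial control of branching via antispherical KL polynomials, and (iii) a geometric argument via the ABG equivalence and a parabolic analogue. Step (i) is aligned with the paper's type~$\tA$ reduction, and your observation that the type~$\tA$ proof exploits the fact that every distinguished orbit in a Levi of $\mathfrak{sl}_n$ is regular (so the target summand can be $\unit$) is correct; this is exactly what fails in other types and makes the problem harder.

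The substantive gaps, which you partly flag yourself, are in (ii) and (iii). For (ii), Proposition~\ref{Prop:GL} controls graded dimensions of $\Hom$-spaces between tilting modules in $\Rep_0 U_\zeta(\fg)$; it does not directly control the indecomposable summands of $\Res_{\fl'} T(\lambda)$, which is a branching statement in $K_0$ governed by restriction of characters of Weyl modules rather than by a simple matching of antispherical polynomials across $W_\fl \hookrightarrow W$. For (iii), the claim that $\Res$ to $U_\zeta(\fl')$ corresponds under the ABG equivalence to a transverse pullback along $\tilde\nil(\fl)\hookrightarrow\tilde\nil$, compatibly with the exceptional coherent $t$-structure and at the level of individual tilting summands (not just supports of complexes), is not available in the literature; establishing such a parabolic ABG compatibility is itself a substantial theorem, not a lemma one can invoke. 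As it stands, the proposal is a reasonable outline of where to attack the conjecture, but it does not constitute a proof, and the paper likewise does not claim one.
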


\begin{remark}Since the corresponding naturality of support is known for reductive groups, for $\ell$ a prime, it would suffice to find in each cell a quantum tilting module that has the same character as the corresponding tilting module for the reductive group in characteristic~$\ell$. This aligns with our proof for type $\tA$.
\end{remark}

\subsection{A conditional classification}

\begin{theorem}\label{thm:distab}
If $\cO$ is a distinguished orbit, then $\Tilt U_\zeta (\fg)/\cI_{\cO}$ admits an abelian envelope, in particular $\cI_{\cO}$ is of abelian type.
\end{theorem}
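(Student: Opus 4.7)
The plan is to invoke Theorem~\ref{LemStr}(2) applied to $\cA=\Tilt U_\zeta(\fg)$ with the thick tensor ideal $\bI=\bI_\cO$. This requires checking two things: that $\bI_\cO$ has a unique cover $\bJ$ in the poset of thick tensor ideals, and that the set of isomorphism classes of indecomposable objects in $\bJ\,\backslash\,\bI_\cO$ is finite.

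For the first ingredient, I would use the classification in Proposition~\ref{prop:thickprime}(1) identifying thick tensor ideals in $\Tilt U_\zeta(\fg)$ with the poset of ideals in $\nilo$. Under this bijection, $\bI_\cO$ corresponds to the ideal $\{\cO'\in\nilo\mid \cO\not\le \cO'\}$, whose complement $\{\cO'\mid \cO\le\cO'\}$ has $\cO$ as unique minimal element. Hence $\bI_\cO$ admits a unique cover $\bJ$, corresponding to adjoining $\cO$, i.e.\ $\bJ=\{T\mid \supp T\subset\overline{\cO}\}$. By~\eqref{supptilt}, the indecomposable tilting modules in $\bJ\,\backslash\,\bI_\cO$ are precisely the $T(\lambda)$ with $\lambda\in C(\cO)$.

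For the finiteness ingredient, I would invoke the distinguished hypothesis through the Lusztig--Xi bijection recalled in \ref{LXbij}: an orbit $\cO$ is distinguished if and only if the right cell $c(\cO)\subset W^+$ is finite. Since $C(\cO)$ is by definition the union of the lower closures of the alcoves containing the weights $x\cdot 0$ for $x\in c(\cO)$, and each alcove is a bounded simplex in $\mR\otimes\mX$ whose intersection with the lattice $\mX$ is finite, the set $C(\cO)$ is finite. This gives only finitely many indecomposable tilting modules in $\bJ\,\backslash\,\bI_\cO$.

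With both hypotheses verified, Theorem~\ref{LemStr}(2) immediately yields that $\cI_\cO=\bI_\cO^{\max}$ is of abelian type and that $\Tilt U_\zeta(\fg)/\cI_\cO$ admits an abelian envelope. There is essentially no serious obstacle in this argument: it is a direct application of the general machinery of Section~\ref{Sec:Method}, reducing to the structural finiteness encoded by distinguishedness via the Lusztig--Xi bijection. The only point requiring care is the correct identification of the unique cover and of its new indecomposables through the support-theoretic labelling, for which the classification results of Proposition~\ref{prop:thickprime} do all the work.
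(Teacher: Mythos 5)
Your proof follows the paper's own route exactly: invoke Theorem~\ref{LemStr}(2), observe that $\bI_\cO$ has a unique cover, identify the indecomposables added by that cover as the $T(\lambda)$ with $\supp T(\lambda)=\overline{\cO}$ (equivalently $\lambda\in C(\cO)$), and conclude finiteness from the Lusztig--Xi fact that $c(\cO)$ is a finite cell. That is precisely what the paper does, with the same references.

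There is, however, a slip in your explicit identification of the cover. You write $\bJ=\{T\mid\supp T\subset\overline{\cO}\}$, but this is the \emph{principal} thick tensor ideal attached to $\cO$ (the labelling of Remark~\ref{Rem:PI}(2)), not the unique cover of $\bI_\cO$. The cover corresponds, under Proposition~\ref{prop:thickprime}(1), to the ideal in $\nilo$ generated by $\cO$ \emph{and all orbits incomparable to $\cO$}, i.e.\ $\{\cO'\mid\cO'\not>\cO\}$, as spelled out in the proof of Proposition~\ref{prop:thickprime}(3). Whenever $\nilo$ is not linearly ordered, your candidate $\bJ$ does not even contain $\bI_\cO$: a tilting module whose support is the closure of an orbit incomparable to $\cO$ lies in $\bI_\cO$ but not in $\{T\mid\supp T\subset\overline{\cO}\}$. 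Fortunately the downstream computation is unaffected --- either description of the cover has difference with $\bI_\cO$ consisting exactly of the indecomposable $T(\lambda)$ with $\supp T(\lambda)=\overline{\cO}$ --- so your conclusion stands, but the intermediate claim should be corrected.
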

\begin{proof}
We can apply \Cref{LemStr}(2). Indeed, we have already observed that the cover $\bI_{\cO}<\bJ$ exists, see also~Lemma~\ref{LemPrimeCover}(2). Furthermore, the indecomposable objects in $\bJ\,\backslash\, \bI_{\cO}$ are precisely $T(\lambda)$, $\lambda\in\mX^+$, with
$\supp(T(\lambda))= \overline{\cO}.$
In other words, $\lambda$ belongs to the lower closure of an alcove labelled by $w\in c(\cO)$. But $c(\cO)$ is finite by \ref{LXbij}, so the conditions of the theorem are satisfied.
\end{proof}

\begin{example}
If $\cO=\cO_{\reg}$, then $\cI_{\cO_{\reg}}=\cN$ and 
$\Tilt U_\zeta (\fg)/\cN$, usually denoted as $\Ver_\zeta (\fg)$ (the Verlinde category) is itself (semisimple) abelian.
\end{example}

\begin{theorem}\label{ThmAb}
Assume that Conjecture~\ref{conj:nat} is satisfied for $\fg$, e.g. $\fg$ is of type $\tA$. The tensor ideals of abelian type in $\Tilt U_\zeta(\fg)$ are in bijection with nilpotent orbits, so they are precisely the ideals $\cI_{\cO}=\bI_{\cO}^{\max}$, for $\cO\in\nilo$, and every map in \eqref{themap} is a bijection.
\end{theorem}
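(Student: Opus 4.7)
The plan is to reduce the statement to the single claim that each $\cI_\cO$ is of abelian type, and then to verify this claim by combining Theorem~\ref{thm:distab} with the Bala--Carter theorem and the naturality conjecture. Everything else is already in place: by Theorem~\ref{ThmClassPrime} the prime tensor ideals in $\Tilt U_\zeta(\fg)$ are exactly the $\cI_\cO$, while Theorem~\ref{Thm:FinR} — whose finiteness hypothesis $\cR(\cA)^M=0$ is Lemma~\ref{Lem:N} — together with $\bk=\mC$ being algebraically closed tells us that every prime tensor ideal is automatically faithfully prime and that $\Ob$ restricts to a bijection between prime tensor ideals and prime thick tensor ideals. Since tensor ideals of abelian type are faithfully prime by Proposition~\ref{propnotab1}, once each $\cI_\cO$ is shown to be of abelian type, all four sets in~\eqref{themap} coincide with $\nilo$ under the bijection $\cO\leftrightarrow\cI_\cO$, and every arrow in~\eqref{themap} becomes a bijection.

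To prove that $\cI_\cO$ is of abelian type I would induct from Levi subalgebras. By the Bala--Carter theorem one may write $\cO=G\cdot\cO_\fl$ for some Levi subalgebra $\fl\subset\fg$ and some distinguished nilpotent orbit $\cO_\fl\subset\nil(\fl')$. Theorem~\ref{thm:distab} applied to $\fl'$ produces an abelian envelope $\cC_{\cO_\fl}$ of $\Tilt U_\zeta(\fl')/\cI_{\cO_\fl}$. Composing with restriction, which preserves tilting modules under our assumptions on $\ell$ (see Section~\ref{sec:quantum1}), yields a tensor functor
$$F\,:\;\Tilt U_\zeta(\fg)\;\xrightarrow{\Res}\;\Tilt U_\zeta(\fl')\;\to\;\Tilt U_\zeta(\fl')/\cI_{\cO_\fl}\;\hookrightarrow\;\cC_{\cO_\fl}.$$
Its kernel $\cJ:=\ker F$ is by construction a tensor ideal of abelian type, hence faithfully prime, so $\cJ=\cI_{\cO'}$ for some $\cO'\in\nilo$ by the first paragraph. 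It then remains to identify $\cO'=\cO$.

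This identification is the crux, and I would carry it out by computing $\Ob(\cJ)$. Since the abelian envelope $\Tilt U_\zeta(\fl')/\cI_{\cO_\fl}\hookrightarrow\cC_{\cO_\fl}$ is faithful, a tilting module $T$ lies in $\Ob(\cJ)$ iff $\Res T\in\Ob(\cI_{\cO_\fl})=\bI_{\cO_\fl}$. Thus $\Ob(\cJ)$ is precisely the preimage of $\bI_{\cO_\fl}$ under $\Res$, which by Proposition~\ref{prop:naturality}(1), applied under Conjecture~\ref{conj:nat}, equals $\bI_{G\cdot\cO_\fl}=\bI_\cO$. Bijectivity of $\Ob$ on prime tensor ideals then forces $\cJ=\bI_\cO^{\max}=\cI_\cO$, as required.

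The main obstacle — and the only step at which Conjecture~\ref{conj:nat} is genuinely invoked — is this last computation. Unconditionally, only the inclusion $\bI_\cO\subset\Ob(\cJ)$ is available, via $G$-stability of supports together with~\eqref{eq:IncSuppLevi}; without naturality one cannot rule out the possibility that $\cO'$ is a strict sub-orbit of $\cO$, with $\cJ$ correspondingly too small. Everything else is a formal combination of the distinguished case (Theorem~\ref{thm:distab}), the classification of prime thick tensor ideals (Proposition~\ref{prop:thickprime}(3)), and the abstract machinery of Section~\ref{sec:abtype}.
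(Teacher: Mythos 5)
Your proposal is correct and follows essentially the same route as the paper: reduce to showing each $\cI_\cO$ is of abelian type, choose a minimal Levi $\fl$ with $\cO = G\cdot\cO'$ for a distinguished orbit $\cO'\subset\nil(\fl')$ (Bala--Carter), observe that the restriction preimage $\cJ$ of $\cI_{\cO'}$ is of abelian type by Theorem~\ref{thm:distab}, compute $\Ob(\cJ)=\bI_\cO$ via Conjecture~\ref{conj:nat}, and conclude $\cJ=\cI_\cO$ from Theorem~\ref{ThmClassPrime}. Your write-up is somewhat more explicit about why the preimage is of abelian type and where exactly naturality enters, but the underlying argument is identical.
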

\begin{proof}
Consider a minimal Levi subalgebra $\fl\subset\fg$ that intersects $\cO$ non-trivially. Then any orbit $\cO'$  in $\cO\cap \fl$ is a distinguished orbit in $\fl'$ and $\cO=G\cdot \cO'$.

The inverse image $\cJ$ in $\Tilt U_\zeta(\fg)$ of $\cI_{\cO'}$ under restriction is of abelian type by Theorem~\ref{thm:distab}. Assuming Conjecture~\ref{conj:nat}, we have $\Ob(\cJ)=\bI_{\cO}$, so that Theorem~\ref{ThmClassPrime} implies $\cJ=\cI_{\cO}$. Hence all the prime tensor ideals must be of abelian type.
\end{proof}

\begin{remark}
\begin{enumerate}
\item Our results show that the inverse image of a prime tensor ideal along restriction $\Tilt U_\zeta(\fg)\to\Tilt U_\zeta(\fl')$ only depends on the conjugacy class of $\fl\subset \fg$. Indeed, the corresponding claim is true for thick tensor ideals by Weyl group invariance of characters. The result is then carried over to tensor ideals by the bijection in Theorem~\ref{ThmClassPrime}.
\item One can also consider the `mixed' case (quantum groups in positive characteristic) of the problems we consider, see \cite{Dc} for the abelian envelopes in that generality for~$SL_2$.
\end{enumerate}

\end{remark}

\subsubsection{} We did not only prove that the prime ideal $\cI_{\cO}$ in $\Tilt U_\zeta(\fg)$ corresponding to a distinguished orbit is of abelian type, Theorem~\ref{ThmAbEnv} implies that $(\Tilt U_\zeta(\fg))/\cI_{\cO}$ then even has an abelian envelope $\cC(\fg,\ell,\cO)$.
Similarly, an affirmative answer to the following question would imply that such an abelian envelope $\cC(\fg,\ell,\cO)$ exists for all orbits $\cO$ and in particular provide an unconditional proof of Theorem~\ref{ThmAb}:
\begin{question}
Is the thick tensor ideal of tilting modules $T$ with $\supp T\subset\overline{\cO}$  inside $\Tilt U_\zeta(\fg)/\cI_{\cO}$ a splitting ideal?
\end{question}

\subsection{Connection with Duflo involutions}

\subsubsection{} For each simple reflection $s$ in the affine Weyl group, we have an associated translation functor $\theta_s$ through the corresponding wall (the composite of translating onto and out of the wall as in \cite[\S 8]{APW}). On the Grothendieck group this action corresponds to the action of $\mZ W$ ($\H$ specialised at $v\mapsto 1$) on the antispherical representation, where the action of $\theta_s$ corresponds to acting with $1+s$.

Following \cite{RW}, we can define a diagrammatic Hecke category $\cD$ over $\bk$, which is analogous to but different from the category of Soergel bimodules corresponding to $W$ as considered in \S\ref{SecSB} in the appendix. The Grothendieck ring is again $\H$, and it is a category $\cS$ as in \ref{DefS} (even satisfying \ref{Piv}, see \cite[\S 4]{RW} although we do not need that) with $\mW=W$, where the necessary results from \cite{So} now have to be replaced with their generalisations in \cite{EW16}. In particular, its theory of Duflo involutions matches the classical theory of Duflo involutions of $W$, and we use $\sD$ unambiguously for the set of Duflo involutions.

Moreover, as in \cite[\S 4.4]{RW}, we can define the anti-spherical category $\cD^{\asph}$ as the quotient of $\cD$ with respect to the right tensor ideal generated by the objects $B_{w}$ with $w\not\in W^+$. Its split Grothendieck group is the antispherical module, with action of $\H$ coming from the right module structure of $\cD$ on $\cD^{\asph}$.

\subsubsection{}\label{hypoRW}

Following \cite[Conjecture~5.1]{RW}, we conjecture that the assignment of $B_s$ to a wall-crossing functor $\theta_s$ for all $s$ induces a right action of the Hecke category $\cD$ on $\Rep_0U_\zeta(\fg)$. As a consequence, see \cite[Theorem~1.2]{RW}, the action of $\cD$ on $\Rep_0 U_\zeta(\fg)$ then induces an equivalence
\begin{equation}\label{eqRW}
F:\;\cD_{\deg}^{\asph}\;\xrightarrow{\sim}\;\Tilt_0U_\zeta(\fg),\qquad B_w\mapsto T(w\cdot 0).
\end{equation}
Here $\cD_{\deg}^{\asph}$ stands for the degrading of $\cD^{\asph}$. It has the same objects, but the morphism spaces are given by $\oplus_i\cD^{\asph}(X,Y\langle i\rangle)$.

\subsubsection{} Equivalence~\eqref{eqRW} sends the monoidal unit in $\cD_{\deg}$ to the monoidal unit in $\Tilt U_\zeta(\fg)$, so that it follows by dual application of Theorem~\ref{ThmRigBij} that there is a canonical bijection between right tensor ideals in $\cD_{\deg}$ that contain all $B_w$ for $w\not\in W^+$ and tensor ideals in $\Tilt U_\zeta(\fg)$. One can show that two tensor ideals in $\cD_{\deg}$ have the same underlying thick tensor ideal if and only if the corresponding ideals in $\Tilt U_\zeta(\fg)$ have the same underlying thick tensor ideal. Most of the argument is given by Proposition~\ref{prop:Duflo} below. With some more work one can express some of the prime condition for tensor ideals in $\Tilt U_\zeta(\fg)$ on the $\cD_{\deg}$-side, so that Lemma~\ref{LemAlt} leads to an alternative proof of Theorem~\ref{ThmClassPrime}. Since this does not give additional results, we omit the details of the latter.

\begin{prop}\label{prop:Duflo}
Assume that Hypothesis \ref{hypoRW} is valid. Then for each $d\in\sD\cap W^+$, there exists a unique, up to post-composition with an isomorphism, $\alpha_d:\unit\to T(d\cdot 0)$ such that for each indecomposable tilting module $T$, there is a unique $d\in\sD$ so that $\co_T$ factors through $\alpha_d$ via a summand $T(d\cdot 0)$ of $T\otimes T^\ast$. Moreover, if $\supp(T)=\overline{\cO}$ then $d$ is the unique element of $c(\cO)\cap\sD$.
\end{prop}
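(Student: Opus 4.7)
The plan is to combine the general theory of Duflo involutions in rigid monoidal categories from Appendix~\ref{sec:Duflo-Mon} with the equivalence $F$ from~\eqref{eqRW}, transporting Duflo data from the Hecke category $\cD$ to the tilting category. First I would apply the appendix's theory to $\cD$, which by construction satisfies the hypotheses of a category of type $\cS$ with $\mW = W$ (with $\cD$ replacing the Soergel bimodules and the results of \cite{EW16} replacing those of \cite{So} in the appendix). This should produce, for each Duflo involution $d \in \sD$, a distinguished morphism $\alpha_d^{\cD}: \unit_{\cD} \to B_d$ in $\cD$, unique up to nonzero scalar, together with the factorization $\co_{B_w} = \iota_w \circ \alpha_d^{\cD}$, where $\iota_w: B_d \hookrightarrow B_w \otimes B_w^\ast$ is an inclusion of a direct summand and $d$ is the unique Duflo involution in the right cell of $w$. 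When $d \in W^+$, both $B_d$ and $\alpha_d^{\cD}$ survive the passage to $\cD^{\asph}$, so I can set $\alpha_d := F(\alpha_d^{\cD}): \unit \to T(d \cdot 0)$, with uniqueness up to scalar inherited immediately from the analogous uniqueness in $\cD$.

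Next I would transfer the factorization of coevaluations first to principal-block tilting modules $T = T(w \cdot 0)$ for $w \in W^+$, and then to arbitrary blocks via the translation functors $\theta^{\out}, \theta^{\on}$ of \cite{APW}. As recalled in the proof of Lemma~\ref{Lem:N}, the out-of-wall translation permutes indecomposable tilting modules up to isomorphism, and being two-sided adjoint to a monoidal functor it is compatible with duals and with the coevaluation, so the case of an arbitrary indecomposable tilting is reduced to the principal block, where the factorization is read off from that of $\co_{B_w}$ via $F$.

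For the identification of $d$ with the unique Duflo involution in $c(\cO)$, observe that if $\supp(T(\lambda)) = \overline{\cO}$, then by \eqref{supptilt} the weight $\lambda$ lies in the lower closure of an alcove labelled by some $w \in c(\cO) \subset W^+$. By the Lusztig--Xi bijection of \ref{LXbij}, $c(\cO)$ is a single right cell in $W^+$, and the appendix's extension of Lusztig's theory to arbitrary Coxeter groups (without boundedness) guarantees that this right cell contains a unique Duflo involution, which is the desired $d$.

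The main obstacle will be the transfer step: $F$ is only an equivalence of right $\cD_{\deg}$-module categories, not of monoidal categories, and yet the proposition concerns the tensor product $T \otimes T^\ast$ in $\Tilt U_\zeta(\fg)$. I would address this by showing that the principal-block summand of $T(w \cdot 0) \otimes T(w \cdot 0)^\ast$ corresponds, under $F$, to the internal Hom $\uHom_{\cD^{\asph}}(B_w, B_w)$ computed with respect to the $\cD$-module structure, and that the coevaluation $\co_{T(w \cdot 0)}$ matches the unit of the associated internal-Hom adjunction. Once this matching is established, the factorization in $\cD$ from the appendix transfers directly, and the statement follows. Verifying this compatibility between the monoidal coevaluation and the module-theoretic internal Hom, in a way robust enough to also track the cellular/Duflo data, is the technically delicate point of the argument.
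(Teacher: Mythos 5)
Your outline matches the paper's strategy at every conceptual junction: obtain distinguished morphisms from the appendix applied to $\cD$, restrict to $\sD\cap W^+$, push them through $F$ to get $\alpha_d$, identify $d$ in $c(\cO)\cap\sD$ via the Lusztig--Xi bijection, and pin down the module-vs.-monoidal mismatch for $F$ as the crux. Your proposed resolution of that crux, however, is not actually carried out and has a small type mismatch as stated: the internal Hom $\uHom_{\cD^{\asph}}(B_w,B_w)$ with respect to the $\cD$-action lives in $\cD$, not in $\cD^{\asph}$, so one cannot literally ``apply $F$'' to it; and $T(w\cdot 0)\otimes T(w\cdot 0)^\ast$ lives in $\Tilt U_\zeta(\fg)$, not in $\cD$. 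Some nontrivial dictionary is needed relating (i) the $\cD$-module internal Hom, (ii) its image under the $\cD$-action on $\unit\in\Tilt_0$, and (iii) the genuine monoidal duality $T\otimes T^\ast$ in $\Tilt$, together with a statement that $\co_T$ corresponds to the internal-Hom unit under this dictionary. This is precisely where the work is.

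The paper resolves it concretely and somewhat differently from the internal-Hom packaging: it shows that for $X,Y\in\cD_{\deg}$ there is a tilting module $T$ containing both $F(Y\otimes X^\ast)$ and $FY\otimes(FX)^\ast$ as direct summands, fitting into a commutative square identifying $\cD^{\asph}_{\deg}(X,Y)\simeq\Hom(FX,FY)$ with the corresponding $\Hom(\unit,-)$-spaces; the construction of $T$ exploits that each wall-crossing $\theta_i$ is a composite of block projections and tensoring with a fixed tilting module, and one obtains $T$ by discarding the projections. This is exactly the check your ``technically delicate point'' was pointing at, and is the part your proposal leaves unproved. A secondary remark: your translation-functor reduction to the principal block is reasonable and is not spelled out in the paper (the paper's explicit argument only treats $T(x\cdot 0)$ with $x\in W^+$), but by itself it is a routine reduction; the substantive content is the $\cD$-module-vs.-monoidal compatibility, which you would still need to prove.
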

\begin{proof}
Note that uniqueness is immediate from the stated properties. We show that we can take $\alpha_d$ to be the image of the distinguished morphisms in $\cD$ from Definition~\ref{Def:dist}.

We claim that for any two $X,Y\in\cD_{\deg}$, there exists a tilting module $T$ that contains both $F(Y\otimes X^\ast)$ and $FY\otimes (FX)^\ast$ as direct summands such that the inclusions yield a commutative diagram
$$
\xymatrix{
\cD_{\deg}^{\asph}(X,Y)\ar[r]^-{\sim}\ar[d]^\sim&\cD^{\asph}_{\deg}(\unit, Y\otimes X^\ast)\ar[r]^-{\sim}&\Hom(\unit, F(Y\otimes X^\ast))\ar[d]\\
\Hom(FX,FY)\ar[r]^-{\sim}&\Hom(\unit,FY\otimes (FX)^\ast)\ar[r]&\Hom(\unit, T).
}$$
The isomorphisms are either evaluation of $F$, or adjunction (where the relevant adjunction in $\cD^{\asph}$ is inherited from the monoidal structure on $\cD$ by taking a quotient with respect to a right tensor ideal).
Indeed, it suffices to prove the claim for $X$ of the form $B_{1}\otimes B_2\otimes \cdots\otimes B_i$, where each $B_i$ is a generator $B_{s_j}$. With $U=F(Y)$, we can then identify $F(Y\otimes X^\ast)$ with $\theta_i\theta_{i-1}\cdots \theta_1(U)$ and $FY\otimes (FX)^\ast$ with  $U\otimes \theta_i\theta_{i-1}\cdots \theta_1(\unit)$.
Each functor $\theta_i$ is a composite of projections onto blocks and taking tensor products with tilting modules. By removing all such projections from both above expressions, we get the desired $T$. Commutativity then follows by observing that adjunction for translation functors is inherited from monoidal adjunction.

We apply this principle to $X=Y=B_x$. It then follows that the summand $B_d$ in $B_x\otimes B_x^\ast$ from Theorem~\ref{ThmP3} must lead to a summand $F(B_d)=T(d\cdot 0)$ in $T$ that is also a summand of $T(x\cdot0)\otimes T(x\cdot 0)^\ast$ through which $\co_{T(x\cdot 0)}$ factors, concluding the proof.
\end{proof}

\begin{remark}
In \cite{HW}, a combinatorial description of certain thick tensor ideals in $\Tilt U_\zeta(\fg)$ is given. Also a chain $\bN=\bN_1\supset \bN_2\supset\cdots$ of generalisations of the thick tensor ideal of negligible objects is constructed. In \cite[\S 10.4]{HW} an interpretation in terms of Lusztig's a-function is suggested, and proved in type $\tA$ in \cite[\S 9.6]{HW}. If correct, then our results would imply that the indecomposable objects in $\bN_i$ are those for which the coevaluation factors through an indecomposable direct summand with a morphism which comes from a morphism in degree $\ge i$ in $\cD^{\asph}$.
\end{remark}


\section{Quantum groups: Rank 2 cases}\label{sec:quantum3}

In this section, we make our results on $\Tilt U_\zeta(\fg)$ more explicit for $\fg$ of rank 2, and place them in the context of all tensor ideals.

\subsection{Classification of all tensor ideals}

In order to classify all tensor ideals, one needs to understand the space $\oplus_x\Hom(\unit,T(x\cdot 0))$ and its module structure. 
For simple Lie algebras of rank 2, the anti-spherical Kazhdan-Lusztig polynomials, and so in particular the dimensions of these spaces, have been computed explicitly in~\cite{Stroppel}. In general, it is clear that we can actually restrict the labelling by $x\in W^+$ significantly:
\begin{lemma}\label{xinv}
If for $x\in W^+$, we have a monomorphism $\unit\to T(x\cdot 0)$ then $x$ is a shortest representative in $W_f\backslash W/ W_f$. 
\end{lemma}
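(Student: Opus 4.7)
The plan is to translate the statement into the vanishing of an antispherical Kazhdan--Lusztig polynomial, and then verify this vanishing by a direct computation in $\mathcal{AS}$. Since $\unit$ is simple, a monomorphism $\unit\to T(x\cdot 0)$ is the same as a non-zero morphism. Applying Proposition~\ref{Prop:GL} with $y=e$ and specialising at $v=1$, and using $\underline{N}_e = N_e$ to deduce $n_{z,e}=\delta_{z,e}$, one obtains
\begin{equation*}
\dim\Hom(\unit, T(x\cdot 0))\;=\; n_{e,x}(1).
\end{equation*}
Since $x\in W^+$ already ensures $sx>x$ for every simple $s\in W_f$, the element $x$ is the shortest in $W_f x W_f$ precisely when in addition $xs>x$ for all such $s$. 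The lemma thus reduces to showing that $n_{e,x}(v)=0$ whenever some simple $s\in W_f$ satisfies $xs<x$.

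Given such an $s$, I would exploit the right $\H$-action on $\mathcal{AS}$. The standard identity $\underline{H}_x\underline{H}_s = (v+v^{-1})\underline{H}_x$ (valid for $xs<x$), together with $\underline{H}_s = H_s + v$, gives $\underline{H}_x H_s = v^{-1}\underline{H}_x$, and hence
\begin{equation*}
\underline{N}_x \cdot H_s \;=\; v^{-1}\, \underline{N}_x \quad\text{in } \mathcal{AS}.
\end{equation*}
The plan is then to compare the coefficients of $N_e$ on both sides of this equation. On the right this coefficient is $v^{-1} n_{e,x}$, so it remains to compute the same coefficient on the left.

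For each $y\in W^+$, I would compute $N_y \cdot H_s = 1\otimes H_y H_s$ using the braid and quadratic relations in $\H$, then slide any resulting factor $H_{s'}$ with $s'\in W_f$ simple through the tensor product using the right $\H_f$-action $H_{s'}\mapsto -v$ on $\mZ[v,v^{-1}]$. A short case analysis on whether $\ell(ys) = \ell(y)\pm 1$, and whether $ys\in W^+$, shows that any non-zero $N_e$-component of $N_y\cdot H_s$ forces $y$ into $W_f\cap W^+ = \{e\}$; and for $y=e$ the computation $N_e\cdot H_s = 1\otimes H_s = -v\, N_e$ is immediate. Hence the coefficient of $N_e$ on the left equals $-v\, n_{e,x}$, and comparison with $v^{-1}\, n_{e,x}$ yields $(v+v^{-1})\,n_{e,x}=0$ in $\mZ[v,v^{-1}]$, whence $n_{e,x}(v)=0$ as required.

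The main obstacle is the bookkeeping in the case analysis of $N_y\cdot H_s$, where one must verify that every potential $N_e$-contribution from $y\neq e$ ultimately requires $y$ to lie in $W_f$, contradicting $y\in W^+\setminus\{e\}$. Once this is handled, the lemma follows from the single identity $(v+v^{-1})\,n_{e,x}=0$.
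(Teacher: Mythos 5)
Your proof is correct but takes a genuinely different route from the paper's. The paper argues categorically: if $x$ is not shortest in $W/W_f$, write $x=ys$ with $s\in W_f$ simple and $\ell(y)<\ell(x)$; then $T(x\cdot 0)$ is a direct summand of $\theta_s T(y\cdot 0)$, and $\Hom(\unit,\theta_s T(y\cdot 0))\simeq\Hom(\theta_s\unit,T(y\cdot 0))=0$ since $\theta_s\unit=0$ (the $s$-wall of the fundamental alcove lies outside the dominant cone for finite $s$). That is a three-line argument using only translation-functor adjunction and the fact that a wall-crossing through a finite simple wall kills $T(0)$. You instead decategorify: via the Soergel character formula (quoted here as the ungraded specialisation of Proposition~\ref{Prop:GL}) you reduce to the vanishing $n_{e,x}=0$, and then prove it by comparing $N_e$-coefficients in the identity $\underline{N}_x\cdot H_s=v^{-1}\underline{N}_x$. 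Your Hecke-algebra calculation is, in effect, the $K_0$-shadow of the translation-functor argument: the relation $\underline{N}_x\cdot H_s=v^{-1}\underline{N}_x$ mirrors $\theta_s T(x\cdot 0)\simeq T(x\cdot 0)\langle 1\rangle\oplus T(x\cdot 0)\langle -1\rangle$, and the case analysis on $N_y\cdot H_s$ is the decategorified version of pushing $\Hom(\unit,-)$ across adjunction. The trade-offs: the paper's proof is shorter, needs no character formula, and avoids the bookkeeping in $\mathcal{AS}$ (deciding whether $ys$ lies in $W^+$ in four sub-cases), but it requires familiarity with translation functors and the observation $\theta_s\unit=0$; your proof is self-contained on the combinatorial side once the character formula for tiltings is granted, but that input is itself a deep theorem (Soergel) which the paper's argument does not use for this lemma. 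Both routes are valid.
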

\begin{proof}
If $x$ is not shortest in $W/W_f$ then $x=ys_i$ for a simple $s\in W_f$ and $\ell(x)>\ell(y)$, and
$$\Hom(\unit, T(ys\cdot 0))\subset\Hom(\unit, \theta_{s}T(y\cdot 0))\;=\;\Hom(\theta_{s}\unit, T(y\cdot 0))=0,$$
since $\theta_{s}\unit=0$.
\end{proof}

The following elementary observation can help to understand the module structure.

\begin{lemma}\label{LemCO}Let $\cA$ be a pseudo-tensor category and consider $X\in \cA$ and a morphism $\alpha:\unit\to Y$ in $\cA$ such that $\co_X:\unit\to X\otimes X^\ast$ factors via $\alpha$. Then any morphism $\unit\to Z$ to any direct summand $Z$ of $X\otimes X^\ast$ factors through $\alpha$.
\end{lemma}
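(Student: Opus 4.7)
The plan is to reduce everything to the standard rigidity adjunction $\Hom(\unit, X\otimes X^\ast)\simeq \End(X)$. The key observation is that the coevaluation $\co_X$ is a universal morphism $\unit\to X\otimes X^\ast$, in the sense that every morphism $h:\unit\to X\otimes X^\ast$ factors through $\co_X$. Indeed, rigidity gives the inverse bijection $h\mapsto \phi_h := (X\otimes \ev_X)\circ(h\otimes X):X\to X$, and the snake identities yield
\[
(\phi_h\otimes X^\ast)\circ \co_X \;=\; h.
\]

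With this in hand, I would argue as follows. Write $Z$ as a direct summand of $X\otimes X^\ast$ via $i:Z\hookrightarrow X\otimes X^\ast$ and $p:X\otimes X^\ast\twoheadrightarrow Z$ with $p\circ i=\id_Z$, and let $f:\unit\to Z$ be arbitrary. Apply the universal factorization to $h:=i\circ f:\unit\to X\otimes X^\ast$, obtaining $\phi:X\to X$ with
\[
i\circ f \;=\; (\phi\otimes X^\ast)\circ \co_X.
\]
By hypothesis, $\co_X=\beta\circ\alpha$ for some $\beta:Y\to X\otimes X^\ast$, so
\[
i\circ f \;=\; (\phi\otimes X^\ast)\circ \beta\circ \alpha.
\]
Applying $p$ on the left and using $p\circ i=\id_Z$ produces
\[
f \;=\; \bigl(p\circ (\phi\otimes X^\ast)\circ \beta\bigr)\circ \alpha,
\]
which is the desired factorization of $f$ through $\alpha$.

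There is no real obstacle here; the only point requiring care is the rigidity identity that every $h:\unit\to X\otimes X^\ast$ factors through $\co_X$, which is immediate from the zig-zag equations defining a left dual. Once that is in place, the argument is just a one-line manipulation using the splitting $i,p$ of the summand.
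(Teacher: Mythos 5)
Your proposal is correct and takes essentially the same approach as the paper: the key point in both is that rigidity forces every morphism $\unit\to X\otimes X^\ast$ to factor through $\co_X$ (as $(\phi\otimes X^\ast)\circ\co_X$ for some $\phi\in\End(X)$), after which the factorization through $\alpha$ is immediate. You simply spell out the direct-summand bookkeeping via the splitting $i,p$, which the paper leaves implicit.
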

\begin{proof}
Any morphism $f:\unit\to X\otimes X^\ast$ factors through $\alpha$, since by adjunction $f$ can always be written as
$(\phi\otimes X^\ast)\circ \co_X,$
for an endomorphism $\phi$ of $X$.
\end{proof}

\subsubsection{Notation}

For $\fg$ simple (or more specifically $\fg=\mathfrak{sl}_n$), we avoid the use of double subscripts with standard orbits like $\cO_{\reg}$ or $\cO_{\zero}$ (or $\cO_\lambda$ for $\lambda\vdash n$), by writing
$$\bI_{{\mathrm{reg}}}\;=\; \bN\;=\; \bI_{(n)},\quad \bI_{{\mathrm{min}}}\;=\; \bP\;=\;\bI_{(2,1,\ldots, 1)} \quad\mbox{and}\quad \bI_{{\mathrm{zero}}}\;=\;\bo\;=\;\bI_{(1,\ldots, 1)}.$$
Here $\bP$ is the thick ideal of projective modules. By \cite[Theorem~9.12]{APW} it is generated by~$T((\ell-1)\rho)$.

\subsection{The example $\tA_2$}
Here we consider $\fg=\mathfrak{sl}_3$. 

\subsubsection{}\label{indecA2} We set
$$T_0:=T(0)=\unit,\quad T_1:=T(s_0\cdot 0)=T((\ell-2)\rho),$$
$$T_2^L:=T(s_0s_2s_1s_0\cdot 0)=T((3\ell-3)\omega_1),\quad T_2^R:=T(s_0s_1s_2s_0\cdot 0)=T((3\ell-3)\omega_2),$$
where $\omega_i\in\mX$ are the fundamental weights and $L,R$ just refers to left and right. Finally, we denote the injective hull of $\unit$ by
$$T_3\;=\; T(s_0s_1s_2s_1s_0\cdot0)\;=\; T((2\ell-2)\rho).$$

By \cite{Stroppel}, the morphism space from $\unit$ into an indecomposable tilting module $T$ is either zero, or one-dimensional. The latter is precisely the case if and only if $T$ is in the above list. Moreover, $\unit\hookrightarrow T_i^{\dagger}$, for $0\le i\le 3$ (and $\dagger$ empty or $L,R$) is in degree $i$ in the graded lift from Section~\ref{sec:grli}.

\subsubsection{} As we know that there is a unique maximal ideal and that $T_3$ is the injective hull of~$\unit$, the morphisms factor as
$$\xymatrixrowsep{1.8mm}
\xymatrix{
&&T_{2}^L\ar[rd]\\
\unit\ar[r]& T_{1}\ar[ru]\ar[rd]&& T_{3}.\\
&&T_{2}^R\ar[ru]}$$
We can thus complete our classification $\{\cI_{(3)},\cI_{(2,1)},\cI_{(1,1,1)}\}$ of tensor ideals of abelian type into a full classification.

\begin{prop}\label{PropLatticeSL3}
The lattice of proper tensor ideals in $\Tilt U_\zeta (\mathfrak{sl}_3)$ is given by
$$\xymatrixrowsep{3.1mm}\xymatrix{
&\cN=\cI_{(3)}\ar@{-}[d]&\\\
&\cI_{(2,1)}\\
\cJ^L\ar@{-}[ru]&&\cJ^R\ar@{-}[lu]\\
&\cJ\ar@{-}[ru]\ar@{-}[lu]\\
&0=\cI_{(1,1,1)}\ar@{-}[u]
}$$
where $\cJ^L$ resp. $\cJ^R$ is generated by $\unit\to T_{2}^L$, resp. $\unit\to T_2^R$, and $\cJ=\bI_{(2,1)}^{\min}=\cJ^L\cap\cJ^R$ is generated by $\unit\to T_3$.
\end{prop}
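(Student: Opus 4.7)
The strategy is to invoke \Cref{ThmRigBij} to identify tensor ideals in $\Tilt U_\zeta(\mathfrak{sl}_3)$ with subfunctors of $\Hom(\unit,-)$. By \Cref{xinv} together with the computation recalled in \ref{indecA2}, among indecomposable tilting modules only $T_0=\unit,\ T_1,\ T_2^L,\ T_2^R,\ T_3$ admit a non-zero morphism from $\unit$, and each such $\Hom$ space is one-dimensional. A subfunctor $J\subset\Hom(\unit,-)$ is therefore specified by the subset $S\subseteq\{T_0,T_1,T_2^L,T_2^R,T_3\}$ on which $J$ takes its full value; proper ideals correspond to $S$ with $T_0\notin S$.

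The main step is to determine which subsets $S$ yield a genuine subfunctor, i.e.\ one closed under post-composition with arbitrary morphisms $T_i\to T_j$. The ``forward'' closure is read off directly from the chain of factorisations in \ref{indecA2}: $T_1\in S$ forces $T_2^L, T_2^R\in S$, and either of $T_2^{L/R}\in S$ forces $T_3\in S$. The main obstacle is ruling out any remaining ``backward'' constraint, and for this I would invoke the graded lift from \ref{desi}. By \Cref{Prop:GL}, morphism spaces in the principal block are concentrated in non-negative degrees, and the normalisation $n_{x,x}=1$ with $n_{y,x}\in v\mZ[v]$ for $y<x$ implies that the degree-zero part of $\Hom(T(x\cdot 0),T(y\cdot 0))$ equals $\delta_{x,y}\mC$. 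Since $\unit\to T_i$ lies in degree $i$ while $\Hom(\unit,T_j)$ is concentrated in degree $j$, composing $\unit\to T_i$ with any morphism $T_i\to T_j$ produces an element of degree $\ge i$ in $\Hom(\unit,T_j)$; this vanishes whenever $j<i$, and also whenever $j=i$ with $T_i\not\simeq T_j$ (the only relevant instance being $T_2^L$ versus $T_2^R$).

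Enumerating, the six proper subsets satisfying the forward constraints are $\emptyset$, $\{T_3\}$, $\{T_2^L,T_3\}$, $\{T_2^R,T_3\}$, $\{T_2^L,T_2^R,T_3\}$ and $\{T_1,T_2^L,T_2^R,T_3\}$, accounting for exactly six proper tensor ideals. The minimal subfunctor containing $\unit\to T_3$ (resp.\ $\unit\to T_2^L$, $\unit\to T_2^R$) is $\{T_3\}$ (resp.\ $\{T_2^L,T_3\}$, $\{T_2^R,T_3\}$), which identifies these subsets with $\cJ$, $\cJ^L$, $\cJ^R$; moreover $\cJ=\cJ^L\cap\cJ^R$ appears as the pointwise intersection. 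To match $\{T_2^L,T_2^R,T_3\}$ with $\cI_{(2,1)}=\bI_{(2,1)}^{\max}$ and $\{T_1,T_2^L,T_2^R,T_3\}$ with $\cN$, I would apply \Cref{prop:Duflo}: the unique Duflo representative in $c(\cO)\cap W^+$ is $\unit$, $T_1$, or $T_3$ for $\cO=\cO_\reg$, $\cO_{(2,1)}$, $\cO_\zero$ respectively, so $\co_T$ factors through the corresponding $\alpha_d$, which pins down $\Ob$ of each candidate ideal and in particular identifies the maximal element of $\Ob^{-1}(\bI_{(2,1)})$.
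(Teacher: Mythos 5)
Your overall strategy — identify proper tensor ideals with proper subfunctors of $\Hom(\unit,-)$ supported on the five tiltings of \ref{indecA2}, then classify the admissible supports — is the same as the paper's. The grading argument ruling out ``backward'' generation is a welcome explicit version of what the paper leaves implicit, and it is sound: by Proposition~\ref{Prop:GL} the only degree-zero morphisms between the $T_i$ are multiples of identities, so post-composing $\alpha_i\in\Hom(\unit,T_i)_i$ never produces a non-zero element of $\Hom(\unit,T_j)$ for $j<i$, nor for $j=i$ with $T_i\not\simeq T_j$.

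There is, however, a genuine gap in the ``forward'' direction. You cite \ref{indecA2} for a ``chain of factorisations'', but \ref{indecA2} records only the degrees of the $\alpha_i$; it contains no factorisation statement, and the degree data alone does not imply that $\alpha_1$ generates the whole of $\cN(\unit,-)$ — i.e.\ that $\alpha_2^L,\alpha_2^R,\alpha_3$ are all obtained from $\alpha_1$ by post-composition. (Knowing $\Hom(T_1,T_2^L)_1\neq 0$ does not by itself produce a morphism whose composite with $\alpha_1$ is non-zero.) The paper asserts the factorisation diagram using the uniqueness of the maximal tensor ideal together with the fact that $T_3$ is the injective hull of $\unit$, but that argument is itself compressed. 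A clean way to close the gap, which dovetails nicely with the degree argument you already gave, is via Theorem~\ref{ThmMM} (which is unconditional): the Duflo morphism $\delta:\unit\to D$ of the cell containing $T_1,T_2^L,T_2^R$ generates $\bN^{\min}=\cN$ as a tensor ideal, hence generates the whole subfunctor $\cN(\unit,-)$; your degree computation then forces $D\simeq T_1$ and $\delta=\alpha_1$, since $\alpha_2^{L}$ (resp.\ $\alpha_2^R$) cannot generate $\alpha_1$ or $\alpha_2^R$ (resp.\ $\alpha_2^L$). Without some such argument, the enumeration of the six admissible supports is not established.

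A secondary issue: in the last step you invoke Proposition~\ref{prop:Duflo}, which is conditional on Hypothesis~\ref{hypoRW}. The conclusion you need (that $\co_T$ factors through the Duflo morphism of $T$'s cell, hence $\Ob$ of each candidate ideal is determined) already follows unconditionally from Theorem~\ref{ThmMM}, and the paper's own identification of $\cI_{(2,1)}$ relies on Theorem~\ref{ThmClassPrime} and the support computation, which is unconditional in type $\tA$. You should use the unconditional route so that the classification does not acquire an unnecessary hypothesis.
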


\subsection{Example: $\tB_2$} In this section we consider $\fg=\mathfrak{so}_5$. We choose a basis $(1,0)$, $(0,1)$ of $\mX$, so that the simple positive roots are $(1,-1)$ and $(0,1)$. The proper thick tensor ideals are
$$0=\bI_{{\zero}}\;\subset\; \bP=\bI_{{\min}}\;\subset\;  \bI_{\subreg}\;\subset\; \bI_{{\reg}}=\bN.$$

\subsubsection{}We set
$$T_0=T(0,0)=\unit, \quad T_1=T(\ell-3,0),\quad T_2=T(2\ell-3,\ell-1),$$
$$ T_3=T(2\ell-2,2\ell-2),\quad T_3^{[i]}=T(i\ell-3,0),\quad T_4=T(3\ell-3,\ell-1),$$
for $i\in\mZ_{\ge 3}.$
By \cite{Stroppel}, $\dim \Hom(\unit, T(\lambda))\le 1$, and only non-zero if $T(\lambda)$ is in the above list. Moreover, the non-zero morphism $\unit\to T_j^{\dagger}$ is contained in degree $j$ (for $\dagger$ empty or $[i]$).

Contrary to type $\tA_2$ (and contrary to the restriction to prime ideals), there are infinitely many tensor ideals. We quantify this infinite behaviour (in `depth' and `breadth') with respect to the inclusion order as follows.

\begin{prop}\label{prop:B2}
\begin{enumerate}
\item Tensor ideals in $\Tilt U_\zeta(\fg)$ do not satisfy the ascending or descending chain condition. 
\item The poset of tensor ideals is not interval finite.
\item There are infinite sets of mutually incomparable tensor ideals in $\Tilt U_\zeta(\fg)$.
\end{enumerate}
\end{prop}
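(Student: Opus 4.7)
The starting point is Theorem~\ref{ThmRigBij}, which identifies tensor ideals in $\Tilt U_\zeta(\mathfrak{so}_5)$ with subfunctors of $\Hom(\unit,-)$. By \cite{Stroppel}, $\Hom(\unit,T)$ is at most one-dimensional for each indecomposable tilting $T$, and non-zero exactly for $T$ in the countable list $L=\{T_0,T_1,T_2,T_3,T_4\}\cup\{T_3^{[i]}:i\ge 3\}$. Hence proper tensor ideals correspond bijectively to up-closed subsets of the poset $P$ on $L\setminus\{T_0\}$, where $T\le T'$ iff the non-zero morphism $\alpha_{T'}:\unit\to T'$ factors through $\alpha_T:\unit\to T$. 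The grading from \S\ref{desi}, together with Proposition~\ref{Prop:GL}, yields the crucial degree obstruction: every non-identity morphism between indecomposable tiltings has strictly positive graded degree, so $T_3^{[i]}\le T_3^{[j]}$ with $i\ne j$ forces $i<j$.

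To prove the descending half of (1) and statement (2), let $\cJ_i$ denote the tensor ideal generated by $\alpha_i:\unit\to T_3^{[i]}$. The degree obstruction gives $\alpha_i\notin\cJ_j$ for $i<j$: any composite $\phi\circ\alpha_j$ with $\phi:T_3^{[j]}\to T_3^{[i]}$ would lie in degree $\ge j>i$. The reverse containment $\alpha_j\in \cJ_i$ follows from exhibiting a morphism $\phi\in\Hom(T_3^{[i]},T_3^{[j]})_{j-i}$ with $\phi\circ\alpha_i$ a non-zero scalar multiple of $\alpha_j$; this is either read off from the antispherical Kazhdan--Lusztig polynomials for $\tB_2$ computed in~\cite{Stroppel} via Proposition~\ref{Prop:GL}, or obtained by iterating a wall-crossing translation functor $\theta_s$ that contains $T_3^{[i+1]}$ as a summand of $\theta_s T_3^{[i]}$ and using the associated adjunction. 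Hence $\cJ_3\supsetneq\cJ_4\supsetneq\cdots$, proving DCC fails; statement (2) follows immediately, since $[0,\cJ_3]$ already contains this infinite family.

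The main obstacle is the remainder: the failure of ACC in (1) and the infinite antichain in (3), neither of which follows from this single chain. The plan is to use the \cite{Stroppel} data to exhibit an infinite antichain $\{X_k\}_{k\ge 1}\subset P$. Concretely, one seeks infinitely many pairs in $L$---among the $T_3^{[i]}$ themselves, or by combining them with $T_4$---for which either the graded Hom space in the required degree vanishes, or its non-zero elements compose trivially with the relevant $\alpha$-generator. Once such an antichain is produced, the tensor ideals $\cL_k:=\langle\alpha_{X_k}\rangle$ are pairwise incomparable (since $X_l\not\ge X_k$ in $P$ for $l\ne k$, so $\alpha_{X_l}\notin\cL_k$), which gives (3); and then
\[
\cL_1\;\subsetneq\;\cL_1+\cL_2\;\subsetneq\;\cL_1+\cL_2+\cL_3\;\subsetneq\;\cdots
\]
is a strict infinite ascending chain (strictness by the same argument: $\alpha_{X_{k+1}}$ belongs to the $(k{+}1)$-th ideal but not the $k$-th), completing (1). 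The delicate technical input is this incomparability verification, which reduces via Proposition~\ref{Prop:GL} to a degree-counting computation with the antispherical Kazhdan--Lusztig polynomials of~\cite{Stroppel}.
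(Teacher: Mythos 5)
Your argument breaks down at a concrete point: you assume the non-zero morphism $\alpha_i\colon\unit\to T_3^{[i]}$ lies in graded degree $i$ (so that, e.g.\, ``any composite $\phi\circ\alpha_j$ with $\phi\colon T_3^{[j]}\to T_3^{[i]}$ would lie in degree $\ge j>i$'' and ``$\phi\in\Hom(T_3^{[i]},T_3^{[j]})_{j-i}$ sends $\alpha_i$ to $\alpha_j$''). In fact the paper states that $\unit\to T_j^{\ast}$ lies in degree $j$ for $\ast$ empty \emph{or} $[i]$, where $j$ is the \emph{subscript}: so every $\alpha_i\colon\unit\to T_3^{[i]}$ is concentrated in degree $3$, independently of $i$. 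Consequently, for $i\ne j$, a factorisation $\alpha_j=\phi\circ\alpha_i$ would force $\phi\in\Hom(T_3^{[i]},T_3^{[j]})_0=0$, so \emph{neither} $\alpha_i$ factors through $\alpha_j$ nor conversely: the morphisms $\alpha_i$ form an infinite \emph{antichain} in your poset $P$, not a chain. Your claimed strict descending chain $\cJ_3\supsetneq\cJ_4\supsetneq\cdots$ of singly-generated ideals therefore does not exist; the ideals $\cJ_i=\langle\alpha_i\rangle$ are pairwise incomparable.

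Ironically, the part you flag as the ``main obstacle'' --- exhibiting an infinite antichain --- is thus immediate, while the part you believe you have proven is wrong. Once the antichain is recognised, the paper's proof follows easily: let $\cI_j$ be generated by $\{\alpha_i:i\ge j\}$ and $\cI'_j$ by $\{\alpha_i: 3\le i\le j\}$; the degree-$3$ antichain property gives, via Theorem~\ref{ThmRigBij}, that $\cI_3\supsetneq\cI_4\supsetneq\cdots$ (failure of DCC) and $\cI'_3\subsetneq\cI'_4\subsetneq\cdots$ (failure of ACC), the interval between $\cJ:=\langle\unit\to T_4\rangle$ and $\cI_3$ is infinite, giving (2), and arbitrary infinite collections of pairwise-incomparable subsets of $\mZ_{\ge 3}$ give (3). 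Your closing ACC argument ($\cL_1\subsetneq\cL_1+\cL_2\subsetneq\cdots$ from an antichain) is fine in principle, but as it currently stands it rests on an antichain you have not identified, while your DCC and interval-finiteness arguments rely on a nonexistent chain.
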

\begin{proof}
Let $\cI_j$ be the tensor ideal generated by all $\unit\to T_3^{[i]}$ for $i\ge j$, let $\cI_j'$ be the tensor ideal generated by all $\unit\to T_3^{[i]}$ for $i\le j$. Let $\cJ$ be the tensor ideal generated by $\unit\to T_4$. Using the grading, and the principle in Theorem~\ref{ThmRigBij}, it follows that we have strict inclusions
$$\cI_3\supset \cI_4\supset \cI_5\supset\cdots\supset \cJ\quad\mbox{and}\quad \cI_3'\subset\cI'_4\subset\cI'_5\subset\cdots,$$
which proves parts (1) and (2). Part (3) follows similarly by taking infinite collections of incomparable subsets of $\mZ_{\ge 3}$.
\end{proof}

\subsubsection{} We have
$$T_1, T_3^{[i]}\in \bI_{\reg}\backslash \bI_{\subreg},\quad T_2,T_3\in \bI_{\subreg}\backslash \bI_{\min},\quad T_4\in \bI_{\min}=\bP.$$
It follows easily that the morphisms from $\unit\to T_1,$ $\unit \to T_2$ and $\unit \to T_4$ are the generators of $\cN$, $\bI^{\min}_{\subreg}$ and $\bI_{\min}^{\min}$ respectively. To determine the structure of the tensor ideals it suffices to establish whether $\unit\to T_3^{\dagger}$ factors via $T_2$ or only via $T_1$. For example:

\begin{lemma}
The morphism $\unit\to T_3$ factors via $T_2$.
\end{lemma}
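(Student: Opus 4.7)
The plan is to apply the Duflo structure from Proposition~\ref{prop:Duflo} (which relies on Hypothesis~\ref{hypoRW}) together with Lemma~\ref{LemCO}.

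First note that $T_2$ and $T_3$ both lie in $\bI_{\subreg}\setminus\bI_{\min}$, so by Proposition~\ref{prop:thickprime}(1)--(3) both have support equal to $\overline{\cO_{\min}}$, placing them in the common cell indexed by the minimal nilpotent orbit. Within this cell I would identify $T_2$ with $T(d\cdot 0)$ for the Duflo involution $d\in c(\cO_{\min})\cap \sD$: indeed $\unit\to T_2$ appears in degree $2$, whereas every other indecomposable tilting module of the cell (including $T_3$) admits a nonzero morphism from $\unit$ only in strictly larger degree, matching the characterisation of the Duflo element as the one whose associated $\alpha_d$ absorbs every coevaluation in the cell.

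With $T_2$ so identified, Proposition~\ref{prop:Duflo} applied to $T_3$ provides a factorisation $\co_{T_3}=\eta\circ\alpha$ for some $\eta:T_2\to T_3\otimes T_3^\ast$, where $\alpha:\unit\to T_2$ is the canonical nonzero morphism. Lemma~\ref{LemCO} with $X=T_3$ then says that every morphism $\unit\to Z$ into any direct summand $Z$ of $T_3\otimes T_3^\ast$ factors through $\alpha$. If I can show that $T_3$ is itself such a summand, then the given map $\unit\to T_3$ factors through $\alpha:\unit\to T_2$, which is exactly what is wanted.

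The main obstacle is therefore to verify that $T_3$ is a direct summand of $T_3\otimes T_3^\ast$, equivalently that $\Hom(T_3\otimes T_3,T_3)\neq 0$; this is a character/translation-functor computation in the principal block of $\Rep U_\zeta(\mathfrak{so}_5)$. As a back-up that avoids this step, one can compute $\dim\Hom(T_2,T_3)_1$ via Proposition~\ref{Prop:GL} using antispherical Kazhdan--Lusztig polynomials to extract a canonical degree-$1$ morphism $\gamma:T_2\to T_3$, and then show via a translation-functor or graded-lift argument that $\gamma\circ\alpha\neq 0$ in $\Hom(\unit,T_3)_3$; since this space is one-dimensional, the composition must then equal the given morphism up to a nonzero scalar.
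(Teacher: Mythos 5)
You identified the right tool (Lemma~\ref{LemCO}) and correctly recognized $T_2$ as the Duflo object of the relevant cell, but your proposal diverges from the paper's proof in a way that leaves genuine gaps. You apply Lemma~\ref{LemCO} with $X = T_3$ itself, which forces you to verify that $T_3$ is a direct summand of $T_3\otimes T_3^\ast$; you flag this as the main obstacle but do not resolve it, and your ``back-up plan'' (showing $\gamma\circ\alpha\neq 0$ by an unspecified translation-functor or graded-lift argument) is really just a restatement of the claim. Moreover, you route the factorization of $\co_{T_3}$ through Proposition~\ref{prop:Duflo}, which makes the argument conditional on Hypothesis~\ref{hypoRW}, whereas the unconditional Theorem~\ref{ThmMM} already supplies what you need here.

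The paper's proof avoids both issues with a different and much slicker choice of auxiliary object: it applies Lemma~\ref{LemCO} with $X = T(\ell-1,\ell-1)$, which is self-dual and lies in $\bI_{\subreg}$. Since $X\otimes X^\ast = X\otimes X$ lies in $\bI_{\subreg}$, the coevaluation $\co_X$ lies in $\bI_{\subreg}^{\min}$, which (as established just before the lemma) is generated by $\alpha:\unit\to T_2$; hence $\co_X$ factors through $\alpha$. And $T_3 = T(2\ell-2,2\ell-2)$ is \emph{automatically} a direct summand of $X\otimes X$, being its highest-weight summand --- no multiplicity computation or extra hypothesis required. Lemma~\ref{LemCO} then immediately yields the conclusion. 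In short, the missing idea in your proposal is to pick an auxiliary $X$ for which the ``$T_3$ is a summand of $X\otimes X^\ast$'' condition is free, rather than trying to make $X=T_3$ work.
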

\begin{proof}
Since $T(\ell-1,\ell-1)$ is self-dual and belongs to $\bI_{\subreg}$, the  conclusion follows from \Cref{LemCO}.
\end{proof}

\subsection{Example: $\tG_2$}

\subsubsection{}We embed the weight space of $\fg$ into three-dimensional space, so that the simple positive roots are
$(1,-1,0)$ and $(-1,2,-1)$, the highest root is $(1,1,-2)$ and the highest short root is $(1,0,-1)$. The proper thick tensor ideals are
$$0=\bI_{{\zero}}\;\subset\; \bP=\bI_{{\min}}\;\subset\; \bI_{{\supmin}}\;\subset\; \bI_{\subreg}\;\subset\; \bI_{{\reg}}=\bN.$$

\subsubsection{} We label again some tilting modules, and denote the maximal thick tensor ideal they belong to. We consider $T_0=\unit$ and
$$T_1=T(\ell-5,0,5-\ell)\in\bI_{{\reg}}, \quad T_2=T(2\ell-4,\ell-2,6-3\ell)\in\bI_{{\subreg}}$$
$$T_3=T(3\ell-5,0,5-3\ell)\in \bI_{{\supmin}},\quad T_4=T(4\ell-4,\ell-2,6-5\ell)\in \bI_{\supmin},$$
$$T_5^L=T(3\ell-3,3\ell-3,6-6\ell)\in \bI_{\subreg},\quad T_5^S=T(5\ell-5,0,5-5\ell)\in\bI_{{\supmin}},$$
$$T_6=T((2\ell-2)\rho)=T(4\ell-4,2\ell-2,6-6\ell)\in \bI_{{\min}}.$$
Here $L,S$ refers to `long' and `short'. By \cite{Stroppel}, $\Hom(\unit, T_i^\dagger)$ is one-dimensional and contained in degree $i$, for $0\le i\le 6$. Morphisms from $\unit$ to other indecomposable tilting modules are zero.

It seems a difficult problem to determine precisely which morphisms factor through which, although some cases follow from the general theory. A plausible picture for the poset of tensor ideals, with the prime tensor ideals $\cI_{\cO}$ depicted by black dots is as follows:
$$\xymatrixrowsep{1.6mm}\xymatrix{
&\bullet\ar@{-}[d]&\\
&\bullet\ar@{-}[d]&\\
&\bullet\\
\circ\ar@{-}[ru]&&\bullet\ar@{-}[lu]\\
\circ\ar@{-}[u]\ar@{-}[urr]&&\circ\ar@{-}[u]\\
\circ\ar@{-}[u]\ar@{-}[urr]&&\circ\ar@{-}[u]\\
&\circ\ar@{-}[ru]\ar@{-}[lu]\ar@{-}[d]\\
&\bullet
}$$   


\section{Further cases of interest}\label{sec:further}

\subsection{The Oriented Brauer category}\label{sec:RepGLt}

Let $\delta\in\bk$. Deligne's category $(\Rep GL)_\delta$ is the universal symmetric pseudo-tensor category on one object of categorical dimension $\delta$, see~\cite{De}. It is the pseudo-abelian envelope of the oriented Brauer category $\OB(\delta)$.

If $\mathrm{char}(\bk)=0$, then $(\Rep GL)_\delta$ has non-zero tensor ideals if and only if $\delta\in\mZ\subset\bk$. It is then proved in \cite{Selecta} that \eqref{ObAll} is a bijection, and all tensor ideals are of abelian type.

For the rest of the section we assume that $\mathrm{char}(\bk)=p>0$. It is then similarly proved in \cite[Corollary~6.3.3]{Sinf} that $(\Rep GL)_\delta$ only has proper non-zero tensor ideals if $\delta\in\mF_p\subset\bk$.

We do not expect the following example to be specific to $p=2$. In fact, some conjectures in \cite{Sinf} predict that most faithfully prime tensor ideals in $(\Rep GL)_\delta$ will not be semiprimitive.
\begin{example}\label{ex:new}
If $p=2$, there is a faithfully prime tensor ideal in $(\Rep GL)_0$ that is not semiprimitive. Let $P$ be the projective cover of $\unit$ in $\Ver_4^+$, see \cite{BEO}, which is a self-extension of $\unit$. Then we can consider kernels of the symmetric tensor functors $(\Rep GL)_0\to\Ver_4^+$ that send the generator to $P$ and to $\unit^2$. By construction they have the same underlying thick tensor ideal. However, the tensor ideals are different, see either \cite[Theorem~5.3.1]{Sinf} or \cite[Example~4.6]{CEO}. We refer to \cite[Remark~2.4.7]{Sinf} for more details.
\end{example}

In other words, the diagonal map in \eqref{themap} is surjective but need not be a bijection.
A negative answer to the first question below would disprove the surjectivity in \cite[Conjecture~5.1.2]{Sinf}. To indicate the difficulty, this question includes the corresponding question for $\Tilt GL_n$ for $n\ge p$.

\begin{question} Let $\delta\in\mF_p$.
\begin{enumerate}
\item Is every faithfully prime tensor ideal in $(\Rep GL)_\delta$ of abelian type? 
\item Is every prime ideal in $(\Rep GL)_\delta$ faithfully prime?
\end{enumerate}

\end{question}

To complete the proof of \cite[Theorem~6.3.2]{Sinf}, we point out the following.

\begin{lemma}
Assume $p>3$. There are tensor ideals in $\OB(3)$ that are not prime.
\end{lemma}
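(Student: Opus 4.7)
My plan is to exhibit a non-prime tensor ideal in $\OB(3)$ as the intersection of two incomparable prime tensor ideals. Indeed, if $\cI_1$ and $\cI_2$ are prime with $f \in \cI_1 \setminus \cI_2$ and $g \in \cI_2 \setminus \cI_1$, then $f \otimes g \in \cI_1 \cap \cI_2$ while neither factor lies in the intersection, witnessing non-primality.

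By Proposition~\ref{propnotab1}, any fibre functor from $\OB(3)$ to a tensor category has a prime kernel. Two natural families of candidates are $F_N \colon \OB(3) \to \Rep GL_N$ for $N \geq 0$ with $N \equiv 3 \pmod p$ (sending $X$ to the natural module of categorical dimension $3$), and $F_\Ver \colon \OB(3) \to \Ver_p$ sending $X$ to the simple object $L_2$ of categorical dimension $3 \in \mF_p$ (available because $p > 3$). On Young symmetrizers $e_\lambda \in \bk S_{|\lambda|} = \End_{\OB(3)}(X^{\otimes |\lambda|})$, membership in $\cI_N := \ker F_N$ is controlled by $\ell(\lambda) > N$, while membership in $\cI_\Ver$ is controlled by vanishing of the hook-content $\prod_{(i,j)\in\lambda}(3+j-i)$ modulo $p$. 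Since $\ell(\lambda) > 3$ automatically makes the hook-content vanish (the box $(4,1)$ contributes a factor $3+(1-4)=0$), these primes fit into a total chain $\cdots \subsetneq \cI_{3+2p} \subsetneq \cI_{3+p} \subsetneq \cI_3 \subsetneq \cI_\Ver$. By Theorem~\ref{ThmRigBij}, together with the cup-cap adjunction $\Hom_{\OB(3)}(\unit, X^{\otimes n} \otimes (X^*)^{\otimes n}) \cong \bk S_n$, this nesting extends globally to tensor ideals.

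The main obstacle is that the obvious fibre-functor primes are thus all comparable, so direct intersections of the natural candidates yield no non-prime ideal. To break the chain one must produce a prime outside it. Since this lemma contributes to completing the proof of \cite[Theorem~6.3.2]{Sinf}, which classifies tensor ideals in $\OB(\delta)$ via two-sided ideals in $\bk S_\infty$, the cleanest route is to read off the existence of additional incomparable primes from the combinatorial classification of prime two-sided ideals in $\bk S_\infty$ at $\delta = 3$ in characteristic $p > 3$, where extra ``exotic'' primes appear precisely because of positive characteristic. Alternatively, one can try to realise incomparable primes via fibre functors into external products such as $\Rep GL_3 \boxtimes \Ver_p$ using objects of categorical dimension $3$ mixing both factors. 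Once any such incomparable pair is exhibited, taking their intersection yields the non-prime tensor ideal asserted by the lemma, and restricting to $\OB(3) \subset (\Rep GL)_3$ completes the argument.
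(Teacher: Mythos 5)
Your strategy — produce a non-prime tensor ideal as the intersection of two incomparable primes — is sound in principle (the argument that $\cI_1 \cap \cI_2$ is non-prime when $\cI_1, \cI_2$ are incomparable is correct, and in fact doesn't even need the $\cI_j$ to be prime). But the proposal never actually produces such a pair, and the two routes you suggest both have serious problems.

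The first route is circular. You propose to "read off the existence of additional incomparable primes from the combinatorial classification of prime two-sided ideals in $\bk S_\infty$ at $\delta=3$," citing \cite[Theorem~6.3.2]{Sinf}. But the lemma you are trying to prove is introduced in the paper precisely "to complete the proof of \cite[Theorem~6.3.2]{Sinf}." Invoking that classification as an input here would close the loop the wrong way. Your second route (fibre functors into external products such as $\Rep GL_3 \boxtimes \Ver_p$) is only gestured at: you would need to exhibit a concrete dimension-$3$ object in such a product whose kernel is provably incomparable with the $\cI_N$ chain, and you do not do this. So as it stands the proposal is a plan, not a proof. (The preliminary chain analysis, identifying $\cI_N$ for $N \equiv 3 \pmod p$ and $\cI_{\Ver}$ as comparable, is a reasonable sanity check, though the strict containments and their ordering are asserted without justification.)

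The paper's proof is both shorter and structurally different. It does not hunt for incomparable primes in $\OB(3)$ at all. Instead it constructs a non-prime tensor ideal $\cJ$ in $\Tilt SL_3$ directly, by mimicking the quantum case: $\cJ$ is generated by a certain morphism, and one exhibits a morphism $\alpha\colon \unit \to T_2^L$ with $\alpha^{\otimes 2} \in \cJ$ but $\alpha \notin \cJ$, the verification done by Soergel bimodule calculus via \cite[Theorem~1.8]{RW} together with the cell structure from \cite[Example~15]{An2}. The non-prime ideal in $\OB(3)$ is then the preimage of $\cJ$ under the universal tensor functor $\OB(3) \to \Tilt SL_3$, with non-primality preserved by lifting the witnesses (this uses that the universal functor is full, i.e.\ Schur--Weyl). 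What the paper's approach buys is a self-contained, explicit witness of non-primality needing no input from the prime/thick-ideal classification; what your approach would buy, if completed, is a structural statement (the poset of tensor ideals in $\OB(3)$ is not a chain), but it requires precisely the kind of global classification information the lemma is meant to feed into.
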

\begin{proof}
It suffices to consider the preimage of the tensor ideal in $\Tilt SL_3$ from Lemma~\ref{lem:sl3p} below, under the universal tensor functor from $\OB(3)$. Here we can use that the tensor ideal theory of $\OB(3)$ is identical to that of $(\Rep GL)_3$ and that every morphism in $\Tilt SL_3$ is in the image of $(\Rep GL)_3\to\Tilt SL_3$.
\end{proof}

\subsection{Reductive groups in positive characteristic}
As explained in the introduction, the motivating problem for the current work is the classification of the tensor ideals of abelian type in $\Tilt G$, where $G$ is a reductive group over an algebraically closed field $\bk$ of characteristic $p>0$. Only the rank-one case is currently understood, see \cite{Selecta}.

A conjecture for the classification of thick tensor ideals (or at least the cells) in $\Tilt G$, for $p$ larger than the Coxeter number for $G$, is formulated in \cite{AHR2}. If correct, then every thick tensor ideal can be realised as the inverse image of a tensor ideal in $\Tilt(SL_2^{\times s})$, for some $s\in\mZ_{>0}$, under a restriction functor. If we make the extra assumption that only semiprimitive tensor ideals are faithfully prime, see Question~\ref{propmodular}, then it would follow that $\Upsilon$ in \eqref{themap} is a surjection. More precisely, every tensor ideal of abelian type in $\Tilt G$ would be the kernel of a symmetric tensor functor to $\Ver_{p^\infty}$, in line with \cite[Conjecture~1.4]{BEO}.

While the decisive features of the grading on $\Tilt_0 U_\zeta(\fg)$, that it is non-negative and bounded, are not satisfied for $\Tilt_0G$, for the moment we have no counterexamples to the following question:
\begin{question}\label{propmodular}
Let $G$ be a reductive group with Coxeter number below $p$. Are the following conditions equivalent on a tensor ideal $\cI$ in $\Tilt G$?
\begin{enumerate}
\item $\cI$ is semiprimitive and $\Ob(\cI)$ is prime;
\item $\cI$ is faithfully prime;
\item $\cI$ is prime.
\end{enumerate}
\end{question}
Note that,  if $p=2$, (1) and (2) are not equivalent for the category of tilting modules of $GL_{2m}$ (with $m>1$) that are direct summands of tensor products of copies of the natural representation and its dual, as follows from Example~\ref{ex:new}. For the moment such behaviour seems restricted to characteristics below the Coxeter number.

\begin{prop}\label{ThmSL3}
Assume $p>3$. Every thick tensor ideal in $\Tilt SL_3$ is prime, and $\Upsilon$ in \eqref{themap} is surjective. Concretely, we have a chain of tensor ideals lying above the thick tensor ideals from \cite[Example~15]{An2}
$$\Tilt SL_3\;\supset\; \cI_1 \;\supset\; \cI_2\;\supset\;\cI_3\;\supset\;\cdots,$$
where $\cI_{2n-1}$ and $\cI_{2n}$ are kernels of symmetric tensor functors $\Tilt SL_3\to \Ver_{p^n}$.
\end{prop}
\begin{proof}
The thick tensor ideals in $\Tilt SL_3$ form one descending chain $\bI_{i-1}\supset\bI_{i}$, by \cite[Example~15]{An2}.
The tensor ideals in $\Tilt SL_2$ are given as
$$\Tilt SL_2\;\supset\; \cJ_1 \;\supset\; \cJ_2\;\supset\;\cJ_3\;\supset\;\cdots$$
and the abelian envelope of $\Tilt SL_2/\cJ_n$ is $\Ver_{p^n}$, see \cite{BEO, AbEnv}. We have $\cJ_i=\bJ_i^{\min}=\bJ^{\max}_i$ for the thick tensor ideal $\bJ_i$ of tilting modules $T(m)$ with $m\ge p^i-1$.  
It thus suffices to demonstrate that, under restriction to the principal $SL_2$ subgroup, we have
$$\Res_P:\Tilt SL_3\to \Tilt SL_2,\quad \Res^{-1}_P(\bJ_n)\;=\; \bI_{2n-1},$$
while for a Levi subgroup $SL_2$, we have
$$\Res_L:\Tilt SL_3\to \Tilt SL_2,\quad \Res^{-1}_L(\bJ_n)\;=\; \bI_{2n}.$$
We can observe that the case $n=1$ follows from naturality of support and Humphreys conjecture, see \cite{Ha}.

We let $\rho,\omega_1,\omega_2$ refer to the half-sum of roots and the two fundamental weights of $SL_3$ and integers to $SL_2$-weights.
With $St_n:=T((p^n-1)\rho)$, we have that $\Res_PSt_n$ is $T(2p^n-2)$ up to summands with lower label and hence $\Res_PSt_n\not\in \bJ_{n+1}$. On the other hand, by Donkin's tensor product formula of \cite{Do}, for $m,n\in\mN$,
$$T[n]:=T((p^n-1)\rho+p^nm\omega_2)\;=\; St_n\otimes T(m\omega_2)^{(n)},$$
where $St_n$ is itself a product of $St_1^{(i)}$ for $0\le i<n$. By Donkin's formula, for neglibile $SL_2$-tilting modules $T_0,T_1\ldots, T_n$, the module $T_0\otimes T_1^{(1)}\otimes\cdots\otimes T_n^{(n)}$ is a tilting module in $\bJ_{n+1}$. Now $\Res_P St_1$ and $\Res_P T(m\omega_2)$, if we now assume $m\ge p-2$, are negligible, by the last sentence of the previous paragraph, so $\Res_P T[n]\in \bJ_{n+1}$. The claim for $\Res_P$ now follows.

Using similar arguments one can observe that $\Res_L St_n\in \bJ_{n}$ (by Donkin's formula) and $\Res_LT[n-1]\not\in\bJ_n$ (by highest weight) if we consider the Levi subgroup $SL_2<SL_3$ where the positive root of $SL_2$ is orthogonal to $\omega_2$. This proves the statement for $\Res_L$.
\end{proof}

Abelian versions of $\Tilt SL_3/\cI_{2i-1}$, see Theorem~\ref{ThmAbEnv}(2), as well as analogues for other simple groups~$G$, will be constructed and studied in forthcoming work of Newton in \cite{Joe}.

%

\begin{lemma}\label{lem:sl3p}
Assume $p>3$. There are tensor ideals in $\Tilt SL_3$ that are not prime.
\end{lemma}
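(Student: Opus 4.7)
The plan is to exhibit two incomparable tensor ideals in $\Tilt SL_3$. By Theorem~\ref{ThmSL3}, the prime tensor ideals form a strict chain, so incomparable tensor ideals cannot both be prime, and at least one must be the sought non-prime tensor ideal.

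Such a pair can be produced using the outer automorphism $\tau$ of $SL_3$ (transpose-inverse, realising the nontrivial Dynkin diagram automorphism of type $\tA_2$). It induces a symmetric monoidal auto-equivalence $\tau^\ast$ of $\Tilt SL_3$ with $T(\lambda) \mapsto T(-w_0\lambda)$. Since $\tau^\ast$ permutes the chain of prime tensor ideals in an order-preserving way, and a well-ordered chain admits no non-trivial order-preserving bijection, $\tau^\ast$ fixes each $\cI_n$ from Theorem~\ref{ThmSL3}. Consequently, if $\cJ$ is any tensor ideal with $\cJ \neq \tau^\ast\cJ$, then $\cJ$ and $\tau^\ast\cJ$ are automatically incomparable: the inclusion $\cJ \subseteq \tau^\ast\cJ$ would, upon applying $\tau^\ast$ and using $(\tau^\ast)^2 \simeq \Id$, yield the reverse inclusion, forcing equality.

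It therefore suffices to exhibit a tensor ideal that is not $\tau^\ast$-invariant. I would take $\cJ = (\alpha)$ for a non-zero morphism $\alpha : \unit \to T(\lambda)$ with $\lambda \neq -w_0\lambda$ and $\Hom(\unit, T(\lambda)) \neq 0$. Then $\tau^\ast \cJ = (\tau^\ast\alpha)$, where $\tau^\ast\alpha : \unit \to T(-w_0\lambda)$ is non-zero with target non-isomorphic to $T(\lambda)$. By Theorem~\ref{ThmRigBij}, the two principal tensor ideals correspond to sub-functors of $\Hom(\unit, -)$ generated respectively by $\alpha$ and by $\tau^\ast\alpha$; these are distinct because $\alpha$ lives in $\Hom(\unit, T(\lambda))$ while $\tau^\ast\alpha$ lives in $\Hom(\unit, T(-w_0\lambda))$, and neither factors through the other by weight considerations.

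The main obstacle is producing such a weight $\lambda$ and, more delicately, verifying that $(\alpha) \neq (\tau^\ast\alpha)$ rather than the two merely differing as generating sets. For existence of $\lambda$, Donkin's tensor product theorem provides candidates such as $\lambda = (p-1)\rho + p\mu$ with $\mu \neq -w_0\mu$, where one arranges for the invariants $T(\lambda)^{SL_3}$ to be non-zero by choosing $\mu$ so that $T(\mu)^{(1)}$ pairs non-trivially with $St_1$; alternatively, one can lift the non-self-dual tilting modules from the asymmetric cells of Proposition~\ref{PropLatticeSL3} to the modular setting. Distinctness of the two ideals then reduces to ruling out a factorisation $\tau^\ast\alpha = F \circ (\alpha \otimes g)$ through a cross-morphism $F : T(\lambda) \otimes W \to T(-w_0\lambda)$, which is expected to follow from the graded structure of $\Tilt_0 SL_3$ in analogy with Proposition~\ref{Prop:GL}, since $\alpha$ and $\tau^\ast\alpha$ should sit in matching minimal degrees that leave no room for an intermediate composite.
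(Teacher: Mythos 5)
Your strategy is genuinely different from the paper's, but it has a real gap at the crucial step. The outer-automorphism trick is elegant: you correctly note that if prime ideals form a chain fixed pointwise by $\tau^\ast$, then any ideal $\cJ$ with $\cJ\neq\tau^\ast\cJ$ gives a pair of incomparable ideals, hence at least one non-prime ideal. And the observation that $\cJ\subseteq\tau^\ast\cJ$ forces equality (by applying $\tau^\ast$ once more) is correct. However, everything hinges on producing a tensor ideal that is \emph{not} $\tau^\ast$-invariant, and this is exactly where your argument is not rigorous. The candidate $\cJ=(\alpha)$ with $\alpha:\unit\to T(\lambda)$, $\lambda\neq -w_0\lambda$, looks promising, but you never actually establish $(\alpha)\neq(\tau^\ast\alpha)$: since $T(\lambda)$ and $T(-w_0\lambda)$ typically lie in the same cell, $T(-w_0\lambda)$ \emph{is} a summand of $T(\lambda)\otimes W$ for suitable $W$, so one must rule out that the induced map $\unit\to T(-w_0\lambda)$ is a nonzero multiple of $\tau^\ast\alpha$. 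Your proposed fix via ``the graded structure of $\Tilt_0 SL_3$ in analogy with Proposition~\ref{Prop:GL}'' is precisely the move the paper warns against: for reductive groups in positive characteristic, the grading on $\Tilt_0 G$ is \emph{not} known to be non-negative and bounded (the paper says this explicitly when motivating why the modular case is harder than the quantum case), so degree-counting does not rule out the factorisation. Also, your Donkin-form weight $(p-1)\rho+p\mu$ does not match the relevant tilting weights: $T_2^L=T((3p-3)\omega_1)$ is not of Steinberg form, so that suggestion does not directly produce the right object either.

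The paper takes a shorter and fully self-contained route: it works with the \emph{symmetric} ideal $\cJ$ generated by $\unit\to T_3=T((2p-2)\rho)$ (the injective hull of $\unit$, which is $\tau^\ast$-invariant), and shows directly that $\cJ$ contains the square $(\unit\hookrightarrow T_2^L)^{\otimes 2}$ but not $\unit\hookrightarrow T_2^L$ itself --- the former amounting to the composite $T_2^R\to\unit\to T_2^L$ factoring through $T_3$, which is checked by Soergel calculus via \cite[Theorem~1.8]{RW}, and the latter following from the cell structure of \cite[Example~15]{An2}. This bypasses the graded-degree issue entirely. If you want to rescue your approach, you would need a concrete character-theoretic or diagrammatic argument (as the paper uses) to show $\tau^\ast\alpha\notin(\alpha)$; the mere expectation that degrees obstruct the factorisation is not available in the modular setting.
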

\begin{proof}
We can mimic the non-prime tensor ideal $\cJ$ from Proposition~\ref{PropLatticeSL3}. Indeed, it follows by direct verification that the tilting modules with the highest weights corresponding to the ones in \ref{indecA2} have the same characters for $SL_3$. We can thus consider the tensor ideal generated by $T_3$ and show that it contains the second tensor power of $\unit\hookrightarrow T_2^L$ (but not $\unit\hookrightarrow T_2^L$ itself due to the cell structure from \cite[Example~15]{An2}). Indeed, an equivalent claim is that the composite $T_2^R\to\unit\to T_2^L$ factors through $T_3$, which can be verified by direct Soergel bimodule calculus due to \cite[Theorem~1.8]{RW}.
\end{proof}

\appendix

\section{Duflo involutions in monoidal categories}\label{sec:Duflo-Mon}

\subsection{Set-up}\label{sec:su}

\subsubsection{}\label{DefS}
We let $\cS$ be a pseudo-tensor category over $\bk$ with a `grading shift functor'. Concretely, this is an auto-equivalence $F:\cS\to\cS$ of $\cS$ as a bimodule category over itself. We will write $F^i(X)=X\langle i\rangle$ for $i\in\mZ$.
We impose the following conditions on this structure:
\begin{enumerate}
\item There exists a set $\mW$ and family of indecomposable objects
$\{B_x\mid x\in \mW\}$  in $\cS$
that is closed under taking duals, and such that for every indecomposable object $X$ in $\cS$ there is precisely one pair $(x,i)\in \mW\times \mZ$ so that $X$ is isomorphic to $B_x\langle i\rangle$.
\item  For each pair $x,y\in \mW$ and $i\le 0$, we have
$$\dim_{\bk}\Hom(B_x,B_y\langle i\rangle)\;=\;\delta_{x,y}\,\delta_{i,0}.$$
\end{enumerate}

\subsubsection{} By the above, there is an element $e\in \mW$ for which $B_e\simeq\unit$. Moreover, there are two mutually inverse operations $x\mapsto x^\ast$ and $x\mapsto {}^\ast x$ on $\mW$ such that $B^\ast_x=B_{x^\ast}$ and ${}^\ast B_x=B_{{}^\ast x}$.

Since $B_x\langle i\rangle\simeq B_x\langle j\rangle \otimes (\unit\langle i-j\rangle)$, the information of thick left (resp. right) tensor ideals and left (resp. right) cells is unambiguously contained in a pre-order $\preceq_L$ (resp. $\preceq_R$) and the corresponding equivalence relation $\sim_L$ (resp. $\sim_R$) on $W$. Concretely, we set $x\preceq_L y$ if and only if $B_y$ is a direct summand of $Y\otimes B_x$ for some $Y\in \cS$, and $B_x$ and $B_y$ belong to the same left cell if and only if $x\sim_L y$ (meaning $x\preceq_L y$ and $y\preceq_L x$). We also speak of the equivalence classes of $\sim_L$ on $\mW$ as left cells.

Based on the case where $\mW$ is a Coxeter group, and $\cS$ the category of Soergel bimodules, see Appendix~\ref{SecSB}, we will label statements according to Lusztig's labelling of Conjectures P1-P15 in \cite[\S 14]{Lubook}.

\subsection{Distinguished elements}\label{sec:disti}

\subsubsection{Lusztig's a-function}\label{DefaD}

For each $x\in \mW$, we set
$$\a(x)\;:=\; \sup\{i\in\mZ\mid B_x\langle i\rangle \mbox{ is a direct summand of } B_y\otimes B_z,\mbox{ for }y,z\in \mW\}\;\in\;\mZ\cup\{\infty\},$$
and
$$\Delta(x)\;:=\; \min\{i\in\mN\mid \Hom(\unit, B_x\langle i\rangle)\not=0\}\;\in\;\mN\cup\{\infty\},$$
with the convention that the minimum of the empty set is $\infty$. Since $B_x$ is a direct summand of $B_x\otimes B_x^\ast\otimes B_x$, the set in the definition of $\a(x)$ is never empty.

\begin{lemma}[P1]\label{LemP1}
We have $\a(x)\le\Delta(x)$ for all $x\in \mW$.
\end{lemma}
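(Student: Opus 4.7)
The plan is to bound, for each individual $i$ appearing in the supremum defining $\a(x)$, the inequality $i\le\Delta(x)$, using only rigidity and the positivity condition (2) of \ref{DefS}. First I would dispose of the case $\Delta(x)=\infty$, where the inequality is vacuous, and otherwise fix a nonzero morphism $\alpha\colon\unit\to B_x\langle\Delta(x)\rangle$.

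Next, for any $i$ witnessing that $B_x\langle i\rangle$ is a direct summand of some $B_y\otimes B_z$, I will fix splittings $\iota\colon B_x\langle i\rangle\hookrightarrow B_y\otimes B_z$ and $\pi\colon B_y\otimes B_z\twoheadrightarrow B_x\langle i\rangle$ with $\pi\iota=\id$, and consider the composite $\iota\circ\alpha\langle i-\Delta(x)\rangle$. Since $\langle\cdot\rangle$ is a bimodule auto-equivalence, this defines a morphism that, after reshuffling shifts, lies in
\[
\Hom(\unit, B_y\otimes B_z\langle\Delta(x)-i\rangle) \;\simeq\; \Hom(B_y^*, B_z\langle\Delta(x)-i\rangle) \;=\; \Hom(B_{y^*}, B_z\langle\Delta(x)-i\rangle),
\]
via a single application of left-dual adjunction together with the identification $B_y^*=B_{y^*}$ from \ref{sec:su}.

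The crux is now transparent: if $i>\Delta(x)$ the internal shift $\Delta(x)-i$ is strictly negative, so condition (2) of \ref{DefS} forces the right-hand side to vanish. Hence $\iota\circ\alpha\langle i-\Delta(x)\rangle=0$, and precomposing with $\pi$ and using $\pi\iota=\id$ gives $\alpha\langle i-\Delta(x)\rangle=0$, which contradicts $\alpha\neq 0$ because the shift functor is an equivalence. Thus $i\le\Delta(x)$ for every such $i$; taking the supremum yields $\a(x)\le\Delta(x)$.

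I do not anticipate a real obstacle: the argument amounts to a one-line use of adjunction that turns "no negative-degree morphisms" into vanishing of the composite, combined with the retraction $\pi\iota=\id$. The only bookkeeping needed is to track the grading shifts carefully and to note that the left dual of a labelled indecomposable is again a labelled indecomposable with no extra shift, both of which are supplied by the setup in \ref{sec:su}.
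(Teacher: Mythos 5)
Your proof is correct and takes essentially the same route as the paper: post-compose a nonzero $\unit\to B_x\langle\Delta(x)\rangle$ with a split inclusion $B_x\langle i\rangle\hookrightarrow B_y\otimes B_z$ (after appropriate shift), adjoin, and invoke condition (2) of \ref{DefS}. You are just a bit more explicit about the retraction $\pi\iota=\id$ and the shift bookkeeping, which the paper leaves implicit.
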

\begin{proof}
We can focus on the case $\Delta(x)<\infty$. So assume that there is a non-zero morphism $\unit\to B_x\langle i\rangle$ for some $i\in\mN$, and take $a\in\mZ$ and $y,z\in\mW$ such that $B_x\langle a\rangle$ is a direct summand of $B_y\otimes B_z$. Then
$$0\;\not=\;\Hom(\unit, B_y\otimes B_z\langle i-a\rangle)\;\simeq\; \Hom(B_{y^\ast}, B_z\langle i-a\rangle),$$
which indeed forces $a\le i$ by assumption (2) in \ref{DefS}.
\end{proof}

We are thus led to define a set of `distinguished' or `Duflo' elements in $\mW$.
\begin{definition}
Set 
$\sD\;:=\;\{x\in \mW\mid \a(x)=\Delta(x)<\infty\}\;\subset \;\mW.$
\end{definition}

\begin{remark}
There is a general notion of `Duflo objects' in rigid monoidal categories due to Mazorchuk and Miemietz (more generally, Duflo 1-morphisms in 2-categories), and we show in \S\ref{sec:MM} that P13' below simply states that our notion of Duflo elements, defined in specific categories, actually leads to Duflo objects in the general sense of \cite{MM}.
\end{remark}

\begin{theorem}${}$\label{ThmP3}
\begin{enumerate}
\item[P2.] If $d\in\sD$ and $B_d\langle\a(d)\rangle$ is a direct summand of $B_y\otimes B_z$, then $y={}^\ast z$.
\item[P3.] For each $z\in\mW$, there is a unique $d\in\sD$ such that $B_d\langle\a(d)\rangle$ is a direct summand of ${}^\ast B_z\otimes B_z$.
\item[P5.] If $d\in\sD$ then the multiplicity of $B_d\langle\a(d)\rangle$ in ${}^\ast B_z\otimes B_z$ is at most $1$. Moreover,
$$\dim_{\bk}\Hom(\unit, B_d\langle\a(d)\rangle)\;=\; 1.$$
\item[P13.] Any left cell $\Gamma$ (resp. right cell $\mathrm{P}$) contains a unique $d\in\sD$, and $B_d\langle\a(d)\rangle$ is a direct summand of ${}^\ast B_z\otimes B_z$ for each $z\in \Gamma$ (resp. $B_y\otimes B_y^\ast$ for each $y\in \mathrm{P}$).
\item[P13'.] For a right cell $\mathrm{P}$, the coevaluation $\co_{B_y}$, for each $y\in \mathrm{P}$, factors through the direct summand $B_d\langle\a(d)\rangle$ of $B_y\otimes B_y^\ast$, for $\mathrm{P}\cap\sD=\{d\}$.
\end{enumerate}
\end{theorem}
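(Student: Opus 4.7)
The plan is to prove the five statements together, with the one-dimensionality of $\Hom(\unit,{}^\ast B_z\otimes B_z)\simeq\End(B_z)=\bk$ and its sibling $\Hom(\unit,B_y\otimes B_y^\ast)\simeq\End(B_y)=\bk$ serving as the central technical lever. For P2, I would exploit the defining property of $\sD$: since $\Delta(d)=\a(d)<\infty$, a non-zero morphism $\alpha:\unit\to B_d\langle\a(d)\rangle$ exists. Composing with the inclusion $\iota:B_d\langle\a(d)\rangle\hookrightarrow B_y\otimes B_z$ yields a non-zero morphism $\unit\to B_y\otimes B_z$; by rigidity this corresponds to a non-zero morphism $B_{{}^\ast z}\to B_y$, which by condition (2) of \ref{DefS} at $i=0$ forces $y={}^\ast z$.

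For P3 and P5, the key observation is that rigidity yields $\Hom(\unit,{}^\ast B_z\otimes B_z)\simeq\End(B_z)=\bk$ (using condition (2) for the indecomposable $B_z$). Decomposing ${}^\ast B_z\otimes B_z=\bigoplus B_x\langle n\rangle^{m_{x,n}}$, this one-dimensionality forces exactly one pair $(x,n)$ to admit $\Hom(\unit,B_x\langle n\rangle)\neq 0$, with $m_{x,n}=\dim\Hom(\unit,B_x\langle n\rangle)=1$. For existence of a Duflo summand in P3, the coevaluation ${}^\ast\co_{B_z}$ projects non-trivially to some summand $B_x\langle n\rangle$, which gives $\Delta(x)\le n\le\a(x)$, and Lemma~\ref{LemP1} collapses these to equalities, placing $x\in\sD$. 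For uniqueness in P3, any Duflo $d'\in\sD$ contributing a summand $B_{d'}\langle\a(d')\rangle$ automatically satisfies $\Hom(\unit,B_{d'}\langle\a(d')\rangle)\neq 0$ by the definition of $\sD$, so the pair $(d',\a(d'))$ must be the unique $(x,n)$ above; this yields also the multiplicity and hom-dimension assertions of P5.

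For P13' and the cell-localization half of P13, I apply the mirror argument to $B_y\otimes B_y^\ast$: the coevaluation $\co_{B_y}$ spans the one-dimensional $\Hom(\unit,B_y\otimes B_y^\ast)$ and therefore factors through the unique Duflo summand $B_d\langle\a(d)\rangle$, which is precisely P13'. Writing this factorization as $\co_{B_y}=\iota'\circ\alpha$ and applying the triangle identity $(B_y\otimes\ev_{B_y})\circ(\co_{B_y}\otimes B_y)=\id_{B_y}$ produces
$$\id_{B_y}\;=\;(B_y\otimes\ev_{B_y})\circ(\iota'\otimes B_y)\circ(\alpha\otimes B_y),$$
exhibiting $B_y$ as a direct summand of $B_d\langle\a(d)\rangle\otimes B_y$, hence $d\preceq_R y$; combined with the automatic $y\preceq_R d$ (from $B_d$ being a summand of $B_y\otimes B_y^\ast$), this gives $d\sim_R y$. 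The symmetric argument with ${}^\ast B_z\otimes B_z$ and the triangle identity $({}^\ast\ev_{B_z}\otimes B_z)\circ(B_z\otimes{}^\ast\co_{B_z})=\id_{B_z}$ places the corresponding Duflo element in the left cell of $z$.

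The hard part is the uniqueness of the Duflo element within a given cell. The plan is to establish the key lemma that $B_d\langle\a(d)\rangle$ is actually a direct summand of ${}^\ast B_d\otimes B_d$ itself (not merely of some auxiliary ${}^\ast B_z\otimes B_z$), which by the P3 uniqueness yields $d_L(d)=d$ for every $d\in\sD$. One attacks this by starting from a known summand inclusion $B_d\langle\a(d)\rangle\hookrightarrow{}^\ast B_z\otimes B_z$ and, using a split embedding $B_z\hookrightarrow Y\otimes B_d$ coming from $z\preceq_L d$ (valid since $z\sim_L d$ by the cell-localization already proved), transporting the summand into ${}^\ast B_d\otimes({}^\ast Y\otimes Y)\otimes B_d$. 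The main obstacle is isolating the $\unit$-isotypic contribution in the decomposition of ${}^\ast Y\otimes Y$, which will require an inductive application of P3 to the summands of ${}^\ast Y\otimes Y$ together with the maximality characterization of $\a$ (any competing summand at a non-trivial shift would violate the bound on $\a(d)$). Once $d_L(d)=d$ is in hand, constancy of $d_L$ on each left cell follows by an analogous transport argument, completing the uniqueness and hence the theorem.
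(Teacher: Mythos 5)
Your treatment of P2, P3 (existence and uniqueness of the Duflo summand of ${}^\ast B_z\otimes B_z$), P5, P13$'$, and the cell-localisation half of P13 is correct and follows the same route as the paper: the lever is $\dim\Hom(\unit,{}^\ast B_z\otimes B_z)=\dim\End(B_z)=1$ together with Lemma~\ref{LemP1} to collapse $\Delta(x)\le n\le\a(x)\le\Delta(x)$, and the zigzag identity to place the resulting $d$ in the cell of $z$. You are also right to flag the remaining step as the hard one: showing that the map $z\mapsto d_L(z)$ is constant on cells (equivalently, that each cell contains exactly one element of $\sD$).

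That last step is where there is a genuine gap. Your plan is to first prove $B_d\langle\a(d)\rangle\hookrightarrow{}^\ast B_d\otimes B_d$ by transporting a known Duflo summand of ${}^\ast B_z\otimes B_z$ through a split embedding $B_z\hookrightarrow Y\otimes B_d$, and then to obtain constancy ``by an analogous transport argument.'' But the transport step is not carried through --- after decomposing ${}^\ast Y\otimes Y$ nothing constrains the $B_d\langle\a(d)\rangle$ summand you are tracking to land inside the $\unit$-isotypic piece, and $Y$ may involve arbitrary shifts, so the maximality of $\a(d)$ alone does not isolate it. Moreover, even granting $d_L(d)=d$ for all $d\in\sD$, this by itself does not yield constancy: two distinct $d,d'\in\sD$ with $d_L(d)=d$ and $d_L(d')=d'$ could a priori lie in the same cell. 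The paper avoids both problems by using Theorem~\ref{ThmRigBij}: two indecomposables lie in the same left cell iff their coevaluations generate the same subfunctor of $\cS(\unit,-)$. Since $\co_{{}^\ast B_z}$ factors through a split monomorphism from $B_{d_L(z)}\langle\a(d_L(z))\rangle$, it generates the same subfunctor as the one-dimensional morphism space $\Hom(\unit,B_{d_L(z)}\langle\a(d_L(z))\rangle)$, and for $z,z'$ in the same cell this forces morphisms $B_{d_L(z)}\langle\a(d_L(z))\rangle\leftrightarrow B_{d_L(z')}\langle\a(d_L(z'))\rangle$ in both directions, which by \ref{DefS}(2) forces $d_L(z)=d_L(z')$. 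With constancy in hand, the ``at most one Duflo per cell'' claim then falls out of P5 uniqueness plus cell-localisation, exactly as you started to argue. I would replace your final paragraph by this Theorem~\ref{ThmRigBij} argument.
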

\begin{proof}
By \ref{DefS}(2), \begin{equation}\label{BB10}\delta_{y^\ast,z}\;=\;\dim \Hom(B_{y^\ast},B_z)\;=\; \dim\Hom(\unit, B_y\otimes B_z).\end{equation}
Hence, if there exists a $B_x\langle\Delta(x)\rangle$, with $\Delta(x)<\infty$, that is a direct summand of $B_y\otimes B_z$, then firstly $y={}^\ast z$, secondly $x\in \sD$ by Lemma~\ref{LemP1}, thirdly $\Hom(\unit,B_x\langle \a(x)\rangle)=\bk$ and fourthly $x$ must be unique. This immediately proves P2, uniqueness in P3 and the first claim in P5. Moreover, by definition, each $B_d\langle\a(d)\rangle$ must occur as a summand in some $B_y\otimes B_z$, proving the second part of P5.

We will actually prove the analogue of P13' for left cells, to be closer to the choices in the other parts. Two indecomposable objects $X$ and $Y$ are in the same left cell if and only if they generate the same left tensor ideal. By Theorem~\ref{ThmRigBij}, this is equivalent to the coevaluations $\unit\to {}^\ast X\otimes X$ and $\unit\to {}^\ast Y\otimes Y$ generating the same subfunctor of $\cS(\unit,-)$.

Fix a left cell $\Gamma$ and take $z\in \Gamma$. By \eqref{BB10} for $y={}^\ast z$, there are unique $x\in\mW$ and $i\in\mZ$ such that the coevaluation of ${}^\ast B_z$ factors via a direct summand $B_x\langle i\rangle$ of ${}^\ast B_z\otimes B_z$. In fact, we must have $i=\Delta(x)$. Indeed, if $\Hom(\unit, B_x\langle j\rangle)\not=0$ with $j<i$, then the same adjunction as used in \eqref{BB10} implies that $\Hom(B_z,B_z\langle j-i\rangle)\not=0$, contradicting \ref{DefS}(2). By the first paragraph $d:=x\in \sD$. This already completes the proof P3 by proving existence.

Now consider another $z'$, in the same left cell $\Gamma$ as $z$. This leads to another $d'\in \sD$. By the second paragraph, $\unit\to B_x\langle \a(x)\rangle$ and $\unit\to B_{d'}\langle \a(d')\rangle$ generate the same subfunctor, which requires in particular non-zero morphisms $B_d\langle \a(d)\rangle\leftrightarrow B_{d'}\langle \a(d')\rangle$, and thus $d=d'$ by \ref{DefS}(2).

Next we show that $d\in \Gamma$. We already know that $B_d\langle \a(d)\rangle$ is a direct summand of ${}^\ast B_z\otimes B_z$ for any $z\in \Gamma$ (so in particular $z\preceq_L d$), and that coevaluation $\unit\to {}^\ast B_z\otimes B_z$ factors through this summand. Now we can use that the composition
$$B_z\xrightarrow{ B_z\otimes \co}B_z\otimes {}^\ast B_z\otimes B_z\xrightarrow{\ev \otimes B_z}B_z $$
is the identity on $B_z$ to conclude that $B_z$ is a direct summand of $B_z\otimes B_d\langle\a(d)\rangle$ (so $d\preceq_L z$), and hence $B_z$ and $B_d$ are in the same left cell.

To conclude the proof it only remains to be observed that each left cell contains only one element of $\sD$. But this is now immediate, since each $B_d\langle\a(d)\rangle$ must be a direct summand of some ${}^\ast B_z\otimes B_z$, but by uniqueness in P5, this must be the $d$ in the left cell of $z$ studied in the previous three paragraphs.
\end{proof}

%
%
%
%
%
%
%
%

\begin{remark}
We can weaken our assumptions on $\cS$ to demand only existence of one-sided duals, leading to the same results as above, but only in relation to left or right cells.
\end{remark}

\subsection{Duflo involutions}\label{sec:invo}

Even to state the property that $\sD$ consists of `involutions', we require additional structure on $\cS$.

\subsubsection{}\label{Piv} From now on we additionally require that $X^{\ast\ast}\simeq X$ for every $X\in\cS$ (for instance, $\cS$ has a pivotal structure). Then the operations $x\mapsto x^\ast$ and $x\mapsto {}^\ast x$ are identical, and we denote the operation by $x\mapsto x^{-1}$, as it is involutory.
We additionally assume that there is a monoidal equivalence $I:\cS\to\cS^{\op},X\mapsto \overline{X}$ with $\overline{B_x}=B_x$ and $\overline{X\langle 1\rangle}=\overline{X}\langle -1\rangle$. By $\cS^{\op}$ we denote the category with morphisms reversed (but not the tensor product).

\begin{theorem}[P6]\label{ThmP6} Under the assumptions in \ref{Piv},
each element $d\in\sD$ satisfies $d=d^{-1}$.
\end{theorem}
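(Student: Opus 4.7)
The strategy is to establish (i) that $\sD$ is stable under inversion $x \mapsto x^{-1}$, and (ii) that for each $d \in \sD$, we have $d \sim_L d^{-1}$. The uniqueness in P13 of Theorem~\ref{ThmP3} then forces $d = d^{-1}$.

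For (i), I would first show that both $\a$ and $\Delta$ are invariant under inversion. For $\Delta$, the contravariant monoidal equivalence $I$ gives
\[
\Hom(\unit, B_x\langle i\rangle) \;\simeq\; \Hom(B_x\langle -i\rangle, \unit) \;\simeq\; \Hom(\unit, B_{x^{-1}}\langle i\rangle),
\]
where the first isomorphism uses $\overline{B_x} = B_x$ and $\overline{\unit} = \unit$, and the second is the standard rigid adjunction combined with $B_x^\ast = B_{x^{-1}}$ and the identity $(X\langle i\rangle)^\ast = X^\ast\langle -i\rangle$. For $\a$, if $B_x\langle i\rangle$ is a direct summand of $B_y \otimes B_z$, taking duals gives $B_{x^{-1}}\langle -i\rangle$ as a direct summand of $B_{z^{-1}} \otimes B_{y^{-1}}$, and then applying $I$ to this produces $B_{x^{-1}}\langle i\rangle$ as a direct summand of the same tensor product. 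Hence $\a(x^{-1}) \ge \a(x)$, and by symmetry equality holds. Combining these, $d \in \sD$ implies $d^{-1} \in \sD$.

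For (ii), apply $I$ to the right-cell instance of P13 at $y = d$: $B_d\langle\a(d)\rangle$ is a direct summand of $B_d \otimes B_{d^{-1}}$. Since $\overline{B_d \otimes B_{d^{-1}}} \simeq B_d \otimes B_{d^{-1}}$ and $\overline{B_d\langle\a(d)\rangle} = B_d\langle -\a(d)\rangle$, the functor $I$ exhibits $B_d\langle -\a(d)\rangle$ as a direct summand of $B_d \otimes B_{d^{-1}}$. Shifting by $\a(d)$ (and redistributing the shift to the first tensor factor), $B_d$ is a direct summand of $B_d\langle\a(d)\rangle \otimes B_{d^{-1}}$, which witnesses $d^{-1} \preceq_L d$. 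Applying the same argument with $d$ replaced by $d^{-1}$ (which belongs to $\sD$ by step (i)) yields $d \preceq_L d^{-1}$. Hence $d \sim_L d^{-1}$, and since by P13 each left cell contains a unique element of $\sD$, we conclude $d = d^{-1}$.

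The principal obstacle is book-keeping: one must pin down precisely how $I$ interacts with tensor order, grading shift and duality, in particular verifying that $I$ sends a direct summand decomposition of $B_d \otimes B_{d^{-1}}$ back to one of the same object (with reversed grading). Under the natural reading that $I$ is a contravariant monoidal equivalence preserving tensor order, this is immediate; under the alternative convention in which $I$ reverses tensor order, one applies $I$ instead to the left-cell instance ${}^\ast B_d \otimes B_d = B_{d^{-1}} \otimes B_d$ and reaches the same conclusion $d^{-1} \preceq_L d$. Once these compatibilities are fixed the rest of the argument is essentially formal.
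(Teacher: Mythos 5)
Your proof is correct and follows essentially the same route as the paper's. Both arguments first use the contravariant equivalence $I$ (together with duality and the pivotal assumption) to show that $\a$ and $\Delta$ are invariant under $x\mapsto x^{-1}$, hence that $\sD$ is stable under inversion, and then deduce a second summand $B_{d^{-1}}\langle\a(d^{-1})\rangle$ inside $B_y\otimes B_y^\ast$ and apply a uniqueness statement from Theorem~\ref{ThmP3}; the only cosmetic difference is that you invoke the uniqueness of the Duflo element in a left cell (P13) after establishing $d\sim_L d^{-1}$, whereas the paper appeals directly to the uniqueness of $d$ appearing in P3/P5, which spares the extra step through cells.
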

\begin{proof}
The existence of $I$ shows that $\Delta(x)=\Delta(x^\ast)$ and $\a(x)=\a(x^\ast)$ for all $x\in\mW$, in particular $\cD$ is closed under $x\mapsto x^\ast$.
If $B_d\langle \a(d)\rangle$ is a direct summand of $B_y\otimes B_y^\ast$, then $B_d^\ast\langle -\a(d)\rangle$ is a direct summand of $B_y^{\ast\ast}\otimes B_y^\ast$. The combination of both additional assumptions thus shows that $B_{d^{-1}}\langle \a(d)\rangle=B_{d^{-1}}\langle \a(d^{-1})\rangle$ is a direct summand of $B_y\otimes B_y^\ast$. Since $d^{-1}\in\sD$, uniqueness in P5 of Theorem~\ref{ThmP3} shows that $d=d^{-1}$.
\end{proof}

%

\begin{definition}\label{Def:dist}
A non-zero morphism $\unit\to B_x\langle i\rangle$, for $x\in\mW$ and $i\in\mZ$, is {\bf distinguished} if $x\in\sD$ and $i=\a(x)$.
\end{definition}
By P5 in Theorem~\ref{ThmP3}, distinguished morphisms are unique up to scalar.
 We have the following criterion.

\begin{lemma}\label{LemAlt}
Under the assumptions in \ref{Piv}, the following conditions are equivalent on a non-zero morphism $\alpha:\unit\to B_x\langle i\rangle$.
\begin{enumerate}
\item $\alpha$ is distinguished;
\item $\alpha=\beta\circ (\alpha^{\otimes 2})$ for some morphism $\beta$ in $\cS$;
\item $\alpha=\pi\circ (\alpha^{\otimes 2})$ for a projection $\pi$ from $B_x^{\otimes 2}\langle 2i\rangle$ onto a direct summand $B_x\langle i\rangle$.
\end{enumerate}
\end{lemma}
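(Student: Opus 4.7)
My plan is to prove the cycle of implications $(3) \Rightarrow (2) \Rightarrow (1) \Rightarrow (3)$. The implication $(3) \Rightarrow (2)$ is immediate: set $\beta = \pi$.

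For $(1) \Rightarrow (3)$, assume $x = d \in \sD$ and $i = \a(d)$. By Theorem~\ref{ThmP6}, $d = d^{-1}$ so $B_d^\ast \simeq B_d$, and Theorem~\ref{ThmP3}(P13) exhibits $B_d\langle \a(d)\rangle$ as a direct summand of $B_d \otimes B_d^\ast = B_d^{\otimes 2}$. The involution $I$ from~\ref{Piv} is a monoidal equivalence fixing each $B_x$ but inverting $\langle k\rangle \leftrightarrow \langle -k\rangle$, so the multiplicities of $B_d\langle k\rangle$ and $B_d\langle -k\rangle$ in $B_d^{\otimes 2}$ coincide; in particular $B_d\langle \a(d)\rangle$ is also a summand of $B_d^{\otimes 2}\langle 2\a(d)\rangle$, and a projection $\pi$ onto this summand exists. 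By Theorem~\ref{ThmP3}(P5), $\Hom(\unit, B_d\langle \a(d)\rangle) = \bk\alpha$, so $\pi \circ \alpha^{\otimes 2} = c\,\alpha$ for some $c \in \bk$. Writing $\alpha^{\otimes 2} = (\alpha \otimes \id_{B_d\langle \a(d)\rangle}) \circ \alpha$ identifies $c$ with the scalar associated to $\pi \circ (\alpha \otimes \id) \in \End(B_d\langle \a(d)\rangle) = \bk$ (using~\ref{DefS}(2)). To verify $c \ne 0$ for a suitable $\pi$, I intend to use Theorem~\ref{ThmP3}(P13'): the coevaluation $\co_{B_d}$ factors through the distinguished summand via a nonzero multiple of $\alpha$, providing a canonical section of $\pi$ so that $\pi$ can be normalized to witness $c \ne 0$. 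Rescaling $\pi$ gives condition (3).

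For $(2) \Rightarrow (1)$, assume $\alpha \ne 0$ and $\alpha = \beta \circ \alpha^{\otimes 2}$. Non-vanishing of $\alpha$ yields $\Delta(x) \le i$, and hence $\a(x) \le \Delta(x) \le i$ by Lemma~\ref{LemP1}; the goal is the reverse inequality $i \le \a(x)$, which combined with the previous forces $\a(x) = \Delta(x) = i$ and $x \in \sD$. Decompose $B_x^{\otimes 2}\langle 2i\rangle = \bigoplus_k B_{y_k}\langle j_k\rangle$, splitting $\alpha^{\otimes 2} = \sum_k \iota_k \alpha_k$ and $\beta = \sum_k \beta_k \pi_k$; then $\alpha = \sum_k \beta_k \alpha_k$ and some $\beta_{k_0}\alpha_{k_0}$ is non-zero. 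By~\ref{DefS}(2), non-vanishing of $\beta_{k_0}: B_{y_{k_0}}\langle j_{k_0}\rangle \to B_x\langle i\rangle$ forces $j_{k_0} \le i$, with equality requiring $y_{k_0} = x$. The main obstacle is to rule out the ``off-diagonal'' case $j_{k_0} < i$; here I plan to iterate the factorization (each step rewriting $\alpha$ as $(\beta^{\otimes 2^{n-1}}) \circ \cdots \circ \beta \circ \alpha^{\otimes 2^n}$) and exploit the fact that accumulated strictly positive grading shifts between non-isomorphic indecomposables cannot be cancelled by scalar endomorphisms, eventually forcing a diagonal term with $y_{k_0} = x$ and $j_{k_0} = i$. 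This exhibits $B_x\langle i\rangle$ as a direct summand of $B_x^{\otimes 2}\langle 2i\rangle$, which by the definition of $\a$ yields $i \le \a(x)$, completing the argument.
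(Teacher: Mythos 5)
Your $(3)\Rightarrow(2)$ is immediate and your $(1)\Rightarrow(3)$ is in essence the paper's argument, though phrased more indirectly. The paper simply observes that since $\co_{B_d}:\unit\to B_d\otimes B_d$ factors via $\alpha$ (by P13'), the triangle identity makes $B_d\otimes\alpha$ a split monomorphism $B_d\hookrightarrow B_d^{\otimes 2}\langle \a(d)\rangle$; shifting and writing $\alpha^{\otimes 2}=(B_d\langle\a(d)\rangle\otimes\alpha)\circ\alpha$, the retraction $\pi$ immediately gives $\pi\circ\alpha^{\otimes 2}=\alpha$. Your ``normalize $\pi$ so that $c\neq 0$'' amounts to the same thing, but it would be cleaner to state that the split mono's retraction \emph{is} the required $\pi$.

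The genuine gap is in $(2)\Rightarrow(1)$. Your attempt tries to avoid the duality $I$ of~\ref{Piv}, but that duality is exactly what makes this direction work, and your substitute does not. Concretely, you correctly deduce from~\ref{DefS}(2) that any contributing summand $B_{y}\langle j\rangle$ of $B_x^{\otimes 2}\langle 2i\rangle$ has $j\le i$ with $y=x$ forced when $j=i$, but you then leave the ``off-diagonal'' case $j<i$ to an unproven iteration heuristic (``accumulated shifts cannot cancel''). This is not a proof and I do not see how to make it one: nothing in~\ref{DefS}(1)--(2) alone controls how the grading shifts combine under iterated tensoring. The paper instead applies $I$ to the summand $B_y\langle j\rangle\subset B_x^{\otimes 2}\langle 2i\rangle$ to get $B_y\langle j\rangle\subset B_x^{\otimes 2}\langle 2j-2i\rangle$, then composes the nonzero map $\unit\to B_y\langle j\rangle$ with the inclusion, and uses rigidity (adjunction) plus~\ref{DefS}(2) to conclude $2j-2i\ge 0$. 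Combined with $j\le i$ this forces $j=i$, $y=x$, $x=x^{-1}$, and crucially also delivers $B_x\langle i\rangle$ as a summand of $B_x^{\otimes 2}$ (not merely of $B_x^{\otimes 2}\langle 2i\rangle$), whence $i\le\a(x)$. Note also that your final line is off: exhibiting $B_x\langle i\rangle$ as a summand of $B_x^{\otimes 2}\langle 2i\rangle$ only gives $-i\le\a(x)$ from the definition of $\a$; you would need one further application of $I$ to upgrade this to $i\le\a(x)$. So both the elimination of the off-diagonal case and the final numerical conclusion require $I$, and your argument supplies neither.
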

\begin{proof}
That (3) implies (2) is trivial.

Assume that (2) is satisfied for some morphism $\beta$ in $\cS$. Then there needs to exist at least one summand $B_y\langle j\rangle$ of $B_x^{\otimes 2}\langle 2i\rangle$ with non-zero morphisms $\unit\to B_y\langle j\rangle\to B_x\langle i\rangle$. In particular $j\le i$. By applying $I$ and shifting by $2j$, we find that $B_{y}\langle j\rangle$ is also a direct summand of $B_x^{\otimes 2}\langle 2j-2i\rangle$. It follows that there is a non-zero morphism $\unit\to B_x^{\otimes 2}\langle 2j-2i\rangle$, so by adjunction a non-zero morphism $B_x^\ast\to B_x\langle 2j-2i\rangle$. This requires $j=i$, $x=x^{-1}$ and $x=y$.
From the start we had $\Delta(x)\le i $, but since $B_x\langle i\rangle$ is now a direct summand of $B_x^{\otimes 2}$, we also have $i\le \a(x)$. Lemma~\ref{LemP1} thus shows that $x\in\sD$ and $i=\a(x)$, implying that $\alpha$ is distinguished, so (1) follows.

Finally, assume that $\alpha:\unit\to B_d\langle \a(d)\rangle$ is distinguished. Since coevaluation $\unit\to B_d\otimes B_d$ factors via $\alpha$, it follows (as already observed in the proof of Theorem~\ref{ThmP3}) that $B_d\otimes\alpha$ is a split monomorphism $B_d\hookrightarrow B_d^{\otimes}\langle \a(d)\rangle$, and we let $\pi$ be the splitting map. It then follows that $\alpha=\pi\circ\alpha^{\otimes 2}$, so we conclude that (1) implies (3).
\end{proof}

\begin{remark}We can rephrase Lemma~\ref{LemAlt} as saying that $\alpha$ is distinguished if and only if $\alpha$ and $\alpha^{\otimes 2}$ generate the same tensor ideal (where `tensor ideal' can freely be interpreted as left, right or two-sided).
\end{remark}

\subsection{Duflo involutions for Coxeter groups without using boundedness}\label{SecSB}

Let $(W,S)$ be a Coxeter system of finite rank ($S$ is finite). In \cite{Lubook}, Lusztig proved properties P1-P15 concerning Duflo elements in the {\em equal} parameter case, under the assumption that the system is `bounded', as in \cite[\S 13.2]{Lubook}. A proof of this boundedness has recently been announced in \cite{bounded}, thus in particular completing Lusztig's proof. Our methods give some results that do not rely on the boundedness hypothesis:

\begin{theorem}\label{Thm:unbounded}
In the equal parameter case, properties P1, P2, P3, P5, P6 and P13 in \cite[Conjectures~14.2]{Lubook} of $(W,S)$ are true.
\end{theorem}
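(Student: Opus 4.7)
The plan is to realise the results as an application of Theorems~\ref{ThmP3} and~\ref{ThmP6} in the abstract framework of Section~\ref{sec:su}. Let $\cS$ be the pseudo-tensor category obtained as the Karoubi envelope of the (graded) category of Bott--Samelson Soergel bimodules for $(W,S)$, or equivalently the Elias--Williamson diagrammatic Hecke category $\cD_{\mathrm{EW}}$ of \cite{EW14}, taken over a field $\bk$ of characteristic zero. Set $\mW:=W$. The first step is to verify the hypotheses in~\ref{DefS} and~\ref{Piv}: the grading shift is the internal grading; indecomposable objects are classified by $(x,i)\in W\times\mZ$ as $B_x\langle i\rangle$ by Soergel's classification (as categorified by \cite{EW14}); the duality in $\cS$ (tensoring with $R$ and swapping factors) is rigid and pivotal with $B_x^\ast\simeq B_{x^{-1}}$; and the bimodule-swap yields the monoidal anti-equivalence $I$ with $\overline{X\langle 1\rangle}=\overline{X}\langle -1\rangle$. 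Crucially, the positivity/Hom-vanishing condition~\ref{DefS}(2) is exactly the Soergel--Hom conjecture in the form established by Elias--Williamson in \cite{EW14}, whose proof does \emph{not} rely on Lusztig's boundedness hypothesis.

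Second, I will match the abstract invariants with their classical counterparts. The split Grothendieck ring of $\cS$ is $\H$ with $[B_x]=\underline{H}_x$, and by \cite{So} (and the Soergel conjecture verified in \cite{EW14}) the structure constants of the Kazhdan--Lusztig basis in the equal parameter case agree with graded multiplicities of $B_z\langle i\rangle$ in $B_x\otimes B_y$. Hence the abstract function $\a(x)$ defined in~\ref{DefaD} coincides with Lusztig's a-function $\mathbf{a}(x)$, and $\Delta(x)$ matches Lusztig's $\Delta$. The abstract set $\sD$ of Duflo elements then coincides with Lusztig's set of distinguished involutions, and the abstract notion of left/right cells on $\mW$ recovers the usual Kazhdan--Lusztig cells.

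Once these dictionaries are in place, the statements P1, P2, P3, P5, P13 of \cite[Conjectures 14.2]{Lubook} are the content of Theorem~\ref{ThmP3}, while P6 is Theorem~\ref{ThmP6}, whose hypotheses in~\ref{Piv} are provided by the pivotal and bimodule-swap structures on $\cS$ noted above. The abstract arguments invoke only adjunction, the Hom-vanishing condition~\ref{DefS}(2), and elementary manipulations with coevaluations, none of which use boundedness.

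The main obstacle is ensuring that our abstract categorical invariants genuinely agree with Lusztig's combinatorial definitions without secretly appealing to boundedness. This requires an attentive translation between graded multiplicities in $\cS$ and Kazhdan--Lusztig polynomials; this translation is provided by the Soergel conjecture as proved in \cite{EW14}, which furnishes both the identification $\a=\mathbf{a}$ and the identification of cells. Modulo this translation, the theorem reduces to a direct application of Theorems~\ref{ThmP3} and~\ref{ThmP6}.
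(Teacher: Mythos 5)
Your proposal is correct and follows essentially the same route as the paper: instantiate the abstract framework with Soergel's category of Soergel bimodules for a reflection faithful representation (equivalently, the Karoubi envelope of Bott--Samelson bimodules), verify conditions~\ref{DefS} and~\ref{Piv} via Soergel's Hom formula combined with the Soergel conjecture from \cite{EW14}, match $\a$ and $\Delta$ with Lusztig's functions, and then apply Lemma~\ref{LemP1}, Theorem~\ref{ThmP3}, and Theorem~\ref{ThmP6}. The only small slip is attributing the diagrammatic Hecke category to \cite{EW14} rather than \cite{EW16}; this does not affect the argument.
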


To prove the theorem we let $\bk$ be a field of characteristic 0.

\subsubsection{}\label{SBM}We choose a reflection faithful representation of $W$ on a finite dimensional $\bk$-vector space $V$. An example is given in \cite[Proposition~2.1]{So} for $\bk=\mR$. Then one can associate the corresponding category $\cS_W$ of Soergel   from \cite{So}. These are specific graded bimodules over the graded $\bk$-algebra $R:=\Sym(V)$.

Then $\cS_W$ is by construction a $\bk$-linear additive idempotent complete monoidal category with grading shift functor $\langle1\rangle$, generated by bimodules $B_s$, $s\in S$. It is well-known, see for instance \cite{EK}, that $B_s$ is monoidally self-dual as a graded $R$-bimodule, so that by construction $\cS_W$ is rigid and therefore a pseudo-tensor category.

By \cite[Satz 6.14(2)]{So}, the indecomposable objects in $\cS_W$ are labelled as $B_w\langle i\rangle$ for $w\in W$ and $i\in\mZ$. It further follows from \cite[Lemma~6.13]{So} and \cite[Theorem~1.1]{EW14} that
\begin{equation}\label{eqgrrank}\sum_{i\in\mZ}\dim\Hom(B_x, B_y\langle i\rangle)v^i\;=\;\left(\sum_{j\in\mN}\dim R_j v^j \right)\sum_{z\in W}h_{z,x}h_{z,y}.\end{equation}
Here $h_{z,x}\in\mZ[v]$ are the Kazhdan-Lusztig polynomials from \ref{ordKL}, so that $h_{x,x}=1$ and $h_{z,x}$ has no constant term if $z\not=x$. It follows that the conditions in \ref{DefS} are satisfied.

Via its definition of $B_w$ as a unique summand of a tensor product of $B_s$, with $s$ running over a reduced expression of $w$, it then follows that naturally $B_w^\ast\simeq {}^\ast B_w\simeq B_{w^{-1}}$.
A functor $I$ as in \ref{Piv} is given by monoidal duality followed by exchanging left and right $R$-actions, so the conditions in \ref{Piv} are also satisfied.

\begin{proof}[Proof of Theorem~\ref{Thm:unbounded}]
By the discussion in \ref{SBM}, this follows from Lemma~\ref{LemP1} and Theorems~\ref{ThmP3} and~\ref{ThmP6}, if we show that the functions $\a$ and $\Delta$ as in \ref{DefaD} applied to $\cS_W$ produce the same functions as those defined from $W$ in \cite[\S 13 and \S 14]{Lubook}. For $\a$, this follows from the fact that the split Grothendieck ring of $\cS_W$ is isomorphic to the Hecke algebra where the basis $[B_x\langle i\rangle]$ is sent to $v^i\underline{H}_x$, see \cite[Theorem~1.1]{EW14}. For $\Delta$ this follows from \eqref{eqgrrank} which shows that the minimal $i$ for which $\Hom(\unit,B_x\langle i\rangle)$ is non-zero is the maximal $i$ for which $v^{-i}h_{e,x}\in\mZ[v]$.
\end{proof}

\begin{remark}
Lusztig's conjectures in the {\em unequal} parameter case remain an active topic, see for instance \cite{GP}.
\end{remark}

\subsection{Mazorchuk - Miemietz - Duflo objects}\label{sec:MM}
The following theorem (up to formulation) appears as \cite[Proposition~17]{MM}, and the resulting objects are subsequently referred to as `Duflo' objects in \cite{MM2}, in case $\cA$ is a pseudo-tensor category with finitely many isomorphism classes of indecomposable objects, but without the `$\End(\unit)=\bk$'-condition. Actually, the result was proved for a type of 2-categories generalising these monoidal categories.
 As is known by the authors of \cite{MM}, the result does not require the condition of finitely many indecomposables. Theorem~\ref{ThmMM}(1) below is a reformulation that avoids the usage of the monoidal (via Day convolution) module category of $\cA$. We do lose some of the elegance in \cite{MM} by restricting to the language in the rest of the current paper.

 As in \S \ref{sec:su}, by a {\em right cell} we refer to a collection of objects $P$ so that $X\oplus Y\in P$ if and only if $X,Y\in P$ and such that for all $X,Y\in P$, there are $U,V\in\cA$ with $X$ a direct summand of $Y\otimes U$ and $Y$ a direct sum of $X\otimes V$. Hence right cells are in natural bijection with right thick tensor ideals generated by a single element.

\begin{theorem}\label{ThmMM}
Let $\cA$ be an essentially small pseudo-abelian Krull-Schmidt rigid monoidal category with additive tensor product (e.g. a pseudo-tensor category) and $P$ a right cell. There exists a unique, up to post-composition with an isomorphism, morphism $\delta:\unit\to D$ in $\cA$ with $D$ indecomposable, characterised by {\bf either} of the following three properties:
\begin{enumerate}
\item For every $Y\in P$, the morphism $Y\xrightarrow{\delta\otimes Y} D\otimes Y$ is a split monomorphism, and $D$ is in the left thick tensor ideal generated by $ Y^\ast$.
\item For every $Y\in\cA$, we have $Y\in P$ if and only if $\co_Y$ factors through $\delta$ via a direct summand $D$ of $Y\otimes Y^\ast$.
\item $\delta$ generates the minimal right tensor ideal containing $\id_Y$ for $Y\in P$.
\end{enumerate}
Moreover, $D$ belongs to the right cell $P$.
\end{theorem}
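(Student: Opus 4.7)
\emph{Plan.} The plan is to follow the Mazorchuk--Miemietz strategy: use Theorem~\ref{ThmRigBij} to reformulate the right tensor ideal generated by the identities of elements of $P$ as a subfunctor of $\cA(\unit,-)$, and then extract the Duflo morphism as a distinguished component of a coevaluation.

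First I would fix an indecomposable $Y_0\in P$ and invoke Theorem~\ref{ThmRigBij} to identify the right tensor ideal generated by $\id_{Y_0}$ with the subfunctor of $\cA(\unit,-)$ generated by $\co_{Y_0}:\unit\to Y_0\otimes Y_0^\ast$. Since all objects of the right cell $P$ generate a common thick right tensor ideal, the resulting ideal $\cJ_P$ (and its associated subfunctor) is independent of $Y_0\in P$; thus establishing~(3) amounts to showing $\cJ_P$ is principal, generated by a single $\delta:\unit\to D$ with $D$ indecomposable. Using Krull--Schmidt, I would then decompose $Y_0\otimes Y_0^\ast=\bigoplus_i D_i$ into indecomposables and correspondingly $\co_{Y_0}=\sum_i\delta_i$ with $\delta_i:\unit\to D_i$. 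Applying the snake identity $\id_{Y_0}=(Y_0\otimes\ev_{Y_0})\circ(\co_{Y_0}\otimes Y_0)$ componentwise yields an expression of $\id_{Y_0}$ as a sum of endomorphisms $f_i\circ(\delta_i\otimes Y_0)$; locality of $\End(Y_0)$ forces some summand to be an automorphism, so that $\delta_{i_0}\ne 0$ and $Y_0$ is a retract of $D_{i_0}\otimes Y_0$. This places $Y_0$ in the right thick tensor ideal generated by $D_{i_0}$, and combined with $D_{i_0}$ being a direct summand of $Y_0\otimes Y_0^\ast$, gives $D_{i_0}\sim_R Y_0$ and hence $D_{i_0}\in P$. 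I set $D:=D_{i_0}$ and $\delta:=\delta_{i_0}$.

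Verification of the three characterisations then proceeds as follows. For~(3): one inclusion is automatic since $\delta=\pi_D\circ\co_{Y_0}\in\cJ_P$, and for the reverse I need that each non-distinguished component $\delta_j$ factors through $\delta$. This uses an ungraded analogue of Lemma~\ref{LemAlt}, namely that $\delta$ satisfies $\delta=\beta\circ\delta^{\otimes 2}$ for suitable $\beta$ (derived from the snake identity for $D$), together with the fact that the $D_j$ with $\delta_j\ne 0$ and $j\ne i_0$ lie in strictly smaller cells under $\preceq_R$. For~(2), given $Y\in P$, the containment $\co_Y\in\cJ_P$ yields a factorisation $\co_Y=k\circ\delta$; writing also $\delta=\ell\circ\co_Y$, locality of $\End(D)$ forces $\ell k$ to be an isomorphism, so $k$ is split monic and $D$ is a direct summand of $Y\otimes Y^\ast$. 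The converse of~(2) follows from the snake identity: a factorisation of $\co_Y$ through $\delta$ via a summand $D$ of $Y\otimes Y^\ast$ exhibits $Y$ as a retract of $D\otimes Y$, whence $Y\sim_R D\in P$. Property~(1) is then a reformulation of~(2) via the snake identity. Uniqueness of $(D,\delta)$ up to post-composition with an isomorphism follows: any competitor $(D',\delta')$ satisfying~(3) provides mutual factorisations $\delta'=f\delta$, $\delta=g\delta'$, so $gf\in\End(D)$ satisfies $gf\circ\delta=\delta$, and locality of $\End(D)$ forces $gf$ to be an automorphism, giving the desired iso $D\simeq D'$.

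The main obstacle I expect is the factorisation substep in~(3): in the absence of a grading to separate components (as was available in the Soergel setting of Theorem~\ref{ThmP3}), one must argue directly via Krull--Schmidt together with the minimality property defining $\cJ_P$, comparing the subfunctors of $\cA(\unit,-)$ generated by $\delta$ versus $\delta_j$ and exploiting locality of endomorphism rings. This is the technical heart of the argument, carried out in \cite{MM}.
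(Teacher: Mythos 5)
Your overall strategy is essentially the same as the paper's — fix $Y_0\in P$, use Theorem~\ref{ThmRigBij} to pass to subfunctors of $\cA(\unit,-)$, decompose $\co_{Y_0}$ into components $\delta_i:\unit\to D_i$, and use locality of $\End(Y_0)$ on the snake identity to single out the component $\delta=\delta_{i_0}$ for which $Y_0$ is a retract of $D_{i_0}\otimes Y_0$. (The paper phrases this last step via the principle \eqref{eq:sumideals} rather than via the snake identity directly, but it is the same argument.) Your treatment of uniqueness and of (2) matches the paper's.

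The genuine gap is in your verification of (3). You observe $\delta\in\cJ_P$ and then claim the reverse inclusion requires showing each component $\delta_j$ with $j\ne i_0$ factors through $\delta$, appealing to an ``ungraded analogue of Lemma~\ref{LemAlt}'' and to the assertion that the $D_j$ ($j\ne i_0$, $\delta_j\ne0$) lie in strictly smaller right cells. Neither of these is justified: Lemma~\ref{LemAlt} uses the grading and the condition in \ref{DefS}(2) in an essential way, and there is no general reason why the other non-vanishing components of $\co_{Y_0}$ should land in strictly smaller cells — they could well land in $P$. But, more to the point, neither claim is needed. Once you know $\id_{Y_0}$ is a composite through $\delta\otimes Y_0$ (which you have already established), you have $\id_{Y_0}\in\langle\delta\rangle$; minimality of $\cJ_P$ then gives $\cJ_P\subseteq\langle\delta\rangle$ immediately, and combined with $\delta\in\cJ_P$ you get $\langle\delta\rangle=\cJ_P$. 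The ``technical heart'' you worry about is a red herring; the paper sidesteps it entirely by deducing (3) from existence plus minimality, and then proving the cycle $(3)\Rightarrow(2)\Rightarrow(1)\Rightarrow(3)$ rather than verifying each characterisation separately.
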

\begin{proof}
Let $(D,\delta)$ and $(D',\delta')$ both be as in (3). Then by Theorem~\ref{ThmRigBij}, which holds in the present context, see \cite{Selecta}, there are morphisms $f:D\to D'$ and $g:D'\to D$ such that $\delta'=f\circ \delta$ and $\delta=g\circ \delta'$. If $f$ is not an isomorphism, then $g\circ f$ belongs to the Jacobson radical $J$ of $R:=\End(D)$, so that for the $R$-submodule $N$ of $\Hom(\unit,D)$ generated by $\delta$, we have $JN=N$ by $g\circ f\circ\delta=\delta$, contradicting Nakayama's Lemma. Hence $(D,\delta)$ as in (3) are unique in the desired sense.

Also for $(D,\delta)$ as in (3), it is clear, by Theorem~\ref{ThmRigBij}, that for $Y\in P$ the coevaluation $\co_Y$ factors through $\delta$. If it would not factor via a direct summand $D$ in $Y\otimes Y^\ast$, then $\co_Y$ would generate, by precisely the same argument as in the previous paragraph, a strictly smaller subfunctor of $\cA(\unit,-)$, and hence a smaller right tensor ideal $\cI$, which is a contradiction as $\cI$ still contains $P$. Conversely, consider $Y\not\in P$. If $Y$ is not in the thick right tensor ideal generated by $P$ then $\co_Y$ cannot factor via $\delta$. If $Y$ is in the thick right tensor ideal generated by $P$ (but not in $P$), then $\co_Y$ cannot factor through $\delta$ via a direct summand $D$, since otherwise $Y$ would generate a right tensor ideal including $P$. Hence (3) implies (2).

That (2) implies (1) follows directly by adjunction. 
Next we prove that (1) implies (3).
Consider $(D,\delta)$ as in (1). For $Y\in P$ it follows via adjunction that $\co_Y$ factors via $\delta$. We thus only need to show that $\delta$ does not generate a strictly bigger right tensor ideal. But since $D$ is a direct summand of $Z\otimes Y^\ast$ for some $Z\in\cA$, the morphism $\delta$ factors as
$$\unit\xrightarrow{\alpha} Z\otimes Y^\ast\to D.$$
By further writing $\alpha$ as a composite of $\co_{Y}$ with some $f\otimes Y^\ast$ it follows that $\delta$ is contained in the minimal ideal from (3), concluding the proof of (1)$\Rightarrow$(3).

Next, we prove existence of a $(D,\delta)$ as in (3). Let $\delta_i:\unit\to D_i$ be a collection of generators of the subfunctor of $\cA(\unit,-)$ corresponding to the ideal in (3), via Theorem~\ref{ThmRigBij}. Firstly we can observe that the collection can be taken to be finite, since $\co_Y$, for any $Y\in P$, generates the subfunctor. Then, by principle \eqref{eq:sumideals} (which shows that the tensor ideal generated by at least one of the morphisms must contain $\id_Y$) we can conclude that a singleton must suffice. .

Finally, clearly $D$ as in (2) is in the right tensor ideal generated by $Y$. The converse follows precisely as in the penultimate paragraph of the proof of Theorem~\ref{ThmP3}.
\end{proof}

\begin{remark}
With the approach in \cite[Proposition~17]{MM} one can show that also ${}^\ast D$ is in $P$, but interestingly in general $D^\ast\not\in P$.
\end{remark}

\begin{remark}
We have the same statement as in Theorem~\ref{ThmMM} for a left cell $\Gamma$. The first two equivalent characterisations become:
\begin{enumerate}
\item For every $Y\in\Gamma$ the morphism $Y\xrightarrow{Y\otimes\delta} Y\otimes D$ is a split monomorphism and $D$ is in the right thick tensor ideal generated by ${}^\ast Y$.
\item For every $Y\in\cA$, we have $Y\in \Gamma$ if and only if $\co_{{}^\ast Y}$ factors through $\delta$ via a direct summand $D$ of ${}^\ast Y\otimes Y$.
\end{enumerate}
We can also formulate a dual version. Let $P$ be a right cell. There exists unique (up to isomorphism) $\delta: D\to\unit$ with $D$ indecomposable that can be characterised by  either of:
\begin{enumerate}
\item For every $Y\in P$, the morphism $D\otimes Y\xrightarrow{\delta\otimes Y}Y$ is a split epimorphism and $D$ is in the left thick tensor ideal generated by ${}^\ast Y$.
\item For every $Y\in\cA$, we have $Y\in P$ if and only if $\ev_{{}^\ast Y}$ factors through $\delta$ via a direct summand $D$ of $Y\otimes {}^\ast Y$.
\end{enumerate}
One can pass from one version of the theorem to another by considering the opposite/reversed monoidal categories.
\end{remark}

With Theorem~\ref{ThmMM} stated in sufficient generality, and with the results of Theorem~\ref{ThmP3} and \S \ref{SBM} in mind, it is now tempting to extend the concept of Duflo elements to the $p$-version of Kazhdan-Lusztig theory, see \cite{JW}. Crucially, in this case the categorification does not satisfy~\ref{DefS}(2) so that the approach of Sections \ref{sec:disti} and \ref{sec:invo} does not apply.
\begin{example}
Let $(W,S)$ be a Coxeter group and $\bk$ a field of characteristic $p>0$. Theorem~\ref{ThmMM} applies to the diagrammatic category of Soergel bimodules $\cS_W$ from~\cite{EW16}, a pseudo-tensor category over $\bk$. The right cells in $\cS_W$ are in canonical bijection with the $p$-Kazhdan-Lusztig cells in $W$ and, for each right $p$-cell $P\subset W$, Theorem~\ref{ThmMM} defines a corresponding ``$p$-Duflo element'' $x\in P$, so that $B_x\langle a\rangle=D$, for some $a\in\mZ$.

For example, if $W$ is the infinite dihedral group with generators $s,t$, then the $p$-Duflo elements are the involutions
$$(st)^{p^n-1}s\quad\mbox{and}\quad (ts)^{p^{n}-1}t,\qquad\mbox{with }\;n\in\mN.$$

In general, the $p$-Duflo elements will be involutions, which can again be deduced from the fact that $\cS_W$ satisfies \ref{Piv}.
\end{example}

\subsection*{Acknowledgements}
The authors thank Jonathan Gruber, Bruno Kahn, Volodymyr Mazorchuk, Daniel Nakano, Joseph Newton, James Parkinson and Geordie Williamson for interesting discussions, and the referee for many useful suggestions resulting in improved presentation. The research of KC was partially supported by ARC grant FT220100125. The work of PE was partially supported by the NSF grants DMS - 2001318 and DMS - 2502467.


\begin{thebibliography}
	{EGNO}
	
	\bibitem[AHR]{AHR1} P.N.~Achar, W.~Hardesty, S.~Riche:
On the Humphreys conjecture on support varieties of tilting modules.
Transform. Groups 24 (2019), no. 3, 597--657.

\bibitem[AHR2]{AHR2} P.N.~Achar, W.~Hardesty, S.~Riche: Conjectures on tilting modules and antispherical p-cells.
arXiv:1812.09960.



	
	\bibitem[An1]{An} H.H.~Andersen: 
Tensor products of quantized tilting modules.
Comm. Math. Phys.   149 (1992), no. 1, 149--159.

\bibitem[An2]{An2}  H.H.~Andersen: Cells in affine Weyl groups and tilting modules.   Representation theory of algebraic groups and quantum groups, 1--16.
Adv. Stud. Pure Math., 40
Mathematical Society of Japan, Tokyo, 2004.

\bibitem[An3]{An3} H.H.~Andersen: A sum formula for tilting filtrations.
J. Pure Appl. Algebra 152 (2000), no. 1-3, 17--40.
	
	\bibitem[APW]{APW} H.H.~Andersen, P.~Polo, K.X.~Wen:
Representations of quantum algebras.
Invent. Math. 104 (1991), no.1, 1--59.



\bibitem[AK]{AK} Y.~André, B.~Kahn:
Nilpotence, radicaux et structures monoïdales. 
With an appendix by Peter O'Sullivan.
Rend. Sem. Mat. Univ. Padova 108 (2002), 107--291.


\bibitem[ABG]{ABG} S.~Arkhipov, R.~Bezrukavnikov, V.~Ginzburg:
Quantum groups, the loop Grassmannian, and the Springer resolution.
J. Amer. Math. Soc. 17 (2004), no. 3, 595--678.


\bibitem[BW]{BW} J.W.~Barrett, B.W.~Westbury:
Spherical categories.
Adv. Math. 143 (1999), no. 2, 357--375.
	
\bibitem[BNPP]{BNPP} C.P.~Bendel, D.K.~Nakano, B.J.~Parshall, C.~Pillen:
Cohomology for quantum groups via the geometry of the nullcone.
Mem. Amer. Math. Soc. 229 (2014), no. 1077.
	
\bibitem[BEO]{BEO} D.~Benson, P.~Etingof, V.~Ostrik: New incompressible symmetric tensor categories in positive characteristic. Duke Math. J. 172 (2023), no. 1, 105--200.

\bibitem[Be]{Be} R.~Bezrukavnikov: 
Cohomology of tilting modules over quantum groups and t-structures on derived categories of coherent sheaves.
Invent. Math. 166 (2006), no.2, 327--357.

\bibitem[BKN]{BKN} B.D. Boe, J.R. Kujawa, D.K. Nakano: Tensor Triangular Geometry for Quantum Groups.
arXiv:1702.01289.



%
%




\bibitem[CH]{bounded}X.~Chen, H.S.~Hu: The boundedness of Lusztig's a-function for Coxeter groups of finite rank. arXiv:2503.06432.

\bibitem[CM]{CM} D.H.~Collingwood, W.M.~McGovern:
Nilpotent orbits in semisimple Lie algebras.
Van Nostrand Reinhold Math. Ser., New York, 1993. 


\bibitem[Co1]{Selecta} K.~Coulembier: Tensor ideals, Deligne categories and invariant theory. Selecta Math. (N.S.) 24 (2018), no. 5, 4659--4710.


\bibitem[Co2]{AbEnv}  K.~Coulembier: Monoidal abelian envelopes. Compos. Math. 157 (2021), no. 7, 1584--1609.

\bibitem[Co3]{pretop} K.~Coulembier:
Additive Grothendieck pretopologies and presentations of tensor categories.
Appl. Categ. Structures 31 (2023), no. 3, Paper No. 23, 41 pp.

\bibitem[Co4]{HomAb} K.~Coulembier: Homological kernels of monoidal functors.
Selecta Math. (N.S.) 29 (2023), no. 2, Paper No. 24, 46 pp.

\bibitem[Co5]{Sinf} K.~Coulembier:  A special class of prime ideals for infinite symmetric group algebras. arXiv:2507.03309.


\bibitem[CEO1]{CEO} K.~Coulembier, P.~Etingof, V.~Ostrik: On Frobenius exact symmetric tensor categories. With an appendix by A.~Kleshchev. Ann. of Math. (2) 197 (2023), no. 3, 1235--1279.


\bibitem[CEO2]{CEO-as} K.~Coulembier, P.~Etingof, V.~Ostrik:
Asymptotic properties of tensor powers in symmetric tensor categories.
Pure Appl. Math. Q. 20 (2024), no.3, 1141--1179.


\bibitem[CF]{CF}K.~Coulembier and J.~Flake: Towards higher Frobenius functors for symmetric tensor categories. arXiv:2405.19506.



\bibitem[Dc]{Dc} T.D.~Décoppet: Higher Verlinde Categories: The Mixed Case. arXiv:2407.20211.




\bibitem[De]{De} P.~Deligne:
La catégorie des représentations du groupe symétrique $S_t$, lorsque t n'est pas un entier naturel. Algebraic groups and homogeneous spaces, 209--273.
Tata Inst. Fund. Res. Stud. Math., 19
Published for the Tata Institute of Fundamental Research, Mumbai, 2007.

\bibitem[Do]{Do} S.~Donkin: On tilting modules for algebraic groups.
Math. Z. 212 (1993), no. 1, 39--60.


\bibitem[DGNO]{DGNO} V.~Drinfeld, S.~Gelaki, D.~Nikshych, V.~Ostrik:
On braided fusion categories. I.
Selecta Math. (N.S.) 16 (2010), no. 1, 1--119.

\bibitem[EK]{EK}
B.~Elias, M.~Khovanov:
Diagrammatics for Soergel categories.
Int. J. Math. Math. Sci. 2010.

\bibitem[EW1]{EW14} B.~Elias, G.~Williamson: 
The Hodge theory of Soergel bimodules.
Ann. of Math. (2) 180 (2014), no. 3, 1089--1136.

\bibitem[EW2]{EW16} B.~Elias, G.~Williamson:
Soergel calculus.
Represent. Theory 20 (2016), 295--374.

\bibitem[EGNO]{EGNO} P.~Etingof, S.~Gelaki, D.~Nikshych, V.~Ostrik:
Tensor categories.
Math. Surveys Monogr., 205
American Mathematical Society, Providence, RI, 2015.

\bibitem[EO]{EO} P.~Etingof, V.~Ostrik:
On semisimplification of tensor categories. Representation theory and algebraic geometry, 3--35.
Trends Math. Birkhäuser/Springer, Cham, 2022.

\bibitem[GK]{GK} V.~Ginzburg, S.~Kumar:
Cohomology of quantum groups at roots of unity.
Duke Math. J.   69 (1993), no. 1, 179--198.


\bibitem[GP]{GP}J.~Guilhot, J.~Parkinson:
A proof of Lusztig's conjectures for affine type G2 with arbitrary parameters.
Proc. Lond. Math. Soc. (3) 118 (2019), no. 5, 1188--1244.


\bibitem[Ha]{Ha} W.D.~Hardesty:
On support varieties and the Humphreys conjecture in type A.
Adv. Math. 329 (2018), 392--421.


\bibitem[HW]{HW} T.~Heidersdorf, H.~Wenzl:
Generalized negligible morphisms and their tensor ideals.
Selecta Math. (N.S.) 28 (2022), no. 2, Paper No. 31.


\bibitem[HK]{HK} A.~Henriques, J.~Kamnitzer:
Crystals and coboundary categories.
Duke Math. J. 132 (2006), no. 2, 191--216.


\bibitem[Ja]{Jantzen}
J.C.~Jantzen:
Representations of algebraic groups. 
 Mathematical Surveys and Monographs, 107. American Mathematical Society, Providence, RI, 2003.



\bibitem[JW]{JW} L.T.~Jensen, G.~Williamson:
The $p$-canonical basis for Hecke algebras. Categorification and higher representation theory, 333–361.
Contemp. Math., 683
American Mathematical Society, Providence, RI, 2017.



\bibitem[Ka]{Ka} M.~Kaneda:
Based modules and good filtrations in algebraic groups.
Hiroshima Math. J. 28 (1998), no. 2, 337--344.


\bibitem[KL]{KLq} D. Kazhdan, G. Lusztig: Tensor structures arising from aﬃne Lie algebras, I--IV, J. Amer. Math.
Soc. 6--7 (1993--1994) 905--947, 949--1011, 335--381, 383--453.

\bibitem[Lu1]{Lu4}G.~Lusztig:
Cells in affine Weyl groups. IV.
J. Fac. Sci. Univ. Tokyo Sect. IA Math. 36 (1989), no. 2, 297--328.


\bibitem[Lu2]{Lu-Mono} G.~Lusztig: Monodromic systems on affine flag manifolds.
Proc. Roy. Soc. London Ser. A 445 (1994), no. 1923, 231--246.


\bibitem[Lu3]{Lubook} G.~Lusztig:
Hecke algebras with unequal parameters.
CRM Monogr. Ser., 18
American Mathematical Society, Providence, RI, 2003.

\bibitem[LX]{LX} G.~Lusztig, N.H.~Xi:
Canonical left cells in affine Weyl groups.
Adv. in Math. 72 (1988), no. 2, 284--288.


\bibitem[MM1]{MM} V.~Mazorchuk, V.~Miemietz:
Cell 2-representations of finitary 2-categories.
Compos. Math. 147 (2011), no. 5, 1519--1545.


\bibitem[MM2]{MM2} V.~Mazorchuk, V.~Miemietz:
Additive versus abelian 2-representations of fiat 2-categories.
Mosc. Math. J. 14 (2014), no. 3, 595--615, 642.

\bibitem[MO]{MO} V.~Mazorchuk, S.~Ovsienko: A pairing in homology and the category of linear complexes of tilting
modules for a quasi-hereditary algebra, J. Math. Kyoto Univ. 45 (2005) 711--741.

\bibitem[Ne]{Joe} J.~Newton: Higher Verlinde categories for simple algebraic groups. In preparation.

\bibitem[Os1]{Os} V.~Ostrik:
Tensor ideals in the category of tilting modules.
Transform. Groups 2 (1997), no.3, 279--287.

\bibitem[Os2]{OsSupp} V.~Ostrik:
Cohomological supports for quantum groups.
Funct. Anal. Appl. 32 (1998), no.4, 237--246.

\bibitem[OS]{OS} P.~O'Sullivan: Super Tannakian hulls. arXiv:2012.15703.

Proc. Sympos. Pure Math., 56, Part 2
American Mathematical Society, Providence, RI, 1994, 93--108.

	
	\bibitem[RW]{RW} S.~Riche, G.~Williamson:
Tilting modules and the p-canonical basis.
Astérisque No. 397 (2018).


\bibitem[SVV]{SVV}P.~Shan, M.~Varagnolo, E.~Vasserot:
Koszul duality of affine Kac-Moody algebras and cyclotomic rational double affine Hecke algebras.
Adv. Math. 262 (2014), 370--435.

	
	\bibitem[So1]{Soergel} W.~Soergel:
Kazhdan-Lusztig polynomials and a combinatoric for tilting modules.
Represent. Theory 1 (1997), 83--114.

\bibitem[So2]{SoKM}W.~Soergel:
Character formulas for tilting modules over Kac-Moody algebras.
Represent. Theory 2 (1998), 432--448.

\bibitem[So3]{So} W.~Soergel: Kazhdan-Lusztig-Polynome und unzerlegbare Bimoduln über Polynomringen.
J. Inst. Math. Jussieu 6 (2007), no. 3, 501--525.
	
	\bibitem[St]{Stroinski} M.~Stroiński: Identity in the presence of adjunction.  International Mathematics Research Notices, Volume 2024, Issue 18, 12711--12745.
	
	\bibitem[Str]{Stroppel} C.~Stroppel: Untersuchungen zu den parabolischen Kazhdan-Lusztig-Polynomen f\"ur affine Weyl-Gruppen. Masters Thesis. available at  https://www.math.uni-bonn.de/ag/stroppel/arbeit\_Stroppel.pdf

 	\end{thebibliography}
\end{document}